\newcounter{counterL}
\newcounter{counterRm}
\newcounter{counterCor}
\theoremstyle{plain}
\newtheorem{theorem}{Theorem}
\newtheorem{corollary}[counterCor]{Corollary}
\newtheorem{proposition}[theorem]{Proposition}
\newtheorem{lemma}[counterL]{Lemma}
\theoremstyle{definition}
\newtheorem{definition}{Definition}
\newtheorem{remark}[counterRm]{Remark}
\newcommand{\ab}{{}_{\alpha,\beta}}
\newcommand{\ba}{{}_{\beta, \alpha}}
\newcommand{\bg}{{}_{\beta}}
\newcommand{\ag}{{}_{\alpha}}
\newcommand{\ux}{{}_{x}}
\newcommand{\uxi}{{}_{\xi}}
\newcommand{\bM}{\bm{M}}
\newcommand{\bR}{\bm{R}}
\newcommand{\bL}{\bm{L}}
\newcommand{\bZ}{\bm{Z}}
\newcommand{\bX}{\bm{X}}
\newcommand{\bF}{\bm{F}}
\newcommand{\bxxi}{\bm{\xi}}
\newcommand{\bx}{\bm{x}}
\newcommand{\by}{\bm{y}}
\newcommand{\bgg}{\bm{g}}
\newcommand{\bff}{\bm{f}}
\newcommand{\bh}{\bm{h}}
\newcommand{\bdelta}{\bm{\delta}}
\newcommand{\N}{\mathbb{N}}
\newcommand{\R}{\mathbb{R}}
\newcommand{\I}{\mathcal{I}}
\newcommand{\Exp}[1]{\mathbb{E}\left[#1\right]}
\newcommand{\ExpN}[1]{\mathbb{E}_{N}\left[#1\right]}
\newcommand{\bpsi}{\bm{\psi}}
\newcommand{\bxi}{\bm{\xi}}
\newcommand{\bXi}{\bm{\Xi}}
\newcommand{\bphi}{ \bm{\phi}}
\newcommand{\bPhi}{ \bm{\Phi}}
\newcommand{\bDelta}{ \bm{\Delta}}
\newcommand{\bPsi}{ \bm{\Psi}}
\newcommand{\balpha}{\bm{\alpha}}
\newcommand{\ProbXi}{\bm{\mathrm{P}}_{\Xi^{-1}}}
\newcommand{\Probxi}{\bm{\mathrm{P}}_{\xi}}
\newcommand{\uX}{{}_{ \scriptscriptstyle X \displaystyle}}
\newcommand{\uY}{{}_{ \scriptscriptstyle Y \displaystyle}}
\newcommand{\uZ}{{}_{ \scriptscriptstyle Z \displaystyle}}
\newcommand{\dX}{{ \scriptscriptstyle X \displaystyle}}
\newcommand{\dY}{{ \scriptscriptstyle Y \displaystyle}}
\newcommand{\dZ}{{ \scriptscriptstyle Z \displaystyle}}
\numberwithin{equation}{section}
\theoremstyle{plain}
\begin{document}

\begin{frontmatter}
\title{Multi-Resolution Schauder Approach to Multidimensional Gauss-Markov Processes}
\runtitle{Gauss-Markov Processes}

\begin{aug}
\author{\fnms{Thibaud} \snm{Taillefumier}\thanksref{m1}\ead[label=e1]{ttaillefum@rockefeller.edu}}
\and
\author{\fnms{Jonathan} \snm{Touboul}\thanksref{m1,m2}\ead[label=e2]{jonathan.touboul@sophia.inria.fr}}
\ead[label=u2,url]{http://www-sop.inria.fr/members/Jonathan.Touboul}

\runauthor{T. Taillefumier \and J. Touboul}

\affiliation[m1]{Laboratory of Mathematical Physics, The Rockefeller University, New-York, USA\\}
\affiliation[m2]{ NeuroMathComp Laboratory, INRIA/ENS Paris, Paris\\}

\address{Laboratory  of Mathematical Physics,\\The Rockefeller University,\\ 1230 York Avenue,\\New-York, NY 10065\\
\printead{e1}\\
\phantom{E-mail:\ }}

\address{NeuroMathComp Laboratory\\
INRIA/ ENS Paris\\
23, avenue d'Italie\\
75013 Paris\\
\printead{e2}\\
\printead{u2}
}
\end{aug}

\begin{abstract}
The study of multidimensional stochastic processes involves complex computations in intricate functional spaces. 
In particular, the diffusion processes, which include the practically important Gauss-Markov processes, are ordinarily defined through the theory of stochastic integration. 
Here, inspired by the L\'{e}vy-Ciesielski construction of the Wiener process, we propose an alternative representation of multidimensional Gauss-Markov processes as  expansions on well-chosen Schauder bases, with independent random coefficients of  normal law with zero mean and unitary variance. 
We thereby offer a natural multi-resolution description of Gauss-Markov processes as limits of the finite-dimensional partial sums of the expansion, that are strongly almost-surely convergent.
Moreover, such finite-dimensional random processes constitute an optimal approximation of the process, in the sense of minimizing the associated Dirichlet energy under interpolating constraints. 
This approach allows simpler treatment in many applied and theoretical fields and we provide a short overview of applications we are currently developing. 
\end{abstract}

\begin{keyword}[class=AMS]
\kwd[Primary ]{62L20} 
\kwd{60G05} 
\kwd{60G15} 
\kwd{60G17} 
\kwd[; secondary ]{60H35} 
\kwd{60H05} 
\kwd{60G50} 
\end{keyword}

\begin{keyword}
\kwd{Gauss-Markov Processes}
\kwd{Levy-Ciesielski construction}
\kwd{Schauder Basis}
\kwd{Hilbert Analysis}
\kwd{RKHS}
\kwd{Stochastic Approximation}
\end{keyword}

\end{frontmatter}


\section*{Introduction}


Multidimensional continuous processes are easily defined for a wide range of stochastic equations through standard It\^{o} integration theory (see e.g. \cite{Protter:2004}).
However, studying their properties proves surprisingly challenging, even for the simplest multidimensional processes.
The high dimensionality of the ambient space and the nowhere-differentiability of the sample paths conspire to heighten the intricacy of the sample paths spaces.
In this regard, such spaces have been chiefly studied for multidimensional diffusion processes~\cite{Strook:2006}, and more recently, the development of rough paths theory has attracted renewed interest in the field~\cite{Lyons:1994,Lyons:2002,Friz:2010}.
Aside from these remarkable theoretical works, little emphasis  is put on the sample paths since most of the available results only make sense in distribution. 
This is particularly true in It\^{o} integration theory, where the sample path is completely neglected for the It\^{o} map being defined up to null sets of paths.\\
Adopting a discrete representation of a process that allows the inference of  sample paths properties from finite-dimensional approximating processes, alleviates the difficulty of working in multidimensional spaces.
This is achieved through writing a process $\bX$ as an {\it almost-sure path-wise} convergent series of random functions 
\begin{equation}
\bX_{t} = \lim_{N \to \infty} \bX_{N} \quad \mathrm{with} \quad \bX_{N} = \sum_{n=0}^{N} \bm{f}_{n} (t)\cdot \bXi_{n}  \, ,
\nonumber
\end{equation}
where the $\bm{f}_{n}$ are deterministic functions and the $\bXi_{n}$ are independently identically distributed random variables.\\
The L\'evy-Cesielski construction of the $d$-dimensional Brownian motion $\bm{W}$ (also referred as Wiener process) provides us with an example of discrete representation for a continuous stochastic process~\cite{Levy}.
Noticing the simple form of the probability density of a Brownian bridge, it is based on completing sample paths by interpolation according to the conditional probabilities of the Wiener process.
More specifically, the coefficients $\bXi_{n}$ are Gaussian independent and the elements $\bm{f}_{n}$, called Schauder elements and denoted $\bm{s}_{n}$, are obtained by time-dependent integration of the Haar basis elements: $\bm{s}_{0,0}(t) = t \, \bm{I}_d$ and  $\bm{s}_{n,k}(t) = s_{n,k}(t) \, \bm{I}_d$, with for all $n>0$
\[
s_{n,k}(t) = \begin{cases}
 \displaystyle 2^{\frac{n-1}{2}} (t-l_{n,k}) \, ,& k2^{-n\!+\!1}\leq t \leq (2k\!+\!1)2^{-n} \, ,\\
 \displaystyle 2^{\frac{n-1}{2}} (r_{n,k}-t) \, ,& (2k\!+\!1)2^{-n} \leq t \leq (k\!+\!1)2^{-n\!+\!1} \, ,\\
 \displaystyle 0 \, , & \mathrm{otherwise} \, . \\
 \end{cases}\]
This latter point is of relevance since, for being a Hilbert system, the introduction of the Haar basis greatly simplify the demonstration of the existence of the Wiener process~\cite{Ciesielski}.
It is also important for our purpose to realize that the Schauder elements $\bm{s}_{n}$ have compact supports that exhibits a nested structure: this fact entails that the finite sums $\bm{W}_{N}$ are processes that interpolates the limit process $\bm{W}$ on the endpoints of the supports, i.e. on the dyadic points $k2^{-N}$, $0 \leq k \leq 2^{N}$.
One of the specific goal of our construction is to maintain such a property in the construction of all multidimensional Gauss-Markov processes $\bm{X}$, being successively approximated by finite dimensional processes ${X}^{N}$ that interpolates $\bm{X}$ at ever finer resolution.
In that respect, , it is only in that sense that we refer to our framework as a multi-resolution approach as opposed to the wavelet multi-resolution theory~\cite{Mallat:1989}.
Extensions of this method to the fractional Brownian motion were also developed~\cite{Meyer:1999}, but applied to some very specific processes.\\

In view of this, we propose a construction of multidimensional Gaussian Markov processes using a multi-resolution Schauder basis of functions. 
As for the L\'evy-Ciesielski construction, our basis is not made of orthogonal functions but the elements are such that the random coefficients $\bXi_{n}$ are always independent and Gaussian (for convenience with law $\mathcal{N}(\bm{0},\bm{I}_d)$, i.e. with zero mean and unitary variance).
We first develop a heuristic approach for the construction of stochastic processes reminiscent of the midpoint displacement technique~\cite{Levy,Ciesielski}, before rigorously deriving the multi-resolution basis that we will be using in all the paper. 
This set of functions is then studied as a multi-resolution Schauder basis of functions:
in particular, we derive explicitly from the multi-resolution basis an Haar-like Hilbert basis,  which is the underlying structure explaining the dual relationship between basis elements and coefficients. 
Based on these results, we study the construction application and its inverse, the coefficient applications, that relate coefficients on the Schauder basis to sample paths. 
We pursue by proving the almost sure and strong convergence of the process having independent standard normal coefficients on the Schauder basis to a Gauss-Markov process. 
We also show that our decomposition is optimal in some sense that is strongly evocative of spline interpolation theory~\cite{deBoor:2001}: the construction yields successive interpolations of the process at the interval endpoints that minimizes the Dirichlet energy induced by the differential operator associated with the Gauss-Markov process~\cite{Fukushima:1994,Pitt:1971kl}. 
We also provide a series of examples for which the proposed Schauder framework yields bases of functions that have simple closed form formulae: in addition to the simple one-dimensional Markov processes, we explicit our framework for two classes of multidimensional processes, the Gauss-Markov rotations and the iteratively integrated Wiener processes (see e.g \cite{McKean:1963,Goldman:1971,Lefebvre:1989}).\\

The ideas underlying this work can be directly traced back to the original work of L\'evy. 
Here, we intend to develop a self-contained Schauder dual framework to further the description of multidimensional Gauss-Markov processes, and in doing so, we extend some well-known results of interpolation theory in signal processing~\cite{deBoor:2008,Kimel1,Kimel2}.
To our knowledge, such an approach is yet to be proposed. 
By restraining our attention to Gauss-Markov processes, we obviously do not pretend to generality.
However, we hope our construction proves of interest for a number of points, which we tentatively list in the following.
First, the almost-sure path-wise convergence of our construction together with the interpolation property of the finite sums allows to reformulate results of stochastic integration in term of  the geometry of finite-dimensional sample paths.
In this regard, we found it appropriate to illustrate how in our framework, the Girsanov theorem for Gauss-Markov processes appears as a direct consequence of the finite-dimensional change of variable formula.
Second, the characterization of our Schauder elements as the minimizer of a Dirichlet form paves the  way to the construction of infinite-dimensionel Gauss-Markov processes, i.e. processes whose sample points themselves are infinite dimensional~\cite{Kolsrud:1988, DaPrato:1992}.
Third, our construction shows that  approximating a Gaussian process by a sequence of interpolating processes relies entirely on the existence of a regular triangularization of the covariance operator, suggesting to further investigate this property for non-Markov Gaussian processes~\cite{Kailath:1978}.
Finally, there is a number of practical applications where applying the Schauder basis framework clearly provides an advantage compared to standard stochastic calculus methods, among which first-hitting times of stochastic processes, pricing of multidimensional path-dependant options~\cite{Baldi:1995,Baldi:2000,Baldi:2002,Gobet2000167}, regularization technique for support vector machine learning~\cite{Scholkopf:2001} and more theoretical work on uncovering the differential geometry structure of the space of Gauss-Markov stochastic processes~\cite{Sekine:2001}. 
We conclude our exposition by developing in more details some of these direct implications which will be the subjects of forthcoming papers.


\section{Rationale of the Construction}

In order to provide a discrete multi-resolution description of Gauss-Markov processes, we first establish basic results about the law of Gauss-Markov bridges in the multidimensional setting. 
We then use them to infer the candidate expressions for our desired bases of functions, while imposing its elements  to be compactly supported on a nested sequence segments.
Throughout this article, we are working in an underlying probability space $\left( \Omega, \mathcal{F}, \mathrm{\bold{P}} \right)$. 


\subsection{Multidimensional Gauss-Markov Processes}

After recalling the definition of multidimensional Gauss-Markov processes in terms of stochastic integral, we use the well known conditioning formula for Gaussian vectors to characterize the law of Gauss-Markov bridge processes.


\subsubsection{Notations and Definitions}

Let $(\bm{W}_t, \mathcal{F}_t, t\in [0,1])$ be a $m$-dimensional Wiener process, consider the continuous functions $\bm{\alpha}:[0,1] \to \R^{d\times d}$  $\sqrt{\bm{\Gamma}}:[0,1] \to \R^{d\times m}$  and define the positive bounded continuous function $\bm{\Gamma} = \sqrt{\bm{\Gamma}} \cdot \sqrt{\bm{\Gamma}}^{T}:[0,1] \to \R^{d\times d}$. 
The $d$-dimensional Ornstein-Uhlenbeck process associated with these parameters is solution of the equation
\begin{equation}\label{eq:eqStoch}
d\bm{X}_t = \bm{\alpha}(t) \cdot \bm{X}_t + \sqrt{\bm{\Gamma}(t)} \cdot d\bm{W}_t \, ,
\end{equation}
and  with initial condition $\bm{X}_{t_0}$ in $t_{0}$, it reads
\begin{equation}\label{eq:OUMult}
	\bm{X}_t = \bF(t_0,t) \cdot \bm{X}_{t_0}  + F(t_0,t) \cdot \int_{t_0}^t \bF(s,t_0) \cdot \sqrt{\bm{\Gamma}(s)} \cdot d\bm{W}_s \, ,
\end{equation}
where $\bF(t_0,t)$ is the flow of the equation, namely the solution in $\R^{d\times d}$ of the linear equation:
\begin{eqnarray} \label{eq:linDeter}
\begin{cases}
	\displaystyle \frac{\partial \bF(t_0,t)}{\partial t} &= \bm{\alpha}(t) \bF(t)\\
	\bF(t_0,t_0)  &= \bm{I}_d
\end{cases} \, .
\end{eqnarray}
Note that the flow $\Phi(t_0,t)$ enjoys the chain rule property:
\[\bF(t_0,t)= \bF(t_1,t)\cdot \bF(t_0,t_1).\]
For all $t,s$ such that $t_{0}<s,t$, the vectors $\bX_{t}$ and $\bX_{s}$ admit the covariance
\begin{eqnarray}
\nonumber\bm{C}_{t_{0}}(s,t) &= & \bF(t_{0},t) \left( \int_{t_0}^{t \wedge s}  \bF(w,t_{0})  \bm{\Gamma}(w)  \bF(w,t_{0})^{T} \, dw \right)  \bF(t_{0},s)^{T} \,\\
\label{eq:covExp} &=&  \bF(t_{0},t) \bm{h}_{t_0}(s,t)  \bF(t_{0},s)^{T} \,
\end{eqnarray}
where we further defined $\bm{h}_u(s,t)$ the function 
\[\bh_u(s,t) = \int_{s}^t  \bF(w,u) \cdot \bm{\Gamma}(w)\cdot  \bF(w,u)^T \; dw\]
which will be of particular interest in the sequel. 
Note that because of the chain rule property of the flow, we have:
\begin{equation}\label{eq:hChainRule}
	\bh_v(s,t) =  \bF(v,u) \,\bh_u(s,t)\, \bF(v,u)^T
\end{equation}
We suppose that the process $\bX$ is never degenerated, that is, for all $t_0<u<v$, all the components of the vector $\bX_{v}$ knowing $\bX_{u}$ are non-deterministic random variables, which is equivalent to say that the covariance matrix of $\bm{X}_{v}$ knowing $\bX_{u}$, denoted $\bm{C}_{u}(v,v)$ is symmetric positive definite for any $u\neq v$.
Therefore, assuming the initial condition $\bX_{0}= \bm{0}$, the multidimensional centered process $\bm{X}$ has a representation (similar to Doob's representation for one-dimensional processes, see~\cite{Karatzas}) of form:
\[\bm{X}_t = \bm{g}(t)\int_{0}^t\bm{f}(s)\cdot d\bm{W}_s \, , \] 
with $\bm{g}(t) =  \bF(0,t)$ and $\bm{f}(t) =  \bF(t,0) \cdot \sqrt{\bm{\Gamma}(t)}$. 

Note that the processes considered in this paper are defined on the time interval $[0,1]$. However, because of the time-rescaling property of these processes, considering the processes on this time interval is equivalent to considering the process on any other bounded interval without loss of generality.


\subsubsection{Conditional Law and Gauss-Markov Bridges }

As stated in the introduction, we aim at defining a multi-resolution description of Gauss-Markov processes. 
Such a description can be seen as a multi-resolution interpolation of the process that is getting increasingly finer. 
This principle, in addition to the Markov property, prescribes to characterize the law of the corresponding Gauss-Markov bridge, i.e. the Gauss-Markov process under consideration, conditioned on its initial and final values. 
The bridge process of Gauss process is still a Gauss process and, for a Markov process, its law can be computed as follows:

\begin{proposition} \label{bridgeProp}
Let $t_x \leq t_z$ two times in the interval $[0,1]$. For any $t\in [t_x,t_z]$, the random variable $\bX_{t}$ conditioned on $\bX_{t_{x}} = \bm{x}$ and $\bX_{t_{z}} = \bm{z}$ is a Gaussian variable with  covariance matrix $\bm{\Sigma}(t)$ and mean vector $\bm{\mu}(t)$  given by:
\begin{eqnarray*}
\bm{\Sigma}(t;t_x,t_z) &=& \bm{h}_{t}(t_{x},t) \left( \bm{h}_{t}(t_{x},t_{z}) \right)^{-1} \bm{h}_{t}(t,t_{z}) \, , \\
\bm{\mu}(t) &=& \bm{\mu}^l(t;\,t_x,\,t_z) \cdot \bm{x} + \bm{\mu}^r(t;\,t_x,\,t_z) \cdot \bm{z} \, , 
\end{eqnarray*}
where the continuous matrix functions $\bm{\mu}^l(\cdot;\,t_x,\,t_z)$ and $\bm{\mu}^r(\cdot;\,t_x,\,t_z)$ of $\R^{d \times d}$ are given by:
\begin{equation*}
	\begin{cases}
		\bm{\mu}^l(t;\,t_x,\,t_z) &=  \bF(t_{x},t) \,  \bm{h}_{t_{x}}(t,t_{z}) \,   \left( \bm{h}_{t_{x}}(t_{x},t_{z})\right)^{-1} \, \\
		\bm{\mu}^r(t;\,t_x,\,t_z) &=  \bF(t_{z},t) \bm{h}_{t_{z}}(t_{x},t) \left( \bm{h}_{t_{z}}(t_{x},t_{z})\right)^{-1}.
	\end{cases}
\end{equation*}
\end{proposition}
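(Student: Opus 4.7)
The plan is to invoke the Markov property of $\bX$ to reduce the bilateral conditioning on $\bX_{t_x}$ and $\bX_{t_z}$ to a unilateral conditioning. Given $\bX_{t_x} = \bm{x}$, the restricted process $(\bX_s)_{s \ge t_x}$ is independent of $\mathcal{F}_{t_x}$ and is a Gauss-Markov process with deterministic initial condition $\bm{x}$ at $t_x$; hence the conditional law of $\bX_t$ given both $\bX_{t_x}=\bm{x}$ and $\bX_{t_z}=\bm{z}$ coincides with the conditional law of $\bX_t$ given $\bX_{t_z}=\bm{z}$ in this reduced Gaussian process. Since Gaussianity is preserved under linear conditioning, the conditional law is automatically Gaussian, and it suffices to compute its first two moments.

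Starting from the integral representation (\ref{eq:OUMult}) applied on $[t_x, 1]$ with initial value $\bm{x}$, I would write down the joint law of $(\bX_t, \bX_{t_z})$: the means are $\bF(t_x,t)\bm{x}$ and $\bF(t_x, t_z)\bm{x}$; by Itô isometry together with $t \le t_z$, the variances are $\bF(t_x, t)\,\bh_{t_x}(t_x, t)\,\bF(t_x, t)^T$ and $\bF(t_x, t_z)\,\bh_{t_x}(t_x, t_z)\,\bF(t_x, t_z)^T$, and the cross-covariance is $\bF(t_x,t)\,\bh_{t_x}(t_x,t)\,\bF(t_x,t_z)^T$. Plugging these into the standard Gaussian conditioning formulas $\bbE[U\mid V=v] = \bbE[U] + \mathrm{Cov}(U,V)\mathrm{Var}(V)^{-1}(v-\bbE[V])$ and $\mathrm{Var}(U\mid V=v) = \mathrm{Var}(U) - \mathrm{Cov}(U,V)\mathrm{Var}(V)^{-1}\mathrm{Cov}(V,U)$ produces raw expressions for $\bm{\mu}(t)$ and $\bm{\Sigma}(t; t_x, t_z)$. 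The $\bF(t_x, t_z)$ factors flanking the inverse cancel immediately, leaving expressions in $\bh_{t_x}(\cdot,\cdot)$ and flow operators only.

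It then remains to reshape these raw expressions into the symmetric forms stated in the proposition. Three tools do all the work: (a) the additive decomposition $\bh_{t_x}(t_x, t_z) = \bh_{t_x}(t_x, t) + \bh_{t_x}(t, t_z)$ immediately yields the $\bm{x}$-coefficient $\bF(t_x, t)\,\bh_{t_x}(t, t_z)\,\bh_{t_x}(t_x, t_z)^{-1} = \bm{\mu}^l(t;t_x,t_z)$; (b) the chain-rule identity $\bF(t_x, t) = \bF(t_z, t)\bF(t_x, t_z)$ lets me factor $\bF(t_z, t)$ on the left of the $\bm{z}$-coefficient; (c) the conjugation relation (\ref{eq:hChainRule}), namely $\bh_v(s,t) = \bF(u, v)\,\bh_u(s,t)\,\bF(u,v)^T$ (with indices arranged so that the conjugation is consistent with the flow composition used above), then rewrites $\bh_{t_x}$ in terms of $\bh_{t_z}$ for the $\bm{z}$-coefficient and in terms of $\bh_t$ for the covariance. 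After these substitutions, the flanking flow factors telescope to give exactly $\bF(t_z, t)\,\bh_{t_z}(t_x, t)\,\bh_{t_z}(t_x, t_z)^{-1} = \bm{\mu}^r(t;t_x,t_z)$ and, for the variance, the product $\bh_t(t_x, t)\,\bh_t(t_x, t_z)^{-1}\,\bh_t(t, t_z)$ after one more use of the additive decomposition.

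The main obstacle, if any, is essentially bookkeeping: verifying that the naturally-emerging asymmetric expression $\bF(t_x, t)\,\bh_{t_x}(t_x, t)\,\bh_{t_x}(t_x, t_z)^{-1}\bF(t_z, t_x)$ for the $\bm{z}$-coefficient is genuinely equal to the symmetric form $\bF(t_z, t)\,\bh_{t_z}(t_x, t)\,\bh_{t_z}(t_x, t_z)^{-1}$ involving $\bh_{t_z}$. This requires applying (\ref{eq:hChainRule}) twice with careful attention to which flow index plays the role of the conjugating direction, and the telescoping is clean only when the transpose factors $\bF(t_z,t_x)^T\bF(t_z,t_x)^{-T}$ pair up correctly. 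Once this algebraic identity is in hand, the remainder of the derivation is routine.
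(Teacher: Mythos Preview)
Your proposal is correct and follows essentially the same route as the paper: condition first on $\bX_{t_x}=\bm{x}$ via the Markov property, write the joint Gaussian law of $(\bX_t,\bX_{t_z})$ from the integral representation, apply the standard Gaussian conditioning formulas, and simplify using the additivity of $\bh_u(\cdot,\cdot)$ and the conjugation identity (\ref{eq:hChainRule}). The only cosmetic difference is that the paper rewrites the $2\times 2$ covariance block in terms of $\bh_t$ \emph{before} applying the conditioning formula, which makes the flow factors telescope at once and avoids the bookkeeping you flagged for the $\bm z$-coefficient; your order of operations is equally valid.
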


Note that the functions $\mu^l$ and $\mu^r$ have the property that $\bm{\mu}^l(t_x;\,t_x,\,t_z)=\bm{\mu}^r(t_z;\,t_x,\,t_z)= \bm{I}_d$ and $\bm{\mu}^l(t_z;\,t_x,\,t_z)=\bm{\mu}^r(t_x;\,t_x,\,t_z)= \bm{0}$ ensuring that the process is indeed equal to $\bm{x}$ at time $t_x$ and $\bm{z}$ at time $t_z$. 

\begin{proof}
Let $t_x,\,t_z$ be two times of the interval $[0,1]$ such that $t_x<t_z$, and let $t \in [t_x,t_z]$. We consider the Gaussian random variable $\bm{\xi}=\left( \bm{X}_{t}, \bm{X}_{t_{z}} \right)$  conditioned on the fact that $\bm{X}_{t_{x}} = \bm{x}$. Its mean can be easily computed from the expression \eqref{eq:OUMult} and reads:
\begin{eqnarray*}
\left( \bm{m}_t, \bm{m}_{t_z} \right) = \left(  \bF(t_{x},t) \bm{x} ,   \bF(t_{x},t_{z}) \bm{x} \right) = \left( \bm{g}(t) \, \bm{g}^{-1}(t_{x}) \, \bm{x} , \bm{g}(t_{z}) \, \bm{g}^{-1}(t_{x}) \, \bm{x} \right)\, ,
\end{eqnarray*}
and its covariance matrix, from equation \eqref{eq:covExp}, reads:
\begin{eqnarray*}
\left[
\begin{array}{cc}
\bm{C}_{t,t}  &  \bm{C}_{t,t_z}  \\
\bm{C}_{t_z,t}  &  \bm{C}_{t_z,t_z}     
\end{array}
\right]
&=&
\left[
\begin{array}{cc}
  \bF(t_{x},t) \bm{h}_{t_{x}}(t_{x},t)   \bF(t_{x},t)^{T}  &    \bF(t_{x},t)\bm{h}_{t_{x}}(t_{x},t)   \bF(t_{x},t_{z})^{T}  \\
  \bF(t_{x},t_{z}) \bm{h}_{t_{x}}(t_{x},t)   \bF(t_{x},t)^{T}  &    \bF(t_{x},t_{z}) \bm{h}_{t_{x}}(t_{x},t_{z})   \bF(t_{x},t_{z})^{T}     
\end{array} 
\right]\\
&=&
\left[
\begin{array}{cc}
 \bm{h}_{t}(t_{x},t)   &  \bm{h}_{t}(t_{x},t) \,  \bF(t,t_{z})^{T}    \\
  \bF(t,t_{z}) \, \bm{h}_{t}(t_{x},t)   &    \bF(t,t_{z}) \, \bm{h}_{t}(t_{x},t_{z}) \,  \bF(t,t_{z})^{T}       
\end{array} 
\right] \, .
\end{eqnarray*}
From there, we apply the conditioning formula for Gaussian vectors (see e.g.~\cite{Bogachev:1998}) to infer the law of $\bm{X}_{t}$ conditionned on $\bm{X}_{t_{x}} = \bm{x}$ and $\bm{X}_{t_{z}} = \bm{z}$, that is the law $\mathcal{N}(\bm{\mu}(t), \bm{\Sigma}(t;t_x,t_z))$ of $\bm{B}_{t}$ where $\bm{B}$ denotes the bridge process obtained by pinning $\bm{X}$ in $t_{x}$ and $t_{z}$. The covariance matrix is given by
\begin{eqnarray*}
\bm{\Sigma}(t;t_x,t_z)
&=&
\bm{C}_{y,y} - \bm{C}_{y,z}\bm{C}^{-1}_{z,z}\bm{C}_{z,y} \, ,\\
&=&
\bm{h}_{t}(t_{x},t) - \bm{h}_{t}(t_{x},t) \left( \bm{h}_{t}(t_{x},t_{z}) \right)^{-1} \bm{h}_{t}(t_{x},t) \, ,\\
&=&
\bm{h}_{t}(t_{x},t) \left( \bm{h}_{t}(t_{x},t_{z}) \right)^{-1} \bm{h}_{t}(t,t_{z}) \, ,
\end{eqnarray*}
and the mean reads
\begin{eqnarray*}
\bm{\mu}(t) 
&=&
 \bm{m}_{y} + \bm{C}_{y,z} \bm{C}_{z,z}^{-1} \, \left( \bm{z} - \bm{m}_{z} \right) \\
 &=& 
  \bF(t_{x},t) \left( \bm{I}_d - \bm{h}_{t_{x}}(t_{x},t) \left( \bm{h}_{t_{x}}(t_{x},t_{z})\right)^{-1}\right) \, \bm{x} \\
&& \qquad  \qquad  \qquad +
   \bF(t_{z},t) \bm{h}_{t_{z}}(t_{x},t) \left( \bm{h}_{t_{z}}(t_{x},t_{z})\right)^{-1} \, \bm{z} \, ,\\
 &=&
 \underbrace{  \bF(t_{x},t) \,  \bm{h}_{t_{x}}(t,t_{z}) \,   \left( \bm{h}_{t_{x}}(t_{x},t_{z})\right)^{-1} }_{{\displaystyle \bm{\mu}^l(t;t_x,t_z)}}\, \cdot \, \bm{x} \\
 && \qquad  \qquad  \qquad +
  \underbrace{    \bF(t_{z},t) \bm{h}_{t_{z}}(t_{x},t) \left( \bm{h}_{t_{z}}(t_{x},t_{z})\right)^{-1} }_{\displaystyle \bm{\mu}^r(t;t_x,t_z)}\, \cdot \, \bm{z} \, ,
\end{eqnarray*}
where we have used the fact that $\bm{h}_{t_{x}}(t_{x},t_{z}) = \bm{h}_{t_{z}}(t_{x},t) +\bm{h}_{t_{x}}(t,t_{z})$.
The regularity of the thus-defined functions $\bm{\mu}_{x}$ and $\bm{\mu}_{z}$ directly stems from the regularity of the flow operator $ \bF$.
Moreover, since for any $0 \leq t,u \leq 1$, we observe that $ \bF(t,t) = \bm{I}_d$ and $h_{u}(t,t) = \bm{0}$, we clearly have $\bm{\mu}_{x}(t_{x})=\bm{\mu}_{y}(t)=\bm{I}_d$ and $\bm{\mu}_{x}(t)=\bm{\mu}_{y}(t_{x})=0$. 
\end{proof}

\begin{remark}
	Note that these laws can also be computed using the expression of the density of the processes, but involves more intricate calculations. Moreover, this approach allows to compute the probability distribution of the Gauss-Markov bridge as a process (i.e allows to compute the covariances), but since this will be of no use in the sequel, we do not provide the expressions. 
\end{remark}



\subsection{The Multi-Resolution Description of Gauss-Markov processes}

Recognizing the Gauss property  and the Markov property as the two crucial elements for a stochastic process to be expanded {\it \`{a} la} L\'{e}vy-Cesielsky, our approach first supposes to exhibit  bases of deterministic functions that would play the role of the Schauder bases for the Wiener process.
In this regard, we first expect such functions to be continuous and compactly supported on increasingly finer supports (i.e. sub-intervals of the definition interval $[0,1]$) in a similar nested binary tree structure. 
Then, as in the L\'{e}vy-Cesielsky construction, we envision that, at each resolution (i.e. on each support), the partially constructed process (up to the resolution of the support) has the same conditional expectation as the Gauss-Markov process when conditioned on the endpoints of the supports. 
The partial sums obtained with independent Gaussian coefficients of law $\mathcal{N}(0,1)$ will thus approximate the targeted Gauss-Markov process in a multi-resolution fashion, in the sense that, at every resolution, considering these two processes on the intervals endpoints yields finite-dimensional Gaussian vectors of same law.


\subsubsection{Nested Structure of the Sequence of Supports}\label{sec:supports}

Here, we define the nested sequence of  segments that constitute the supports of the multi-resolution basis.
We construct such a sequence by recursively partitioning the interval $[0,1]$.\\
More precisely, starting from $S_{1,0} = [l_{1,0},r_{1,0}]$ with $l_{1,0}=0$ and $r_{1,0}=1$, we iteratively apply the following operation.
Suppose that, at the $n$th step, the interval $[0,1]$ is decomposed into $2^{n-1}$ intervals  $S_{n,k} = [l_{n,k},r_{n,k}]$, called supports, such that $l_{n,k+1} = r_{n,k}$ for $0 \leq k<2^{n-1}$. 
Each of these intervals is then subdivided into two child intervals, a left child $S_{n+1,2k}$ and a right child $S_{n+1,2k+1}$, and the subdivision point $r_{n+1,2k}=l_{n+1,2k+1}$ is denoted $m_{n,k}$. 
Therefore, we have defined three sequences of real $l_{n,k}$, $m_{n,k}$, and $r_{n,k}$ for $n>0$ and $0\leq k < 2^{n-1}$ satisfying  $l_{0,0} = 0 \leq l_{n,k}<m_{n,k}<r_{n,k} \leq r_{0,0} = 1$ and 
\begin{eqnarray}
l_{n+1,2k} = l_{n,k},  \quad m_{n,k} = r_{n+1,2k}=l_{n+1,2k+1},  \quad r_{n+1,2k+1} = r_{n,k} \, .
\nonumber
\end{eqnarray}
where we have posit $l_{0,0}=0$ and $r_{0,0}=1$ and $S_{0,0} = [0,1]$.
The resulting sequence of supports $\lbrace S_{n,k} \, ; n \leq 0 , \, 0 \leq k < 2^{n-1} \rbrace$ clearly has a binary tree structure. \\
For the sake of compactness of notations, we define $\I$ the set of indices
 \[\I = \bigcup_{n < N} \I_{n} \quad \mathrm{with} \quad \I_{N} = \left \{ (n,k)\in \N^2 \; \vert \; 0 < n \leq N \, ,\; 0 \leq k < 2^{n-1} \right\},\]
and for $N>0$, we define $D_N = \{m_{n,k}\;,\; (n,k) \in \I_{N-1} \} \cup \{0,1\} $, the set of endpoints of the intervals $S_{N,k}$. 
We additionally require that there exists $\rho \in (0,1)$ such that for all $(n,k) \in \I$
$ \max (r_{n,k}-m_{n,k}, m_{n,k}-l_{n,k}) < \rho (r_{n,k}-l_{n,k})$
which in particular implies that 
\begin{eqnarray*} \label{eq:stepCons}
\lim_{n \to \infty} \sup_{k} r_{n,k} -l_{n,k} = 0 \, .
\end{eqnarray*}
and ensures that the set of endpoints $\cup_{N \in \mathrm{N}} D_N$ is dense in $[0,1]$. 
The simplest case of such partitions is the dyadic partition of $[0,1]$, where the end points for $(n,k)\in \I$ read
\begin{equation}
l_{n,k} = k \, 2^{-n+1}, \quad m_{n,k} = (2k+1)2^{-n}, \quad r_{n,k} = (k+1) 2^{-n+1} \, .
\nonumber
\end{equation}
in which case the endpoints are simply the dyadic points $\cup _{N} D_N  = \{ k2^{-N} \, \vert \, 0 \leq k \leq 2^{N} \}$.
\begin{figure}
	\centering
		\includegraphics[width=.6\textwidth]{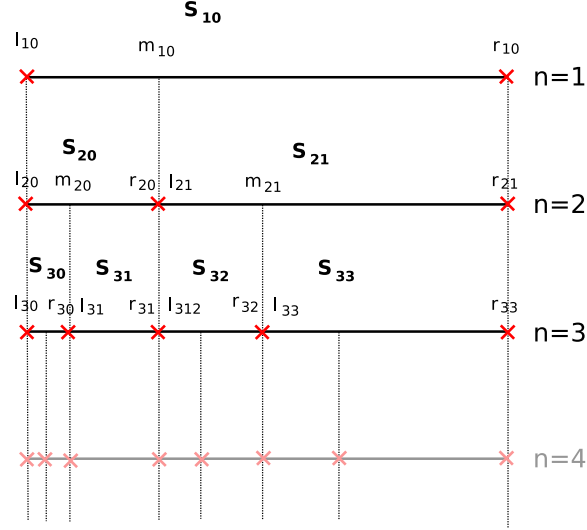}
	\caption{A sequence of nested intervals}
	\label{fig:partition}
\end{figure}
Figure \ref{fig:partition} represents the global architecture of the nested sequence of intervals.
 
The nested structure of the supports, together with constraint of continuity of the bases elements, implies that only  a finite number of coefficients are needed to construct the exact value of the process at a given endpoints, thus providing us with an exact schema to simulate sample values of the process on the endpoint up to an arbitrary resolution, as we will further explore.


\subsubsection{Innovation processes  for Gauss-Markov processes}

For $\bm{X}_t$ a multidimensional Gauss-Markov process, we call multi-resolution description of a process the sequence of conditional expectations on the nested sets of endpoints $D_n$. In details, if we denote $\mathcal{F}_{N}$ the filtration generated by $\lbrace \bX_{t} ; t \in D_N \rbrace$ the values of the process at the endpoints $D_N$ of the partition, we introduce the sequence of Gaussian processes $(\bm{Z}^N_t)_{N\geq 1}$ defined by:
\begin{equation*}\label{eq:ZDef}
	\bZ_t^N = \mathbb{E} \left [ \bm{X}_t \big \vert \mathcal{F}_{N} \right] =  \ExpN{ \bX_{t}} \, .
\end{equation*}
These processes $\bm{Z}^N$ constitute a martingale taking values in the processes spaces that can be naturally viewed as an interpolation of the process $\bX$ sampled at the increasingly finer partitions times$D_N$, since for all $t \in D_N$ we have $\bm{Z}^N_{t} = \bm{X}^N_{t}$. 
The innovation process $(\bdelta^N_t, \mathcal{F}_{t}, t \in [0,1] )$ is defined as the update transforming the process $ \bZ_t^N$ into $\bZ_t^{N+1}$, i.e.
\begin{equation}\label{eq:innov}
	\bdelta^N_t = \bZ_t^{N+1} - \bZ_t^N.
\end{equation}
It corresponds to the difference the additional knowledge of the process at the points $m_{N,k}$ make on the conditional expectation of the process. This process satisfies the following important properties that found our multi-resolution construction

\begin{proposition}
	The innovation process $\bm{\delta}^N_t$ is a centered Gaussian process independent of the processes $\bZ^n_t$ for any $n\leq N$. For $s\in S_{N,k}$ and $t\in S_{N,p}$ with $k,p \in \I_N$, the covariance of the innovation process reads:
	\begin{equation} \label{eq:covInnov}
	\ExpN{\bdelta^{N}_t \cdot {\left( \bdelta^{N}_s \right)}^{T}}
	= \begin{cases}
	\bm{\mu}_{N,k}(t) \cdot \bm{\Sigma}_{N,k}	\cdot {\bm{\mu}_{N,k}(t)}^{T} \quad & \text{if } k=p\\
	\bm{0} & \text{if } k\neq p,
\end{cases}
	\end{equation}
	where 
	\[\bm{\mu}_{N,k}(t) = \begin{cases} \bm{\mu}^r(t;\,l_{N,k},\,m_{N,k}) & \qquad t\in [l_{N,k},m_{N,k}]\\
	\bm{\mu}^l(t;\,m_{N,k},\,r_{N,k}) & \qquad t\in [m_{N,k},r_{N,k}].
	\end{cases}\]
	with $ \bm{\mu}^l$, $\bm{\mu}^r$ and $\bm{\Sigma}_{N,k}=\bm{\Sigma}(m_{N,k};l_{N,k},r_{N,k})$ as defined in Proposition~\ref{bridgeProp}. 
\end{proposition}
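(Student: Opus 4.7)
The plan is to exploit the Markov property to reduce both conditional expectations $\bZ^N$ and $\bZ^{N+1}$ to conditionings on at most three endpoints, express $\bdelta^N_t$ as a deterministic matrix acting on a single Gaussian residual per subinterval, and then upgrade vanishing conditional means to outright independence via joint Gaussianity.

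Fix $(N,k)$ with $t\in S_{N,k}=[l_{N,k},r_{N,k}]$. By the Markov property of $\bX$, conditioning on $\mathcal{F}_N$ is equivalent to conditioning on $\bX_{l_{N,k}}$ and $\bX_{r_{N,k}}$, while conditioning on $\mathcal{F}_{N+1}$ amounts to further adding $\bX_{m_{N,k}}$ and reduces to the three bracketing endpoints of the finer sub-support containing $t$. Proposition~\ref{bridgeProp} then expresses both $\bZ^N_t$ and $\bZ^{N+1}_t$ as explicit affine functions of $\bX_{l_{N,k}}, \bX_{m_{N,k}}, \bX_{r_{N,k}}$. Writing $\bdelta^N_t = a(t)\bX_{l_{N,k}} + b(t)\bX_{m_{N,k}} + c(t)\bX_{r_{N,k}}$ and imposing $\mathbb{E}[\bdelta^N_t\vert\mathcal{F}_N]=0$ via the tower property, the nondegeneracy assumption on $(\bX_{l_{N,k}},\bX_{r_{N,k}})$ forces $a(t)= -b(t)\bm{\mu}^l(m_{N,k};l_{N,k},r_{N,k})$ and $c(t)= -b(t)\bm{\mu}^r(m_{N,k};l_{N,k},r_{N,k})$, so that
\[\bdelta^N_t = b(t)\cdot\bm{\eta}_{N,k},\qquad \bm{\eta}_{N,k}:=\bX_{m_{N,k}}-\mathbb{E}\bigl[\bX_{m_{N,k}}\,\big\vert\,\bX_{l_{N,k}},\bX_{r_{N,k}}\bigr].\]
Reading off the coefficient $b(t)$ of $\bX_{m_{N,k}}$ from the sub-bridge formulas of Proposition~\ref{bridgeProp} on each of $[l_{N,k},m_{N,k}]$ and $[m_{N,k},r_{N,k}]$ identifies $b(t)$ with $\bm{\mu}_{N,k}(t)$ as defined in the statement.

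The remaining assertions follow from properties of the residuals $\bm{\eta}_{N,k}$. Since $\bm{\eta}_{N,k}$ is jointly Gaussian with all $\mathcal{F}_N$-measurable variables and has zero conditional mean given $\mathcal{F}_N$, it is independent of $\mathcal{F}_N$; as $\bZ^n_t$ is $\mathcal{F}_n\subset\mathcal{F}_N$-measurable for $n\leq N$, independence of $\bdelta^N_t$ from $\bZ^n$ follows. The covariance of $\bm{\eta}_{N,k}$ equals $\bm{\Sigma}(m_{N,k};l_{N,k},r_{N,k})=\bm{\Sigma}_{N,k}$ by direct application of Proposition~\ref{bridgeProp}, yielding the $k=p$ case of \eqref{eq:covInnov}. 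For $k\neq p$, the Markov property applied to the disjoint subintervals $S_{N,k}$ and $S_{N,p}$, whose endpoints all lie in $\mathcal{F}_N$, makes the corresponding bridges conditionally independent given $\mathcal{F}_N$, hence $\bm{\eta}_{N,k}$ and $\bm{\eta}_{N,p}$ are conditionally independent with conditional laws that do not depend on $\mathcal{F}_N$; the block-diagonal Gaussian covariance is therefore unconditional, giving the $k\neq p$ case.

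The step I expect to be the main obstacle is the rigorous handling of the reduction from $\mathcal{F}_N$ to the two bracketing endpoints together with the mutual independence of the $\bm{\eta}_{N,k}$ across $k$. The cleanest route is to work conditionally on $\mathcal{F}_N$: the restrictions of $\bX$ to the intervals $S_{N,k}$ then become independent Gauss-Markov bridges whose laws depend only on the two bracketing endpoint values, so that the residuals $\bm{\eta}_{N,k}$ inherit both their Gaussian marginals and mutual independence upon passing back to the unconditional measure.
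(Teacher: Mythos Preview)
Your proposal is correct and follows essentially the same approach as the paper: both use the Markov property and Proposition~\ref{bridgeProp} to write $\bZ^N_t$ and $\bZ^{N+1}_t$ as affine functions of the bracketing endpoints, invoke the tower property $\mathbb{E}[\bdelta^N_t\mid\mathcal{F}_N]=0$ to collapse $\bdelta^N_t$ to $\bm{\mu}_{N,k}(t)\cdot(\bX_{m_{N,k}}-\bZ^N_{m_{N,k}})$, and then read off the covariance structure from the bridge law and the Markov property. The only cosmetic difference is that the paper establishes independence from $\bZ^n$ by a direct tower-property computation of $\mathbb{E}[\bZ^n_t\cdot(\bdelta^N_s)^T]$, whereas you argue it via the cleaner observation that a Gaussian with zero conditional mean given $\mathcal{F}_N$ is independent of $\mathcal{F}_N$; both are valid and equivalent here.
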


\begin{proof}
	Because of the Markovian property of the process $\bX$, the law of the process $\bZ^N$ can be computed from the bridge formula derived in Proposition~\ref{bridgeProp} and we have:
	\[\bZ^N_t= \bm{\mu}^l(t;\,l_{N,k},\,r_{N,k}) \cdot \bX_{l_{N,k}} + \bm{\mu}^r(t;\,l_{N,k},\,r_{N,k}) \cdot \bX_{r_{N,k}}.\]
and 
\[\bZ^{N+1}_{t}= 
\begin{cases}
	\bm{\mu}^l(t;\,l_{N,k},\,m_{N,k}) \cdot \bX_{l_{N,k}} + \bm{\mu}^r(t;\,l_{N,k},\,m_{N,k}) \cdot \bX_{m_{N,k}} \, , \\ \qquad \qquad  \qquad \mathrm{for} \qquad t\in [l_{N,k}, m_{N,k}] \, ,\\
	\\
	\bm{\mu}^l(t;\,m_{N,k},\,r_{N,k}) \cdot \bX_{m_{N,k}} + \bm{\mu}^r(t;\,m_{N,k},\,r_{N,k}) \cdot \bX_{r_{N,k}} \, , \\ \qquad \qquad  \qquad \mathrm{for} \qquad t\in [l_{N,k}, m_{N,k}] \, .
\end{cases}\]
Therefore, the innovation process can be written for $t\in S_{N,k}$ as 
\[\bm{\delta}^N_t = \bm{\mu}_{N,k}^N(t)\cdot \bX_{m_{N,k}} + \bm{\nu}^N(t) \cdot \bm{Q}_t^N\]
where $\bm{Q}_t^N$ is a $\mathcal{F}_N$ measurable process, $\bm{\nu}^N(t)$ a deterministic matrix function and 
\[\bm{\mu}_{N,k}(t) = \begin{cases} \bm{\mu}^r(t;\,l_{N,k},\,m_{N,k}) & \qquad t\in [l_{N,k},m_{N,k}]\\
\bm{\mu}^l(t;\,m_{N,k},\,r_{N,k}) & \qquad t\in [m_{N,k},r_{N,k}].
\end{cases}\]
The expressions of $\bm{\nu}$ and $\bm{Q}$ are quite complex, but are highly simplified when noting that 
\begin{align*}
	\mathbbm{E}[\bdelta_t^{N} \vert \mathcal{F}_N] &=\mathbbm{E}[\bZ_t^{N+1} \vert \mathcal{F}_N]-\bZ_t^{N}\\
	&= \mathbbm{E}\Big[\mathbbm{E}\big[\bZ_t \vert \mathcal{F}_{N+1} \big] \vert \mathcal{F}_N \Big]-\bZ_t^{N}\\
	&= \bm{0}
\end{align*}
directly implying that $\bm{\nu}(t) \cdot \bm{Q}_t^N =\bm{\mu}^N(t)\cdot \bZ^N_{m_{N,k}}$ and yielding the remarkably compact expression:
\begin{equation} \label{eq:innovExp}
\bdelta^{N}_t = \bm{\mu}_{N,k}(t) \cdot \big( \bX_{m_{N,k}} - \bZ^{N}_{m_{N,k}} \big) \, .
\end{equation}
This process is a centered Gaussian process. Moreover, observing that it is $\mathcal{F}_N$-measurable, it can be written as:
\[
\bdelta^{N}_t = \bm{\mu}_{N,k}(t) \cdot \big( \lbrace \bX_{m_{N,k}} \vert \mathcal{F}_N\rbrace - \bZ^{N}_{m_{N,k}} \big).
\]
and the process $\lbrace \bX_{m_{N,k}} \vert \mathcal{F}_N\rbrace$ appears as the Gauss-Markov bridge conditioned at times $l_{N,k}$ and $r_{N,k}$, and whose covariance is given by Proposition \ref{bridgeProp} and that has the expression 
\begin{eqnarray}\label{eq:Sigmank}
\bm{\Sigma}_{N,k}&=&\bm{\Sigma}(m_{N,k};l_{N,k},r_{N,k}) \nonumber\\
&=&
\bm{h}_{m_{n,k}}(l_{n,k},m_{n,k}) \big( \bm{h}_{m_{n,k}}(l_{n,k},r_{n,k}) \big)^{-1} \bm{h}_{m_{n,k}}(m_{n,k},r_{n,k})  \, .
\end{eqnarray}
 Let $(s,t)\in [0,1]^2$, and assume that $s\in S_{N,k}$ and $t\in S_{N,p}$. If $k\neq p$, then because of the Markov property of the process $\bX$, the two bridges are independent and therefore the covariance $\ExpN{\bdelta^{N}_t \cdot {\left( \bdelta^{N}_s \right)}^{T}}$ is zero.
If $k=p$, we have:
\[
\ExpN{\bdelta^{N}_t \cdot {\left( \bdelta^{N}_s \right)}^{T}}
=\bm{\mu}_{N,k}(t) \cdot \bm{\Sigma}_{N,k} \cdot {\bm{\mu}_{N,k}(s)}^{T}.
\]
Eventually, the independence property stems from simple properties of the conditional expectation. Indeed, let $n\leq N$. We have:
\begin{align*}
	\Exp{\bZ^n_t \cdot { \left( \bdelta^{N}_s \right)}^{T}} &= \Exp{ \bZ^n_t \cdot \left(\bZ^{N+1}_s - \bZ^{N}_s \right)^{T}}\\
	& = \Exp{ \Exp{\bX_t\vert \mathcal{F}_n} \cdot \left(\Exp{\bX_s^T\vert \mathcal{F}_{N+1}} - \Exp{\bX_s^T\vert \mathcal{F}_{N}} \right)}\\
	& = \Exp{ \Exp{\bX_t\vert \mathcal{F}_n} \cdot \Exp{\bX_s^T\vert \mathcal{F}_{N+1}}} - \Exp{ \Exp{\bX_t\vert \mathcal{F}_n} \cdot \Exp{\bX_s^T\vert \mathcal{F}_{N}}}\\
	&= \Exp{ \bZ^n_t (\bZ^n_s)^T} - \Exp{ \bZ^n_t (\bZ^n_s)^T}\\
	&=\bm{0}
\end{align*}
and the fact that a zero covariance between two Gaussian processes implies the independence of these processes concludes the proof.
\end{proof}


\subsubsection{Derivation of the Candidate Multi-Resolution Bases of Functions}\label{sec:Identification}

We deduce from the previous proposition the following fundamental theorem of this paper
\begin{theorem}\label{theo:basis}
	For all $N\in \N$, there exists a collection of functions $\bpsi_{N,k}:[0,1]\mapsto \R^{d\times d}$ that are zero outside the sub-interval $S_{N,k}$ and such that in distribution we have:
	\[ \bdelta^N_t = \sum_{k \in \I_N} \bpsi_{N,k}(t) \cdot \bXi_{N,k}\]
	where $\bXi_{N,k}$ are independent $d$-dimensional standard normal random variables (i.e. of law $\mathcal{N}(0,\bm{I}_d)$). This basis of functions is unique up to an orthogonal transformation.
\end{theorem}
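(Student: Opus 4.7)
The plan is to exploit the explicit characterization of $\bdelta^N$ obtained in the previous proposition. The proposition already gives us two crucial facts: first, the restrictions $\bdelta^N|_{S_{N,k}}$ for different indices $k$ are pairwise independent (their cross-covariances vanish), so the problem decouples into $2^{N-1}$ independent sub-problems, one per interval $S_{N,k}$. Second, on each $S_{N,k}$ the innovation admits the compact form $\bdelta^N_t = \bm{\mu}_{N,k}(t) \cdot (\bX_{m_{N,k}} - \bZ^N_{m_{N,k}})$, where the Gaussian vector $\bm{Y}_{N,k} := \bX_{m_{N,k}} - \bZ^N_{m_{N,k}}$ is centered with positive-definite covariance $\bm{\Sigma}_{N,k}$ given by \eqref{eq:Sigmank}.

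The construction of $\bpsi_{N,k}$ then reduces to a finite-dimensional whitening of $\bm{Y}_{N,k}$. Because $\bm{\Sigma}_{N,k}$ is symmetric positive definite (by the non-degeneracy hypothesis on $\bX$), one may choose a square root $\bm{R}_{N,k}$ with $\bm{R}_{N,k}\bm{R}_{N,k}^T = \bm{\Sigma}_{N,k}$ (e.g.\ the lower-triangular Cholesky factor, or the symmetric positive square root). Setting
\[
\bpsi_{N,k}(t) = \begin{cases} \bm{\mu}_{N,k}(t) \cdot \bm{R}_{N,k} & \text{if } t \in S_{N,k},\\ \bm{0} & \text{otherwise,}\end{cases}
\qquad \bXi_{N,k} = \bm{R}_{N,k}^{-1} \, \bm{Y}_{N,k},
\]
one has $\bXi_{N,k}\sim \mathcal{N}(\bm{0},\bm{I}_d)$ by construction, while the independence across $k$ of the vectors $\bm{Y}_{N,k}$ (hence of the $\bXi_{N,k}$) is inherited from the independence of the restrictions $\bdelta^N|_{S_{N,k}}$. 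The vanishing of $\bm{\mu}^l$ and $\bm{\mu}^r$ at the appropriate endpoints, noted after Proposition~\ref{bridgeProp}, ensures that $\bpsi_{N,k}$ is continuous and genuinely supported in $S_{N,k}$. Verifying the identity $\bdelta^N_t = \sum_{k} \bpsi_{N,k}(t)\cdot \bXi_{N,k}$ in distribution is then a direct covariance computation: on $S_{N,k}$ both sides coincide with $\bm{\mu}_{N,k}(t)\cdot \bm{Y}_{N,k}$ up to a linear isomorphism that preserves law, and the cross-interval covariances vanish on both sides by the independence proved in the previous proposition.

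For uniqueness, suppose $\{\bpsi'_{N,k}\}$ is another family with the same compact-support property yielding $\bdelta^N$ in distribution with i.i.d.\ standard normal coefficients. Matching covariances on $S_{N,k}$ gives $\bpsi'_{N,k}(t)\bpsi'_{N,k}(s)^T = \bpsi_{N,k}(t)\bpsi_{N,k}(s)^T$ for all $s,t\in S_{N,k}$. Picking any $t\in S_{N,k}$ where $\bm{\mu}_{N,k}(t)$ is invertible (for instance near $m_{N,k}$, where $\bm{\mu}_{N,k}$ is close to the identity) lets one extract $\bpsi'_{N,k}(t) = \bpsi_{N,k}(t)\cdot \bm{O}_{N,k}$ for a constant matrix $\bm{O}_{N,k}$; plugging back yields $\bm{O}_{N,k}\bm{O}_{N,k}^T = \bm{I}_d$, i.e.\ $\bm{O}_{N,k}$ is orthogonal. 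The across-$k$ rigidity is automatic since the basis elements have disjoint open supports.

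The main obstacle, such as there is one, is purely bookkeeping: keeping track of what "the basis is unique up to an orthogonal transformation" means at the matrix-valued level, and making sure that the choice of square root $\bm{R}_{N,k}$ parametrizes exactly this $O(d)$ ambiguity. The analytic content is mild because the previous proposition has already isolated the finite-dimensional random vector $\bm{Y}_{N,k}$ carrying all the randomness of $\bdelta^N$ on $S_{N,k}$; everything else is linear algebra in $\R^{d\times d}$.
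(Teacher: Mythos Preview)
Your proof is correct and follows essentially the same route as the paper. Both arguments identify $\bpsi_{N,k}(t)=\bm{\mu}_{N,k}(t)\,\bm{\sigma}_{N,k}$ with $\bm{\sigma}_{N,k}$ a square root of $\bm{\Sigma}_{N,k}$, and both derive the $O(d)$ ambiguity from the covariance identity $\bpsi'_{N,k}(t)\bpsi'_{N,k}(s)^T=\bm{\mu}_{N,k}(t)\,\bm{\Sigma}_{N,k}\,\bm{\mu}_{N,k}(s)^T$; the only cosmetic difference is that the paper evaluates this identity directly at $s=t=m_{N,k}$ (where $\bm{\mu}_{N,k}=\bm{I}_d$ exactly, not merely ``close to the identity'') to read off $\bm{\sigma}_{N,k}$, whereas you proceed constructively by whitening the vector $\bm{Y}_{N,k}$ and obtain a pathwise---not just distributional---identity.
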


\begin{proof}
	The two processes $\bdelta^N_t$ and $\bm{d}^N_t \stackrel{def}{=} \sum_{k \in \I_N} \bpsi_{N,k}(t) \cdot \bXi_{N,k}$ are two Gaussian processes of mean zero. Therefore, we are searching for functions $\bpsi_{N,k}$ vanishing outside $S_{N,k}$ and ensuring that the two processes have the same probability distribution. A necessary and sufficient condition for the two processes to have the same probability distribution is to have the same covariance function (see e.g.~\cite{Bogachev:1998}). We therefore need to show the existence of a collection of functions $ \bpsi_{N,k}(t)$ functions that vanish outside the sub-interval $S_{N,k}$ and that ensure that the covariance of the process $\bm{d}^N$ is equal to the covariance of $\bdelta^N$. Let $(s,t) \in [0,1]$ such that $s\in S_{N,k}$ and $t\in S_{N,p}$. If $k\neq p$, the assumption fact that the functions $\psi_{N,k}$ vanish outside $S_{N,k}$ implies that 
	\[\Exp{\bm{d}^N_t \cdot (\bm{d}^N_s)^T} = \bm{0}.\]
	If $k=p$, the covariance reads:
	\begin{align*}
		\Exp{\bm{d}^N_t \cdot (\bm{d}^N_s)^T} &= \Exp{\bpsi_{N,k}(t)\cdot \bXi_{N,k}\cdot \bXi_{N,k}^T\cdot (\bpsi_{N,k}(s))^T}\\
		& = \bpsi_{N,k}(t)\cdot (\bpsi_{N,k}(s))^T
	\end{align*}
	which needs to be equal to the covariance of $\bdelta^N$, namely:
	\begin{equation}\label{eq:psichar}
		\bpsi_{N,k}(t)\cdot (\bpsi_{N,k}(s))^T = \bm{\mu}_{N,k}(t)\cdot \bm{\Sigma}_{N,k}\cdot (\bm{\mu}_{N,k}(s))^T.
	\end{equation} 
	Therefore, since $\bm{\mu}_{N,k}(m_{N,k})=\bm{I}_d$, we have:
	\[\bpsi_{N,k}(m_{N,k})\cdot (\bpsi_{N,k}(m_{N,k}))^T=\bm{\Sigma_{N,k}}\]
	meaning that $\bm{\sigma}_{N,k} \stackrel{def}{=} \bpsi_{N,k}(m_{N,k})$ is a square root of the symmetric positive matrix $\bm{\Sigma_{N,k}}$. Moreover, by fixing $s=m_{N,k}$ in equation \eqref{eq:psichar}, we get:
	\[\bpsi_{N,k}(t)\cdot \bm{\sigma}_{N,k}^T = \bm{\mu}(t) \cdot \bm{\sigma}_{N,k}\cdot \bm{\sigma}_{N,k}^T \]
	Eventually, since by assumption we have $\bm{\Sigma}_{N,k}$ invertible, so is $\bm{\sigma}_{N,k}$, and the functions $\bpsi_{N,k}$ can be written as:
	\begin{equation}\label{eq:candPsiBasis}
		\bm{\psi}_{N,k}(t) =  \bm{\mu}_{N,k}(t) \cdot \bm{\sigma}_{N,k} \,
	\end{equation}
	with $\bm{\sigma}_{N,k}$ a square root of $\bm{\Sigma}_{N,k}$. Square roots of positive symmetric matrices are uniquely defined up to an orthogonal transformation. Therefore, all square roots of $\bm{\Sigma}_{N,k}$ are related by orthogonal transformations $\bm{\sigma'}_{N,k} = \bm{\sigma}_{N,k}\cdot \bm{O}_{N,k}$ where $\bm{O}_{N,k}\cdot \bm{O}_{N,k}^T=\bm{I}_d$. This property immediately extends to the functions $\bpsi_{N,k}$ we are studying: two different functions $\bpsi_{N,k}$ and $\bpsi_{N,k}'$ satisfying the theorem differ from an orthogonal transformation $\bm{O}_{N,k}$. 
We proved that, for $\bpsi_{N,k}(t)\cdot \bXi_{N,k}$ to have the same law as $\bdelta^N(t)$ in the interval $S_{N,k}$, the function $\bpsi_{N,k}$ with support in $S_{N,k}$ are necessarily of the form $\bm{\mu}_{N,k}(t) \cdot \bm{\sigma}_{N,k}$. It is straightforward to show the sufficient condition that provided such a set of functions, the processes $\bdelta^N_t$ and $\bm{d}^{N}_t$ are equal in law, which ends the proof of the theorem.
\end{proof}

Using the expressions obtained in Proposition \ref{bridgeProp}, we can make completely explicit the form of the basis in terms of the functions $\bm{f}$, $\bm{g}$ and $\bm{h}$. 
\begin{equation}\label{eq:BasisExpressionFirst}
	\bm{\psi}_{n,k}(t) = \begin{cases}
	\bm{g}(t) \,  \bm{g}^{-1}(m_{n,k})  \, \bm{h}_{m_{n,k}}(l_{n,k},t) \, \big( \bm{h}_{m_{n,k}}((l_{n,k},m_{n,k}) \big)^{-1} \bm{\sigma}_{n,k}, \\ \qquad \qquad  \qquad \mathrm{for} \qquad l_{n,k}\leq t \leq m_{n,k} \, ,\\
	\\
	\bm{g}(t) \,  \bm{g}^{-1}(m_{n,k}) \, \bm{h}_{m_{n,k}}(t,r_{n,k}) \big( \bm{h}_{m_{n,k}}(m_{n,k},r_{n,k}) \big)^{-1} \bm{\sigma}_{n,k}, \\ \qquad \qquad  \qquad \mathrm{for} \qquad m_{n,k} \leq t \leq r_{n,k} \, ,\\
\end{cases}
 \end{equation}
and $\bm{\sigma}_{n,k}$ satisfies
\begin{equation*}
\bm{\sigma}_{n,k} \cdot \bm{\sigma}_{n,k}^{T}
=
 \bm{h}_{m_{n,k}}(l_{n,k},m_{n,k}) \left( \bm{h}_{m_{n,k}}(l_{n,k},r_{n,k}) \right)^{-1} \bm{h}_{m_{n,k}}(m_{n,k},r_{n,k}) \, .
\end{equation*}
Note that $\bm{\sigma}_{n,k}$ can be defined uniquely as the symmetric positive square root, or as the lower triangular matrix resulting from the Cholesky decomposition of $\bm{\Sigma}_{n,k}$.\\
Let us now define the function $\bpsi_{0,0} : [0,1]\mapsto \R^{d\times d}$ such that the process $\bpsi_{0,0}(t) \cdot \bXi_{0,0}$ has the same covariance as $\bZ^0_t$, which is computed using exactly the same technique as developed in the proof of Theorem \ref{theo:basis} and that has the expression
\begin{equation*}
\bpsi_{0,0}(t) = \bm{g}(t)  \,  \bm{h}_0(l_{0,0},t) \,  \left( \bm{h}_0(l_{0,0},r_{0,0})\right)^{-1}   \, \bm{g}^{-1}(r_{0,0}) \, \bm{\sigma}_{0,0} \, ,
\end{equation*}
for $\bm{\sigma}_{0,0}$ a square root of $\bm{C}_{r_{0,0}}$ the covariance matrix of $\bX_{r_{0,0}}$ which from equation \eqref{eq:covExp} reads: \[ \bF(0,1) \bm{h}_{0}(1,1)  \bF(0,1)^{T}=\bm{g}(1)\bm{h}_0(1,1)(\bm{g}(1))^T. \]

We are now in position to show the following corollary of Theorem \ref{theo:basis}
\begin{corollary}\label{cor:basis}
	The Gauss-Markov process $\bZ^N_t$ is equal in law to the process 
	\[\bX^N_t = \sum_{n=0}^{N-1} \sum_{k\in \I_n} \bpsi_{n,k}(t)\cdot \bXi_{n,k} \] 
	where $\bXi_{n,k}$ are independent standard normal random variables $\mathcal{N}(0,\bm{I}_d)$. 
\end{corollary}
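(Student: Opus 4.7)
The plan is to combine Theorem~\ref{theo:basis} with the telescoping martingale identity
\[
\bZ^N_t = \bZ^0_t + \sum_{n=0}^{N-1} \bdelta^n_t,
\]
which follows directly from the definition \eqref{eq:innov} of the innovation process. Each term on the right is centered Gaussian, so it suffices to check that both sides of the claimed equality have the same covariance (and hence the same law, since means vanish).

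First I would handle the initial term: by construction $\bpsi_{0,0}$ was defined precisely so that $\bpsi_{0,0}(t) \cdot \bXi_{0,0}$ has the same covariance as $\bZ^0_t$, so these two Gaussian processes are equal in law. Next, for each $n \in \{0,\dots,N-1\}$, Theorem~\ref{theo:basis} gives $\bdelta^n_t \stackrel{\mathrm{law}}{=} \sum_{k \in \I_n} \bpsi_{n,k}(t) \cdot \bXi_{n,k}$, with the $\bXi_{n,k}$ independent standard normals. Since the $\bpsi_{n,k}$ are supported on disjoint intervals $S_{n,k}$ for fixed $n$, and the orthogonal ambiguity in the choice of square root $\bm{\sigma}_{n,k}$ is absorbed by the rotational invariance of the law $\mathcal{N}(\bm{0},\bm{I}_d)$, this identification is exact at the level of finite-dimensional distributions.

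The remaining step is to promote these scale-by-scale equalities in law to a joint equality in law of the full telescoping sum. Here I would use the independence of the innovation processes $\bdelta^n$ across $n$, which was proved in the preceding proposition via the tower property of conditional expectation. On the right-hand side, the double-indexed family $\{\bXi_{n,k}\}$ is assumed independent, so the partial sums $\sum_{k \in \I_n} \bpsi_{n,k}(t) \cdot \bXi_{n,k}$ for different $n$ are independent as well. Two sums of independent centered Gaussian processes agree in law if and only if the corresponding summands do, so summing over $n$ from $0$ to $N-1$ yields
\[
\bZ^N_t \stackrel{\mathrm{law}}{=} \sum_{n=0}^{N-1} \sum_{k \in \I_n} \bpsi_{n,k}(t) \cdot \bXi_{n,k} = \bX^N_t.
\]

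The main thing to be careful about is the matching of indices and the treatment of the $n=0$ term: one must verify that the definition of $\bpsi_{0,0}$ slots naturally into the summation as the $n=0$ contribution (so that $\I_0 = \{(0,0)\}$ in the notation of the corollary), and that the joint independence across scales is actually needed, not just marginal equality in law at each scale. No hard analytic estimates are required; the proof is essentially bookkeeping on top of Theorem~\ref{theo:basis} and the orthogonality of the innovation sequence.
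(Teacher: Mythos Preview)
Your proposal is correct and follows essentially the same telescoping argument as the paper: write $\bZ^N_t$ as the initial term plus the sum of innovations, then invoke Theorem~\ref{theo:basis} and the definition of $\bpsi_{0,0}$ term by term. You are in fact more careful than the paper's own proof, which simply chains the equalities without explicitly justifying the passage from marginal equality in law at each scale to joint equality; your remark that the independence of the $\bdelta^n$ across $n$ (established in the preceding proposition) is what licenses this step is exactly the point the paper leaves implicit.
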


\begin{proof}
	We have:
	 	\begin{align*}
			\bZ^N_t &= (\bZ^{N}_t - \bZ^{N-1}_t) + (\bZ^{N-1}_t - \bZ^{N-2}_t) + \ldots + (\bZ^{2}_t-\bZ^1_t) + \bZ^1_t\\
			&= \sum_{n=1}^{N-1} \bdelta^{n}_t + \bZ^1_t\\
			&= \sum_{n=1}^{N-1} \sum_{k\in \I_n} \bpsi_{n,k}(t) \cdot \bXi_{n,k} + \bpsi_{0,0}(t) \cdot \bXi_{0,0}\\
			&= \sum_{n=0}^{N-1} \sum_{k\in \I_n} \bpsi_{n,k}(t) \cdot \bXi_{n,k}
		\end{align*}
\end{proof}

We therefore identified a collection of functions $\lbrace  \bpsi_{n,k} \rbrace_{(n,k) \in \I}$ that allows a simple construction of the Gauss-Markov process iteratively conditioned on increasingly finer partitions of the interval $[0,1]$. We will show that this sequence $\bZ^N_t$ converges almost surely towards the Gauss-Markov process $\bX_t$ used to construct the basis, proving that these finite-dimensional continuous functions $\bZ^N_t$ form an asymptotically accurate description of the initial process. 
Beforehand, we rigorously study the Hilbertian properties of the collection of functions we just defined.



\section{The Multi-Resolution Schauder Basis Framework}

The above analysis motivates the introduction of a set of functions $\lbrace  \bpsi_{n,k} \rbrace_{(n,k) \in \I}$ we now study in details.
In particular, we enlighten the structure of the collection of functions $\bpsi_{n,k}$ as a Schauder basis in a certain space $\mathcal{X}$ of continuous functions from $[0,1]$ to $\R^{d}$. The Schauder structure was defined in \cite{Schauder:27, Schauder:28}, and its essential characterization is the unique decomposition property: namely that every elements $x$ in $\mathcal{X}$ can be written as a well-formed linear combination
\begin{equation*}
x = \sum_{(n,k) \in \I} \bpsi_{n,k} \cdot \bm{\xi}_{n,k} \, ,
\end{equation*}
and that  the coefficients satisfying the previous relation are unique.


\subsection{System of Dual Bases}

To complete this program, we need to introduce some quantities that will play a crucial role in expressing the family $\bpsi_{n,k}$ as a Schauder basis for some given space. In equation \eqref{eq:BasisExpressionFirst}, two constant matrices  $\R^{d \times d}$ appear, that will have a particular importance in the sequel for $(n,k)$ in $\I$ with $n \neq 0$:
\begin{eqnarray*}
\bm{L}_{n,k} &=&\bm{g}^{T}(m_{n,k}) \big( \bm{h}_{m_{n,k}}(l_{n,k},m_{n,k}) \big)^{-1}  \bm{\sigma}_{n,k} \\
&=& \big( \bm{h}(l_{n,k},m_{n,k}) \big)^{-1} \, \bm{g}^{-1}(m_{n,k})\, \bm{\sigma}_{n,k} \, , \\
\bm{R}_{n,k} &=& \bm{g}^{T}(m_{n,k}) \big( \bm{h}_{m_{n,k}}(m_{n,k},r_{n,k}) \big)^{-1}  \bm{\sigma}_{n,k}  \\
&=& \big( \bm{h}(m_{n,k},r_{n,k}) \big)^{-1}\, \bm{g}^{-1}(m_{n,k})\,\bm{\sigma}_{n,k} \,  , 
\end{eqnarray*}
where $\bm{h}$ stands for $\bm{h}_0$. We further define the matrix
\[\bm{M}_{n,k} = \bm{g}^{T}(m_{n,k}) \,{\bm{\sigma}^{-1}_{n,k}}^{T} \] 
and we recall that $\bm{\sigma}_{n,k}$ is a square root of $\bm{\Sigma}_{n,k}$, the covariance matrix of $\bX_{m_{n,k}}$ knowing $\bX_{l_{n,k}}$ and $\bX_{r_{n,k}}$ given in equation \eqref{eq:Sigmank}. We stress that the matrices $\bm{L}_{n,k}$, $\bm{R}_{n,k}$, $\bm{M}_{n,k}$ and $\bm{\Sigma}_{n,k}$ are all invertible and satisfy the important following properties:

\begin{proposition}\label{computProp}
For all $(n,k)$ in $\I$, $n \neq 0$, we have:
\renewcommand{\theenumi}{\roman{enumi}} 
\begin{enumerate}
	\item $\bm{M}_{n,k} = \bm{L}_{n,k} + \bm{R}_{n,k}$ and 
	\item $\bm{\Sigma}^{-1}_{n,k} = \big( \bm{h}_{m_{n,k}}(l_{n,k},m_{n,k}) \big)^{-1} + \big( \bm{h}_{m_{n,k}}(m_{n,k},r_{n,k}) \big)^{-1}$ .
\end{enumerate}
\end{proposition}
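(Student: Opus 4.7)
The plan is to prove (ii) first and then derive (i) as a short consequence. Both statements reduce to matrix algebra built on two elementary facts: the additivity of the integral defining $\bm{h}$ and the chain-rule relation \eqref{eq:hChainRule}.

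First, I would observe that for any $u$ and any three times $l<m<r$, the integral expression of $\bh_u$ yields
\[
\bh_{u}(l,r) \;=\; \bh_{u}(l,m) + \bh_{u}(m,r).
\]
Applied at $u=m_{n,k}$ with $l=l_{n,k}$, $m=m_{n,k}$, $r=r_{n,k}$, this is the one algebraic identity driving the whole proof. Writing $\bm{A}=\bh_{m_{n,k}}(l_{n,k},m_{n,k})$ and $\bm{B}=\bh_{m_{n,k}}(m_{n,k},r_{n,k})$ for brevity, the definition \eqref{eq:Sigmank} of $\bm{\Sigma}_{n,k}$ becomes $\bm{\Sigma}_{n,k}=\bm{A}(\bm{A}+\bm{B})^{-1}\bm{B}$, so
\[
\bm{\Sigma}_{n,k}^{-1} \;=\; \bm{B}^{-1}(\bm{A}+\bm{B})\bm{A}^{-1} \;=\; \bm{B}^{-1}\bm{A}\bm{A}^{-1} + \bm{B}^{-1}\bm{B}\bm{A}^{-1} \;=\; \bm{A}^{-1} + \bm{B}^{-1},
\]
which is exactly (ii).

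Next, for (i), I would reconcile the two expressions given for $\bm{L}_{n,k}$ and $\bm{R}_{n,k}$ via the chain rule \eqref{eq:hChainRule}: since $\bm{g}(t)=\bF(0,t)$ one has $\bF(m_{n,k},0)=\bm{g}^{-1}(m_{n,k})$, hence
\[
\bh_{m_{n,k}}(l_{n,k},m_{n,k}) \;=\; \bm{g}(m_{n,k})\,\bh(l_{n,k},m_{n,k})\,\bm{g}(m_{n,k})^{T},
\]
and similarly for the other interval, which confirms the equivalence of the two lines defining $\bm{L}_{n,k}$ and $\bm{R}_{n,k}$. Keeping the first line, I compute
\[
\bm{L}_{n,k}+\bm{R}_{n,k} \;=\; \bm{g}^{T}(m_{n,k})\bigl[\bm{A}^{-1}+\bm{B}^{-1}\bigr]\bm{\sigma}_{n,k} \;=\; \bm{g}^{T}(m_{n,k})\,\bm{\Sigma}_{n,k}^{-1}\,\bm{\sigma}_{n,k},
\]
where the last equality uses (ii). Finally, since $\bm{\sigma}_{n,k}\bm{\sigma}_{n,k}^{T}=\bm{\Sigma}_{n,k}$ implies $\bm{\Sigma}_{n,k}^{-1}\bm{\sigma}_{n,k}=(\bm{\sigma}_{n,k}^{T})^{-1}=(\bm{\sigma}_{n,k}^{-1})^{T}$, the right-hand side collapses to $\bm{g}^{T}(m_{n,k})(\bm{\sigma}_{n,k}^{-1})^{T}=\bm{M}_{n,k}$, proving (i).

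There is no real obstacle here beyond careful bookkeeping: the only nontrivial point is choosing to work with $\bm{A}^{-1}+\bm{B}^{-1}$ rather than the complicated product form of $\bm{\Sigma}_{n,k}$, and to remember to invoke the chain-rule identity \eqref{eq:hChainRule} in order to move between the two equivalent expressions for $\bm{L}_{n,k},\bm{R}_{n,k}$ displayed in the statement.
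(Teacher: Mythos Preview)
Your proof is correct and follows essentially the same route as the paper: prove (ii) first via the matrix identity $\bigl(\bm{A}(\bm{A}+\bm{B})^{-1}\bm{B}\bigr)^{-1}=\bm{A}^{-1}+\bm{B}^{-1}$, then feed (ii) into $\bm{L}_{n,k}+\bm{R}_{n,k}=\bm{g}^{T}(m_{n,k})(\bm{A}^{-1}+\bm{B}^{-1})\bm{\sigma}_{n,k}$ and collapse using $\bm{\Sigma}_{n,k}^{-1}\bm{\sigma}_{n,k}=(\bm{\sigma}_{n,k}^{-1})^{T}$. The only cosmetic difference is that the paper isolates the matrix identity as a separate preliminary lemma, whereas you verify it inline; your additional remark reconciling the two displayed forms of $\bm{L}_{n,k},\bm{R}_{n,k}$ via \eqref{eq:hChainRule} is a nice clarification but not needed for the argument.
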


\noindent To prove this proposition, we first establish the following simple lemma of linear algebra:
\begin{lemma}\label{lemAlg}
Given two invertible matrices ${A}$ and ${B}$ in $GL_{n}(\R)$ such that ${C} = {A} + {B}$ is also invertible, if we posit ${D} = {A} {C}^{-1} {B}$, we have the following properties: 
\renewcommand{\theenumi}{\roman{enumi}} 
\begin{enumerate}
	\item ${D} ={A} {C}^{-1} {B} = {B} {C}^{-1} {A} $ and 
	\item ${D}^{-1} ={A}^{-1} + {B}^{-1}$
\end{enumerate}
\end{lemma}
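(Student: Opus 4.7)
The plan is to reduce both identities to direct algebraic manipulations using the single relation $C = A + B$, rewriting one summand in terms of $C$ and the other so the $C^{-1}$ in the middle telescopes away.

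For part (i), I would substitute $A = C - B$ on the left-hand side of $AC^{-1}B = BC^{-1}A$, obtaining $(C-B)C^{-1}B = B - BC^{-1}B$, and then substitute $A = C - B$ on the right-hand side to obtain $BC^{-1}(C - B) = B - BC^{-1}B$. The two expressions coincide, so $D$ is unambiguously defined by either product. No invertibility of $A$ or $B$ is needed for this step, only invertibility of $C$.

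For part (ii), I would verify the identity by right-multiplication: compute $D(A^{-1} + B^{-1}) = AC^{-1}BA^{-1} + AC^{-1}BB^{-1} = AC^{-1}(BA^{-1} + I) = AC^{-1}(B + A)A^{-1} = AC^{-1}CA^{-1} = I$, where the key step is to factor out $AC^{-1}$ on the right and use $A + B = C$. A symmetric left-multiplication check $(A^{-1}+B^{-1})D = I$ can be obtained identically, or one can invoke (i) to swap the roles of $A$ and $B$. Invertibility of both $A$ and $B$ is of course used here so that $A^{-1}+B^{-1}$ makes sense; invertibility of $D$ itself then follows for free from the identity.

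I do not expect any genuine obstacle: the whole lemma is purely formal linear algebra, and the only subtlety worth flagging is to resist the temptation to write $(A+B)^{-1} = A^{-1} + B^{-1}$ (which is false) and instead recognize that it is the product $AC^{-1}B = BC^{-1}A$, the noncommutative ``harmonic mean'' of $A$ and $B$, whose inverse is the sum of inverses. The cleanest write-up is therefore two short algebraic displays, one per item, each using only $A + B = C$ and associativity.
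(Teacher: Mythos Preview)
Your proposal is correct and essentially identical to the paper's proof: both parts are handled by the same substitution $A = C - B$ (equivalently $B = C - A$) to telescope the $C^{-1}$. The only cosmetic difference is that for (ii) the paper left-multiplies and uses both representations of $D$ from (i), writing $A^{-1}(AC^{-1}B) + B^{-1}(BC^{-1}A) = C^{-1}(B+A) = I$, whereas you right-multiply using a single representation; the content is the same.
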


\begin{proof}
i) ${D} = {A} {C}^{-1} {B} = ({C}- {B}){C}^{-1} {B}=  {B}- {B}{C}^{-1} {B} = {B}( {I} -{C}^{-1} {B}) =  {B} {C}^{-1}({C} -{B})= {B} {C}^{-1} {A}$.\\
ii) $({A}^{-1} + {B}^{-1}){D} = {A}^{-1} {D} + {B}^{-1} {D} = {A}^{-1}  {A} {C}^{-1} {B}+ {B}^{-1}  {B} {C}^{-1} {A} = {C}^{-1}( {B} + {A}) = {C}^{-1}{C} = {I}$.
\end{proof}

\begin{proof}[Proof of Proposition \ref{computProp}] $ \; $\\
	\noindent (ii) directly stems from Lemma \ref{lemAlg}, item (ii) by posing $A = \bm{h}_{m_{n,k}}(l_{n,k},m_{n,k})$, $B= \bm{h}_{m_{n,k}}(m_{n,k},r_{n,k})$ and $C= A+B= \bm{h}_{m_{n,k}}(l_{n,k},r_{n,k})$. Indeed, the lemma implies that 
		\begin{eqnarray*}
		D^{-1} &=& {A}^{-1} {C} {B}^{-1} \\
		&=& \bm{h}_{m_{n,k}}(l_{n,k},m_{n,k})^{-1} \bm{h}_{m_{n,k}}(l_{n,k},r_{n,k}) \bm{h}_{m_{n,k}}(l_{n,k},m_{n,k})^{-1} \\
		&= &\bm{\Sigma}_{n,k}^{-1}
		\end{eqnarray*}
	(i) We have: 
	\begin{align*}
		\bm{L}_{n,k} + \bm{R}_{n,k} &= \bm{g}(m_{n,k})^T\big( \bm{h}(l_{n,k},m_{n,k})^{-1} + \bm{h}(m_{n,k},r_{n,k})^{-1}\big)  \bm{\sigma}_{n,k}\\
		&= \bm{g}(m_{n,k})^T \bm{\Sigma}_{n,k}^{-1} \bm{\sigma}^{n,k}\\
		&=\bm{g}(m_{n,k})^T \left(\bm{\sigma}_{n,k}^{-1}\right)^{T}
	\end{align*}
	which ends the demonstration of the proposition.
\end{proof}

Let us define $\bm{L}_{0,0} =  \left( \bm{h}(l_{0,0},r_{0,0})\right)^{-1}   \, \bm{g}^{-1}(r_{0,0}) \, \bm{\sigma}_{0,0}$. With this notations we define the functions in a compact form as:

\begin{definition}\label{def:psiDef}
For every $(n,k)$ in $\I$ with $n \neq 0$, the continuous functions $\bpsi_{n,k}$ are defined on their support $S_{n,k}$ as
\begin{equation}\label{eq:psiDef}
	\bm{\psi}_{n,k}(t) = \begin{cases}
	\bm{g}(t)   \, \bm{h}(l_{n,k},t)  \cdot \bm{L}_{n,k}\, ,& l_{n,k}\leq t \leq m_{n,k} \, ,\\
	\bm{g}(t) \,   \bm{h}(t,r_{n,k}) \cdot \bm{R}_{n,k}  \, ,& m_{n,k} \leq t \leq r_{n,k} \, ,\\
\end{cases}
 \end{equation}
and the basis element $\bpsi_{0,0}$ is given on $[0,1]$ by 
\begin{equation*}
\bpsi_{0,0}(t) = \bm{g}(t)  \,  \bm{h}(l_{0,0},t) \cdot \bm{L}_{0,0} \, .
\end{equation*}
\end{definition}

The definition implies that the $\bpsi_{n,k}$ are continuous functions in the space of piecewise derivable functions with piecewise continuous derivative which takes value zero at zero.
We denote such a space $C^{1}_{0}\big([0,1],\R^{d \times d}\big)$.

Before studying the property of the functions $\bpsi_{n,k}$, it is worth remembering that their definitions include the choice of a square root $\bm{\sigma}_{n,k}$ of $\bm{\Sigma}_{n,k}$.
Properly speaking, there is thus a class of bases $\bpsi_{n,k}$ and all the points we develop in the sequel are valid for this class.
However, for the sake of simplicity, we consider from now on that the basis under scrutiny results from choosing the unique square root $\bm{\sigma}_{n,k}$ that is lower triangular with positive diagonal entries (Cholesky decomposition).


\subsubsection{Underlying System of Orthonormal Functions}\label{ssec:Phink}

We first introduce a family of functions $\bphi_{n,k}$ and show that it constitutes an orthogonal basis on a certain Hilbert space. The choice of this basis can seem arbitrary at first sight, but the definition of these function will appear natural for its relationship with the functions $\bpsi_{n,k}$ and $\bm{\Phi}_{n,k}$ that is made explicit in the sequel, and the mathematical rigor of the argument leads us to choose this apparently artificial introduction. 

\begin{definition}\label{def:phiDef}
For every $(n,k)$ in $\I$ with $n \neq 0$, we define a continuous function $\bphi_{n,k}:[0,1] \to \R^{m \times d}$ which is zero outside its support $S_{n,k}$ and has the expressions:
\begin{equation} \label{eq:phiDef}
\bphi_{n,k}(t)= \begin{cases}
										\bm{f}(t)^{T} \cdot \bm{L}_{n,k}\, , & \textrm{if $l_{n,k} \! \leq \! t \! < m_{n,k} $} \: ,\\
										\bm{f}(t)^{T} \cdot \bm{R}_{n,k}\, , & \textrm{if $m_{n,k} \! \leq \! t \! < \! r_{n,k}$} \: .
\end{cases}
\end{equation}
The basis element $\bphi_{0,0}$ is defined on $[0,1]$ by 
\begin{equation}\label{defPhi00}
\bphi_{0,0}(t) =  \bm{f}(t)^T \cdot \bm{L}_{0,0}\, .
\end{equation}
\end{definition}

Remark that the definitions of make apparent that fact that these two families of functions are linked  for all $(n,k)$ in $\I$ through the simple relation
\begin{equation}\label{eq:simplePsiPhi}
\bpsi_{n,k}' = \bm{\alpha} \cdot  \bpsi_{n,k} + \sqrt{\bm{\Gamma}} \cdot \bphi_{n,k} \, .
\end{equation}
Moreover, this collection of functions $\bphi_{n,k}$ constitutes an orthogonal basis of functions, in the following sense:

\begin{proposition}\label{orthoProp}
Let $L^{2}_{\bm{f}}$ be the closure of 
\begin{equation*}
\lbrace \bm{u}:[0,1] \to \R^{m} \, \vert \, \exists \; \bm{v} \in L^{2}\big([0,1],\R^{d}\big)  \, ,  \bm{u} = \bm{f}^{T} \cdot \bm{v} \rbrace \, ,
\end{equation*}
equipped with the natural norm of $L^{2}\big([0,1],\R^{m}\big)$. It is a Hilbert space, and moreover for all $0 \leq j <d$,
the family of functions $c_{j}\left(\bphi_{n,k}\right)$ defined as the columns of $\bphi_{n,k}$, namely 
\[c_{j}\left(\bphi_{n,k}\right) =\left[ (\bphi_{n,k})_{i,j} \right]_{0 \leq i < m},\] 
forms a complete orthonormal basis of $L^{2}_{\bm{f}}$. 
\end{proposition}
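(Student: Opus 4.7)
The proposition has three components to establish: $L^2_{\bm{f}}$ is a Hilbert space, the system $\{c_j(\bphi_{n,k})\}$ is orthonormal, and it is complete in $L^2_{\bm{f}}$. The first is immediate, since $L^2_{\bm{f}}$ is by definition a closed subspace of $L^2([0,1],\R^m)$; the columns of $\bphi_{n,k}$ lie in $L^2_{\bm{f}}$ because they are of the form $\bm{f}^T v$ for piecewise constant $v$.

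For orthonormality, the relevant object to compute is the $d\times d$ matrix $\int_0^1 \bphi_{n,k}(t)^T \bphi_{n',k'}(t)\,dt$, whose $(j,j')$-entry is $\langle c_j(\bphi_{n,k}), c_{j'}(\bphi_{n',k'})\rangle$; the goal is to show it equals $\delta_{(n,k),(n',k')}\,\bm{I}_d$. The single identity used throughout is $\int_s^t \bm{f}(w)\bm{f}(w)^T\,dw = \bm{h}(s,t)$, which follows from $\bm{f}=\bm{g}^{-1}\sqrt{\bm{\Gamma}}$ and the definition of $\bm{h}_0$. Three cases arise from the nested support structure. If $S_{n,k}$ and $S_{n',k'}$ intersect in a set of measure zero, the integral is trivially zero. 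If $(n,k)=(n',k')$, the integral decomposes as $\bm{L}_{n,k}^T \bm{h}(l_{n,k},m_{n,k})\bm{L}_{n,k} + \bm{R}_{n,k}^T \bm{h}(m_{n,k},r_{n,k})\bm{R}_{n,k}$, which with the explicit forms of $\bm{L}_{n,k},\bm{R}_{n,k}$ collapses to $\bm{\sigma}_{n,k}^T\bm{\Sigma}_{n,k}^{-1}\bm{\sigma}_{n,k} = \bm{I}_d$ using Proposition~\ref{computProp}(ii). If one support is strictly contained in the other (say $S_{n,k}\subsetneq S_{n',k'}$), then on $S_{n,k}$ the function $\bphi_{n',k'}$ is $\bm{f}^T$ times a fixed matrix, and the integral reduces to a combination involving $\bm{L}_{n,k}^T \bm{h}(l_{n,k},m_{n,k})$ and $\bm{R}_{n,k}^T \bm{h}(m_{n,k},r_{n,k})$; both evaluate to $\bm{\sigma}_{n,k}^T\bm{g}^{-T}(m_{n,k})$, and the opposite signs of $\bphi_{n,k}$ on its two halves (dictated by relation~\eqref{eq:simplePsiPhi}) produce cancellation.

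For completeness, I would show that any $u \in L^2_{\bm{f}}$ with $\langle u, c_j(\bphi_{n,k})\rangle = 0$ for all admissible $(n,k,j)$ must vanish. Introduce $F(t) = \int_0^t \bm{f}(s) u(s)\,ds$, a continuous $\R^d$-valued function with $F(0)=\bm{0}$. Orthogonality to the columns of $\bphi_{0,0} = \bm{f}^T \bm{L}_{0,0}$ gives $\bm{L}_{0,0}^T F(1) = \bm{0}$, so $F(1) = \bm{0}$ by invertibility of $\bm{L}_{0,0}$. For each $(n,k)$ with $n\geq 1$, orthogonality to the columns of $\bphi_{n,k}$ is equivalent to $\bm{L}_{n,k}^T\bigl(F(m_{n,k})-F(l_{n,k})\bigr) = \bm{R}_{n,k}^T\bigl(F(r_{n,k})-F(m_{n,k})\bigr)$. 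By induction on $n$: $F$ vanishes on $D_1 = \{0,1\}$, and if $F$ vanishes on $D_n$, then $F(l_{n,k}) = F(r_{n,k}) = \bm{0}$ for every $k$, so the relation reduces to $(\bm{L}_{n,k}+\bm{R}_{n,k})^T F(m_{n,k}) = \bm{0}$, and invertibility of $\bm{M}_{n,k} = \bm{L}_{n,k}+\bm{R}_{n,k}$ (Proposition~\ref{computProp}(i)) yields $F(m_{n,k})= \bm{0}$; hence $F$ vanishes on $D_{n+1}$. Since $\bigcup_N D_N$ is dense in $[0,1]$ and $F$ is continuous, $F\equiv \bm{0}$, so $\bm{f}(t) u(t) = \bm{0}$ a.e. Then for every $v \in L^2([0,1],\R^d)$, $\langle u, \bm{f}^T v\rangle_{L^2} = \int_0^1 (\bm{f}(t) u(t))^T v(t)\,dt = 0$, so $u$ is orthogonal to the dense set generating $L^2_{\bm{f}}$, hence $u \perp u$ and $u = 0$.

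The main obstacle is the completeness step. Orthonormality reduces essentially mechanically to the matrix identities in Proposition~\ref{computProp}, which are built into the definitions. Completeness, by contrast, uses the binary-tree structure of the partition in a nontrivial way: invertibility of $\bm{M}_{n,k}$ is precisely what lets the orthogonality conditions propagate the vanishing of $F$ from the boundary $\{0,1\}$ down to every midpoint, and the density of $\bigcup_N D_N$ closes the argument. The crucial sign structure of $\bphi_{n,k}$ on its two halves (opposite in sign, as imposed by~\eqref{eq:simplePsiPhi}) is what produces the difference $F(m_{n,k})-F(l_{n,k})$ on one side and $F(r_{n,k})-F(m_{n,k})$ on the other, causing the combination $\bm{L}_{n,k}+\bm{R}_{n,k} = \bm{M}_{n,k}$ to emerge at the inductive step.
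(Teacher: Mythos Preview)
Your proof is correct. The Hilbert-space and orthonormality parts match the paper's argument essentially verbatim: both reduce the off-diagonal case to $\mathcal{P}(\bphi_{n,k},\bm{f}^T)=\bm{L}_{n,k}^T\bm{h}(l_{n,k},m_{n,k})-\bm{R}_{n,k}^T\bm{h}(m_{n,k},r_{n,k})=\bm{0}$ and the diagonal case to Proposition~\ref{computProp}(ii). (Incidentally, you were right to extract the sign from~\eqref{eq:simplePsiPhi}; Definition~\ref{def:phiDef} as printed omits the minus on the right half, but the paper's own computations and the later function $\bm{P}_{n,k}$ use $-\bm{R}_{n,k}$ there.)

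For completeness you take a different route. The paper argues \emph{density of the span}: it introduces the piecewise-constant matrices $\bm{P}_{n,k}$ (equal to $\bm{L}_{n,k}$ on the left half and $-\bm{R}_{n,k}$ on the right), and shows by induction that the span of their columns at level $N$ coincides with the space of $\R^d$-valued step functions on the partition $D_{N+1}$, the key step being that any jump at $m_{N,k}$ can be cancelled because $\bm{L}_{N,k}+\bm{R}_{N,k}=\bm{M}_{N,k}$ is invertible. You argue \emph{triviality of the orthogonal complement}: form the primitive $F(t)=\int_0^t\bm{f}u$, read the orthogonality conditions as relations among the values of $F$ at $l_{n,k},m_{n,k},r_{n,k}$, and propagate $F=\bm{0}$ from $\{0,1\}$ down through the midpoints using the same invertibility of $\bm{M}_{n,k}$. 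The two arguments are dual (span-dense versus complement-trivial) and hinge on the same algebraic fact; your version is somewhat cleaner in that it avoids introducing the auxiliary spaces $K_N$, while the paper's version has the advantage of identifying the finite-dimensional span explicitly, which is reused later in the finite-rank description of the construction.
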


\begin{proof}
The space $L^{2}_{\bm{f}}$ is clearly a Hilbert space as a closed subspace of the larger Hilbert space $L^{2}\big([0,1],\R^{m}\big)$ equipped with the standard scalar product:
\begin{equation*}
\forall \; \bm{u},\bm{v} \in L^{2}\big([0,1],\R^{d}\big) \, , \quad (\bm{u},\bm{v}) = \int_{0}^{1} \bm{u}(t)^{T} \cdot {\bm{v}(t)} \, dt \, . 
\end{equation*}
We now proceed to demonstrate that the columns of $\bphi_{n,k}$ form an orthonormal family which generates a dense subspace of $L^{2}_{\bm{f}}$.
To this end, we define $M\big([0,1],\R^{m \times d}\big)$ the space of functions 
\begin{equation*}
\lbrace \bm{A}:[0,1] \to \R^{m \times d} \, \vert \, \forall \; j : \, 0 \leq j < d \, ,  t \mapsto \left[ \bm{A}_{i,j}(t) \right]_{0 \leq i < m}  \in L^{2}\big([0,1],\R^{m }\big) \rbrace \, ,
\end{equation*}
that is, the space of functions which take values in the set of $m \times d$-matrices  whose columns are in $L^{2}\big([0,1],\R^{m }\big)$.
This definition allows us to define the bilinear function $\mathcal{P}:  M\big([0,1],\R^{m \times d}\big) \times M\big([0,1],\R^{m \times d}\big) \to \R^{d \times d}$ as
\begin{equation*}
\mathcal{P}(\bm{A},\bm{B}) =  \int_{0}^{1} \bm{A}(t)^{T} \cdot\bm{B}(t) \, dt  \quad \mathrm{satisfying} \quad \mathcal{P}(\bm{B},\bm{A})  = \mathcal{P}(\bm{A},\bm{B})^{T} \, ,
\end{equation*}
and we observe that the columns of $\bphi_{n,k}$ form an orthonormal system if and only if
\begin{eqnarray*}
\forall \; \big((p,q), (n,k)\big) \in \I \times \I \, , \quad \mathcal{P}(\bphi_{n,k}, \bphi_{p,q} ) = \int_{0}^{1} \bphi_{n,k}(t)^{T} \cdot \bphi_{p,q}(t) \, dt = \delta^{n,k}_{p,q} \, \bm{I}_d \, ,
\end{eqnarray*}
where $\delta^{n,k}_{p,q}$ is the Kronecker delta function, whose value is $1$ if  $n=p$ and $k=q$, and $0$ otherwise.\\
First of all, since the functions $\bphi_{n,k}$ are zero outside the interval $S_{n,k}$, the matrix  $\mathcal{P}(\bphi_{n,k}, \bphi_{p,q} ) $ are non-zero only if  $S_{n,k} \cap S_{p,q} \neq \emptyset$.
In such cases, assuming that $n \neq p$ and for example that $n<p$, we necessarily have $S_{n,k}$ strictly included in $S_{p,q}$:
more precisely, $S_{n,k}$ is either included in the left-child support $S_{p+1,2q}$ or in the  right-child support $S_{p+1,2q+1}$ of $S_{p,q}$.
In both cases, writing the matrix $\mathcal{P}(\bphi_{n,k}(t),\bphi_{p,q} )$ shows that it is expressed as a matrix product whose factors include $\mathcal{P}(\bphi_{n,k}, \bm{f}^{T} )$.
We then show that:
\begin{eqnarray*}
	\mathcal{P}(\bphi_{n,k},\bm{f}^{T})
 & = &
\int_{0}^{1} \bphi_{n,k}(t)^{T} \cdot \bm{f}(t)^{T} \\
 & = &
\bm{L}_{n,k}^{T} \cdot \int_{l_{n,k}}^{m_{n,k}} \bm{f}(u) \cdot \bm{f}^{T}(u) \,du   - \bm{R}_{n,k}^{T} \cdot \int_{m_{n,k}}^{r_{n,k}} \bm{f}(u) \cdot \bm{f}^{T}(u) \, du \, , \\
 & = &
\bm{L}_{n,k}^{T} \cdot \bm{h}(l_{n,k},m_{n,k})   - \bm{R}_{n,k}^{T} \cdot \bm{h}(m_{n,k},r_{n,k})    \\
 & = & 
\bm{\sigma}_{n,k}^{T} \,\bm{g}^{-1}(m_{n,k})^{T}
- 
\bm{\sigma}_{n,k}^{T} \, \bm{g}^{-1}(m_{n,k})^{T},
\end{eqnarray*}
which entails that $\mathcal{P}(\bphi_{n,k},\bm{f}^{T})=0$ if $n < p$.
If $n > p$, we use the fact that $\mathcal{P}(\bphi_{n,k},\bphi_{p,q} ) = {\mathcal{P}(\bphi_{p,q},\bphi_{n,k} )}^{T}$, and we conclude that $\mathcal{P}(\bphi_{n,k},\bphi_{p,q} ) = \bm{0}$ from the preceding case.
Fot $n=p$, we directly compute for $n>0$ the only non-zero term
\begin{eqnarray*}
\mathcal{P}(\bphi_{n,k},\bphi_{n,k} )
 & = &
\bm{L}_{n,k}^{T} \cdot \int_{l_{n,k}}^{m_{n,k}} \bm{f}(u) \cdot \bm{f}^{T}(u) \,du \cdot \bm{L}_{n,k} \\
&& \qquad + \;  \bm{R}_{n,k}^{T} \cdot \int_{m_{n,k}}^{r_{n,k}} \bm{f}(u) \cdot \bm{f}^{T}(u) \, du \cdot \bm{R}_{n,k} \, , \\
 & = & 
\bm{\sigma}_{n,k}^{T} \, \bm{g}^{-1}(m_{n,k})^{T} \, \big( \bm{h}(l_{n,k},m_{n,k}) \big)^{-1} \, \bm{g}^{-1}(m_{n,k})\, \bm{\sigma}_{n,k}  \\
&& \qquad + \; \bm{\sigma}_{n,k}^{T} \,\bm{g}^{-1}(m_{n,k})^{T}\, \big( \bm{h}(m_{n,k},r_{n,k}) \big)^{-1} \, \bm{g}^{-1}(m_{n,k})\, \bm{\sigma}_{n,k}
 \, .
\end{eqnarray*}
Using the passage relationship between the symmetric functions $\bm{h}$ and $\bm{h}_{m_{n,k}}$ given in equation \eqref{eq:hChainRule}, we can then write
 \begin{eqnarray*}
\mathcal{P}(\bphi_{n,k},\bphi_{n,k} )
 & = &
\bm{\sigma}_{n,k}^{T} \, \big( \bm{h}_{m_{n,k}}(l_{n,k},m_{n,k}) \big)^{-1} \, \bm{\sigma}_{n,k} \\
&& \qquad \qquad + \bm{\sigma}_{n,k}^{T} \, \big( \bm{h}_{m_{n,k}}(m_{n,k},r_{n,k}) \big)^{-1} \, \bm{\sigma}_{n,k} \, . 
\end{eqnarray*}
Proposition \ref{computProp} implies that $ \bm{h}_{m_{n,k}}(l_{n,k},m_{n,k})^{-1} + \bm{h}_{m_{n,k}}(m_{n,k},r_{n,k})^{-1} = \bm{\Sigma}^{-1}_{n,k} = ( \bm{\sigma}^{-1}_{n,k})^{T} \bm{\sigma}_{n,k}^{-1}$
which directly implies that  $\mathcal{P}(\bphi_{n,k},\bphi_{n,k}^{T}) = \bm{I}_d$.
For $n=0$, a computation of the exact same flavor yields that $\mathcal{P}(\bphi_{0,0},\bphi_{0,0}) = \bm{I}_d$.
Hence, we have proved that the collection of columns of $ \bphi_{n,k}$ forms an orthonormal family of functions in $L^{2}_{\bm{f}}$
(the definition of $ \bphi_{n,k}$ clearly states that its columns can be written in the form of elements of $L^{2}_{\bm{f}}$).

The proof now amounts showing the density of the family of functions we consider. Before showing this density property, we introduce for all $(n,k)$ in $\I$ the functions $\bm{P}_{n,k}: [0,1] \to \R^{d \times d}$ with support on $S_{n,k}$ defined by:
\begin{equation*}
\bm{P}_{n,k}(t)
=
\left\{
\begin{array}{cccc}
  & \bm{L}_{n,k}  &\mathrm{if} & l_{n,k} \leq t < m_{n,k}  \\
 - &\bm{R}_{n,k}   &\mathrm{if} &    m_{n,k} \leq t < r_{n,k}
\end{array}
\right.
\quad \mathrm{n \neq 0} \quad \mathrm{and} \quad
\bm{P}_{0,0}(t) = \bm{L}_{0,0} 
\end{equation*}
Showing that the family of columns of $\bphi_{n,k}$ is dense in $L^{2}_{\bm{f}}$ is equivalent to show that the column vectors of the matrices $\bm{P}_{n,k}$ seen as a function of $t$, are dense in $L^{2}\big([0,1], \R^{d} \big)$.
It is enough to show that the span of such functions contains the family of piecewise continuous $\R^{d}$-valued functions that are constant on $S_{n,k}$, $(n,k)$ in $\I$
(the density of the endpoints of the partition $\cup_{N \in \mathbb{N}}D_{N}$ entails that the latter family generates $L^{2}\big([0,1],\R^{d}\big)$). \\
In fact, we show  that the span of functions 
\begin{equation*}
V_{N} = \mathrm{span} \left\{ t \mapsto  c_{j}(\bm{P}_{n,k})(t) \, \Big \vert \, 0 \leq j < d  \, , (n,k) \in \I_{N} \right\}
\end{equation*}
is exactly equal to the space $K_{N}$ of piecewise continuous functions from $[0,1]$ to $\R^{d}$ that are constant on the supports $S_{N+1,k}$, for any $(N+1,k)$ in $\I$.
The fact that $V_{N}$ is included in $K_{N}$ is clear from the fact that the matrix-valued functions $\bm{P}_{N,k}$ are defined constant on the support $S_{N+1,k}$, for $(N,k)$
in $I$. \\

We prove that $K_{N}$ is included in $V_{N}$ by induction on $N \leq 0$. The property is clearly true at rank $N = 0$ since  $\bm{P}_{0,0}$ is then equal to the constant invertible matrix $\bm{L}_{0,0}$.
Assuming the proposition true at rank $N-1$ for a given $N>0$, let us consider a piecewise continuous function $\bm{c}:[0,1] \to \R^{d}$ in $K_{N-1}$.
Remark that for every $(N,k)$ in $\I$, the function $\bm{c}$ can only take two values on $S_{N,k}$ and can have discontinuity jump in $m_{N,k}$: let us denote these jumps as
\begin{equation*}
\bm{d}_{N,k} = \bm{c}(m_{N,k}^{+}) - \bm{c}(m_{N,k}^{-}) \, .
\end{equation*}
Now, remark that for every $(N,k)$ in $\I$, the matrix-valued functions $\bm{P}_{N,k}$ takes only two matrix values on $S_{N,k}$, namely $\bm{L}_{N,k}$ and $-\bm{R}_{N,k}$.
From Proposition \ref{computProp}, we know that $\bm{L}_{N,k}+\bm{R}_{N,k} = \bm{M}_{N,k}$ is invertible.
This fact directly entails that there exists  vectors $\bm{a}_{N,k}$, for any $(N,k)$ in $\I$, such that $\bm{d}_{N,k} = \big(\bm{L}_{N,k}+\bm{R}_{N,k}\big)(-\bm{a}_{N,k})$.
We then necessarily have that the function $\bm{c}' = \bm{c} + \bm{P}_{n,k} \cdot \bm{a}_{n,k}$ is piecewise constant on the supports $S_{N,k}$, $(N,k)$ in $\I$.
By recurrence hypothesis,  $\bm{c}' $ belongs to $V_{N-1}$, so that $\bm{c}$ belongs to $V_{N}$, and we have proved that $K_{N} \subset V_{N}$. \\ 
Therefore, the space generated by  the column vectors $P_{n,k}$ is dense in $L^{2}[0,1]$, which completes the proof that the functions $t \mapsto \left[ (\bphi_{n,k}(t))_{i,j} \right]_{0 \leq i < m}$ form a complete orthonormal family of $L^{2}[0,1]$.
\end{proof}

The fact that the column functions of $\bphi_{n,k}$ form a complete orthonormal system of $L^{2}_{\bm{f}}$ directly entails the following decomposition of the identity on $L^{2}_{\bm{f}}$:

\begin{corollary}\label{cor:identDecomp}
If  $\delta$ is the real delta Dirac function, we have
\begin{eqnarray}
 \sum_{(n,k) \in \I} \bphi_{n,k}(t) \cdot \bphi^{T}_{n,k}(s) = \delta(t-s) {Id}_{L^{2}_{\bm{f}}} \, ,
\end{eqnarray} 
\end{corollary}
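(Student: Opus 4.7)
The plan is to interpret the claimed identity in the distributional/weak sense on $L^2_{\bm{f}}$, so that the statement ``$\sum_{(n,k)} \bphi_{n,k}(t)\bphi_{n,k}^T(s) = \delta(t-s)\,\mathrm{Id}_{L^2_{\bm{f}}}$'' really means that the kernel operator $K(s,t) = \sum_{(n,k)\in\I} \bphi_{n,k}(t)\bphi_{n,k}^T(s)$ acts as the identity on $L^2_{\bm{f}}$. Once this is granted, the corollary is essentially a direct restatement of completeness of the orthonormal system furnished by Proposition \ref{orthoProp}.

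Concretely, I would fix an arbitrary test function $\bm{u} \in L^2_{\bm{f}}$ and use the completeness of the basis $\{c_j(\bphi_{n,k}) : (n,k)\in\I,\,0\le j<d\}$ established in Proposition \ref{orthoProp} to expand
\[
\bm{u} \;=\; \sum_{(n,k)\in\I} \sum_{j=0}^{d-1} \bigl\langle c_j(\bphi_{n,k}),\bm{u}\bigr\rangle \, c_j(\bphi_{n,k}),
\]
with convergence in $L^2_{\bm{f}}$. The next step is to repackage the sum over $j$ into a matrix product: the vector of coefficients $\bm{v}_{n,k} = \bigl(\langle c_j(\bphi_{n,k}),\bm{u}\rangle\bigr)_{0\le j<d}$ is exactly $\int_0^1 \bphi_{n,k}(s)^T\bm{u}(s)\,ds$, so that
\[
\bm{u}(t) \;=\; \sum_{(n,k)\in\I} \bphi_{n,k}(t)\cdot\bm{v}_{n,k} \;=\; \sum_{(n,k)\in\I} \bphi_{n,k}(t)\int_0^1 \bphi_{n,k}(s)^T \bm{u}(s)\,ds.
\]
Recognizing this as the action of the kernel $K$ on $\bm{u}$ gives the desired resolution of the identity, which is precisely the distributional content of the stated formula.

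The only subtle point is the interchange of the infinite sum with the integral, since the Parseval expansion converges in $L^2$ rather than pointwise. I would handle this not by swapping sum and integral literally, but by formulating the statement at the operator level: defining the partial-sum kernel $K_N(t,s) = \sum_{(n,k)\in\I_N} \bphi_{n,k}(t)\bphi_{n,k}^T(s)$, the associated integral operator $T_N$ is exactly the orthogonal projector onto the span of $\{c_j(\bphi_{n,k}) : (n,k)\in\I_N,\,0\le j<d\}$, since orthonormality of this finite family (also from Proposition \ref{orthoProp}) makes $T_N\bm{u}$ the partial Fourier expansion of $\bm{u}$. Completeness of the basis then forces $T_N \to \mathrm{Id}_{L^2_{\bm{f}}}$ in the strong operator topology, which is the rigorous meaning of the Dirac-delta identity and completes the argument.
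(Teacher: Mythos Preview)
Your proposal is correct and follows essentially the same approach as the paper: apply the partial-sum kernel to an arbitrary $\bm{v}\in L^2_{\bm{f}}$, recognize the result as the partial Fourier expansion $\sum_{(n,k)\in\I_N}\sum_{p} c_p(\bphi_{n,k})\,(c_p(\bphi_{n,k}),\bm{v})$, and invoke Parseval/completeness from Proposition~\ref{orthoProp} to obtain $\bm{v}$ in the $L^2_{\bm{f}}$ limit. Your explicit framing of $T_N$ as the orthogonal projector and the convergence as strong-operator is a touch more careful than the paper's phrasing, but the argument is the same.
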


\begin{proof}
Indeed it easy to verify that for all $\bm{v}$ in $L^{2}_{\bm{f}}$,   we have for all $N>0$
\begin{eqnarray*}
\int_{U} \sum_{(n,k) \in \I_{N}} \left( \bphi_{n,k}(t) \cdot \bphi^{T}_{n,k}(s) \right) \, \bm{v}(s) \, ds 
& = &
 \sum_{(n,k) \in \I_{N}}  \bphi_{n,k}(t) \cdot \mathcal{P} \left( \bphi_{n,k}   , \bm{v} \right) \, ,\\
 & = &
 \sum_{(n,k) \in \I_{N}}  \sum_{p=0}^{d-1}  c_{p}(\bphi_{n,k}) \big( c_{p}(\bphi_{n,k}) , \bm{v} \big) \, ,
\end{eqnarray*}
where $\big( c_{p}(\bphi_{n,k}) , \bm{v} \big)$ denotes the inner product in $L^{2}_{\bm{f}}$ between $\bm{v}$ and the $p$-column of $\bpsi_{n,k}$.
Therefore, by the Parseval identity, we have in the $L^{2}_{\bm{f}}$ sense
\begin{equation*}
\int_{U} \sum_{(n,k) \in \I} \left( \bphi_{n,k}(t) \cdot \bphi^{T}_{n,k}(s) \right) \, \bm{v}(s) \, ds  = \bm{v}(t) \, .
\end{equation*}
\end{proof}

From now on, abusing language, we will say that the family of $\R^{m \times d}$-valued functions $\bphi_{n,k}$ is an orthonormal family of functions to refer to the fact that the columns of such matrices form orthonormal set of $L^{2}_{\bm{f}}$. 
We now make explicit the relationship between this orthonormal basis and our functions $(\bpsi_{n,k})$ derived in our analysis of multi-dimensional Gauss-Markov processes.


\subsubsection{Generalized Dual Operators}\label{ssec:DualOps}

\paragraph{The integral operator $\mathcal{K}$ }
The basis $\bphi_{n,k}$  is of great interest in this article for its relationship to the functions $\bpsi_{n,k}$ that naturally arise in the decomposition of Gauss-Markov processes. Indeed, the collection $\bpsi_{n,k}$ can be generated from the orthonormal basis $\bphi_{n,k}$ through the action of the integral operator $\mathcal{K}$ defined on $L^{2}\big([0,1],\R^{m} \big)$ into $L^{2}\big([0,1],\R^{d} \big)$ by:
\begin{equation}\label{eq:KDef}
\bm{u} \mapsto \mathcal{K}[\bm{u}] 
=
\left\{ t \mapsto  \bm{g}(t)  \cdot \int_{U} \mathbbm{1}_{[0,t]}(s) \bm{f}(s) \, \bm{u}(s) \, ds \right\} 
\end{equation}
where $U\supset [0,1]$ is an open set and where, for any set $E\subset U$, $\mathbbm{1}_{E} (\cdot)$ denotes the indicator function of $E$.
Indeed, realizing that $ \mathcal{K}$ acts on $M\big([0,1],\R^{m \times d}\big)$ into $M\big([0,1],\R^{d \times d}\big)$ through 
\begin{equation*}
\forall \bm{A} \in M\big([0,1],\R^{m \times d}\big) \, , \quad \mathcal{K}[\bm{A}] = \Big[ \mathcal{K}[c_{0}( \bm{A})], \ldots ,\mathcal{K}[c_{d-1}(\bm{A})]\Big] \, ,
\end{equation*}
where $c_{j}(\bm{A})$ denote the $j$-th  $\R^{m}$-valued column function of $\bm{A}$, we easily see that for all $(n,k)$ in $\I$, $0 \leq t \leq1$:
\begin{equation}\label{eq:psinkKPhi}
\bpsi_{n,k}(t) = \bm{g}(t) \cdot \int_{0}^{t} \bm{f}(s) \cdot \bphi_{n,k}(s) \, ds = \mathcal{K}[ \bphi_{n,k}](t) \, .
\end{equation}
It is worth noticing that the introduction of the operator $\mathcal{K}$ can be considered natural since it characterizes the centered Gauss-Markov process $\bX$ through formally  writing $\bX = \mathcal{K}[\dot{\bm{W}}]$ .\\
In order to exhibit a dual family of functions to the basis $\bpsi_{n,k}$, we further investigate the property of the integral operator $\mathcal{K}$.
In particular, we study the existence of an inverse operator $\mathcal{D}$, whose action on the orthonormal basis $\bm{\phi}_{n,k}$ will conveniently provide us with a dual basis to $\bpsi_{n,k}$.
Such an operator does not always exist, nevertheless, under special assumptions,  it can be straightforwardly expressed as a generalized differential operator.

\paragraph{The differential operator $\mathcal{D}$ }

Here, we make the assumptions that $m=d$, that for all $t$, $\bm{f}(t)$ is invertible in $\R^{d \times d}$ and that $\bm{f}$ and $\bm{f}^{-1}$ have continuous derivatives, which especially implies that $L^{2}_{\bm{f}}=L^{2}\big( \R^{d} \big)$. 
In this setting, we define the space $D_{0}\big(U,\R^{d}\big)$ of functions in $C_{0}^{\infty}\big(U,\R^{d}\big)$ that are zero at zero, and denote by $D_{0}'\big(U, \R^{d} \big)$ its dual in the space of distributions (or generalized functions). Under the assumptions just made, the operator $ \mathcal{K}:D_{0}\big(U,\R^{d}\big) \mapsto D_{0}\big(U,\R^{d}\big)$ admits the differential operator $\mathcal{D}:D_{0}\big(U,\R^{d}\big) \mapsto D_{0}\big(U,\R^{d}\big)$  defined  by
\begin{equation*}\label{eq:DDef}
	\bm{u} \in D_{0}\big(U, \R^{d} \big) \mapsto \mathcal{D}[\bm{u}] 
	=
	\left\{ t \mapsto \bm{f}^{-1}(t) \frac{d}{dt} \! \Big( \bm{g}^{-1}(t) \bm{u}(t) \Big) \right\}
 	\, .
\end{equation*}
as its inverse, that is, when restricted to $D_{0}\big(U,\R^{d}\big)$, we have $\mathcal{D} \circ \mathcal{K} = \mathcal{K} \circ \mathcal{D}=  Id$ on $D_{0}(U,\R^{d})$. The dual operators of $\mathcal{K}$ and $\mathcal{D}$ are expressed, for any $\bm{u}$ in $D_{0}\big(U,\R^{d} \big)$, as 
\begin{eqnarray*}
 \mathcal{D}^{*}[\bm{u}] &=& \left\{  t \mapsto   -{\left( \bm{g}^{-1}(t) \right)}^{T} \displaystyle \frac{d}{dt} \left( \left(\bm{f}^{-1}(t)\right)^{T} \, \bm{u}(t) \right) \right\} \, ,\\
 \mathcal{K}^{*}[\bm{u}] &=&  \left\{  t \mapsto   -\bm{f}(t)^{T} \displaystyle  \int_{U} \mathbbm{1}_{[0,t]}(s) \, \bm{g}^{T}(s) \, \bm{u}(s) \, ds \right\} \, .
\end{eqnarray*}
They satisfy (from the properties of $\mathcal{K}$ and $\mathcal{D}$) $\mathcal{D}^{*} \circ \mathcal{K}^{*} = \mathcal{K}^{*} \circ \mathcal{D}^{*}= Id$ on $D_{0}(U,\R^{d})$.
By dual pairing, we extend the definition of the operators $\mathcal{K}$, $\mathcal{D}$ as well as their dual operators, to the space of generalized function $D_0'(U,\R^{d})$. 
In details, for any distribution $T$ in $D_{0}'(U,\R^{d})$ and test function $\bm{u}$ in $D_{0}(U,\R^{d})$, define $ \mathcal{K}$ and $\mathcal{K}^{*}$ by
\begin{eqnarray*}
\left(  \mathcal{D} [T] , \bm{u} \right) =  \left( T , \mathcal{D}^{*}[\bm{u}] \right)  \quad \mathrm{and} \quad
\left(  \mathcal{K} [T] , \bm{u} \right) = \left( T , \mathcal{K}^{*}[\bm{u}] \right) \, ,
\end{eqnarray*}
and reciprocally for the dual operators $\mathcal{D}^{*}$ and $ \mathcal{K}^{*}$. \\

\paragraph{Candidate Dual Basis }
We are now in a position to use the orthonormality of the $\bphi_{n,k}$ to infer a dual family to the basis of elements $\bpsi_{n,k}$.
For  any function $\bm{u}$ in $L^{2}\big(U, \R^{d} \big)$, the generalized function $\mathcal{K}[\bm{u}]$ belongs to $C_{0}\big(U, \R^{d} \big)$, the space of continuous function which are zero at zero. 
We equip this space with the uniform norm and denote its topological dual $R_{0}\big(U, \R^{d} \big)$, the set of  $d$-dimensional Radon measures with $R_{0}\big(U, \R^{d} \big) \subset D'_{0}\big(U, \R^{d} \big)$.
Consequently, operating in the Gelfand triple
\begin{equation}\label{eq:Gelf}
C_{0}\big(U, \R^{d} \big) \subset  L^{2}\big(U, \R^{d} \big) \subset R_{0}\big(U, \R^{d} \big) \, ,
\end{equation}
we can write, for any function $\bm{u}$, $\bm{v}$ in $L^{2}\big(U, \R^{d} \big) \subset R_{0}\big(U,\R^{d}\big)$ 
\begin{equation*}
\left(\bm{u},\bm{v} \right) = \Big( \big( \mathcal{D} \circ \mathcal{K} \big) [\bm{u}], \bm{v} \Big)  = \left( \mathcal{K}  [\bm{u}], \mathcal{D}^{*} [\bm{v}] \right)
\end{equation*}
The first equality stems from the fact that, when  $\mathcal{K}$ and $\mathcal{D}$ are seen as generalized functions, they are still inverse of each other, so that in particular $\mathcal{D} \circ \mathcal{K} = Id$ on $L^{2}\big(U, \R^{d} \big)$.
The  dual pairing associated with the Gelfand triple \eqref{eq:Gelf} entails the second equality where $\mathcal{D}^{*} $ is the generalized operator defined on $D'_{0}\big(U,\R^{d}\big)$ and where $\mathcal{D}^{*} [\bm{v}]$ is in $R_{0}\big(U,\R^{d}\big)$.\\
As a consequence, defining the functions $\bdelta_{n,k}$ in  $R_{0}\big(U,\R^{d \times d}\big)$,  the $d \times d$-dimensional space of  Radon measures, by 
\begin{equation*}
\bdelta_{n,k} = \mathcal{D}^{*}(\bphi_{n,k}) = \left[ \mathcal{D}^{*}\big[ c_{1}(\bphi_{n,k}) \big] , \ldots, \mathcal{D}^{*}\big[ c_{d}((\bphi_{n,k})] \big] \right]
\end{equation*}
provides us with a family of $d \times d$-generalized functions which are dual to the family $\bpsi_{n,k}$ in the sense that, for all $\big((n,k),(p,q)\big)$ in $\I \times \I$, we have
\begin{equation*}
\mathcal{P}\big( \bdelta_{n,k} , \bpsi_{p,q} \big) = \delta^{n,k}_{p,q} \, \bm{I}_d \, ,
\end{equation*}
where the definition of $\mathcal{P}$ has been extended through dual pairing:
given any $\bm{A}$ in  $R_{0}\big(U,\R^{m \times d}\big)$ and any $\bm{B}$ in $ C_{0}\big(U,\R^{m \times d}\big)$, we have
\begin{equation*}
\mathcal{P}(\bm{A} , \bm{B} ) = \Big[ \big(  c_{i}(\bm{A}) , c_{j}(\bm{B}) \big) \Big]_{0\leq i,j <d}
\end{equation*}
with $\big(  c_{i}(\bm{A}) ,  c_{j}(\bm{B}) \big)$ denoting the dual pairing between the $i$-th column of $\bm{A}$ taking value in $R_{0}\big(U,\R^{d}\big)$ and the $j$-th column of $\bm{B}$ taking value in $C_{0}\big(U,\R^{d}\big)$.
Under the favorable hypothesis of this section, the $d \times d$-generalized functions $\bdelta_{n,k}$ can actually be easily computed, since considering the definition of $\bphi_{n,k}$ shows that the functions $(\bm{f}^{-1})^{T} \cdot \bphi_{n,k}$ have support $S_{n,k}$ and are constant on $S_{n+1,2k}$ and $S_{n+1,2k+1}$ in $\R^{d \times d}$.
Only the discontinuous jumps in $l_{n,k}$, $m_{n,k}$ and $r_{n,k}$ intervene, leading to express for $(n,k)$ in $\I$, $n \neq 0$
\begin{equation*}
\bdelta_{n,k}(t) = (\bm{g}(t)^{-1})^{T}   \cdot \left( \bm{M}_{n,k} \, \delta(t-{m_{n,k}})  - \left( \bm{L}_{n,k} \, \delta(t-{l_{n,k}}) + \bm{R}_{n,k} \, \delta(t-{r_{n,k}})\right) \right)
\end{equation*}
and $\bdelta_{0,0}(t) = (\bm{g}(t)^{-1})^{T} \cdot \bm{L}_{0,0}$, where $\delta(\cdot)$ denotes the standard delta Dirac function (centered in $0$). These functions can be extended to the general setting of the article since its expressions do not involve the assumptions made on the invertibility and smoothness of $\bm{f}(t)$. We now show that these functions, when defined in the general setting still provide a dual basis of the functions $\bpsi_{n,k}$.


\subsubsection{Dual Basis of Generalized Functions}

The expression of the basis $\bdelta_{n,k}$ that has been found under favorable assumptions makes no explicit reference to these assumptions.
It suggest to define a  functions $\bdelta_{n,k}$  formally as linear combination of Dirac functions acting by duality on $C_{0}\big(U,\R^{d \times d}\big)$: 

\begin{definition}\label{deltaDef}
	For $(n,k)$ in $\I$, the family of generalized functions $\bdelta_{n,k}$ in $R_{0}\big(U,\R^{d \times d}\big)$ is given $n \neq 0$ by 
	\begin{equation*}  
	 \bdelta_{n,k}(t)
	=
	(\bm{g}(t)^{-1})^{T}  \cdot \left( \bm{M}_{n,k} \, \delta(t-{m_{n,k}}) - \left( \bm{L}_{n,k} \, \delta(t-{l_{n,k}}) + \bm{R}_{n,k} \, \delta(t-{r_{n,k}})\right) \right)  ,
	 \, ,
	\end{equation*}
	and $\bdelta_{0,0}(t) = (\bm{g}(t)^{-1})^{T} \cdot \bm{L}_{0,0}$, where $\delta$ is the standard Dirac distribution.
\end{definition}

Notice that the basis $\bdelta_{n,k}$ is defined for the open set $U$.
For the sake of consistency, we extend the definition of the families $\bpsi_{n,k}$ and $\bphi_{n,k}$ on $U$ by setting them to zero on $U \setminus [0,1]$, except for $\bpsi_{0,0}$ which is continued for  $t>1$ by a  continuous function $\bm{c}$ that is compactly supported in $[1,a)$ for a given $a$ in $U$, $a>1$ and satisfy $\bm{c}(1)= \bpsi_{0,0}(1)$.\\
We can now formulate:

\begin{proposition}
\label{dualProp}
Given the dual pairing in $C_{0}(U) \subset L^{2}(U) \subset R(U)$ where $U$ is a bounded open set of $\mathbb{R}$ containing $[0,1]$, the family of continuous functions $\bpsi_{n,k}$ in $C_{0}(U)$ admits for dual family in $R(U)$, the set of distributions $\bdelta_{n,k}$. \\
\end{proposition}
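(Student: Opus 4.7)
The strategy is to compute the dual pairing $\mathcal{P}(\bdelta_{n,k}, \bpsi_{p,q})$ directly and exhibit its Kronecker structure via a case analysis on the relative positions of $S_{n,k}$ and $S_{p,q}$ in the nested binary tree of supports from Section~\ref{sec:supports}.

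For $n \geq 1$, the distribution $\bdelta_{n,k}$ acts on any continuous test function as a weighted three-point evaluation at $l_{n,k}, m_{n,k}, r_{n,k}$; applied to $\bpsi_{p,q} \in C_0(U)$ this gives
\begin{equation*}
\mathcal{P}(\bdelta_{n,k}, \bpsi_{p,q}) = \bm{M}_{n,k}^T \bm{g}(m_{n,k})^{-1}\bpsi_{p,q}(m_{n,k}) - \bm{L}_{n,k}^T \bm{g}(l_{n,k})^{-1}\bpsi_{p,q}(l_{n,k}) - \bm{R}_{n,k}^T \bm{g}(r_{n,k})^{-1}\bpsi_{p,q}(r_{n,k}).
\end{equation*}
I split this into three sub-cases. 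First, if $(p,q) \neq (n,k)$ and $p \geq n$, the binary tree structure forces each of $l_{n,k}, m_{n,k}, r_{n,k}$ to lie either outside $S_{p,q}$ or exactly at one of its endpoints $l_{p,q}, r_{p,q}$, where $\bpsi_{p,q}$ vanishes; hence the pairing is zero. Second, if $p < n$, then $S_{n,k}$ is contained in one half of $S_{p,q}$, say the left one (the right case being symmetric), on which $\bm{g}(t)^{-1}\bpsi_{p,q}(t) = \bm{h}(l_{p,q}, t)\bm{L}_{p,q}$; substituting and expanding $\bm{M}_{n,k} = \bm{L}_{n,k} + \bm{R}_{n,k}$ via Proposition~\ref{computProp}(i), together with the telescoping identity $\bm{h}(l_{p,q},m_{n,k}) - \bm{h}(l_{p,q},l_{n,k}) = \bm{h}(l_{n,k},m_{n,k})$, reduces the pairing to
\begin{equation*}
\bigl[\bm{L}_{n,k}^T \bm{h}(l_{n,k}, m_{n,k}) - \bm{R}_{n,k}^T \bm{h}(m_{n,k}, r_{n,k})\bigr] \bm{L}_{p,q}.
\end{equation*}
Both bracketed terms equal $\bm{\sigma}_{n,k}^T \bm{g}(m_{n,k})^{-T}$ by the definitions of $\bm{L}_{n,k}, \bm{R}_{n,k}$ and the symmetry of $\bm{h}$, so the bracket vanishes. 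Third, if $(p,q) = (n,k)$, only the evaluation at $m_{n,k}$ contributes and gives $\bpsi_{n,k}(m_{n,k}) = \bm{\sigma}_{n,k}$; combined with $\bm{M}_{n,k}^T \bm{g}(m_{n,k})^{-1} = \bm{\sigma}_{n,k}^{-1}$, this yields $\mathcal{P}(\bdelta_{n,k}, \bpsi_{n,k}) = \bm{I}_d$.

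The case of $\bdelta_{0,0}$ requires a separate argument since it is a genuine integrable function rather than a Dirac combination. The cleanest route is the adjoint identity: one recognizes that, under the extension of $\bpsi_{0,0}$ by the compactly supported continuation $\bm{c}$ introduced before the statement of Proposition~\ref{dualProp}, $\bdelta_{0,0}$ coincides with $\mathcal{D}^*[\bphi_{0,0}]$ as an element of $R_0\big(U, \R^{d \times d}\big)$. Using $\bpsi_{p,q} = \mathcal{K}[\bphi_{p,q}]$ from equation~\eqref{eq:psinkKPhi} and dual pairing in the Gelfand triple~\eqref{eq:Gelf}, one obtains
\begin{equation*}
\mathcal{P}(\bdelta_{0,0}, \bpsi_{p,q}) = \mathcal{P}(\mathcal{D}^*[\bphi_{0,0}], \mathcal{K}[\bphi_{p,q}]) = \mathcal{P}\bigl(\bphi_{0,0}, (\mathcal{D}\circ\mathcal{K})[\bphi_{p,q}]\bigr) = \mathcal{P}(\bphi_{0,0}, \bphi_{p,q}) = \delta^{0,0}_{p,q}\, \bm{I}_d
\end{equation*}
by Proposition~\ref{orthoProp}. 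As a backup for $p \geq 1$, one can directly check that $\bm{L}_{0,0}^T \int_{S_{p,q}} \bm{g}(t)^{-1}\bpsi_{p,q}(t)\, dt = \bm{0}$ by splitting $S_{p,q}$ into its two halves and invoking the same cancellation identities that drove case (ii) above, while for $(p,q)=(0,0)$ one verifies $\mathcal{P}(\bdelta_{0,0}, \bpsi_{0,0}) = \bm{I}_d$ directly, calibrating $\bm{c}$ so that integration over $[1, a)$ produces no net contribution.

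The main obstacle I anticipate lies precisely in the $(0,0)$ sub-case: the compact definition of $\bdelta_{0,0}$ as a smooth function rather than a combination of Dirac masses silently encodes the boundary identifications that make $\mathcal{D}\circ\mathcal{K} = Id$ hold on the extended domain $U$. One must verify that the extension $\bm{c}$ does not introduce spurious boundary contributions at $t=0$ or $t=1$ and that the resulting pairing is independent of the arbitrary choice of $\bm{c}$. Once this is settled, the remaining cases are routine algebra built on the single key identity $\bm{L}_{n,k}^T \bm{h}(l_{n,k}, m_{n,k}) = \bm{R}_{n,k}^T \bm{h}(m_{n,k}, r_{n,k}) = \bm{\sigma}_{n,k}^T \bm{g}(m_{n,k})^{-T}$, which is the same structural cancellation that underlies the orthonormality proof in Proposition~\ref{orthoProp}.
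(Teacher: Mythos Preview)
Your treatment of the cases $n \geq 1$ is correct and matches the paper's proof almost line for line: the three-point evaluation, the split according to the nested tree structure, and the cancellation via $\bm{M}_{n,k} = \bm{L}_{n,k} + \bm{R}_{n,k}$ together with the additivity of $\bm{h}$ are exactly what the paper does.

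The gap is in your reading of $\bdelta_{0,0}$. You treat it as a smooth integrable function, but this is a typo in the paper's Definition~\ref{deltaDef}: the intended formula is $\bdelta_{0,0}(t) = (\bm{g}(t)^{-1})^T \bm{L}_{0,0}\,\delta(t - r_{0,0})$, a single Dirac mass at $r_{0,0} = 1$. The paper's own proof confirms this (``given the simple form of $\bdelta_{0,0}$ with a single Dirac function centered in $r_{0,0}$''), and it is also what the derivation $\bdelta_{0,0} = \mathcal{D}^*[\bphi_{0,0}]$ actually produces: $(\bm{f}^{-1})^T \bphi_{0,0}$ equals the constant $\bm{L}_{0,0}$ on $[0,1]$ and zero outside, so its distributional derivative is a pair of jumps, the one at $0$ being killed by the $D_0$-pairing. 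With this correction the $(0,0)$ case becomes a one-line computation: $\bpsi_{n,k}(r_{0,0}) = \bm{0}$ for all $n \geq 1$, while $\mathcal{P}(\bdelta_{0,0}, \bpsi_{0,0}) = \bm{L}_{0,0}^T\,\bm{g}(1)^{-1}\bpsi_{0,0}(1) = \bm{L}_{0,0}^T \bm{h}(0,1)\bm{L}_{0,0}$, which unwinds to $\bm{I}_d$ exactly as in the paper.

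Your adjoint-identity route $\mathcal{P}(\mathcal{D}^*[\bphi_{0,0}], \mathcal{K}[\bphi_{p,q}]) = \mathcal{P}(\bphi_{0,0}, \bphi_{p,q})$ is morally correct but relies on the operators $\mathcal{D}, \mathcal{D}^*$, which were introduced only under the favorable hypotheses $m=d$ and $\bm{f}$ invertible; Proposition~\ref{dualProp} is meant to hold in the general setting where those operators are not available. And your backup computation $\bm{L}_{0,0}^T \int_{S_{p,q}} \bm{g}(t)^{-1}\bpsi_{p,q}(t)\,dt$ is based on the smooth-function reading and does not give $\bm{I}_d$ for $(p,q)=(0,0)$ (you get $\bm{L}_{0,0}^T \bigl(\int_0^1 \bm{h}(0,t)\,dt\bigr)\bm{L}_{0,0}$ instead), so it should be dropped once $\bdelta_{0,0}$ is recognized as a Dirac.
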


\begin{proof}
We have to demonstrate that, for all $\big( (n,k),(p,q)\big)$ in $\I \times \I$,
\begin{equation*}
\mathcal{P}\big( \bdelta_{p,q} , \bpsi_{n,k} \big) = \delta^{n,k}_{p,q} \, \bm{I}_d \, .
\end{equation*}
Suppose first, $n,p>0$.
If $p < n$, $\mathcal{P}\big( \bdelta_{n,k} , \bpsi_{p,q} \big)$ can only be non-zero if the support $S_{p,q}$ is strictly included in $S_{n,k}$.
We then have
\begin{eqnarray*}
\mathcal{P}\big( \bdelta_{p,q} , \bpsi_{n,k} \big) 
&=&
\bm{M}^{T}_{p,q} \, \bm{g}^{-1}(m_{p,q})  \bpsi_{n,k}(m_{p,q})  \\
&& \qquad - \left( \bm{L}^{T}_{p,q} \, \bm{g}^{-1}(l_{p,q})  \bpsi_{n,k}(l_{p,q})  + \bm{R}^{T}_{p,q} \, \bm{g}^{-1}(r_{p,q})  \bpsi_{n,k}(r_{p,q}) \right) \, .
\end{eqnarray*}
Assume $S_{p,q}$ is to the left of $m_{n,k}$, that is, $S_{p,q}$ is a left-child of $S_{n,k}$ in the nested binary tree of supports and write
\begin{eqnarray*}
\mathcal{P}\big( \bdelta_{p,q} , \bpsi_{n,k} \big) 
= 
\Big( \bM^{T}_{p,q} \, \bh(l_{n,k},m_{p,q})   -   \bL^{T}_{p,q} \, \bh(l_{n,k},l_{p,q})\big) -  \bR^{T}_{p,q}  \bh(l_{n,k},r_{p,q})\big) \Big) \bL_{n,k}  \, .
\end{eqnarray*}
Using the fact that $\bM_{p,q} = \bL_{p,q}+ \bR_{p,q}$ and that the function $h(x,y)$, as any integral between $x$ and $y$, satisfies the chain rule $\bh(x,y)=\bh(x,z)+\bh(z,y)$ for all $(x,y,z)$, we obtain:
\begin{eqnarray*}
\mathcal{P}\big( \bdelta_{p,q} , \bpsi_{n,k} \big) 
&=& 
 \Big(- \bL^{T}_{p,q}\,\big( \bh(l_{n,k},m_{p,q}) - \bh(l_{n,k},l_{p,q})\big)   \\
 && \quad  + \bR^{T}_{p,q} \, \big( h(l_{n,k},m_{p,q}) - h(l_{n,k},r_{p,q})\big) \Big) \bL_{n,k}\\
&=&
\Big(- \bL^{T}_{p,q}\, \bh(l_{p,q},m_{p,q}) + \bR^{T}_{p,q} \, h(r_{p,q},m_{p,q})  \Big) \bL_{n,k}\\
&=& \Big(- \bm{\sigma}_{p,q}^T (\bm{g}^{-1}(m_{p,q}))^T (\bh(l_{p,q},m_{p,q})^{-1})^T\cdot \bh(l_{p,q},m_{p,q}) \\
&& \quad + \bm{\sigma}_{p,q}^T (\bm{g}^{-1}(m_{p,q}))^T (\bh(m_{p,q},r_{p,q})^{-1})^T\cdot \bh(m_{p,q},r_{p,q})\Big)\cdot \bL_{n,k}\\
&=&\bm{0}
\end{eqnarray*}
The same result is true if $S_{p,q}$ is a right-child of $S_{n,k}$ in the nested binary tree of supports.
If $p=n$, necessarily the only non-zero term is for $q=p$, i.e. 
\begin{eqnarray*}
\mathcal{P}\big( \bdelta_{p,q} , \bpsi_{n,k} \big) 
&=&
 \bM_{n,k}^{T} \, \bgg^{-1}(m_{n,k}) \, \bpsi(m_{n,k}) \\
&=&
 \bM_{n,k}^{T} \,  \bh(l_{n,k},m_{n,k}) \, \bL_{n,k}\\
&=&
\bm{\sigma}_{p,q}^{-1} \, \bgg(m_{n,k}) \,\bh(l_{n,k},m_{n,k})\, \bh(l_{n,k},m_{n,k})^{-1}\,\bgg^{-1}(m_{n,k}) \, \bm{\sigma}_{p,q} \\
& = & \bm{I}_{d}
 \, .
\end{eqnarray*}
If $p > n$, $\mathcal{P}\big( \bdelta_{n,k} , \bpsi_{p,q} \big)$ can only be non-zero if the support $S_{n,k}$ is included in $S_{p,q}$, but then $\bpsi_{n,k}$ is zero in $l_{p,q}$, $m_{p,q}$, $r_{p,q}$ so that $\mathcal{P}\big( \bdelta_{n,k} , \bpsi_{p,q} \big) = \bm{0}$.\\
Otherwise, if $n=0$ and $p>0$, we directly have 
\begin{eqnarray*}
\mathcal{P}\big( \bdelta_{p,q} , \bpsi_{0,0} \big) 
&=&
\bm{M}^{T}_{p,q} \, \bm{g}^{-1}(m_{p,q})  \bpsi_{0,0}(m_{p,q}) \\  
&& \quad   - \left( \bm{L}^{T}_{p,q} \, \bm{g}^{-1}(l_{p,q})  \bpsi_{0,0}(l_{p,q})  + \bm{R}^{T}_{p,q} \, \bm{g}^{-1}(r_{p,q})  \bpsi_{0,0}(r_{p,q}) \right) \, , \\
&=& 
 \Big(- \bL^{T}_{p,q} \, \bh(l_{p,q},m_{p,q}) + \bR^{T}_{p,q} \, \bh(m_{p,q},r_{p,q}) \Big) \bL_{0,0}\, , \\
&=&
 \Big( - \bm{\sigma}_{p,q}^{T} \, {\bm{g}^{-1}(m_{p,q})}^{T}+ \bm{\sigma}_{p,q}^{T} \, {\bm{g}^{-1}(m_{p,q})}^{T} \big) \Big) \bL_{n,k} \, ,\\
&=& \bm{0} \, .
 \nonumber
\end{eqnarray*}
Finally, if $p=0$, given the simple form of $\bdelta_{0,0}$ with a single Dirac function centered in $r_{0,0}$, we clearly have $\mathcal{P}\big( \bdelta_{0,0} , \bpsi_{n,k} \big) = 0$, and if $n>0$
\begin{eqnarray*}
\mathcal{P}\big( \bdelta_{0,0} , \bpsi_{0,0} \big) \, ,
&=& \bL^{T}_{0,0}\, \bh(l_{0,0},r_{0,0})\, \bL_{0,0} \, ,\\
&=& \bm{\sigma}_{0,0}^{T} \,  \big(\bm{g}^{-1}(r_{0,0})\big)^{T} \, \bL_{0,0} \, ,\\
&=& \bm{\sigma}_{0,0}^{T} \,  \big(\bm{g}^{-1}(r_{0,0})\big)^{T} \,  \big(\bh(l_{0,0},r_{0,0})\, \bL_{0,0}) \big)^{-1} \,  \bm{g}^{-1}(r_{0,0}) \, \bm{\sigma}_{0,0} \, , \\
&=& \bm{\sigma}_{0,0}^{T} \, \bm{C}_{r_{0,0}}^{-1} \,  \bm{\sigma}_{0,0} \, ,
\end{eqnarray*}
and using the fact that (by definition of $\bm{\sigma}_{0,0}$) we have $\bm{\sigma}_{0,0} \cdot \bm{\sigma}_{0,0}^{T} = \bm{C}_{r_{0,0}}$, this last expression is equal to:
\[\mathcal{P}\big( \bdelta_{0,0} , \bpsi_{0,0} \big)=\bm{\sigma}_{0,0}^{T} \, \big(\bm{\sigma}_{0,0}^T\big)^{-1} \cdot \big(\bm{\sigma}_{0,0}\big)^{-1}  \,  \bm{\sigma}_{0,0} = I_d,\]
which completes the proof. 
\end{proof}

This proposition directly implies the main result of the section:
\begin{theorem}\label{thm:schauderbasis}
	The collection of functions $(\bpsi_{n,k}\;;\;(n,k)\in \I)$ constitute a Schauder basis of functions on $C_0(U,\R^d).$
\end{theorem}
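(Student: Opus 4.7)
The plan is to prove the Schauder basis property by combining the biorthogonality of Proposition~\ref{dualProp} with a multi-resolution interpolation identity for the partial sums. For any $x \in C_0(U, \R^d)$, I define the coefficients $\bxi_{n,k}(x) := \mathcal{P}(\bdelta_{n,k}, x)$, which are well-defined because each $\bdelta_{n,k} \in R(U)$ is a continuous linear functional on $C_0(U, \R^d)$, and the partial sums
\begin{equation*}
    S_N x \;:=\; \bpsi_{0,0}\, \bxi_{0,0}(x) \;+\; \sum_{n=1}^{N-1} \sum_{k} \bpsi_{n,k}\, \bxi_{n,k}(x).
\end{equation*}
The theorem reduces to showing (i) that the $\bxi_{n,k}(x)$ are the unique admissible coefficients and (ii) that $S_N x \to x$ uniformly on $U$.

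\textbf{Uniqueness} follows directly by dual pairing: if $x = \lim_N \sum \bpsi_{n,k}\, \bm{\eta}_{n,k}$ in the sup norm, then applying the bounded functional $\mathcal{P}(\bdelta_{p,q},\,\cdot)$ term by term and invoking the biorthogonality relation $\mathcal{P}(\bdelta_{p,q}, \bpsi_{n,k}) = \delta^{n,k}_{p,q}\,\bm{I}_d$ from Proposition~\ref{dualProp} forces $\bm{\eta}_{p,q} = \bxi_{p,q}(x)$.

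\textbf{Existence} rests on the following multi-resolution interpolation identity, which I would prove by induction on $N$: for every $(N,k) \in \I$ and every $t \in S_{N,k}$,
\begin{equation*}
    S_N x(t) \;=\; \bm{\mu}^l(t;\,l_{N,k},r_{N,k})\, x(l_{N,k}) \;+\; \bm{\mu}^r(t;\,l_{N,k},r_{N,k})\, x(r_{N,k}).
\end{equation*}
In particular, $S_N x$ and $x$ agree on the dense set $\bigcup_N D_N$. The inductive step exploits the nested support structure: the level-$N$ functions $\bpsi_{N,k}$ vanish at every endpoint in $D_N$, so $S_{N+1} x$ agrees with $S_N x$ there. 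At each new midpoint $m_{N,k} \in D_{N+1} \setminus D_N$, only the single term $\bpsi_{N,k}(m_{N,k})\,\bxi_{N,k}(x) = \bm{\sigma}_{N,k}\,\bxi_{N,k}(x)$ contributes to the update. Since the Dirac structure of $\bdelta_{N,k}$ (Definition~\ref{deltaDef}) makes $\bxi_{N,k}(x)$ an explicit linear combination of $x(l_{N,k}), x(m_{N,k}), x(r_{N,k})$, and since by the inductive hypothesis $S_N x$ already equals $x$ at $l_{N,k}, r_{N,k}$, the algebraic identities of Proposition~\ref{computProp} (namely $\bm{M}_{n,k} = \bm{L}_{n,k} + \bm{R}_{n,k}$ and the splitting of $\bm{\Sigma}_{n,k}^{-1}$) force the update to exactly match the required bridge values on the two new sub-intervals.

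\textbf{Uniform convergence} then follows by writing, on each $S_{N,k}$,
\begin{equation*}
    S_N x(t) - x(t) = \bm{\mu}^l\bigl(x(l_{N,k}) - x(t)\bigr) + \bm{\mu}^r\bigl(x(r_{N,k}) - x(t)\bigr) + \bigl(\bm{\mu}^l + \bm{\mu}^r - \bm{I}_d\bigr) x(t).
\end{equation*}
The matrix functions $\bm{\mu}^{l}, \bm{\mu}^{r}$ are continuous on their compact parameter domains and hence uniformly bounded. The $\rho$-regularity of the partition gives $\max_k (r_{N,k} - l_{N,k}) \to 0$, so uniform continuity of $x$ on the compact closure of $U$ kills the first two terms uniformly. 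A short computation using the chain rule \eqref{eq:hChainRule} for $\bh$ and the continuity of the flow $\bF$ yields $\bm{\mu}^l(t;l,r) + \bm{\mu}^r(t;l,r) \to \bm{I}_d$ uniformly as $r - l \to 0$, which kills the last term. Taking supremum over $k$ gives $\|S_N x - x\|_{\infty} \to 0$.

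\textbf{Main obstacle.} The delicate step is establishing the full bridge interpolation identity on each sub-interval $S_{N,k}$, rather than merely pointwise agreement on $D_N$. Verifying that the explicit expressions for $\bpsi_{n,k}$ and the Dirac-form coefficients $\bxi_{n,k}(x)$ combine to produce precisely the Gauss–Markov bridge formula of Proposition~\ref{bridgeProp} requires careful algebraic bookkeeping across the nested tree of supports, and is where Proposition~\ref{computProp} and the specific normalization by $\bm{\sigma}_{n,k}$ all come into play. Once this identity is in hand, both uniqueness and convergence follow by the routine arguments above.
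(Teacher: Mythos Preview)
Your argument is correct and is considerably more complete than the paper's own treatment, which simply asserts that the theorem follows ``directly'' from Proposition~\ref{dualProp} and gives no further justification. Duality by itself only delivers uniqueness of coefficients; the convergence $S_N x \to x$ for an arbitrary continuous $x$ is a genuine additional step, and your bridge-interpolation identity
\[
S_N x(t) \;=\; \bm{\mu}^l(t;l_{N,k},r_{N,k})\,x(l_{N,k}) + \bm{\mu}^r(t;l_{N,k},r_{N,k})\,x(r_{N,k}) \qquad (t\in S_{N,k})
\]
together with the modulus-of-continuity estimate and $\bm{\mu}^l+\bm{\mu}^r\to\bm{I}_d$ is exactly what is needed to close it. The paper does prove convergence statements later (Proposition~\ref{lem:convergence}, Theorem~\ref{lemInv}), but only for coefficient sequences in $\uxi\Omega'$, i.e.\ essentially for sample paths of the underlying process, and never returns to justify the Schauder-basis claim for a generic $x\in C_0$. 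So your proof is not a different route so much as the honest completion of the route the paper only gestures at.

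One small caveat worth recording: as literally stated, the theorem claims a Schauder basis for $C_0(U,\R^d)$, but since all $\bpsi_{n,k}$ vanish on $U\setminus[0,1]$ except for the arbitrary continuation $\bm{c}$ of $\bpsi_{0,0}$, the expansion cannot reproduce a general $x$ outside $[0,1]$. Your bridge identity and convergence argument live entirely on $[0,1]$, which is the intended content; the open neighbourhood $U$ is only a device so that the endpoint Dirac masses in $\bdelta_{n,k}$ act on genuinely interior points. In other words, what you are proving---and what the paper means---is that $(\bpsi_{n,k})$ is a Schauder basis of $C_0([0,1],\R^d)$.
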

 
This theorem provides us with a complementary view of stochastic processes: in addition to the standard sample paths view, this structure allows to see Gauss-Markov processes as coefficients on the computed basis. This duality is developed in the sequel.


\subsection{The Sample Paths Space} 


\subsubsection{The Construction Application} \label{app:ConsAppl}
The Schauder basis of functions with compact supports constructed allows to define functions by considering the coefficients on this basis, which constitute sequences of real numbers in the space: 
\begin{equation} 
\uxi \Omega = \big \lbrace \bm{\xi}={\lbrace \bm{\xi}_{n,k} \rbrace}_{\I} \, ; \, \forall \, (n,k) \in \I, \bm{\xi}_{n,k} \in \mathbb{R}^{d} \big \rbrace =  \left( \mathbb{R}^{d} \right)^{\I}
\nonumber.
\end{equation}
We equip $\uxi \Omega$ with the uniform norm $\Vert \bm{\xi} \Vert_{\infty} = \sup_{(n,k) \in \I}\vert  \bm{\xi}_{n,k} \vert$, where we write $ \vert  \bm{\xi}_{n,k} \vert = \sup_{0 \leq i <d} \vert (\bm{\xi}_{n,k})_{i}\vert$. We denote by $\mathcal{B}\left(\uxi \Omega\right)$ the Borelian sets of the topology induced by the uniform norm and we recall that $C(\uxi \Omega)$, the cylinder sets of $\uxi \Omega$, form a generative family of Borelian sets. Remark that not any sequence of coefficients provide a continuous function, and one needs to assume a certain decrease in the coefficients to get convergence. A sufficient condition to obtain convergent sequences is to consider coefficients in the space:
\[\uxi \Omega' = \Big\{ \bxxi \in \uxi \Omega \; \vert \; \exists \delta \in (0,1), \; \exists N \in \N,\; \forall (n,k) \in \I \setminus \I_{N} ,\, \vert \bxxi_{n,k} \vert < 2^{\frac{n\delta}{2}}\Big\}\] 
This set is clearly a Borelian set of $\uxi \Omega$, since it can be written as a countable intersection and union of cylinder, namely, by denoting $\mathcal{J}$ the set of finite subset of $\mathbb{N}$ and $\delta_{p} = 1 \!-\!1/p$, $p>1$,
\begin{equation}
\uxi \Omega' = \bigcup_{p >1} \bigcup_{J \in \mathcal{J}} \bigcap_{ n \in \mathbb{N} \setminus J} \Big\{ \bxxi  \in \uxi \Omega \, \Big \vert \, \max_{0 \leq k < 2^{n-1}} \vert \bxxi_{n,k} \vert < 2^{\frac{n\delta_{p}}{2}} \Big\} \, .
\nonumber
\end{equation}
It is also easy to verify that it forms a vectorial subspace of $\uxi \Omega$.

After these definitions, we are in position to introduce the following useful function:
\begin{definition}
 We denote by $\bPsi^{N}$ the \emph{partial construction application}:
\begin{equation*}
\bPsi^{N} = \begin{cases} 
	\uxi \Omega & \longrightarrow  C_{0}\big([0,1],\R^{d}\big)\\
  \bxxi  & \longmapsto \sum_{(n,k) \in \I_{N}} \bpsi_{n,k}(t) \cdot \bxi_{n,k} \, .
\end{cases}
\end{equation*}
where the $C_{0}\big([0,1],\R^{d}\big)$ is the $d$-dimensional Wiener space, which is complete under the uniform norm $\Vert \bm{x}\Vert_{\infty} = \sup_{0 \leq t \leq 1} \vert \bm{x}(t) \vert$.
\end{definition}

This sequence of partial construction applications is shown to converge to the construction application in the following:

\begin{proposition}
\label{lem:convergence}
For every $\bxi$ in $\uxi \Omega'$, $ \bPsi^{N}(\bxi)$ converges uniformly toward a continuous function in $C_{0}\big([0,1],\R^{d}\big)$. We will denote this function $\bPsi(\bxi)$, defined as:
\[
\bPsi: \begin{cases}
\uxi \Omega' &\longrightarrow C_0([0,1],\R^d)\\
\bxi &\longmapsto \sum_{(n,k)\in \I}\psi_{n,k}(t) \cdot  \bxi_{n,k}
\end{cases}\]
and this application will be referred to as the construction application. 
\end{proposition}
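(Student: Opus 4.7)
The plan is to adapt the classical Lévy-Ciesielski convergence argument to the Schauder system constructed here, which rests on two ingredients: a sharp $L^\infty$ estimate on the basis elements $\bpsi_{n,k}$ at each level, and the fact that the supports $S_{n,k}$ at a fixed level $n$ are essentially disjoint (they share only endpoints where, by construction, $\bpsi_{n,k}$ vanishes). This gives, for every $t\in[0,1]$,
\[
\Bigl\| \sum_{k \in \I_n} \bpsi_{n,k}(t) \cdot \bxi_{n,k} \Bigr\|
\leq \Bigl(\max_{k} \Vert \bpsi_{n,k}\Vert_\infty\Bigr)\cdot \Bigl(\max_{k} \vert \bxi_{n,k}\vert\Bigr),
\]
reducing the uniform Cauchy property of $\bPsi^N(\bxi)$ to the summability over $n$ of the right-hand side.

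The first main step is to establish a geometric decay estimate $\Vert \bpsi_{n,k} \Vert_\infty \leq C\,(r_{n,k}-l_{n,k})^{1/2}$ uniformly in $k$. From Definition~\ref{def:psiDef}, on the left half-support one has $\bpsi_{n,k}(t)=\bg(t)\,\bh(l_{n,k},t)\,\bL_{n,k}$ with $\bL_{n,k}=\bh(l_{n,k},m_{n,k})^{-1}\bg^{-1}(m_{n,k})\bm{\sigma}_{n,k}$. Using continuity of $\bg,\bg^{-1}$, the fact that $\bh(x,y)=\int_x^y \bg^{-1}\bm{\Gamma}(\bg^{-1})^T$ scales linearly in $(y-x)$, and that $\bm{\sigma}_{n,k}$, as a Cholesky factor of $\bm{\Sigma}_{n,k}=\bh_{m_{n,k}}(l_{n,k},m_{n,k})\bh_{m_{n,k}}(l_{n,k},r_{n,k})^{-1}\bh_{m_{n,k}}(m_{n,k},r_{n,k})$, scales like $(r_{n,k}-l_{n,k})^{1/2}$, one concludes the $\sqrt{r_{n,k}-l_{n,k}}$ bound; the right half-support is symmetric. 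The contraction hypothesis $\max(r_{n,k}-m_{n,k},m_{n,k}-l_{n,k})<\rho(r_{n,k}-l_{n,k})$ from Section~\ref{sec:supports} then propagates to $(r_{n,k}-l_{n,k})\leq \rho^{\,n-1}$, giving $\max_k\Vert \bpsi_{n,k}\Vert_\infty \leq C\,\rho^{(n-1)/2}$.

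The second step is to combine this with the defining property of $\uxi\Omega'$: for $\bxi\in \uxi\Omega'$ there exist $\delta\in(0,1)$ and $N_0$ with $\vert\bxi_{n,k}\vert <2^{n\delta/2}$ for all $n>N_0$. Therefore the tail satisfies
\[
\sum_{n>N_0} \Bigl\Vert \sum_{k \in \I_n}\bpsi_{n,k}\cdot \bxi_{n,k}\Bigr\Vert_\infty \;\leq\; C\sum_{n>N_0}\bigl(\sqrt{\rho}\cdot 2^{\delta/2}\bigr)^{\,n},
\]
which is a convergent geometric series (the exponent condition is met since one can choose $\delta$ small enough, and in the dyadic reference case $\rho=1/2$ any $\delta<1$ works). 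By the Weierstrass $M$-test, the partial sums $\bPsi^N(\bxi)$ form a uniformly Cauchy sequence of continuous functions on $[0,1]$, hence converge uniformly to a continuous limit $\bPsi(\bxi)$. Continuity at $t=0$ of the limit follows because each $\bpsi_{n,k}$ vanishes at $0$ (for $n\geq 1$ the support is contained in $[l_{n,k},r_{n,k}]\subset (0,1]$ or, more precisely, $\bpsi_{n,k}$ vanishes at the endpoints of its support; and $\bpsi_{0,0}(0)=\bg(0)\bh(0,0)\bL_{0,0}=0$), so $\bPsi(\bxi)(0)=0$, placing the limit in $C_0([0,1],\R^d)$.

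The expected main obstacle is the first step: the uniform sup-norm bound $\Vert \bpsi_{n,k}\Vert_\infty \leq C\sqrt{r_{n,k}-l_{n,k}}$. It requires tracking matrix norms of products involving $\bh$ and its inverse over shrinking intervals, and invoking the positive-definiteness and continuity assumptions on $\bm{\Gamma}$ to ensure that the constants do not blow up uniformly in $(n,k)$. Once this estimate is in place, the remaining work is a straightforward geometric summation combined with the disjointness of the supports at each fixed level.
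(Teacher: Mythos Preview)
Your overall architecture matches the paper's: bound $\Vert\bpsi_{n,k}\Vert_\infty$ by a constant times $|S_{n,k}|^{1/2}$, exploit the disjointness of supports at each fixed level, and conclude with a geometric series and the Weierstrass $M$-test. The paper does exactly this.

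The substantive difference is how the sup-norm bound on $\bpsi_{n,k}$ is obtained. The paper does \emph{not} work from the explicit formula in Definition~\ref{def:psiDef}. Instead it uses the representation $\bpsi_{n,k}=\mathcal{K}[\bphi_{n,k}]$ from~\eqref{eq:psinkKPhi}, writes each entry of $\int_0^t \bm f(s)\,\bphi_{n,k}(s)\,ds$ as an $L^2$ inner product, and applies Cauchy--Schwarz together with the orthonormality $\Vert c_j(\bphi_{n,k})\Vert_2=1$ established in Proposition~\ref{orthoProp}. The only remaining factor is $\Vert\mathbbm{1}_{S_{n,k}}\,l_i(\bm f)\Vert_2\le K\,|S_{n,k}|^{1/2}$, which needs nothing beyond boundedness of $\bm f$. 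This is both shorter and more robust than tracking all the matrix products.

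There is a genuine gap in your route. You write that $\bh(x,y)$ ``scales linearly in $(y-x)$'' and that $\bm\sigma_{n,k}$ ``scales like $(r_{n,k}-l_{n,k})^{1/2}$'', and you ultimately appeal to ``positive-definiteness \dots\ assumptions on $\bm\Gamma$''. The paper does \emph{not} assume $\bm\Gamma$ is positive definite; it only assumes the process is non-degenerate (the covariance $\bm C_u(v,v)$ is invertible). These are not the same: for the iteratively integrated Wiener process of Section~4.2.2, $\bm\Gamma$ has rank~1, the entries of $\bh_u(s,t)$ scale like various powers $(t-s)^{2d-1-(i+j)}$, and the diagonal entries of $\bm\sigma_{n,k}$ range from order $(r-l)^{1/2}$ to $(r-l)^{d-1/2}$ (cf.\ Appendix~\ref{append:IP}). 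Your linear-scaling heuristic is therefore false in this regime, and the cancellation between $\bh(l,t)$, $\bh(l,m)^{-1}$ and $\bm\sigma_{n,k}$ that still yields the correct $|S_{n,k}|^{1/2}$ bound is not explained by your argument. The paper's Cauchy--Schwarz route sidesteps this entirely because the orthonormality of $\bphi_{n,k}$ already encodes the right normalization in every case.

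A minor second point: you write that the geometric series converges ``since one can choose $\delta$ small enough''. You cannot: $\delta$ is dictated by $\bxi\in\uxi\Omega'$, not chosen afterwards. The paper's proof (written for the dyadic partition with $|S_{n,k}|=2^{-n+1}$) gets the ratio $2^{(\delta-1)/2}<1$ for every $\delta<1$, so no choice is needed there; for general partitions with contraction ratio $\rho$, both your argument and the paper's need $2^{\delta}\rho<1$, and this is a constraint on the pair $(\rho,\delta)$ rather than on $\delta$ alone.
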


This proposition is proved in appendix \ref{append:ConstructCoeff}. The image of this function constitutes a subset of the Wiener space continuous functions $C_0([0,1],\R^d)$. Let us now define the vectorial subspace $\ux \Omega' = \bPsi( \uxi \Omega')$ of $C_{0}\big([0,1],\R^{d}\big)$ so that $\bPsi$ appears as a bijection.

It is important to realize that, in the multidimensional case, the space ${}_{ \scriptscriptstyle x \displaystyle}\Omega'$  depends on $\bm{\Gamma}$ and $\bm{\alpha}$  in a non-trivial way.
For instance, assuming $\balpha = 0$, the space ${}_{ \scriptscriptstyle x \displaystyle}\Omega'$ depends obviously crucially on the rank of $\bm{\Gamma}$.
To fix idea, for a given constant  $\bm{\sqrt{\bm{\Gamma}(t)}} = \left[ 0,0 \ldots 1\right]^{T}$ in $\R^{d \times 1}$, we expect the space ${}_{ \scriptscriptstyle x \displaystyle}\Omega'$ to only include sample paths of $C_{0}\big([0,1],\R^{d}\big)$ for which the $n\!-\!1$ first components are constant.
Obviously, a process with such sample paths is degenerate in the sense that its covariance matrix is not invertible.\\
Yet, if we additionally relax the hypothesis that $\balpha \neq 0$,  the space ${}_{ \scriptscriptstyle x \displaystyle}\Omega'$ can be dramatically altered:
if we take 
\[
 \bm{\alpha}(t) =  \left[ \begin{array}{ccccc}
		0&1\\
		&   \ddots & \ddots& \\
		&&\ddots&1\\
		&&&0\\
\end{array}
\right]
\]
the space ${}_{ \scriptscriptstyle x \displaystyle}\Omega'$ will represent the sample space of the $d\!-\!1$-integrated Wiener process, a non-degenerate d-dimensional process we fully develop in the example section.

However, the situation is much simpler in the one-dimensional case: because the uniform convergence of the sample paths is preserved as long as $\alpha$ is continuous and $\Gamma$ is non-zero through \eqref{eq:UnifBound}, the definition $\ux \Omega'$ does not depend on $\alpha$ or $\Gamma$.
Moreover, in this case, the space $\ux \Omega'$ is large enough to contain reasonably regular functions as proved in appendix \ref{append:ConstructCoeff}, Proposition \ref{lem:Hold}. 

In the case of the $d\!-\!1$-integrated Wiener process, the space ${}_{ \scriptscriptstyle x \displaystyle}\Omega'$ clearly contains the functions $\left\{ \bm{f} = \left( f_{d-1}, \ldots ,f_{0}\right) \, \vert \, f_{0} \in H , f'_{i} = f_{i-1} \, , 0 < i < d \right\} $.

This remark does not holds  that the space ${}_{ \scriptscriptstyle x \displaystyle}\Omega'$ does not depend on $\alpha$  as long as $\alpha$ is continuous, because the uniform convergence of the sample paths is preserved through the change of basis of expansion $\psi_{n,k}$ through \eqref{eq:UnifBound}.

We equip the space $\ux \Omega'$ with the topology induced by the uniform norm on $C_{0}\big([0,1],\R^{d}\big)$.
As usual, we denote $\mathcal{B}(\ux \Omega')$ the corresponding Borelian sets.
We prove in Appendix \ref{append:ConstructCoeff} that:

\begin{proposition}\label{lem:bijection}
The function $\bPsi: \left( \uxi \Omega',\mathcal{B}\left(\uxi \Omega' \right)\right)     \rightarrow    \left(\ux \Omega', \mathcal{B}(\ux \Omega')  \right)$ is a bounded continuous bijection.
\end{proposition}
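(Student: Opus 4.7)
The plan is to verify three items in turn: surjectivity, injectivity, and continuity/boundedness, with well-definedness into $C_{0}\big([0,1],\R^{d}\big)$ already granted by Proposition~\ref{lem:convergence}. Surjectivity is tautological, since $\ux \Omega'$ is defined as the image $\bPsi(\uxi \Omega')$.

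For injectivity, the natural tool is the dual family $\bdelta_{n,k}$ of Proposition~\ref{dualProp}, which satisfies the biorthogonality $\mathcal{P}(\bdelta_{p,q},\bpsi_{n,k}) = \delta^{n,k}_{p,q}\,\bm{I}_d$. Given $\bxi \in \uxi\Omega'$ and $\bx = \bPsi(\bxi)$, the plan is to recover each coefficient by evaluating the continuous linear functional $\mathcal{P}(\bdelta_{p,q}, \cdot)$ on $\bx$:
\[
\mathcal{P}(\bdelta_{p,q},\bx) \;=\; \mathcal{P}\!\left(\bdelta_{p,q},\lim_{N \to \infty}\bPsi^N(\bxi)\right) \;=\; \lim_{N \to \infty}\sum_{(n,k) \in \I_N} \mathcal{P}(\bdelta_{p,q},\bpsi_{n,k})\cdot\bxi_{n,k} \;=\; \bxi_{p,q}.
\]
The interchange of the infinite sum and the pairing is legitimate because $\bdelta_{p,q}$ is a finite linear combination of Dirac masses supported at the endpoints $l_{p,q}, m_{p,q}, r_{p,q}$, so $\mathcal{P}(\bdelta_{p,q}, \cdot)$ acts on $C_{0}(U)$ through continuous point evaluations, and the partial sums $\bPsi^N(\bxi)$ converge uniformly to $\bx$ by Proposition~\ref{lem:convergence}. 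Hence $\bPsi(\bxi)=\bPsi(\bxi')$ forces $\bxi = \bxi'$, giving injectivity.

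For continuity and boundedness, linearity reduces the question to controlling $\|\bPsi(\bxi)\|_\infty$ in terms of $\bxi$. The key observation is that for any fixed $t \in [0,1]$, the nested binary-tree structure of the supports $S_{n,k}$ exhibits at each level $n$ a unique index $k_n(t)$ such that $t \in S_{n,k_n(t)}$, whence
\[
|\bPsi(\bxi)(t)| \;\leq\; \sum_{n \geq 0}\|\bpsi_{n,k_n(t)}\|_\infty\,|\bxi_{n,k_n(t)}|.
\]
Explicit inspection of \eqref{eq:BasisExpressionFirst}, combined with the geometric contraction of the supports and the continuity of $\bm{f}$, $\bm{g}$, and $\bm{h}$, yields a bound of the form $\|\bpsi_{n,k}\|_\infty \leq C\,2^{-n/2}$, analogous to the scalar L\'evy--Ciesielski elements. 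Since $\bxi \in \uxi\Omega'$ eventually satisfies $|\bxi_{n,k}| < 2^{n\delta/2}$ for some $\delta<1$, the tail is dominated by the geometric series $\sum_n 2^{-n(1-\delta)/2}$ and the sup norm of $\bPsi(\bxi)$ is finite, with bounded sets of $\uxi\Omega'$ mapped to bounded sets of $\ux\Omega'$. The main obstacle I anticipate is extracting the uniform-in-$n$ decay $\|\bpsi_{n,k}\|_\infty \leq C\,2^{-n/2}$ in the multidimensional setting, where the matrices $\bm{L}_{n,k}$, $\bm{R}_{n,k}$, $\bm{\sigma}_{n,k}$ depend on the local covariance $\bh_{m_{n,k}}$: non-degeneracy of $\bm{\Gamma}$ and continuity of $\bm{\alpha}$ must be leveraged to guarantee that these matrices do not blow up as $n \to \infty$. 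These estimates, carried out already in Appendix~\ref{append:ConstructCoeff} for Proposition~\ref{lem:convergence}, then combine with the injectivity argument above to finish the proof.
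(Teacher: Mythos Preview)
Your proposal is correct and follows essentially the same route as the paper: surjectivity is tautological, injectivity is exactly the paper's Lemma~\ref{lem:injection} via the dual family $\bdelta_{n,k}$, and continuity comes from the decay $\|\bpsi_{n,k}\|_\infty \le C\,2^{-n/2}$ together with the single-support-per-level structure, which is precisely how the paper derives~\eqref{eq:UnifBound} and then uses it. Your anticipated obstacle is not one: the paper never bounds $\bm{L}_{n,k},\bm{R}_{n,k},\bm{\sigma}_{n,k}$ directly but instead writes $\bpsi_{n,k}(t)=\bm{g}(t)\int_0^t \bm{f}(s)\bphi_{n,k}(s)\,ds$ and applies Cauchy--Schwarz on the support of length $\le 2^{-n+1}$, using that the columns of $\bphi_{n,k}$ have unit $L^2$ norm (Proposition~\ref{orthoProp}); this gives the $2^{-n/2}$ decay in full generality with no matrix estimates needed.
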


We therefore conclude that we dispose of a continuous bijection mapping the coefficients onto the sample paths, $\bPsi$. We now turn to study its inverse, the coefficient application, mapping sample paths on coefficients over the Schauder basis. 


\subsubsection{The Coefficient Application} \label{app:CoefAppl}

In this section, we introduce and study the properties of the following function:
\begin{definition}\label{def:coeffAppli} 
	We call \emph{coefficient application} and denote by $\bXi$ the function defined by:
\begin{equation}\label{eq:CoeffAppliDef}
\bXi : \begin{cases}
C_{0}\big([0,1],\R^{d}\big) & \longrightarrow \uxi \Omega = \left( \R^{d} \right)^{\I}\\
\qquad \quad \bm{x} & \longmapsto \bDelta(\bm{x}) = \large\{\bDelta(\bm{x}) \large\}_{(n,k) \in \I}
\quad \mathrm{with} \quad 
{\lbrace \bDelta(\bm{x}) \rbrace}_{n,k} = \mathcal{P}\left( \bdelta_{n,k}, \bm{x} \right) \, .
\end{cases}
\end{equation}
\end{definition}

Should a function $x$ admits a uniformly convergent  decomposition in terms on the basis of elements $\bpsi_{n,k}$, the function $\bDelta$ gives its coefficients in such a representation. 
More precisely, we have:

\begin{theorem} \label{lemInv}
The function $\bDelta:\left(\ux \Omega' ,\mathcal{B}\left(  \ux \Omega' \right)  \right) \rightarrow \left(  \uxi \Omega',\mathcal{B}\left(\uxi \Omega' \right)\right) $ is a measurable linear bijection whose inverse is $\bPsi = \bDelta^{-1}$. 
\end{theorem}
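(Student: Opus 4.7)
The plan is to deduce the theorem from the duality identity $\mathcal{P}(\bdelta_{n,k},\bpsi_{p,q}) = \delta^{n,k}_{p,q}\bm{I}_d$ established in Proposition \ref{dualProp}, together with the uniform convergence of the partial sums $\bPsi^N$ on $\uxi\Omega'$ proved in Proposition \ref{lem:convergence}. Linearity of $\bDelta$ is immediate: each coefficient $\{\bDelta(\bm{x})\}_{n,k} = \mathcal{P}(\bdelta_{n,k},\bm{x})$ is defined by a dual pairing which is bilinear in its arguments, so $\bDelta$ is linear in $\bm{x}$.

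The core of the argument is to verify the two composition identities. For $\bDelta \circ \bPsi = \mathrm{Id}$ on $\uxi\Omega'$, I would fix $\bxi \in \uxi\Omega'$ and a multi-index $(p,q) \in \I$. For any $N \geq 0$ large enough that $(p,q) \in \I_N$, the partial sum $\bPsi^N(\bxi)$ is a finite linear combination of $\bpsi_{n,k}$, so by Proposition \ref{dualProp}
\begin{equation*}
\mathcal{P}\!\left(\bdelta_{p,q},\bPsi^N(\bxi)\right) = \sum_{(n,k) \in \I_N} \mathcal{P}\!\left(\bdelta_{p,q},\bpsi_{n,k}\right) \cdot \bxi_{n,k} = \bxi_{p,q}.
\end{equation*}
Since $\bdelta_{p,q}$ is either a finite combination of Dirac masses weighted by continuous matrix functions (for $p \neq 0$) or the continuous function $(\bm{g}^{-1})^T\bm{L}_{0,0}$ (for $p=0$), the functional $\bm{x} \mapsto \mathcal{P}(\bdelta_{p,q},\bm{x})$ is continuous on $C_0([0,1],\R^d)$ under the uniform norm. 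Combined with the uniform convergence of $\bPsi^N(\bxi) \to \bPsi(\bxi)$ this gives $\{\bDelta(\bPsi(\bxi))\}_{p,q} = \bxi_{p,q}$. In particular, $\bDelta$ does map $\ux\Omega' = \bPsi(\uxi\Omega')$ into $\uxi\Omega'$. Conversely, for $\bPsi \circ \bDelta = \mathrm{Id}$ on $\ux\Omega'$, any $\bm{x} \in \ux\Omega'$ may be written as $\bm{x} = \bPsi(\bxi)$ for some $\bxi \in \uxi\Omega'$ by definition; the previous identity forces $\bDelta(\bm{x}) = \bxi$, hence $\bPsi(\bDelta(\bm{x})) = \bm{x}$. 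This simultaneously establishes that $\bDelta$ is a bijection with inverse $\bPsi$.

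It remains to check measurability of $\bDelta$. Since $\mathcal{B}(\uxi\Omega')$ is generated by cylinder sets, it suffices to show that for each $(n,k)$ the component $\bm{x} \mapsto \{\bDelta(\bm{x})\}_{n,k}$ is Borel-measurable from $(\ux\Omega',\mathcal{B}(\ux\Omega'))$ to $\R^d$. But this component is a continuous linear functional on $C_0([0,1],\R^d)$ with the uniform norm (a finite linear combination of point evaluations, or an integral against a bounded continuous kernel when $(n,k)=(0,0)$), and therefore restricts to a continuous, hence Borel, map on $\ux\Omega'$.

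The only delicate step is the interchange of the dual pairing with the infinite sum in the first composition; the main obstacle is rigorously controlling this in view of the fact that $\bdelta_{p,q}$ is only a Radon measure. This is handled cleanly because each $\bdelta_{p,q}$ has either finite support or is genuinely continuous, so the functional $\mathcal{P}(\bdelta_{p,q},\cdot)$ is continuous with respect to the uniform topology inherited from $C_0([0,1],\R^d)$, allowing the passage to the limit without invoking dominated convergence on the whole Schauder expansion.
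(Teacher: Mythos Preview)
Your proof is correct and complete. It differs from the paper's in the order of the argument: you first establish $\bDelta \circ \bPsi = \mathrm{Id}_{\uxi\Omega'}$ by exploiting the continuity of each functional $\mathcal{P}(\bdelta_{p,q},\cdot)$ under the uniform norm together with the uniform convergence of the partial sums $\bPsi^N$, and then deduce $\bPsi \circ \bDelta = \mathrm{Id}_{\ux\Omega'}$ from the very definition $\ux\Omega' = \bPsi(\uxi\Omega')$. The paper instead proves $\bPsi \circ \bDelta = \mathrm{Id}_{\ux\Omega'}$ directly, using the interpolation structure: for $t$ in the dense set of partition endpoints $D_N$, only the terms with $n \le N$ contribute to the sum, and the finite-dimensional identity $\bPsi_N \circ \bDelta_N = \mathrm{Id}$ (established earlier from the triangular structure) gives $\bPsi(\bDelta(\bm{x}))(t) = \bm{x}(t)$; continuity and density then finish the job. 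Your route is arguably more self-contained, since it relies only on Proposition~\ref{dualProp} and Proposition~\ref{lem:convergence} without invoking the finite-dimensional matrix inversion; the paper's route, on the other hand, emphasizes the interpolation property that is central to the construction. Your measurability argument---continuity of each coordinate functional---is also more direct than the paper's inductive treatment of cylinder sets in Lemma~\ref{lemXi}, and is perfectly adequate here.
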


The proof of this theorem is provided in Appendix \ref{append:ConstructCoeff}


\section{Representation of Gauss-Markov Processes}


\subsection{Inductive  Construction of Gauss-Markov Processes}

Up to this point, we have rigorously defined the dual spaces of sample paths $\ux \Omega'$ and coefficients $\uxi \Omega'$.
Through the use of the Schauder basis $\bpsi_{n,k}$ and its dual family of generalized functions $\bdelta_{n,k}$, we have defined inverse measurable bijections $\bPsi$ and $\bDelta$ transforming one space into the other.
In doing so, we have unraveled the fundamental role played by the underlying orthonormal basis $\bphi_{n,k}$.
We now turn to use this framework to formulate a path-wise construction of the Gauss-Markov processes in the exact same flavor as the Levy-Cesielski construction of the Wiener process.


\subsubsection{Finite-Dimensional Approximations}

Considering the infinite dimensional subspace $\ux \Omega'$ of $C_{0}\big([0,1],\R^{d}\big)$, let us introduce the equivalence relation $\sim_{N}$ as
\begin{equation}
\bx \sim_{N} \by \iff  \; \forall \; t \in D_{N} , \quad \bx(t) = \by(t) \, .
\nonumber
\end{equation}
We can use the functions $ \bPsi$ to carry the structure of $\sim_{N}$ on the infinite-dimensional space of coefficients $\uxi \Omega'$:
\begin{equation}
\bxi \sim_{N} \bm{\eta} \iff \; \bPsi(\xi) \sim_{N} \bPsi(\bm{\eta}) \iff \; \forall \; (n,k) \in \I_{N} \, , \quad \bxi_{n,k} = \bm{\eta}_{n,k} \, , \nonumber
\end{equation}
which clearly entails that $\bx \sim_{N} \by$  if and only if $\bDelta(\bx) \sim_{N} \bDelta(\by)$.
We denote the sets of equivalence classes of $\ux \Omega' / \sim_{N} = \ux \Omega_{N}$ and $\uxi \Omega' / \sim_{N} = \uxi \Omega_{N}$, which are  isomorphic  $ \ux \Omega_{N} = \left( \R^{d} \right)^{\I} = \uxi \Omega_{N}$.
For every $N>0$, we define the finite-dimensional operators $\bPsi_{N} = \ux \bm{i}_{N} \circ \bPsi \circ \uxi \bm{p}_{N}$ and $\bDelta_{N} = \uxi \bm{i}_{N} \circ \bDelta \circ \ux \bm{p}_{N}$, with the help of the canonical projections $\uxi \bm{p}_{N}:\uxi \Omega'  \to \uxi \Omega _{N}$, $\ux \bm{p}_{N}:\ux \Omega'  \to \ux \Omega _{N}$ and the inclusion map $\uxi \bm{i}_{N}:\uxi \Omega_{N}  \to \uxi \Omega' $, $\ux \bm{i}_{N}:\ux \Omega_{N}  \to \ux \Omega' $.

The results of the preceding sections straightforwardly extend on the equivalence classes, and in particular we see that the function $\bPsi_{N}:\uxi \Omega_{N} \rightarrow \ux \Omega_{N}$ and $\bDelta_{N}:\ux \Omega_{N} \rightarrow \uxi \Omega_{N}$ are linear finite-dimensional bijections satisfying $\bPsi_{N} = {\bDelta_{N}}^{-1}$. 
We write $\bm{e}=\lbrace \bm{e}_{p,q} \rbrace_{(p,q) \in \I}$ (resp. $ \bm{f} = \lbrace \bm{f}_{p,q} \rbrace_{(p,q) \in \I}$) the canonical basis of $\uxi \Omega _{N}$ (resp. $\ux \Omega _{N}$) when listed in the recursive dyadic order.
In these bases, the matrices $\bPsi_{N}$ and $\bDelta_{N}$ are lower block-triangular. Indeed, denoting $\bPsi_{N}$ in the natural basis $\bm{e}=\lbrace \bm{e}_{p,q} \rbrace_{(p,q) \in \I}$ and $ \bm{f} = \lbrace \bm{f}_{p,q} \rbrace_{(p,q) \in \I}$ by
\begin{equation}
\bPsi_{N}
=
\big[ \bpsi_{n,k}(m_{i,j}) \big]
=
\Big[ \bPsi^{i,j}_{n,k} \Big] \, ,
\nonumber
\end{equation}
where $\bPsi^{i,j}_{n,k}$ is a $d \times d$ matrix, the structure of the nested-support $S_{n,k}$ entails the  block-triangular structure (where only possibly non-zero coefficients are written):
\begin{displaymath}
 \ag \Psi_{N}
=
\left[
\begin{array}{c|c|cc|cccc|c}
  \bpsi_{0,0}^{0,0} &   &  &  &  &   &  &  &  \\
  \hline
  \bpsi_{0,0}^{1,0}&  \bpsi_{1,0}^{1,0} &  &   &  &   &  &  & \\
   \hline
  \bpsi_{0,0}^{2,0}&  \bpsi_{1,0}^{2,0} &  \bpsi_{2,0}^{2,0}  &   &   &   &  &  & \\
   \hline
   \bpsi_{0,0}^{2,1}&   \bpsi_{1,0}^{2,1} &  &  \bpsi_{2,1}^{2,1}  &   &   &  &  & \\
  \hline
   \bpsi_{0,0}^{3,0}&   \bpsi_{1,0}^{3,0} &  \bpsi_{2,0}^{3,0}  &  &  \bpsi_{3,0}^{3,0}  &   &  &  & \\
   \hline
   \bpsi_{0,0}^{3,1}&  \bpsi_{1,0}^{3,1}  &  \bpsi_{2,0}^{3,1}  &   &  &  \bpsi_{3,1}^{3,1}  &  &  & \\
  \hline
   \bpsi_{0,0}^{3,2}&  \bpsi_{1,0}^{3,2}  &  &  \bpsi_{2,1}^{3,2} &   &   & \bpsi_{3,2}^{3,2} & & \\
  \hline
   \bpsi_{0,0}^{3,3}&  \bpsi_{1,0}^{3,3}  &  &  \bpsi_{2,1}^{3,3} &   &   &  &  \bpsi_{3,3}^{3,3} &\\
   \hline
  \vdots &   &  &   &   &   &  &  &  \ddots
\end{array}
\right] \, .
\end{displaymath}
Similarly, for the matrix representation of $\bDelta_{N}$ in the natural basis $e_{n,k}$ and $f_{i,j}$
\begin{equation}
\bDelta_{N} 
=
\Big[ \bDelta^{n,k}_{i,j} \Big]
\nonumber
\end{equation}
proves to have the following triangular form:
\begin{displaymath}
\bDelta_{N} 
=
\left[
\begin{array}{c|c|c|c|c}
 { \bgg^{-1}(t_{0,0})}^{T} \bM_{0,0}   &   &  &   &   \\
\hline
 -{\bgg^{-1}(t_{0,0})}^{T} \bR_{1,0} &   {\bgg^{-1}(t_{1,0})}^{T} \bM_{1,0}  &  &   &    \\
\hline
&  -{\bgg^{-1}(t_{0,0})}^{T} \bR_{2,0} &  {\bgg^{-1}(t_{1,0})}^{T} \bM_{2,0} &   &   \\
 -{\bgg^{-1}(t_{0,0})}^{T} \bR_{2,1} &   -{\bgg^{-1}(t_{1,0})}^{T} \bL_{2,1}  &  &   {\bgg^{-1}(t_{2,1})}^{T} \bM_{2,1}  &   \\
\hline
  \vdots &   &  &   &     \ddots
\end{array} 
\right] \, .
\end{displaymath}
The duality property \ref{dualProp} simply reads for all $0 \leq n <N$ and $0 \leq k < 2^{n-1}$, $0 \leq p <N$ and $0 \leq p < 2^{q-1}$ 
\begin{eqnarray}
\mathcal{P}(\bdelta_{p,q},\bpsi_{n,k}) 
= 
\sum_{(n,k) \in \I_{N} } \bDelta^{p,q}_{i,j} \cdot \bPsi^{i,j}_{n,k}   = \delta^{p,q}_{n,k} \bm{I}_{d}\, .
\nonumber
\end{eqnarray}
that is, $\bDelta_{N} \cdot  \bPsi_{N}= Id_{\uxi \Omega_{N}}$.
But because, we are now in a finite-dimensional setting, we also have $\bPsi_{N} \cdot \bDelta_{N} = Id_{\ux \Omega_{N}}$:
\begin{equation}
\delta^{i,j}_{k,l} \bm{I}_{d} = \sum_{(p,q) \in \I_{N} } \bPsi^{i,j}_{p,q} \cdot \bDelta^{p,q}_{k,l} \, .
\nonumber
\end{equation}
Realizing that $\delta^{i,j}_{k,l} \bm{I}_{d}$ represents the class of functions $\bm{x}$ in $\ux \Omega'$ whose value are zero on every dyadic points of $D_{N}$ except for $\bm{x}(l2^{k}) = \bm{I}_{d}$, $\left\{ \bDelta^{p,q}_{k,l} \right\}_{(p,q) \in \I_{N}}$ clearly appear as the coefficients of the decomposition of such functions in the bases $\bpsi_{p,q}$ for $(p,q)$ in $\I_N$.

Denoting $\bXi = \lbrace \bXi_{n,k} \rbrace_{(n,k) \in I}$, a set  of independent Gaussian variables of law $\mathcal{N}(\bm{0},\bm{I}_d)$ on $\left( \Omega, \mathcal{F} , \mathrm{\bold{P}}\right)$, and for all $N>0$, we form the finite dimensional Gauss-Markov vector  $\left[ \bX^{N}_{i,j} \right]_{(i,j) \in \I_{N}}$ as
\begin{eqnarray*}
 \bX^{N}_{i,j} = \sum_{(n,k) \in \I_{N} } \bpsi_{n,k}(m_{i,j}) \cdot \bXi_{n,k} \, ,
\end{eqnarray*}
which,  from Corollary \ref{cor:basis}, has the same law as $\left[ \bX_t \right]_{t \in D_{N}}$, the finite-dimensional random vector obtained from sampling $\bX$ on $D_{N}$ (modulo a permutation on the indices). 
We then prove the following lemma that sheds light on the meaning of the construction:

\begin{lemma}
The Cholesky decomposition of the finite-dimensional covariance block matrix $ \bm{ \Sigma}_{N}$  is given  by $ \bm{ \Sigma}_{N} =  \bPsi_{N}\cdot { \bPsi_{N}}^{T}$.
\end{lemma}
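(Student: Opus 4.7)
The plan is to observe that this result is almost a direct consequence of the preceding setup: it amounts to computing the covariance of $\bX^N$ using the independence of the Gaussian coefficients, and then verifying the shape of $\bPsi_N$ forces it to be a Cholesky factor.

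First I would compute the covariance of the finite-dimensional vector $\bX^N = [\bX^N_{i,j}]_{(i,j) \in \I_N}$ directly from its definition. Using independence of the family $\{\bXi_{n,k}\}$ together with $\mathbb{E}[\bXi_{n,k} \bXi_{p,q}^T] = \delta^{n,k}_{p,q} \, \bm{I}_d$, one gets for any $(i,j),(i',j') \in \I_N$
\begin{equation*}
\mathbb{E}\!\left[ \bX^N_{i,j} \cdot (\bX^N_{i',j'})^T \right] = \sum_{(n,k) \in \I_N} \bpsi_{n,k}(m_{i,j}) \cdot \bpsi_{n,k}(m_{i',j'})^T,
\end{equation*}
which is precisely the $((i,j),(i',j'))$ block of $\bPsi_N \cdot \bPsi_N^T$. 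Since Corollary~\ref{cor:basis} guarantees that $\bX^N$ has the same law as $[\bX_t]_{t \in D_N}$ (up to the recursive dyadic reordering that defines $\bPsi_N$), this computation yields $\bm{\Sigma}_N = \bPsi_N \cdot \bPsi_N^T$.

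It remains to check that $\bPsi_N$ is a Cholesky factor, i.e.\ lower triangular with positive diagonal. The block lower-triangular structure was already displayed explicitly above: whenever $(n,k)$ indexes a later block than $(i,j)$ in the recursive dyadic order, the support $S_{n,k}$ does not contain $m_{i,j}$, so $\bPsi^{i,j}_{n,k} = \bpsi_{n,k}(m_{i,j}) = \bm{0}$. For the diagonal blocks, the definition of $\bpsi_{n,k}$ in Definition~\ref{def:psiDef} combined with the identity $\bm{h}(l_{n,k},m_{n,k}) \cdot \bm{L}_{n,k} = \bm{\sigma}_{n,k}$ gives
\begin{equation*}
\bPsi^{n,k}_{n,k} = \bpsi_{n,k}(m_{n,k}) = \bm{\sigma}_{n,k},
\end{equation*}
and by our running convention $\bm{\sigma}_{n,k}$ is the unique lower triangular square root of $\bm{\Sigma}_{n,k}$ with positive diagonal entries (Cholesky factor). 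Assembled together, $\bPsi_N$ is lower triangular with positive diagonal entries.

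By uniqueness of the Cholesky decomposition of the symmetric positive definite matrix $\bm{\Sigma}_N$, the factorization $\bm{\Sigma}_N = \bPsi_N \cdot \bPsi_N^T$ is therefore the Cholesky decomposition. The only mild subtlety — not really an obstacle — is to be careful that the indexing order used to write $\bPsi_N$ as a matrix is exactly the recursive dyadic order on $\I_N$, so that the "ancestor comes before descendant" property of the nested supports translates into the standard lower-triangular structure; everything else is bookkeeping.
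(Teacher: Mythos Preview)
Your proof is correct and follows essentially the same approach as the paper: compute the covariance of $\bX^N$ from the independence of the $\bXi_{n,k}$, recognize it as $\bPsi_N \bPsi_N^T$, and invoke the previously-established lower-triangular structure with positive diagonal blocks. Your version is slightly more detailed in spelling out that the diagonal blocks equal $\bm{\sigma}_{n,k}$ (note a small slip: the intermediate identity should read $\bm{g}(m_{n,k})\,\bm{h}(l_{n,k},m_{n,k})\cdot\bm{L}_{n,k}=\bm{\sigma}_{n,k}$, though your conclusion $\bpsi_{n,k}(m_{n,k})=\bm{\sigma}_{n,k}$ is correct).
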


\begin{proof}
For every $0 \leq t , s\leq 1$, we compute the covariance of the finite-dimensional process $\bX^{N}$ as
\begin{eqnarray} \label{eq:CovRepeat}
\bm{C}_{N}(t,s) = \Exp{ \bX^{N}_{t} \cdot (\bX^{N}_{s})^{T}} =   \sum_{n = 0}^{N} \sum_{ \hspace{5pt} 0 \leq k < 2^{n\!-\!1} }   \bpsi_{n,k}(t) \cdot  \left( \bpsi_{n,k}(s) \right)^{T} \, , \nonumber
\end{eqnarray}
From there,  we write the finite-dimensional covariance block matrix $\bm{ \Sigma}_{N}$ in the recursively ordered basis $\bm{f}_{i,j}$ for $0 \leq i \leq N$, $0 \leq j < 2^{i-1}$, as
\begin{eqnarray}
{\big[  \bm{\Sigma}_{N} \big]}^{i,j}_{k,l} = \bm{C}_{N}(m_{i,j},m_{k,l}) = \sum_{n = 0}^{N} \sum_{ \hspace{5pt} 0 \leq k < 2^{n\!-\!1} }   \bPsi_{n,k}^{i,j} \cdot  \bPsi_{n,k}^{k,l} \, .
\nonumber
\end{eqnarray}
We already established that the matrix $\bPsi_{N}$ was triangular with positive diagonal coefficient, which entails that the preceding equality provides us with the Cholesky decomposition of $\bm{\Sigma}$.
\end{proof}

In the finite-dimensional case, the inverse covariance or potential matrix is a well-defined quantity and we straightforwardly have the following corollary:

\begin{corollary}
The Cholesky decomposition of the finite-dimensional inverse covariance matrix $\bm{ \Sigma}^{-1}_N$  is given  by $\bm{ \Sigma}^{-1}_N = {\bDelta_{N}}^{T} \cdot \bDelta_{N}$.
\end{corollary}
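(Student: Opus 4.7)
The plan is to leverage the finite-dimensional duality $\bDelta_N \cdot \bPsi_N = Id_{\uxi\Omega_N}$ (and equivalently $\bPsi_N \cdot \bDelta_N = Id_{\ux\Omega_N}$), which was established just above the corollary as a consequence of Proposition \ref{dualProp} combined with the fact that in finite dimension a left inverse is automatically a two-sided inverse. This identifies $\bDelta_N$ with $\bPsi_N^{-1}$ as matrices acting on $\left(\R^d\right)^{\I_N}$.

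From this algebraic identity, the equality $\bm{\Sigma}_N^{-1} = \bDelta_N^T \cdot \bDelta_N$ is then a one-line consequence of the preceding lemma: invert both sides of $\bm{\Sigma}_N = \bPsi_N \cdot \bPsi_N^T$ to obtain
\begin{equation*}
\bm{\Sigma}_N^{-1} = \left(\bPsi_N^T\right)^{-1} \cdot \bPsi_N^{-1} = \left(\bPsi_N^{-1}\right)^T \cdot \bPsi_N^{-1} = \bDelta_N^T \cdot \bDelta_N \, .
\end{equation*}

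The remaining content of the statement is the \emph{Cholesky} qualifier, so the second step is to verify the triangularity and positivity of the diagonal blocks of $\bDelta_N$. Here I would simply invoke the explicit block-triangular display of $\bDelta_N$ given in the excerpt just above: its diagonal blocks are of the form ${\bm{g}^{-1}(m_{n,k})}^T \cdot \bm{M}_{n,k}$, and using the definition $\bm{M}_{n,k} = \bm{g}(m_{n,k})^T ({\bm{\sigma}^{-1}_{n,k}})^T$ these reduce to ${(\bm{\sigma}_{n,k}^{-1})}^T$ (respectively to $\bm{L}_{0,0}$ at the root), which are themselves upper/lower triangular with positive diagonal because $\bm{\sigma}_{n,k}$ was fixed in Definition \ref{def:psiDef} as the Cholesky factor of $\bm{\Sigma}_{n,k}$. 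One only needs to check that the global ordering of indices makes $\bDelta_N$ lower block-triangular (it does, by the nested-support structure of $\I_N$), which then implies that $\bDelta_N^T$ is upper block-triangular with positive diagonal --- the conventional factor of a symmetric positive-definite matrix in its Cholesky-type $U^T U$ form, which is unique.

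I do not expect any real obstacle: both technical ingredients (finite-dimensional invertibility of $\bPsi_N$ and triangularity of $\bDelta_N$) are already in the text. The only point that deserves a brief sentence is the uniqueness of the Cholesky decomposition once the triangular shape and positivity of the diagonal are known, which guarantees that $\bDelta_N^T \cdot \bDelta_N$ really is \emph{the} Cholesky factorization of $\bm{\Sigma}_N^{-1}$ rather than just \emph{a} factorization of it.
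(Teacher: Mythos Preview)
Your proposal is correct and follows essentially the same approach as the paper's proof, which is the one-line computation $\bm{\Sigma}^{-1}_N = (\bPsi_N \cdot \bPsi_N^T)^{-1} = (\bPsi_N^{-1})^T \cdot \bPsi_N^{-1} = \bDelta_N^T \cdot \bDelta_N$. Your additional discussion of the triangularity and positivity of the diagonal blocks to justify the ``Cholesky'' qualifier is more explicit than what the paper provides, but the core argument is identical.
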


\begin{proof}
The result stems for the equalities: $\bm{ \Sigma}^{-1}_N = {\left( \bPsi_{N} \cdot { \bPsi_{N}}^{T}   \right)}^{-1}= {\left(  \bPsi^{-1}_{N} \right)}^{T} \cdot \bPsi^{-1}_{N} = {\bDelta_{N}}^{T} \cdot \bDelta_{N}$.
\end{proof}


\subsubsection{The L\'{e}vy-Cesielski Expansion}\label{ssec:LevyConstruct}

We now show that asymptotically, the basis $\bpsi_{n,k}$ allows us to faithfully build the Gauss-Markov process from which we have derived its expression.
In this perspective we consider $\bXi = \lbrace \bXi_{n,k} \rbrace_{(n,k) \in \I}$, a set  of independent Gaussian variables of law $\mathcal{N}(\bm{0},\bm{I}_d)$ on $\left( \Omega, \mathcal{F} , \mathrm{\bold{P}}\right)$, and for all $N>0$, we form the finite dimensional continuous Gaussian process $\bZ^{N}$, defined for $0 \leq t \leq1$ by
\begin{eqnarray*}
\bX^{N}_{t} = \sum_{(n,k) \in \I_{N} } \bpsi_{n,k}(t) \cdot \bXi_{n,k} \, ,
\end{eqnarray*}
which is, from the result of Theorem \ref{theo:basis}, has the same law $\bZ^N_t=\Exp{X_t\vert \mathcal{F}_N}$. We prove the following lemma:

\begin{lemma} \label{propX} 
The sequence of processes $\bX^N$ almost surely converges towards a continuous Gaussian process denoted $\bX^{\infty}$ .
\end{lemma}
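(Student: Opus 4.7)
The plan is to reduce the lemma to the deterministic statement Proposition \ref{lem:convergence} by showing that almost surely the random coefficient sequence $\bXi = \{\bXi_{n,k}\}_{(n,k)\in\I}$ lies in the admissible space $\uxi\Omega'$. Once this is done, the almost-sure uniform convergence of $\bX^N = \bPsi^N(\bXi)$ to a continuous function on $[0,1]$ is immediate from that proposition, and the Gaussian character of the limit follows from a standard limit-in-distribution argument.

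The probabilistic step is the heart of the argument. Fix $\delta \in (0,1)$ and apply the standard Gaussian tail bound to each component of $\bXi_{n,k} \sim \mathcal{N}(\bm{0},\bm{I}_d)$: there is a constant $C_d$ depending only on $d$ such that
\[
\Prob\bigl(|\bXi_{n,k}| > 2^{n\delta/2}\bigr) \;\leq\; C_d \exp\!\bigl(-\tfrac12\, 2^{n\delta}\bigr).
\]
Union-bounding over the $2^{n-1}$ indices $k$ at level $n$ yields
\[
\Prob\Bigl(\max_{0\le k<2^{n-1}} |\bXi_{n,k}| > 2^{n\delta/2}\Bigr) \;\leq\; C_d\, 2^{n-1}\exp\!\bigl(-\tfrac12\, 2^{n\delta}\bigr),
\]
and these probabilities are summable in $n$ (since $\delta > 0$). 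By the Borel--Cantelli lemma, there exists an almost-sure event $\Omega_0 \subset \Omega$ and, for every $\omega \in \Omega_0$, an integer $N(\omega)$ such that $|\bXi_{n,k}(\omega)| \leq 2^{n\delta/2}$ for all $(n,k) \in \I \setminus \I_{N(\omega)}$. This is precisely the defining condition of $\uxi\Omega'$, so $\bXi(\omega) \in \uxi\Omega'$ for every $\omega \in \Omega_0$.

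On this full-measure event, Proposition \ref{lem:convergence} applies pointwise in $\omega$: the partial sums $\bX^N_t(\omega) = \bPsi^N(\bXi(\omega))(t)$ converge uniformly in $t\in[0,1]$ to a continuous function, which we denote $\bX^\infty_t(\omega) = \bPsi(\bXi(\omega))(t)$. Extending $\bX^\infty$ arbitrarily on the null complement gives a process with continuous sample paths, and $\bX^N \to \bX^\infty$ almost surely in the uniform topology on $C_0([0,1],\R^d)$. Finally, each $\bX^N$ is a finite linear combination of jointly Gaussian variables and is therefore a Gaussian process; since almost-sure uniform convergence implies convergence in law of all finite-dimensional marginals, the limit $\bX^\infty$ has Gaussian finite-dimensional marginals as well and is thus a continuous Gaussian process.

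The only delicate point is the Borel--Cantelli estimate; everything else is a direct consequence of results already established. Note that the bound $2^{n\delta/2}$ with $\delta<1$ is comfortable: the Gaussian tail kills a super-exponential factor $\exp(-\tfrac12 2^{n\delta})$ while only $2^{n-1}$ indices per level need to be controlled, so summability holds robustly. This flexibility in $\delta$ is what will later allow the identification of $\bX^\infty$ with the target Gauss--Markov process $\bX$, but for the present statement we only need existence of a continuous Gaussian limit.
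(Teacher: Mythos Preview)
Your proposal is correct and follows essentially the same approach as the paper: both reduce the claim to Proposition~\ref{lem:convergence} by proving $\Prob(\bXi \in \uxi\Omega') = 1$ via a Gaussian tail bound, a union bound over the $2^{n-1}$ indices at each level, and Borel--Cantelli, then invoke the Gaussian stability under limits. Your treatment of the Gaussian character of the limit (via convergence of finite-dimensional marginals) is slightly more explicit than the paper's one-line remark, but the argument is the same.
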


\begin{proof}
For all fixed $N>0$ and for any $\omega$ in $\Omega$, we know that $t \mapsto \bX^{N}_{t}(\omega)$ is continuous.
Moreover, we have establish, that for every $\bxi$ in $\uxi \Omega'$, $\bX^{N}(\bxi)$ converges uniformly in $t$ toward a continuous limit denoted $\bX^{N}(\bxi)$.
Therefore, in order to prove that $\lim_{N \to \infty} \bX^{N}$ defines almost surely a process $\bX$ with continuous paths, it is sufficient to show that $\Probxi (\uxi \Omega') = 1$, where $\Probxi =\ProbXi$ is the $\Xi$-induced measure on $\uxi \Omega$, which stems from a classical Borel-Cantelli argument.
For $\xi$ a random variable of normal law $\mathcal{N}(0,1)$, and $a>0$, we have
\begin{equation}
\mathrm{\bold{P}}(\vert \xi \vert>a) = \sqrt{\frac{2}{\pi}} \int_{a}^{\infty} e^{-u^{2}/2} \, du \leq  \sqrt{\frac{2}{\pi}} \int_{a}^{\infty} \frac{u}{a} e^{-u^{2}/2} \, du = \sqrt{\frac{2}{\pi}} \frac{e^{-a^{2}/2}}{a} \, .
\nonumber
\end{equation}
Then, for any $\delta>0$
\begin{equation}
\Probxi \big(\max_{0 \leq k < 2^{n-1}} \vert \bxi_{n,k} \vert_{\infty} > 2^{\frac{n\delta}{2}} \big) \leq d2^{n} \mathrm{\bold{P}}(\vert \xi \vert> d2^{\frac{n\delta}{2}}) = \sqrt{\frac{2}{\pi}} 2^{(1-\delta/2)n} \exp{\left( - 2^{n\delta -1} \right)} \, .
\nonumber
\end{equation}
Since the series 
\begin{equation}
\sum_{n=0}^{\infty}\sqrt{\frac{2}{\pi}} 2^{(1-\delta/2)n} \exp{\left( - 2^{n\delta -1} \right)}
\end{equation}
is convergent, the Borel-Cantelli argument implies that $\Probxi ({}_{ \scriptscriptstyle \xi \displaystyle}\Omega') = 1$.
Eventually, the continuous almost-sure limit process $\bX^{\infty}_t$ is Gaussian as a countable sum of Gaussian processes.
 \end{proof}
 
Now that these preliminary remarks have been made, we can evaluate, for any $t$ and $s$ in $[0,1]$,  the covariance of $\bX$ as the limit of the covariance of $\bX^{N}$.
 
\begin{lemma} \label{lem:covariance}
For any $0 \leq t,s  \leq 1$, the covariance of $\bX^{\infty} = \lbrace \bX^{\infty}_{t} = \bPsi_{t} \circ \bXi ; 0 \! \leq \! t \! \leq \! 1 \rbrace$ is
\begin{eqnarray}
\bm{C}(t,s) = \Exp{ \bX^{\infty}_{t} \cdot \left( \bX^{\infty}_{s} \right)^{T}} = \bgg(t) \,  \bh(t \wedge s) \, \bgg(s)^{T} \, .
\end{eqnarray}
\end{lemma}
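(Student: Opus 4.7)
The plan is to identify the covariance of the limit process $\bm{X}^\infty$ with that of the original Gauss-Markov process by passing to the limit in the finite-dimensional covariances of $\bm{X}^N$ and then evaluating an orthonormal-series identity. First, because the $\bXi_{n,k}$ are independent $\mathcal{N}(\bm{0},\bm{I}_d)$ variables,
\begin{equation*}
\Exp{\bm{X}^N_t \cdot (\bm{X}^N_s)^T} = \sum_{(n,k)\in\I_N} \bpsi_{n,k}(t) \cdot \bpsi_{n,k}(s)^T.
\end{equation*}
By Corollary~\ref{cor:basis}, $\bm{X}^N_t$ has the law of $\Exp{\bm{X}_t \mid \mathcal{F}_N}$, so $\Exp{|\bm{X}^N_t|^2}\leq \Exp{|\bm{X}_t|^2}$ uniformly in $N$; combined with the almost-sure convergence from Lemma~\ref{propX}, this yields $L^2$ convergence of $\bm{X}^N_t$ to $\bm{X}^\infty_t$ and legitimizes passing to the limit entry by entry in the covariance.

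The core of the proof is thus to establish the series identity
\begin{equation*}
\sum_{(n,k)\in\I} \bpsi_{n,k}(t)\cdot\bpsi_{n,k}(s)^T \;=\; \bgg(t)\,\bh(0,t\wedge s)\,\bgg(s)^T.
\end{equation*}
I would leverage the integral-operator formulation \eqref{eq:psinkKPhi}: setting $\bm{F}_t(u) = \mathbbm{1}_{[0,t]}(u)\,\bm{f}(u)$, one has $\bpsi_{n,k}(t) = \bgg(t)\int_0^1 \bm{F}_t(u)\,\bphi_{n,k}(u)\,du = \bgg(t)\,\mathcal{P}(\bm{F}_t^T,\bphi_{n,k})$. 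The columns of $\bm{F}_t^T$ belong to $L^2_{\bm{f}}$ because each takes the form $\bm{f}^T \cdot (\mathbbm{1}_{[0,t]}\bm{e}_j)$ with $\mathbbm{1}_{[0,t]}\bm{e}_j \in L^2([0,1],\R^d)$. Since the columns $c_j(\bphi_{n,k})$ form a complete orthonormal system of $L^2_{\bm{f}}$ by Proposition~\ref{orthoProp}, applying Parseval entry-wise to the $d\times d$ inner-product $\mathcal{P}$ yields the matrix-valued identity
\begin{equation*}
\mathcal{P}(\bm{F}_t^T,\bm{F}_s^T) \;=\; \sum_{(n,k)\in\I} \mathcal{P}(\bm{F}_t^T,\bphi_{n,k}) \cdot \mathcal{P}(\bphi_{n,k},\bm{F}_s^T).
\end{equation*}
A direct computation gives $\mathcal{P}(\bm{F}_t^T,\bm{F}_s^T) = \int_0^{t\wedge s}\bm{f}(u)\bm{f}(u)^T\,du = \bh(0,t\wedge s)$, along with $\mathcal{P}(\bm{F}_t^T,\bphi_{n,k}) = \bgg(t)^{-1}\bpsi_{n,k}(t)$ and $\mathcal{P}(\bphi_{n,k},\bm{F}_s^T) = \bpsi_{n,k}(s)^T\,(\bgg(s)^{-1})^T$; multiplying on the left by $\bgg(t)$ and on the right by $\bgg(s)^T$ then produces the desired closed-form covariance.

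The main obstacle is the rigorous setup of the Parseval step for matrix-valued functions in the possibly strict subspace $L^2_{\bm{f}} \subset L^2([0,1],\R^m)$: one has to verify that the columns of $\bm{F}_t^T$ truly lie in $L^2_{\bm{f}}$ (not merely in $L^2$) so that the completeness statement of Proposition~\ref{orthoProp} applies, and that the resulting series of $d\times d$ matrices converges in the sense needed to match the $L^2$ limit of $\Exp{\bm{X}^N_t (\bm{X}^N_s)^T}$. Aside from this careful bookkeeping, the argument is a direct consequence of the dual Hilbert structure already established.
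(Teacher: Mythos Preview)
Your proposal is correct and follows essentially the same route as the paper: express $\bpsi_{n,k}(t)=\bgg(t)\int_0^1\mathbbm{1}_{[0,t]}(u)\bm{f}(u)\bphi_{n,k}(u)\,du$, recognize the resulting integrals as inner products in $L^2_{\bm{f}}$ between the columns of $\bphi_{n,k}$ and the functions $\mathbbm{1}_{[0,t]}c_i(\bm{f}^T)=\bm{f}^T\cdot(\mathbbm{1}_{[0,t]}\bm{e}_i)\in L^2_{\bm{f}}$, and apply Parseval for the complete orthonormal system of Proposition~\ref{orthoProp} to obtain $\bh(0,t\wedge s)$. Your packaging via the matrix pairing $\mathcal{P}$ is slightly more compact than the paper's explicit entry-by-entry argument, and your added remark on $L^2$ convergence of $\bm{X}^N_t$ (to justify $\lim_N\bm{C}^N=\bm{C}$) is a point the paper leaves implicit, but the substance is identical.
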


\begin{proof}
As $\bXi_{n,k}$ are independent Gaussian random variables of normal law $\mathcal{N}(\bm{0},\bm{I}_d)$, we see that the covariance of $\bX^{N}$ is given by
\begin{eqnarray*}
\bm{C}^{N}(t,s) = \Exp{ \bX_{t}^{N} \cdot \left(\bX_{s}^{N}\right)^{T} } = 
\sum_{ (n,k) \in \I_{N} } \; \bpsi_{n,k}(t) \cdot \left(\bpsi_{n,k}(s)\right)^{T} \, .
\end{eqnarray*}
To compute the limit of the right-hand side, we recall that the element of the basis $\bpsi_{n,k}$ and the functions  $\bphi_{n,k}$ are linked by the following relation
\begin{eqnarray*}\label{eq:IntPsi}
\bpsi_{n,k}(t) =  \mathcal{K}[\bphi_{n,k}] = \bm{g}(t)  \int_{U} \mathbbm{1}_{[0,t]}(s) \bm{f}(s) \, \bphi_{n,k}(s) \, ds 
\end{eqnarray*}
from which we deduce
\begin{eqnarray*}
\bm{C}^{N}(t,s) &=& \bgg(t) \left( \sum_{ (n,k) \in \I_{N} }  \left( \int_{U} \mathbbm{1}_{[0,t]}(u) \bm{f}(u) \, \bphi_{n,k}(u) \, du \right) \right. \\
&& \qquad  \qquad \qquad \qquad \left.  \left(\int_{U} \mathbbm{1}_{[0,s]}(v) \bm{f}(v) \, \bphi_{n,k}(v) \, dv \right)^{T} \right)  \bgg(s)^{T} \, .
\end{eqnarray*}
Defining the auxiliary $\R^{d \times d}$-valued function
\begin{equation*}
\bm{\kappa}_{n,k}(t) = \int_{U} \mathbbm{1}_{[0,t]}(u) \bm{f}(u) \, \bphi_{n,k}(u) \, du \, ,
\end{equation*}
we observe that $(i,j)$-coefficient function reads
\begin{eqnarray*}
(\bm{\kappa}_{n,k})_{i,j}(t) 
&=& 
\int_{U} \mathbbm{1}_{[0,t]}(u) \left( l_{i}\big(\bff(u)\big)^{T} \cdot c_{j}\big(\bphi_{n,k}(u)\big) \right) \, du \, , \\
&=&
\int_{U}  \left( \mathbbm{1}_{[0,t]}(u) \,c_{i}\big( \bff^{T}(u)\big)^{T} \cdot c_{j}\big(\bphi_{n,k}(u)\big) \right) \, du \, , 
\end{eqnarray*}
where $\mathbbm{1}_{[0,t]}$ is the real function that is one if $0 \leq u \leq t$ and zero otherwise.
As we can write 
\begin{equation*}
\mathbbm{1}_{[0,t]}(u) \,c_{i}\big( \bff^{T}(u)\big) = \bff^{T}(u) \cdot 
\left[
\begin{array}{c}
 0 \\
  \vdots\\
 \mathbbm{1}_{[0,t]}(u) \\
   \vdots\\
  0 \\    
\end{array}
\right] \leftarrow i \, ,
\end{equation*}
we see that the function $\bm{f}_{i,t} = \mathbbm{1}_{[0,t]} \,c_{i}( \bff^{T})$ belongs to $L^{2}_{\bm{f}}$, so that we can write $(\bm{\kappa}_{n,k})_{i,j}(t) $ as a scalar product in the Hilbert space $L^{2}_{\bm{f}}$:
\begin{eqnarray*}
(\bm{\kappa}_{n,k})_{i,j}(t) 
=
\int_{U}  \bm{f}_{i,t}^{T}(u) \cdot c_{j}\big(\bphi_{n,k}(u)\big) \, du 
=
\Big( \bm{f}_{i,t} , c_{j}\big(\bphi_{n,k}\big) \Big)\, .
\end{eqnarray*}
We then specify the $(i,j)$-coefficient of $\bgg^{-1}(t) \, \bm{C}^{N}(t,s) \, \big( \bgg^{-1}(s) \big)^{T}$ writing
\begin{eqnarray*}
\sum_{ (n,k) \in \I_{N} } \Big(\bm{\kappa}(t) \cdot \bm{\kappa}(s)^{T}\Big)_{i,j}
=
\sum_{ (n,k) \in \I_{N} }
\sum_{p=0}^{d-1} 
\left(   \bm{f}_{i,t} , c_{j}\big(\bphi_{n,k} \big)  \right)
\left(   \bm{f}_{j,s} , c_{j}\big(\bphi_{n,k} \big)  \right)
\end{eqnarray*}
and, remembering that the family of functions $c_j \big(\bphi_{n,k}\big)$ forms a complete orthonormal system of $L^{2}_{\bm{f}}$, we can  use the Parseval identity, which reads
\begin{eqnarray*}
\sum_{ (n,k) \in \I } \Big(\bm{\kappa}(t) \cdot \bm{\kappa}(s)^{T}\Big)_{i,j}
&=&
\Big( \bm{f}_{i,t}  , \bm{f}_{j,s} \Big) \\
&=&
\int_{U} \mathbbm{1}_{[0,t]}(u) \, c_{i}\big( \bff^{T}(u)\big)^{T}  \cdot \mathbbm{1}_{[0,s]}(u) \, c_{j}\big( \bff^{T}(u)\big)  \, du \, , \\
&=&
\int_{0}^{t \wedge s} \left( \bm{f} \cdot \bm{f}^{T} \right)_{i,j}(u) \, du \, .
\end{eqnarray*}
Thanks to this relation, we can conclude the evaluation of the covariance since   
\begin{eqnarray*} 
\lim_{N \to \infty} \bm{C}^{N}(t,s)
=
\bgg(t)  \left( \int_{0}^{t \wedge s} \left( \bm{f} \cdot \bm{f}^{T} \right)(u) \, du \right)  \bgg(s)^{T} 
=
\bgg(t)  \bh(t \wedge s) \,  \bgg(s)^{T} 
\, , 
\end{eqnarray*}
\end{proof}

We stress the fact that the relation
\begin{eqnarray*}
\bm{C}(t,s) = 
\sum_{ (n,k) \in \I } \; \bpsi_{n,k}(t) \cdot \left(\bpsi_{n,k}(s)\right)^{T} \,
= \bPsi(t) \circ \bPsi^{T}(s) .
\end{eqnarray*}
provides us with a continuous version of  the Cholesky decomposition of the covariance kernel $\bm{C}$.
Indeed, if we chose $\bm{\sigma}_{n,k}$ as the Cholesky square root of $\bm{\Sigma}_{n,k}$, we remark that the operators  $\bPsi$ are triangular in the following sense:
consider the chain of nested vectorial spaces $\left\{ F_{n,k} \right\}_{(n,k) \in \I}$
\begin{eqnarray*}
F_{0,0} \subset F_{1,0} \subset \left( F_{2,0}\subset F_{2,1} \right) \ldots \subset \left(  F_{n,0} \subset \ldots \subset F_{n,2^{n}-1} \right) \ldots \subset \uxi \Omega'
\end{eqnarray*}
with $F_{n, k} = \mathrm{span} \left\{ \bm{f}_{i,j} \, \vert \, 0 \leq i  \leq n , 0 \leq j \leq k \right\}$, then  for every $(n,k)$ in $\I$, the operator $\bPsi$ transforms  the chain $\left\{ F_{n,k} \right\}_{(n,k) \in \I}$ into the chain 
\begin{eqnarray*}
\mathcal{E}_{0,0} \subset \mathcal{E}_{1,0} \subset \left( \mathcal{E}_{2,0}\subset \mathcal{E}_{2,1} \right) \ldots \subset \left(  \mathcal{E}_{n,0} \subset \ldots \subset \mathcal{E}_{n,2^{n}-1} \right) \ldots \subset \ux \Omega'
\end{eqnarray*}
with $\mathcal{E}_{n, k} = \mathrm{span} \left\{ \bm{\bPsi}_{i,j} \, \vert \, 0 \leq i  \leq n , 0 \leq j \leq k \right\}$.

The fact that this covariance is equal to the covariance of the process $\bm{X}$ solution of equation \eqref{eq:eqStoch} implies that we have the following fundamental result:

\begin{theorem}\label{theo:PrincipTheo}
	The process $\bm{X}^{\infty}$ is equal in law to the initial Gauss-Markov process $\bX$ used to construct the basis of functions. 
\end{theorem}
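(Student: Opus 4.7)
The plan is to invoke the standard fact that the law of a centered Gaussian process on $[0,1]$ is uniquely determined by its covariance function, and then to check that $\bX$ and $\bX^\infty$ share the same covariance. All of the genuine work has already been done in the preceding lemmas; what remains is essentially bookkeeping.

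First I would note that $\bX^\infty$ is a centered Gaussian process with continuous sample paths. This follows from Lemma \ref{propX}: since each finite sum $\bX^N$ is a finite linear combination of the independent standard Gaussian vectors $\bXi_{n,k}$, it is centered Gaussian, and the almost-sure uniform limit of centered Gaussian processes is again centered Gaussian. On the other hand, the process $\bX$ defined by the stochastic differential equation \eqref{eq:eqStoch} with initial condition $\bX_0 = \bm{0}$ is centered Gaussian with continuous paths by the representation \eqref{eq:OUMult}.

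Next I would identify the two covariance functions. From equation \eqref{eq:covExp} together with Doob's representation $\bX_t = \bgg(t) \int_0^t \bff(s) \cdot d\bm{W}_s$ recalled just below it, the covariance of the original process $\bX$ is exactly
\begin{equation*}
\Exp{\bX_t \cdot \bX_s^T} = \bgg(t)\, \bh(t \wedge s)\, \bgg(s)^T.
\end{equation*}
Lemma \ref{lem:covariance} shows that $\bX^\infty$ has the very same covariance. Since two centered Gaussian processes with identical covariance functions have identical finite-dimensional distributions (this is the defining property of Gaussian laws; see e.g.\ \cite{Bogachev:1998}), the finite-dimensional marginals of $\bX^\infty$ and $\bX$ coincide.

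Finally, I would upgrade the equality of finite-dimensional distributions to equality in law on path space. Both processes take values in $C_0([0,1],\R^d)$ equipped with the uniform topology, whose Borel $\sigma$-algebra is generated by the cylinder sets, i.e.\ by the finite-dimensional marginals. Hence equality of all finite-dimensional distributions forces equality of the induced measures on $C_0([0,1],\R^d)$, which is precisely the statement that $\bX^\infty$ and $\bX$ are equal in law. No step here is a genuine obstacle: the only conceptual content was packed into Theorem \ref{theo:basis}, Corollary \ref{cor:basis}, and Lemmas \ref{propX}--\ref{lem:covariance}, and what remains is the routine reduction from covariance identification to distributional identification for Gaussian processes.
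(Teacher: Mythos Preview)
Your proposal is correct and follows essentially the same approach as the paper: the paper simply observes, immediately after Lemma \ref{lem:covariance}, that the covariance just computed coincides with that of the original process $\bX$, and treats the theorem as an immediate consequence. Your write-up spells out the standard Gaussian-determination argument more explicitly than the paper does, but the underlying idea is identical.
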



\begin{remark}
Our  multi-resolution representation of Gauss-Markov processes appears to be the direct consequence of the fact that, because of the Markov property, the Cholesky decomposition of the finite-dimensional covariance admit a simple inductive continuous limit.
More generally, triangularization of kernel operators have been studied in depth~\cite{Kailath:1978,Brodski:1968,Brodski:1971,Gohberg:1970} and it would be interesting to investigate if these results make possible a similar multi-resolution approach for non-Markov Gaussian processes.
In this regard, we naturally expect to lose the compactness of the supports of a putative basis.
\end{remark}


\subsection{Optimality Criterion of the Decomposition}\label{sec:Optimality}

In the following, we draw from the theory interpolating spline to further characterize the nature of our proposed basis for the construction of Gauss-Markov processes.
We show, following \cite{Dolph,Kimel3}, that the finite-dimensional sample paths of our construction induce a nested sequence $\mathcal{E}_{N}$ of reproducing Hilbert kernel space (RKHS). 
In turn, the finite-dimensional process $\bX^N$ naturally appears as the orthogonal projection of the infinite-dimensional process $\bX$ onto $\mathcal{E}_{N}$.
We then show that such a RKHS structure allows us to define a unicity criterion for the finite-dimensional sample path as the only functions of  $\mathcal{E}$ that minimize a functional, called Dirichlet energy, under constraint of interpolation on $D_N$ (equivalent to conditioning on the times $D_{N}$), thus extending well-known results to multidimensional kernel~\cite{Smola:2001}. 
In this respect, we point out that the close relation between Markov processes and Dirichlet forms is the subject of a vast literature, largely beyond the scope of the present paper (see e.g. \cite{Fukushima:1994}).


\subsubsection{Sample Paths Space as a Reproducing Hilbert Kernel Space}

In order to define the finite-dimensional sample paths as a nested sequence of RKHS, let us first define the infinite-dimensional operator
\begin{equation*}
\bPhi: \begin{cases} l^{2}(\uxi \Omega) &\mapsto L^{2}_{\bm{f}}\\
\bxi &\mapsto \bPhi[\bxi] = \Big \lbrace t \mapsto \sum_{(n,k) \in \I} \bphi_{n,k}(t) \cdot \bxi_{n,k} \Big \rbrace 
\end{cases} \, .
\end{equation*}
Since we know that the column functions of $\bphi_{n,k}$ form a complete orthonormal system of $L^{2}_{\bm{f}}$, the operator $\bPhi$ is an isometry and its inverse satisfies $\bPhi^{-1}=\bPhi^{T}$, which reads for all $\bm{v}$ in $L^{2}_{\bm{f}}$
\begin{eqnarray*}
{\left[\bPhi^{-1}[\bm{v}]\right]}_{n,k} = \int_{U} \bphi_{n,k}^{T}(t) \cdot \bm{v}(t) \, dt = \mathcal{P}(\bphi_{n,k},\bm{v}) \, .
\end{eqnarray*}
Equipped with this infinite-dimensional isometry, we then consider the linear operator $\mathcal{L} = \bPhi \circ \bDelta$ siutably defined on the set
\begin{eqnarray*}
\mathcal{E} = \big \lbrace  \bm{u} \in C_{0}\big(U,\R^{d}\big)\, \vert \, \mathcal{L}[\bm{u}] \in L^{2}_{\bm{f}} \big \rbrace 
=
 \big \lbrace  \bm{u} \in C_{0}\big(U,\R^{d}\big)\, \vert \,   \bDelta[\bm{u}] \in l^{2}(\uxi \Omega) \big \rbrace 
 \, .
\end{eqnarray*}
with ${\Vert \bxi \Vert_{2}}^{2} = \sum_{n,k \in \I} {\vert \bxi_{n,k} \vert_{2}}^{2} $, the $l^{2}$ norm of $\uxi \Omega$.
The set $\mathcal{E}$ form an infinite-dimensional vectorial space that is naturally equipped with the inner product
\begin{equation*}
\forall \: (\bm{u}, \bm{v}) \, \in \mathcal{E}^{2}\, , \quad \langle \bm{u}, \bm{v} \rangle = \int_{U} \mathcal{L}[\bm{u}](t)^{T} \cdot \mathcal{L}[\bm{v}](t) \, dt = \big( \mathcal{L}[\bm{u}] , \mathcal{L}[\bm{v}]\big) \, ,
\end{equation*}
Moreover  since $\bm{u}(0) = \bm{v}(0) = \bm{0}$, such an inner product is definite positive and consequently, $\mathcal{E}$ forms an Hilbert space.

\begin{remark}
Two straightforward remarks are worth making.
	 First, the space $\mathcal{E}$ is strictly included in the infinite-dimensional sample paths space $\ux \Omega'$. 
	Second notice that, in the favorable case $m=d$, if $\bm{f}$ is everywhere invertible with continuously differentiable inverse, we have $\mathcal{L} = \mathcal{D} = \mathcal{K}^{-1}$.
	More relevantly, the operator $\mathcal{L}$   can actually considered a first-order differential operator  from $\mathcal{E}$ to $L^{2}_{\bm{f}}$ as a general left-inverse of the integral operator $\mathcal{K}$.
	Indeed, realizing that on $L^{2}_{\bm{f}}$, $\mathcal{K}$ can be expressed as $\mathcal{K} = \bPsi \circ \bPhi^{-1}$, we clearly have
	\begin{equation*}
	\mathcal{L} \circ \mathcal{K} 
	=
	 \bPhi \circ \bDelta \circ \bPsi \circ \bPhi^{-1}
	 =
	Id_{L^{2}_{\bm{f}}} \, .
	\end{equation*}
\end{remark}

We know motivate the introduction of the Hilbert space $\mathcal{E}$ by the following claim:

\begin{proposition}
The Hilbert space $\left( \mathcal{E}, \langle , \rangle \right)$ is a reproducing kernel Hilbert space (RKHS) with $\R^{d \times d}$-valued reproducing kernel $\bm{C}$, the covariance function of the process $\bX$.
\end{proposition}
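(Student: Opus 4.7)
My plan is to exhibit the reproducing kernel by an explicit coefficient computation in the orthonormal basis $\bphi_{n,k}$, using the isometric structure of the map $\bPhi$ introduced above. I would first record an explicit formula for the inner product on $\mathcal{E}$. For any $\bm{u},\bm{v}\in\mathcal{E}$, write $\bxi=\bDelta[\bm{u}]$ and $\bm{\eta}=\bDelta[\bm{v}]$, both in $\ell^{2}(\uxi\Omega)$. Since $\mathcal{L}=\bPhi\circ\bDelta$ and since the columns of the $\bphi_{n,k}$ form a complete orthonormal family of $L^{2}_{\bm{f}}$ (Proposition \ref{orthoProp}), a direct application of Parseval's identity yields
\[
\langle\bm{u},\bm{v}\rangle=\big(\mathcal{L}[\bm{u}],\mathcal{L}[\bm{v}]\big)=\sum_{(n,k)\in\I}\bxi_{n,k}^{T}\bm{\eta}_{n,k}.
\]
This is the key identification: the scalar product on $\mathcal{E}$ is the pull-back of the standard $\ell^{2}$ pairing on $\uxi\Omega$ via $\bDelta$.

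Next, for every fixed $t\in[0,1]$ and every vector $\bm{a}\in\R^{d}$, I would define the candidate reproducer $\bm{K}_{t}^{\bm{a}}(s)=\bm{C}(s,t)\,\bm{a}$ and verify that it belongs to $\mathcal{E}$. Using the decomposition
$\bm{C}(s,t)=\sum_{(n,k)\in\I}\bpsi_{n,k}(s)\,\bpsi_{n,k}(t)^{T}$
established in Lemma \ref{lem:covariance}, the natural coefficient sequence is $\bm{\eta}_{n,k}=\bpsi_{n,k}(t)^{T}\bm{a}$. Its $\ell^{2}$ norm is controlled by
\[
\sum_{(n,k)\in\I}\bigl|\bpsi_{n,k}(t)^{T}\bm{a}\bigr|^{2}=\bm{a}^{T}\!\Bigl(\sum_{(n,k)\in\I}\bpsi_{n,k}(t)\bpsi_{n,k}(t)^{T}\Bigr)\bm{a}=\bm{a}^{T}\bm{C}(t,t)\bm{a}<\infty,
\]
so $\bm{\eta}\in\ell^{2}(\uxi\Omega)\subset\uxi\Omega'$; continuity at $0$ of $\bm{K}_{t}^{\bm{a}}$ follows from $\bm{C}(0,t)=0$. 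Applying Theorem \ref{lemInv} gives $\bPsi[\bm{\eta}]=\bm{K}_{t}^{\bm{a}}$ and hence $\bm{K}_{t}^{\bm{a}}\in\mathcal{E}$ with $\bDelta[\bm{K}_{t}^{\bm{a}}]=\bm{\eta}$.

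Finally, I would check the reproducing property by combining the two formulas above: for every $\bm{u}\in\mathcal{E}$ with $\bxi=\bDelta[\bm{u}]$,
\[
\langle\bm{u},\bm{K}_{t}^{\bm{a}}\rangle=\sum_{(n,k)\in\I}\bxi_{n,k}^{T}\bpsi_{n,k}(t)^{T}\bm{a}=\bm{a}^{T}\sum_{(n,k)\in\I}\bpsi_{n,k}(t)\,\bxi_{n,k}=\bm{a}^{T}\bm{u}(t),
\]
which is the desired reproducing identity for the $\R^{d\times d}$-valued kernel $\bm{C}$. Since $\bm{a}\in\R^{d}$ is arbitrary, this also shows that point evaluation $\bm{u}\mapsto\bm{u}(t)$ is continuous on $\mathcal{E}$, so $(\mathcal{E},\langle\cdot,\cdot\rangle)$ is a RKHS. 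The identification of the reproducing kernel with the covariance of $\bX$ itself is then nothing but Theorem \ref{theo:PrincipTheo}. The only genuine subtlety is verifying that $\bm{K}_{t}^{\bm{a}}$ actually lies in $\mathcal{E}$, which is where the finiteness of $\bm{C}(t,t)$ and the bijection $\bPsi=\bDelta^{-1}$ play their essential role; once this is in place, the reproducing formula falls out of the Parseval identity without further work.
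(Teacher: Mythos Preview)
Your argument is correct. The route differs from the paper's in presentation though the substance is close. The paper introduces the Green kernel $\bm{k}(t,s)=\mathbbm{1}_{[0,t]}(s)\,\bm{g}(t)\bm{f}(s)$ of the operator $\mathcal{K}$, writes every $\bm{u}\in\mathcal{E}$ as $\bm{u}(t)=\int\bm{k}(t,s)\,\mathcal{L}[\bm{u}](s)\,ds$, expands $\bm{k}(t,\cdot)=\sum\bpsi_{n,k}(t)\bphi_{n,k}^{T}$, and identifies $\bm{k}(t,\cdot)=\mathcal{L}[\bm{C}(t,\cdot)]$, whence the reproducing identity follows directly at the $L^{2}_{\bm f}$ level; it then invokes Moore--Aronszajn for uniqueness. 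You instead pull the inner product back to $\ell^{2}(\uxi\Omega)$ via $\bDelta$ and Parseval, identify the coefficients of $\bm{C}(\cdot,t)\bm{a}$ as $\bpsi_{n,k}(t)^{T}\bm{a}$, and read off the reproducing property coordinatewise. Your approach has the advantage of making membership $\bm{K}_{t}^{\bm a}\in\mathcal{E}$ fully explicit through the finiteness of $\bm{a}^{T}\bm{C}(t,t)\bm{a}$, a point the paper glosses over. Conversely, the paper's Green-function presentation is basis-free and highlights the analytic structure $\mathcal{L}\circ\mathcal{K}=Id$. One small omission in your write-up: the final equality $\sum\bpsi_{n,k}(t)\bxi_{n,k}=\bm{u}(t)$ for a general $\bm{u}\in\mathcal{E}$ also needs the identity $\bm{u}=\bPsi[\bDelta\bm{u}]$, which you justified for $\bm K_{t}^{\bm a}$ but should note holds for every $\bm u\in\mathcal E$ by the same argument (injectivity of $\bDelta$ on $C_{0}$ plus $\ell^{2}\subset\uxi\Omega'$).
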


\begin{proof}
Consider the problem of finding all elements $\bm{u}$ of $\mathcal{E}$ solution of the equation $\mathcal{L}[\bm{u}] = \bm{v}$ for $ \bm{v}$   in $L^{2}_{\bm{f}}$.
The operator $\mathcal{K}$ provides us with a continuous  $\R^{d \times m}$-valued kernel function $\bm{k}$ 
\begin{eqnarray*}
\forall \: (t,s) \, \in \, U^{2}\, , \quad \bm{k}(t,s) = \mathbbm{1}_{[0,t]}(s) \; \bm{g}(t) \cdot \bm{f}(s) \, , 
\end{eqnarray*}
which is clearly the Green function for our differential equation.
This entails that the following equalitiy holds for every $\bm{u}$ in $\mathcal{E}$
\begin{eqnarray*}
\bm{u}(t)
=
\int_{U}  \bm{k}(t,s) \, \mathcal{L}[\bm{u}](s) ds \, . 
\end{eqnarray*}
Moreover, we can decompose the kernel $\bm{k}$ in the $L^{2}_{\bm{f}}$ sense as
\begin{equation*}
\bm{k}(t,s)= \sum_{(n,k) \in \I} \bpsi_{n,k}(t) \cdot \bphi^{T}_{n,k}(s)
\end{equation*}
since we have
\begin{eqnarray*}
 \bm{k}(t,s) &=& \mathcal{K} \left[ \delta_{s} \,  {Id}_{L^{2}_{\bm{f}}} \right](t) \\
 &=& \mathcal{K} \left[ \sum_{(n,k) \in \I} \bphi_{n,k} \cdot \bphi^{T}_{n,k}(s)\right] \\
 & =&  \sum_{(n,k) \in \I}  \mathcal{K} \left[ \bphi_{n,k}\right](t) \cdot \bphi^{T}_{n,k}(s) \, ,
\end{eqnarray*}
withs $\delta_{s}= \delta(\cdotp-s)$.
Then, we clearly have 
\begin{eqnarray*}
\bm{C}(t,s)
=
\int_{U} \bm{k}(t,u) \cdot \bm{k}(s,u)^{T} \, du
=
\sum_{(n,k) \in \I} \bpsi_{n,k}(t) \cdot \bpsi^{T}_{n,k}(s)
\end{eqnarray*}
where we recognize the covariance function of $\bX$, which implies 
\begin{eqnarray*}
\bm{k}(t,s) = 
 \sum_{(n,k) \in \I} \bpsi_{n,k}(t) \cdot \mathcal{L}\left[ \bphi_{n,k}\right]^{T}(s)
 =
 \mathcal{L}\left[ \bm{C}(t, \cdotp)\right] \, .
\end{eqnarray*}
Eventually, for all $\bm{u}$ in $L^{2}_{\bm{f}}$, we have:
\begin{eqnarray*}
\bm{u}(t)
=
\int_{U}   \mathcal{L}\left[ \bm{C}(t, \cdotp)\right](s) \cdot \mathcal{L}[\bm{u}](s) ds 
=
\mathcal{P} \langle \bm{C}(t, \cdotp) , \bm{u} \rangle
\, ,
\end{eqnarray*}
where we have introduced the $\mathcal{P}$-operator associated with the inner product $\langle , \rangle$:
for all $\R^{d \times d}$-valued functions $\bm{A}$ and $\bm{B}$ defined on $U$ such that the columns $c_{i}(\bm{A})$ and $c_{i}(\bm{B})$, $0 \leq i < d$, are in $\mathcal{E}$, we define the matrix $ \mathcal{P}\langle\bm{A},\bm{B}\rangle$ in $\R^{d \times d}$ by
\begin{equation*}
\forall \; 0 \leq i,j < d \, , \quad \mathcal{P}\langle\bm{A},\bm{B}\rangle_{i,j} =  \big \langle c_{i}(\bm{A}), c_{j}(\bm{B})\big \rangle \, .
\end{equation*}
By the Moore-Aronszajn theorem \cite{aronszajn:1950}, we deduce that there is a unique reproducing kernel Hilbert space associated with a given covariance kernel.
Thus, $\mathcal{E}$ is the reproducing subspace of $C_{0}\big( U, \R^{d} \big)$ corresponding to the kernel $\bm{C}$, with respect to the inner product $\langle , \rangle$.
\end{proof}

\begin{remark}
From a more abstract point of view, it is well-know that the covariance operator of a Gaussian measure defines an associated Hilbert structure~\cite{Kuo:1975}.
\end{remark}

In the sequel, we will use the space $\mathcal{E}$ as the ambient Hilbert space to define the finite-dimensional sample-paths spaces as  a nested sequence of RKHS.
More precisely, let us write for $\mathcal{E}_{N}$ the finite-dimensional subspace of $\mathcal{E}$
\begin{eqnarray*}
\mathcal{E}_{N}
=
\big \lbrace  \bm{u} \in C_{0}\big(U,\R^{d}\big)\, \vert \, \mathcal{L}[\bm{u}] \in L^{2}_{\bm{f},N} \big \rbrace 
 \, ,
\end{eqnarray*}
with the space $L^{2}_{\bm{f},N}$ being defined as
\begin{eqnarray*}
L^{2}_{\bm{f},N}
=
\mathrm{span} \left[ \left\{ c_{i}(\bphi_{n,k}) \right\}_{n,k \in \I_{N}, 0 \leq i < d} \right] \, .
\end{eqnarray*}
We refer to such spaces as finite-dimensional approximation spaces, since we remark that
\begin{eqnarray*}
\mathcal{E}_{N}
=
\mathrm{span} \left[ \left\{ c_{i}(\bpsi_{n,k}) \right\}_{n,k \in \I_{N}, 0 \leq i < d} \right] 
=
\bPsi_{N}\left[ \uxi \Omega_{N} \right] \, ,
\end{eqnarray*}
which means the space $\mathcal{E}_{N}$ is made of the sample space of the finite dimensional process $\bX_{N}$.
The previous definition makes obvious the nested structure $\mathcal{E}_{0} \subset \mathcal{E}_{1} \subset \dots \subset \mathcal{E}$, and it is easy to characterize each space $\mathcal{E}_{N}$ as reproducing Hilbert kernel space:

\begin{proposition}
The Hilbert spaces $\left( \mathcal{E}_{N}, \langle , \rangle \right)$ are reproducing kernel Hilbert space (RKHS) with $\R^{d \times d}$-valued reproducing kernel $\bm{C}_{N}$, the covariance function of the process $\bX_{N}$.
\end{proposition}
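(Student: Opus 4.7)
My plan is to argue in direct parallel to the proof of the preceding proposition, using that each step of that argument remains valid when one restricts the orthonormal expansion from the full family $\{c_j(\bphi_{n,k})\}_{(n,k)\in\I}$ to the finite truncation indexed by $\I_N$. Completeness of $(\mathcal{E}_N,\langle\cdot,\cdot\rangle)$ is immediate because $\mathcal{E}_N=\bPsi_N[\uxi\Omega_N]$ is finite-dimensional, and the inner product inherited from $\mathcal{E}$ remains positive definite on it.

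The key structural fact I would exploit is that $\mathcal{L}$ sends $\bpsi_{n,k}$ to $\bphi_{n,k}$, which follows at once from $\mathcal{L}=\bPhi\circ\bDelta$ combined with the biorthogonality $\mathcal{P}(\bdelta_{p,q},\bpsi_{n,k})=\delta^{n,k}_{p,q}\bm{I}_d$ of Proposition \ref{dualProp}. Consequently $\mathcal{L}$ restricts to a bijection $\mathcal{E}_N\to L^2_{\bm{f},N}$ with inverse $\mathcal{K}$. Introducing the truncated Green kernel $\bm{k}_N(t,s)=\sum_{(n,k)\in\I_N}\bpsi_{n,k}(t)\bphi^T_{n,k}(s)$ and mirroring the computation of the preceding proof with finite sums, the orthonormality of the columns of $\bphi_{n,k}$ produces
\begin{equation*}
\int_U\bm{k}_N(t,u)\cdot\bm{k}_N(s,u)^T\,du=\sum_{(n,k)\in\I_N}\bpsi_{n,k}(t)\cdot\bpsi^T_{n,k}(s),
\end{equation*}
which one identifies with the covariance $\bm{C}_N(t,s)$ of $\bX^N$ through the intermediate computation in the proof of Lemma \ref{lem:covariance}. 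In particular each column of $s\mapsto \bm{C}_N(t,s)$ is a finite linear combination of columns of the $\bpsi_{n,k}$ with $(n,k)\in\I_N$, and therefore lies in $\mathcal{E}_N$.

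To verify the reproducing property itself, I would expand an arbitrary $\bm{u}\in\mathcal{E}_N$ uniquely as $\bm{u}=\sum_{(n,k)\in\I_N}\bpsi_{n,k}\cdot\bxi_{n,k}$ via the Schauder bijection $\bPsi_N$, so that $\mathcal{L}[\bm{u}]=\sum_{(n,k)\in\I_N}\bphi_{n,k}\cdot\bxi_{n,k}$, and then compute $\mathcal{P}\langle\bm{C}_N(t,\cdot),\bm{u}\rangle$ as an inner product of the $\mathcal{L}$-images in $L^2_{\bm{f},N}$. Parseval's identity for the finite orthonormal family $\{c_j(\bphi_{n,k})\}_{(n,k)\in\I_N,\,0\leq j<d}$ collapses the resulting double sum to $\sum_{(n,k)\in\I_N}\bpsi_{n,k}(t)\cdot\bxi_{n,k}=\bm{u}(t)$, as required. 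Since all sums are now finite, no convergence issue arises and the only real care needed is the bookkeeping of matrix and vector shapes when applying Parseval column-wise; rather than a genuine obstacle, this is a transcription exercise from the preceding proof. A still shorter route would invoke the uniqueness part of the Moore--Aronszajn theorem: $\bm{C}_N$ is a symmetric positive semi-definite kernel (being a covariance), $\mathcal{E}_N$ contains its sections and realizes the reproducing identity above, hence $\mathcal{E}_N$ is \emph{the} RKHS associated with $\bm{C}_N$.
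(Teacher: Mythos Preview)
Your proposal is correct and follows essentially the same approach as the paper. The paper's own proof simply states that the argument of the preceding proposition carries over verbatim upon replacing $\bm{k}$ and $\bm{C}$ by the truncated kernels $\bm{k}_N(t,s)=\sum_{(n,k)\in\I_N}\bpsi_{n,k}(t)\bphi_{n,k}^T(s)$ and $\bm{C}_N(t,s)=\sum_{(n,k)\in\I_N}\bpsi_{n,k}(t)\bpsi_{n,k}^T(s)$; you have spelled out the details of that transcription (and even added the Moore--Aronszajn shortcut), but the underlying idea is identical.
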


\begin{proof}
The proof this proposition follows the exact same argument as in the case of $\mathcal{E}$, but with the introduction of finite-dimensional kernels $\bm{k}_{N}$ 
\begin{eqnarray*}
\forall \: (t,s) \, \in \, U^{2}\, , \quad \bm{k}_{N}(t,s) =  \sum_{(n,k) \in \I_{N}}  \bpsi_{n,k}(t) \cdot \bphi^{T}_{n,k}(s) \, ,
\end{eqnarray*}
and the corresponding covariance function
\begin{equation*}
\forall \: (t,s) \, \in \, [0,1]^{2}\, , \quad \bm{C}_{N}(t,s) =  \sum_{(n,k) \in \I_{N}}  \bpsi_{n,k}(t) \cdot \bpsi^{T}_{n,k}(s) \, .
\end{equation*}
\end{proof}


\subsubsection{Finite-Dimensional Processes as Orthogonal Projections}

The framework set in the previous section offers a new interpretation of our construction.
Indeed, for all $N>0$, the column of $\lbrace \bpsi_{n,k} \rbrace_{(n,k) \in \I_{N}}$ form an orthonormal basis of the space $\mathcal{E}_{N}$:
\begin{eqnarray*}
\mathcal{P} \langle \bpsi_{n,k} , \bpsi_{p,q} \rangle = \mathcal{P}( \mathcal{L}[\bpsi_{n,k}] ,  \mathcal{L}[\bpsi_{p,q}]) =  \mathcal{P}( \bphi_{n,k} ,  \bphi_{p,q}) = \delta^{n,k}_{p,q} \, .
\end{eqnarray*}
This leads to define the finite-dimensional approximation $\bm{x}_{N}$ of an sample path $\bm{x}$ of $\mathcal{E}$ as the orthogonal projection of $\bm{x}$ on $\mathcal{E}_{N}$ with respect to the inner product $\langle, \rangle$.
At this point, it is worth remembering that the space $\mathcal{E}$ is strictly contained in $\ux \Omega'$ and does not coincide with  $\ux \Omega'$: actually one can easily show that $\mathrm{\bold{P}}(\mathcal{E}) = \bm{0}$.
We devote the rest of this section to define the finite-dimensional processes $\bZ^{N} = \ExpN{X}$ resulting from the conditioning on $D_{N}$, as path-wise orthogonal projection of the original process $\bX$ on the sample space $\mathcal{E}_{N}$.

\begin{proposition}\label{prop:orthProj}
For any $N>0$, the conditioned processes $\ExpN{\bX}$ can be written as the orthogonal projection of $\bX$ on $\mathcal{E}_{N}$ with respect to $\langle, \rangle$
\begin{equation*}
\ExpN{\bX} = \sum_{(n,k) \in \I_{N}} \bpsi_{n,k} \cdot \mathcal{P} \langle \bpsi_{n,k} ,  \bX \rangle \, .
\end{equation*}
\end{proposition}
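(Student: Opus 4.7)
The plan is to exploit three ingredients already established: (a) the columns of $\{\bm{\psi}_{n,k}\}_{(n,k)\in\mathcal{I}_N}$ form an orthonormal basis of the RKHS $(\mathcal{E}_N, \langle\cdot,\cdot\rangle)$ (noted just before the proposition from $\mathcal{P}\langle\bm{\psi}_{n,k},\bm{\psi}_{p,q}\rangle=\mathcal{P}(\bm{\phi}_{n,k},\bm{\phi}_{p,q})=\delta^{n,k}_{p,q}\bm{I}_d$); (b) the coefficient application $\bm{\Delta}$ inverts $\bm{\Psi}$ pathwise (Theorem \ref{lemInv}), so that $\bm{X}=\sum_{(n,k)\in\mathcal{I}}\bm{\psi}_{n,k}\cdot\bm{\Xi}_{n,k}$ almost surely with $\bm{\Xi}_{n,k}=\mathcal{P}(\bm{\delta}_{n,k},\bm{X})$; (c) Corollary \ref{cor:basis} identifying the partial Schauder sum with $\bm{Z}^N=\mathbb{E}_N[\bm{X}]$.

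Concretely, I would proceed as follows. First, recall that for any $\bm{u}\in\mathcal{E}$ the orthogonal projection of $\bm{u}$ onto $\mathcal{E}_N$ in the Hilbert space $(\mathcal{E},\langle\cdot,\cdot\rangle)$ expands on the orthonormal system $\{\bm{\psi}_{n,k}\}_{(n,k)\in\mathcal{I}_N}$ as
\[
\mathrm{Proj}_{\mathcal{E}_N}(\bm{u})=\sum_{(n,k)\in\mathcal{I}_N}\bm{\psi}_{n,k}\cdot\mathcal{P}\langle\bm{\psi}_{n,k},\bm{u}\rangle.
\]
Second, I would identify the coefficient $\mathcal{P}\langle\bm{\psi}_{n,k},\bm{u}\rangle$ with the Schauder coefficient $\bm{\Delta}(\bm{u})_{n,k}$. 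By definition of the inner product and using $\mathcal{L}[\bm{\psi}_{n,k}]=\bm{\phi}_{n,k}$ together with the orthonormality of the $\bm{\phi}_{p,q}$ in $L^2_{\bm{f}}$ (Proposition \ref{orthoProp}), any $\bm{u}\in\mathcal{E}$ with Schauder expansion $\bm{u}=\sum\bm{\psi}_{p,q}\cdot\bm{\eta}_{p,q}$ satisfies $\mathcal{L}[\bm{u}]=\sum\bm{\phi}_{p,q}\cdot\bm{\eta}_{p,q}$, so that
\[
\mathcal{P}\langle\bm{\psi}_{n,k},\bm{u}\rangle=\mathcal{P}(\bm{\phi}_{n,k},\mathcal{L}[\bm{u}])=\bm{\eta}_{n,k}=\mathcal{P}(\bm{\delta}_{n,k},\bm{u}),
\]
the last equality being the duality of Proposition \ref{dualProp}. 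Extending this identification by duality (the right-hand side is defined on all of ${}_x\Omega'$), I read $\mathcal{P}\langle\bm{\psi}_{n,k},\bm{X}\rangle=\bm{\Xi}_{n,k}$ almost surely.

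Finally, I would substitute and invoke (c): the right-hand side of the proposition equals $\sum_{(n,k)\in\mathcal{I}_N}\bm{\psi}_{n,k}\cdot\bm{\Xi}_{n,k}$, which by Corollary \ref{cor:basis} coincides with $\bm{Z}^N=\mathbb{E}_N[\bm{X}]$. The pathwise identification here is not merely in law: the partial Schauder sum interpolates $\bm{X}$ at the endpoints of $D_N$ by the nested compact-support structure of $\bm{\psi}_{n,k}$, and on each $S_{N,k}$ it reproduces exactly the bridge formula $\bm{\mu}^l(t;\cdot,\cdot)\bm{X}_{l_{N,k}}+\bm{\mu}^r(t;\cdot,\cdot)\bm{X}_{r_{N,k}}$, which by Proposition \ref{bridgeProp} and the Markov property is precisely $\mathbb{E}_N[\bm{X}_t]$.

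The principal obstacle is that $\bm{X}\notin\mathcal{E}$ almost surely (indeed $\mathbf{P}(\mathcal{E})=0$ as remarked in the text), so the symbol $\langle\bm{\psi}_{n,k},\bm{X}\rangle$ cannot be taken as a literal Hilbert inner product and must be reinterpreted. The rescue is to read $\mathcal{P}\langle\bm{\psi}_{n,k},\bm{X}\rangle$ as its natural dual-pairing extension $\mathcal{P}(\bm{\delta}_{n,k},\bm{X})$, which is meaningful on ${}_x\Omega'$ and agrees with the Hilbert inner product whenever both sides are defined. Once this duality extension is in place, the three steps above chain together to yield the stated equality.
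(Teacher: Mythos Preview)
Your argument is correct, and it takes a genuinely different route from the paper's. The paper does not extend $\mathcal{P}\langle\bpsi_{n,k},\cdot\rangle$ through the Dirac pairing $\mathcal{P}(\bdelta_{n,k},\cdot)$; instead it first proves an auxiliary lemma (Lemma~\ref{lem:finiteStoch}) giving the stochastic-integral representation $\bZ^{N}_{t}=\int_{0}^{1}\bm{k}_{N}(t,s)\,d\bm{W}_{s}$ with $\bm{k}_{N}=\sum_{(n,k)\in\I_{N}}\bpsi_{n,k}\cdot\bphi_{n,k}^{T}$, which simultaneously identifies $\bXi_{n,k}=\int_{0}^{1}\bphi_{n,k}^{T}\,d\bm{W}_{s}$. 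It then computes $\mathcal{P}\langle\bpsi_{n,k},\bZ^{N}\rangle=\bXi_{n,k}$ for every $N$ directly from orthonormality, and \emph{defines} $\mathcal{P}\langle\bpsi_{n,k},\bX\rangle$ as the limit $\lim_{N}\mathcal{P}\langle\bpsi_{n,k},\bZ^{N}\rangle$. Your approach is purely pathwise: you identify the extended pairing with $\mathcal{P}(\bdelta_{n,k},\bX)$ via the coefficient application, and then argue that the partial Schauder sum equals $\ExpN{\bX}$ by recognizing that both interpolate $\bX$ on $D_{N}$ and coincide with the bridge formula of Proposition~\ref{bridgeProp} on each $S_{N,k}$. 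Your route avoids stochastic integration entirely and keeps everything within the Schauder/duality machinery; the paper's route buys the explicit Wiener-integral identification $\bXi_{n,k}=\int\bphi_{n,k}^{T}\,d\bm{W}$, which is reused later in the Girsanov section. One small comment: your citation of Corollary~\ref{cor:basis} is a red herring, since that corollary is only in law; the real work in your argument is the subsequent pathwise bridge identification, which stands on its own.
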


The only hurdle to prove Proposition \ref{prop:orthProj} is purely technical in the sense that the process $\bX$ lives in a larger space than $\mathcal{E}$: 
we need to find a way to extend the definition of $\langle , \rangle$ so that the expression bears a meaning.
Before answering this point quite straighforwardly, we need to establish the following lemma:

\begin{lemma}\label{lem:finiteStoch}
Writing the Gauss-Markov process $\bX_{t} = \int_{0}^{1} \bm{k}(t,s) \, d\bm{W}_{s}$, for all $N>0$, the conditioned process $\bZ^{N} = \ExpN{\bX}$ is expressed as the stochastic integral
\begin{eqnarray*}
\bZ^{N}= \int_{0}^{1} \bm{k}_{N}(t,s) \, d\bm{W}_{s} \quad \mathrm{with} \quad   \bm{k}_{N}(t,s) = \sum_{(n,k) \in \I_{N}} \bpsi_{n,k}(t) \cdot \bphi^{T}_{n,k}(s)\, .
\end{eqnarray*}
\end{lemma}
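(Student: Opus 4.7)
The plan is to define $\bY^N_t := \int_0^1 \bm{k}_N(t,s)\, d\bm{W}_s$ and prove $\bY^N_t = \ExpN{\bX_t}$ almost surely by identifying $\bY^N_t$ as the $L^2$-orthogonal projection of $\bX_t$ onto $L^2(\mathcal{F}_N)$ in the jointly Gaussian setting. First I would use linearity of the Itô integral to write $\bY^N_t = \sum_{(n,k)\in\I_N} \bpsi_{n,k}(t)\,\bXi_{n,k}$ with $\bXi_{n,k} := \int_0^1 \bphi_{n,k}^T(s)\, d\bm{W}_s$; by the Itô isometry and the $L^2_{\bm{f}}$-orthonormality of the columns of $\bphi_{n,k}$ established in Proposition~\ref{orthoProp}, the $\bXi_{n,k}$ form an iid family of $\mathcal{N}(\bm{0},\bm{I}_d)$ vectors.

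The pivotal step is the deterministic identity $\bm{k}(t,\cdot) = \bm{k}_N(t,\cdot)$ in $L^2_{\bm{f}}$ for every $t\in D_N$. To establish it, I would expand the columns of the transposed kernel $\bm{k}(t,\cdot)^T$ on the complete orthonormal basis $\{\bphi_{n,k}\}$ of $L^2_{\bm{f}}$: using the relation~\eqref{eq:psinkKPhi}, the Fourier coefficient $\mathcal{P}(\bphi_{n,k},\bm{k}(t,\cdot)^T)$ is readily seen to equal $\bpsi_{n,k}^T(t)$, so Parseval's identity yields $\bm{k}(t,s) = \sum_{(n,k)\in\I} \bpsi_{n,k}(t)\bphi_{n,k}^T(s)$. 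For $t\in D_N$, the nested-support construction forces $\bpsi_{n,k}(t)=\bm{0}$ whenever $(n,k)\notin\I_N$, because such $t$ either lies outside $S_{n,k}$ or coincides with one of its endpoints, where $\bpsi_{n,k}$ vanishes by Definition~\ref{def:psiDef}. The Parseval expansion truncates accordingly, and the Itô isometry then gives $\bY^N_t = \bX_t$ almost surely for every $t\in D_N$.

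Two consequences follow. On the one hand, $\bY^N_t$ is $\mathcal{F}_N$-measurable for every $t\in[0,1]$: inverting the block-triangular finite-dimensional system $\bPsi_N$ of Section~3.1.1 expresses each $\bXi_{n,k}$, $(n,k)\in\I_N$, as a linear functional of $\{\bY^N_s = \bX_s : s\in D_N\}$. On the other hand, for arbitrary $t\in[0,1]$ and $s\in D_N$, the Itô isometry combined with~\eqref{eq:psinkKPhi} and the orthonormality of the $\bphi_{n,k}$ yield
\[
\Exp{\bY^N_t\,\bX_s^T} = \sum_{(p,q)\in\I_N}\bpsi_{p,q}(t)\bpsi_{p,q}^T(s) = \sum_{(p,q)\in\I}\bpsi_{p,q}(t)\bpsi_{p,q}^T(s) = \Exp{\bX_t\,\bX_s^T},
\]
where the middle equality uses that $\bpsi_{p,q}(s)=\bm{0}$ for $s\in D_N$ and $(p,q)\notin\I_N$. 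Hence $\Exp{(\bX_t-\bY^N_t)\bX_s^T}=\bm{0}$ for all $s\in D_N$; by joint Gaussianity of $(\bX,\bm{W})$ this orthogonality upgrades to independence of $\bX_t-\bY^N_t$ from $\mathcal{F}_N$, and combined with the $\mathcal{F}_N$-measurability of $\bY^N_t$ we conclude $\ExpN{\bX_t}=\bY^N_t$ almost surely.

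The main obstacle is the identity $\bpsi_{n,k}(t)=\bm{0}$ for $t\in D_N$ and $(n,k)\notin\I_N$, and more precisely the verification that the Parseval expansion of $\bm{k}(t,\cdot)$ has $\bpsi_{n,k}(t)$ as its Fourier coefficients so that it truncates as claimed; everything else reduces to a routine application of the $L^2_{\bm{f}}$-orthonormality of Proposition~\ref{orthoProp} and the Itô isometry.
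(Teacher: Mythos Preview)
Your proposal is correct and follows essentially the same route as the paper: both identify $\bXi_{n,k}=\int_0^1\bphi_{n,k}^T(s)\,d\bm{W}_s$ via the It\^o isometry and the orthonormality of Proposition~\ref{orthoProp}, and both rest on the kernel expansion $\bm{k}(t,\cdot)=\sum_{(n,k)\in\I}\bpsi_{n,k}(t)\bphi_{n,k}^T(\cdot)$ together with the vanishing $\bpsi_{n,k}(t)=\bm{0}$ for $t\in D_N$, $n>N$. The only difference is one of packaging: the paper's proof invokes the Section~1 derivation (``by definition of the elements $\bpsi_{n,k}$'') to identify the partial sum with $\ExpN{\bX}$, whereas you give a self-contained Gaussian-projection argument (measurability plus orthogonality of the residual to $\{\bX_s:s\in D_N\}$), which is more explicit but logically equivalent.
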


\begin{proof}
In the previous section, we have noticed  that the kernel $\bm{k}_{N}$ converges toward the kernel  $\bm{k}$ (Green function) in the $L^{2}_{\bm{f}}$ sense:
\begin{eqnarray*}
\bm{k}(t,s) &=&  \sum_{(n,k) \in \I} \bpsi_{n,k}(t) \cdot \bphi^{T}_{n,k}(s)\\ 
&=&  \lim_{N \to \infty} \sum_{(n,k) \in \I_{N}} \bpsi_{n,k}(t) \cdot \bphi^{T}_{n,k}(s) \bm{k}_{N} \\
& =& \lim_{N \to \infty} \bm{k}_{N}(t,s) \, .
\end{eqnarray*}
This implies that the process $\bX$ as the stochastic integral, can also be written as 
\begin{equation*}
\bX_{t} =  \bgg(t) \int_{U}  \mathbbm{1}_{[0,t]}(s) \bff(s) \, d\bm{W}_{s} = \int_{0}^{1} \bm{k}(t,s) \, d\bm{W}_{s}  = \lim_{N \to \infty} \int_{0}^{1} \bm{k}_{N}(t,s) \, d\bm{W}_{s}  \, .
\end{equation*}
Specifying the decomposition of $\bm{k}_{N}$, we can then naturally express $\bX$ as the convergent sum 
\begin{eqnarray*}
\bX_{t}   = \sum_{(n,k) \in \I} \bpsi_{n,k} \cdot \bXi_{n,k} \quad \mathrm{with} \quad \bXi_{n,k} = \int_{0}^{1} \bphi_{n,k}^{T}(s) \, d\bm{W}_{s} \, ,
\end{eqnarray*}
where the orthonormality property of the $\bphi_{n,k}$ with respect to $( , )$, makes the vectors $\bXi_{n,k}$ appears as independent  $d$-dimensional Gaussian variables of law $\mathcal{N}(0,\bm{I}_{d})$.
It is then easy to see that by definition of the elements $\bpsi_{n,k}$, for almost every $\omega$ in $\Omega$,  we then have 
\begin{eqnarray*}
\forall \: N>0 \, , \: 0 \leq t \leq 1 \, , \quad \bZ^{N}(\omega) = \ExpN{\bX}(\omega) = \sum_{(n,k) \in \I_{N}} \bpsi_{n,k} \cdot \bXi_{n,k}(\omega)  \, ,
\end{eqnarray*}
and we finally  recognize in the previous expression that for all $0 \leq t \leq 1$
\begin{eqnarray*}
\bZ_{t}^{N}  = \sum_{(n,k) \in \I_{N}} \bpsi_{n,k} \cdot \bXi_{n,k}=  \sum_{(n,k) \in \I_{N}} \bpsi_{n,k}(t) \cdot \int_{U} \bphi^{T}_{n,k}(s) \, d\bm{W}_{s} = \int_{0}^{1} \bm{k}_{N}(t,s) \, d\bm{W}_{s}  \, .
\end{eqnarray*}
\end{proof}

We can now proceed to justify the main result of Propositon \ref{prop:orthProj}:

\begin{proof}
The finite-dimensional processes $\bZ^{N}$ defined through Lemma \ref{lem:finiteStoch} have sample-paths $t \mapsto \bZ^{N}_{t}(\omega)$ belonging to $\mathcal{E}_{N}$.
Moreover, for almost every $\omega$ in $\Omega$, and for all $n,k$ in $\I_{N}$, 
\begin{eqnarray*}
\mathcal{P} \Big \langle \bpsi_{n,k} , \bZ^{N}(\omega)\Big \rangle
&=&
\mathcal{P} \bigg \langle  \bpsi_{n,k},  \int_{0}^{1} \bm{k}_{N}(t,s) \, d\bm{W}_{s}(\omega) \bigg \rangle \, ,\\
&=&
\mathcal{P} \bigg \langle  \bpsi_{n,k},  \sum_{(p,q) \in I_{N}} \bpsi_{p,q}(\omega) \cdot \int_{0}^{1} \bphi^{T}_{p,q}(s) \, d\bm{W}_{s}(\omega) \bigg \rangle \, ,\\
&=& 
\int_{0}^{1} \bphi_{n,k}^{T}(s) \, d\bm{W}_{s}(\omega) \, ,
\end{eqnarray*}
because of the orthonormality property of $\bpsi_{n,k}$ with respect to $\langle  , \rangle$.
As the previous equalities holds for every $N>0$, the applications $\bx \mapsto \mathcal{P} \big \langle \bpsi_{n,k} , \bx \rangle $ can naturally be extended on $\ux \Omega'$ by continuity,
Therfore, it makes sense to write for all $(n,k)$ in $\I_{N}$, $ \mathcal{P} \langle \bpsi_{n,k} , \bZ^{N} \rangle = \lim_{N \to \infty}  \mathcal{P} \langle \bpsi_{n,k} , \bZ^{N} \rangle \stackrel{def}{=}  \mathcal{P} \langle \bpsi_{n,k} , \bX \rangle$ even if the $\bX$ is defined into a larger sample space than $\mathcal{E}$.
In other words,  we have
\begin{eqnarray*}
\mathcal{P} \big \langle \bpsi_{n,k} , \bX \big \rangle = \int_{0}^{1} \bphi^{T}_{n,k}(s) \, d\bm{W}_{s}  = \bXi_{n,k} \, ,
\end{eqnarray*}
and we can thus express the conditioned process $\bZ^{N} = \ExpN{\bX}$ as the orthogonal projection of $\bX$ onto the finite sample-path $\mathcal{E}_{N}$ by writing
\begin{equation*}
\bZ^{N} = \sum_{(n,k) \in \I_{N}} \bpsi_{n,k} \cdot \mathcal{P} \langle \bpsi_{n,k} , \bX \rangle \, . 
\end{equation*}
\end{proof}


\subsubsection{Optimality Criterion of the Sample Paths}

Proposition \ref{prop:orthProj} elucidates the structure of the conditioned processes $\bZ_{N}$ as path-wise orthogonal projections of $\bX$ on the finite-dimensional RKHS $\mathcal{E}_{N}$.
It  allows us to cast the finite sample-paths in a geometric setting and incidentally, to give a characterization of them as the minimizer of some functionals.
In doing so, we shed a new light on well-known results of interpolation theory~\cite{Wahba:1990,Rako:2005,Hsing:2009} and extend them to the multidimensional case.\\
The central point of this section reads as follows:

\begin{proposition}\label{prop:minCriterion}
Given a function $\bm{x}$ in $\mathcal{E}$, the function $\bm{x}_{N} = \left( \bPsi \circ \bDelta_{N} \right)[\bm{x}]$ belongs to $\mathcal{E}_{N}$ and is defined by the following optimal criterion:
 $\bm{x}_{N} $ is the only function in $\mathcal{E}$ interpolating $\bm{x}$ on $D_{N}$ such that the functional 
 \begin{eqnarray}\label{eq:energy}
 \langle \bm{y}, \bm{y} \rangle = {\Vert \mathcal{L}[\bm{y}](t) \Vert_{2}}^{2} = \int_{0}^{1} {\vert \mathcal{L}[\bm{y}](t) \vert_{2}}^{2} \, dt \, ,
\end{eqnarray}
takes its unique minimal value over $\mathcal{E}$ in $\bx_{N}$.
\end{proposition}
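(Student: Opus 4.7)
The plan is to realize $\bm{x}_N$ as the orthogonal projection of $\bm{x}$ onto the finite-dimensional RKHS $\mathcal{E}_N$ with respect to $\langle , \rangle$, then to conclude by a standard Pythagorean argument that this projection is the unique interpolant of $\bm{x}$ at $D_N$ minimizing the Dirichlet energy \eqref{eq:energy}. First I would dispatch the easy assertions: $\bm{x}_N \in \mathcal{E}_N$ is immediate from $\mathcal{E}_N = \bPsi_N[\uxi \Omega_N]$ combined with $\bm{x}_N = (\bPsi \circ \bDelta_N)[\bm{x}]$; and $\bm{x}_N$ interpolates $\bm{x}$ on $D_N$ because $\bDelta_N$ only depends on $\bm{x}|_{D_N}$ while $\bPsi_N \circ \bDelta_N = \mathrm{Id}_{\ux \Omega_N}$ in the finite-dimensional setting.

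Next I would establish the orthogonal projection identity. The columns of $\{\bpsi_{n,k}\}_{(n,k) \in \I_N}$ form an orthonormal basis of $\mathcal{E}_N$ with respect to $\langle , \rangle$: indeed, the relation $\mathcal{L}[\bpsi_{n,k}] = \bphi_{n,k}$ follows either from $\bpsi_{n,k} = \mathcal{K}[\bphi_{n,k}]$ and $\mathcal{L} \circ \mathcal{K} = \mathrm{Id}$, or directly from $\mathcal{L} = \bPhi \circ \bDelta$ together with the biorthogonality established in Proposition~\ref{dualProp}, so that Proposition~\ref{orthoProp} yields
\[
	\mathcal{P} \langle \bpsi_{n,k}, \bpsi_{p,q} \rangle = \mathcal{P}(\bphi_{n,k}, \bphi_{p,q}) = \delta^{n,k}_{p,q}\, \bm{I}_d .
\]
On the other hand, for any $\bm{x} \in \mathcal{E}$, the isometry property of $\bPhi$ combined with $\mathcal{L}[\bm{x}] = \bPhi[\bDelta[\bm{x}]]$ gives by Parseval
\[
	\mathcal{P} \langle \bpsi_{n,k}, \bm{x} \rangle = \mathcal{P}(\bphi_{n,k}, \mathcal{L}[\bm{x}]) = \{\bDelta[\bm{x}]\}_{n,k} .
\]
These two facts together identify $\bm{x}_N = \sum_{(n,k) \in \I_N} \bpsi_{n,k} \cdot \mathcal{P} \langle \bpsi_{n,k}, \bm{x} \rangle$ as precisely the orthogonal projection of $\bm{x}$ onto $\mathcal{E}_N$.

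Finally I would take an arbitrary $\bm{y} \in \mathcal{E}$ interpolating $\bm{x}$ on $D_N$ and set $\bm{z} = \bm{y} - \bm{x}_N$. Since both $\bm{y}$ and $\bm{x}_N$ coincide with $\bm{x}$ on $D_N$, the residual $\bm{z}$ vanishes at every point of $D_N$. Because $\bdelta_{n,k}$ is a linear combination of Dirac masses concentrated at $\{l_{n,k}, m_{n,k}, r_{n,k}\} \subset D_N$ for every $(n,k) \in \I_N$ (and at the single point $r_{0,0}=1 \in D_N$ in the boundary case $(0,0)$), one gets $\langle \bpsi_{n,k}, \bm{z} \rangle = \mathcal{P}(\bdelta_{n,k}, \bm{z}) = 0$, so $\bm{z}$ is $\langle , \rangle$-orthogonal to $\mathcal{E}_N$. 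Pythagoras then yields $\langle \bm{y}, \bm{y} \rangle = \langle \bm{x}_N, \bm{x}_N \rangle + \langle \bm{z}, \bm{z} \rangle \geq \langle \bm{x}_N, \bm{x}_N \rangle$, with equality if and only if $\langle \bm{z}, \bm{z} \rangle = 0$; positive definiteness of $\langle , \rangle$ on $\mathcal{E}$ then forces $\bm{z} = 0$, giving uniqueness.

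The only genuinely delicate point is the combinatorial inclusion $\{l_{n,k}, m_{n,k}, r_{n,k}\} \subset D_N$ for every $(n,k)$ indexing the truncated expansion $\bm{x}_N$: it is precisely this alignment between the distributional support of $\bdelta_{n,k}$ and the interpolation nodes $D_N$ that makes the pairing $\mathcal{P}(\bdelta_{n,k}, \bm{z})$ vanish on residuals $\bm{z}$ vanishing on $D_N$. Once this nesting is verified from the recursive definition of the partition in Section~\ref{sec:supports}, the remainder of the argument is a routine orthogonal projection computation in the RKHS $\mathcal{E}$.
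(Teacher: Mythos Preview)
Your proof is correct and follows essentially the same approach as the paper's. Both arguments hinge on the same key observation---that for every $(n,k)$ indexing $\bDelta_N$, the distribution $\bdelta_{n,k}$ is supported on $D_N$, so the interpolation constraint fixes all low-level coefficients $\bDelta_{n,k}[\bm{y}]=\bDelta_{n,k}[\bm{x}]$. The only cosmetic difference is packaging: the paper transfers everything to $\ell^2(\uxi\Omega)$ via the isometry $\bPhi$ and compares $\|\bDelta[\bm{y}]\|_2^2 \geq \|\bDelta[\bm{x}_N]\|_2^2$ coefficient by coefficient, whereas you phrase the same computation as an orthogonal projection onto $\mathcal{E}_N$ followed by Pythagoras; these are equivalent formulations of the same argument.
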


\begin{proof}
The space $\mathcal{E}_{N}$ has been defined as $\mathcal{E}_{N} = \bPsi_{N}\left[ \uxi \Omega_{N} \right] = \bPsi \circ \bDelta_{N} \left[ \mathcal{E} \right]$, so that for all $\bx$ in $\mathcal{E}$, $\bx_{N}$ clearly belongs to $\mathcal{E}_{N}$.
Moreover, $\bm{x}_{N}$ interpolates $\bx$ on $D_{N}$:
indeed, we know that the finite-dimensional operator $\bDelta_{N}$ and $\bPsi_{N}^{-1}$ are inverse of each other $\bDelta_{N} = \bPsi_{N}^{-1}$, which entails that for all $t$ in $D_{N}$
\begin{eqnarray*}
\bx_{N}(t) = \left( \bPsi \circ \bDelta_{N} \right)[\bm{x}](t) =  \left( \bPsi_{N} \circ \bDelta_{N} \right)[\bm{x}](t) = \bx(t) \, ,
\end{eqnarray*}
where we use the fact that for any $\bxi$ in $\uxi \Omega'$, and for all $t$ in $D_{N}$, $\bPsi_{N}[\bxi](t) = \bPsi[\bxi](t)$ (recall that $\bpsi_{n,k}(t) = \bm{0}$ if $n > N$ and $t$ belongs to $D_{N}$). \\
Let us now show that $\bx_{N}$  is determined in $\mathcal{E}$ by the announced optimal criterion.
Suppose $\by$ belongs to $\mathcal{E}$ and interpolates $\bx$ on $D_{N}$ and remark that we can write
\begin{eqnarray*}
 \langle \by, \by \rangle =  {\Vert \mathcal{L}[\by] \Vert_{2}}^{2}  = {\Vert (\bPhi \circ \bDelta)[\by](t) \Vert_{2}}^{2}   = {\Vert \bDelta[\by] \Vert_{2}}^{2}  \, ,
\end{eqnarray*}
since $\bPhi$ is an isometry.
Then, consider $\bDelta[\by]$ in  $l^{2}(\uxi \Omega)$ and remark that, since for all $(n,k)$ in $\I_{N}$,  $\bdelta_{n,k}$ are Dirac measures supported by $D_{N}$, we  have 
\begin{eqnarray*}
\forall \; (n,k) \in \I_{N} \, , \quad \bDelta_{n,k}[\by] = \mathcal{P}(\bdelta_{n,k}, \by) = \mathcal{P}(\bdelta_{n,k}, \bx) =  \bDelta_{n,k}[\bx] = \bDelta_{n,k}[\bx_{N}]\, .
\end{eqnarray*}
This entails
\begin{eqnarray*}
 {\Vert \bDelta[\by] \Vert_{2}}^{2} \, dt
=
\sum_{(n,k) \in \I}  {\vert \bDelta_{n,k}[\by] \vert_{2}}^{2}
\geq
\sum_{(n,k) \in \I_{N}}  {\vert \bDelta_{n,k}[\by] \vert_{2}}^{2}
=
 {\Vert \bDelta[\bx_{N}] \Vert_{2}}^{2} \, dt \, .
\end{eqnarray*}
since by definition of $\bx_{N}$, $\bdelta_{n,k}[\bx_{N}] = \bm{0}$ if $n>N$.
Moreover,  the minimum $ \langle \bx_{N}, \bx_{N} \rangle$ is only attained for $\by$ such that $\bdelta_{n,k}[\by] =   \bm{0}$ if $n>N$ and $\bdelta_{n,k}[\by] = \bdelta_{n,k}[\bx] $ if $n \leq N$, which defines univocally $\bx_{N}$.
This establishes that for all $\by$ in $\mathcal{E}$ such that for all $t$ in $D_{N}$, $\by(t) = \bx(t)$, we have $ \langle \bm{x}_{N}, \bm{x}_{N} \rangle \leq  \langle \by, \by \rangle$ and the equality case holds if and only if $\by = \bx_{N}$.
\end{proof}

\begin{remark}
When $\mathcal{L}$ represents a regular differential operator of order $d$, $\sum_{i=1}^{d} a_{i}(t) D^{i}$ where $D = \frac{d}{dt}$, that is for
\[
d\bm{X}_{t}
=
 \bm{\alpha}(t) \cdot \bm{X}_t + \sqrt{\bm{\Gamma}(t)} \cdot dW_t \, ,
\]
with
\[
 \bm{\alpha}(t) =  \left[ \begin{array}{ccccc}
		0&1\\
		&   \ddots & \ddots& \\
		&&\ddots&1\\
		a_{d}&a_{d-1}&\ldots&a_{1}\\
\end{array}
\right]
, \quad 
\bm{\sqrt{\bm{\Gamma}(t)}} = 
\left[
\begin{array}{c}
0\\
0\\
\vdots\\
1
\end{array}
\right] \, .
\]
the finite-dimensional sample paths coincide exactly the spline interpolation of order $2d+1$, which are well-known to satisfy the previous criterion~\cite{Kimel3}. This example will be further explored in the example section. 
\end{remark}

The Dirichlet energy simply appears as the squared norm induced on $\mathcal{E}$ by the inner product $\langle , \rangle$, which in turn can be characterized as a Dirichlet quadratic form on $\mathcal{E}$.
Actually, such a Dirichlet form can be used to define the Gauss-Markov process, extending  the Gauss-Markov property to processes indexed on multidimensional spaces  parameter~\cite{Pitt:1971kl}.
In particular, for a $n$-dimensional parameter space, we can condition such Gauss-Markov processes on a smooth $n\!-\!1$-dimensional boundary. 
Within the boundary, the sample paths of the resulting conditioned process (the solution to the prediction problem in~\cite{Pitt:1971kl}) are the solutions to the corresponding Dirichlet problems for the elliptic operator associated with the Dirichlet form.

The characterization of the basis as the minimizer of such a  Dirichlet energy \eqref{eq:energy} gives rise to an alternative method to compute the basis as the solution of a Dirichlet boundary value problem for an elliptic differential operator:

\begin{proposition}\label{prop:diffeq}
	Let us assume that $\balpha$ and $\sqrt{\bm{\Gamma}}$ are continuously differentiable and that $\sqrt{\bm{\Gamma}}$ is invertible. Then the functions $\bm{\mu}_{n,k}$ are defined as:
	\[ \bm{\mu}_{n,k}(t)=\begin{cases}
	 \bm{\mu}^l(t) & t\in [l_{n,k},m_{n,k}] \\
	  \bm{\mu}^r(t) & t\in[m_{n,k},r_{n,k}]\\
	\bm{0} & \text{else},
	\end{cases}
	\] where $\bm{\mu}^l$ and $\bm{\mu}^r$ are the unique solutions of the second order $d$-dimensional linear differential equation
	\begin{equation}\label{eq:bvp}
			\bm{u}'' + \left( \bm{\Gamma}^{-1} \left( \balpha^{T}   \bm{\Gamma} -  \bm{\Gamma}' \right) - \balpha \right) \bm{u}'- \left( \bm{\Gamma}^{-1} \left( \balpha^{T}   \bm{\Gamma} -  \bm{\Gamma}' \right)  \balpha + \balpha' \right) \bm{u} = \bm{0}\\
	\end{equation}
 with the following boundary value conditions:\\

\centering{
\begin{minipage}{.4\textwidth}
	\[\begin{cases}
                \bm{\mu}^l(l_{n,k}) =  \bm{0} \\
	 \bm{\mu}^l(m_{n,k}) = \bm{I}_{d} 
	\end{cases}
	\]
\end{minipage}
\begin{minipage}{.4\textwidth}
	\[\begin{cases}
		 \bm{\mu}^r(m_{n,k}) = \bm{I}_{d} \\
	 	 \bm{\mu}^r(r_{n,k}) = \bm{0}   
	\end{cases}
	\]
\end{minipage}
}
\end{proposition}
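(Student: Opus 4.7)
The plan is to derive both the ODE and the boundary conditions from the variational characterization of the basis in Proposition~\ref{prop:minCriterion}, then close the argument with a uniqueness statement. I would first reduce the problem to the basis functions themselves: by equation~\eqref{eq:candPsiBasis}, on each half-support one has $\bpsi_{n,k}(t) = \bm{\mu}_{n,k}(t)\cdot\bm{\sigma}_{n,k}$ with a constant invertible $\bm{\sigma}_{n,k}$, so the endpoint values $\bpsi_{n,k}(l_{n,k}) = \bpsi_{n,k}(r_{n,k}) = \bm{0}$ and $\bpsi_{n,k}(m_{n,k}) = \bm{\sigma}_{n,k}$ yield precisely the claimed Dirichlet conditions $\bm{\mu}^l(l_{n,k}) = \bm{\mu}^r(r_{n,k}) = \bm{0}$ and $\bm{\mu}^l(m_{n,k}) = \bm{\mu}^r(m_{n,k}) = \bm{I}_d$. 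Since the target ODE is linear, $\bm{\mu}_{n,k}$ solves it if and only if $\bpsi_{n,k}$ does, so I may work with either.

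Under the standing assumption $m=d$ with $\sqrt{\bm{\Gamma}}$ invertible and smoothly differentiable, the function $\bff(t) = \bgg^{-1}(t)\sqrt{\bm{\Gamma}(t)}$ is invertible, and the operator $\mathcal{L}$ introduced in Section~\ref{ssec:DualOps} reduces to the explicit first-order form
\[
\mathcal{L}[\bm{u}](t) = \sqrt{\bm{\Gamma}(t)}^{-1}\bigl(\bm{u}'(t) - \balpha(t)\bm{u}(t)\bigr),
\]
which follows from a direct computation using $(\bgg^{-1})' = -\bgg^{-1}\balpha$. Consequently, the Dirichlet form reads
\[
\langle\bm{u},\bm{u}\rangle = \int_{0}^{1} \bigl(\bm{u}' - \balpha\bm{u}\bigr)^{T}\bm{\Gamma}^{-1}\bigl(\bm{u}' - \balpha\bm{u}\bigr)\,dt.
\]
By Proposition~\ref{prop:minCriterion}, each column of $\bpsi_{n,k}$ minimizes $\langle\cdot,\cdot\rangle$ among elements of $\mathcal{E}$ interpolating the corresponding boundary data. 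Restricting the functional to one half-support $[l_{n,k},m_{n,k}]$ (respectively $[m_{n,k},r_{n,k}]$) with Dirichlet data at its endpoints, this variational characterization transfers verbatim to each column of $\bm{\mu}^l$ (respectively $\bm{\mu}^r$).

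The ODE then emerges as the Euler--Lagrange equation of the quadratic Lagrangian $L(\bm{u},\bm{u}') = (\bm{u}' - \balpha\bm{u})^{T}\bm{\Gamma}^{-1}(\bm{u}' - \balpha\bm{u})$. A standard variation gives
\[
\frac{d}{dt}\Bigl(\bm{\Gamma}^{-1}\bigl(\bm{u}' - \balpha\bm{u}\bigr)\Bigr) + \balpha^{T}\bm{\Gamma}^{-1}\bigl(\bm{u}' - \balpha\bm{u}\bigr) = \bm{0},
\]
and expanding the derivative using $(\bm{\Gamma}^{-1})' = -\bm{\Gamma}^{-1}\bm{\Gamma}'\bm{\Gamma}^{-1}$, multiplying through by $\bm{\Gamma}$, and regrouping the terms of order $\bm{u}''$, $\bm{u}'$ and $\bm{u}$ yields the stated second-order linear system \eqref{eq:bvp}. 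I anticipate this algebraic rearrangement to be the main technical obstacle: because the matrices $\balpha$, $\bm{\Gamma}$, $\bm{\Gamma}'$ do not commute, one must carefully track on which side of $\bm{\Gamma}$ or $\bm{\Gamma}^{-1}$ each factor appears while recovering the compact form $\bm{\Gamma}^{-1}(\balpha^{T}\bm{\Gamma} - \bm{\Gamma}')$ that organizes the equation.

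Uniqueness of the solution to the boundary value problem then follows from the strict positivity of the Dirichlet form on functions vanishing at a single endpoint. Indeed, the integrand is non-negative, and it vanishes identically only if $\bm{u}' = \balpha\bm{u}$, i.e.\ if $\bm{u}$ is a solution of the homogeneous flow equation, for which vanishing at one endpoint already forces $\bm{u}\equiv\bm{0}$. Hence the minimizer of the Dirichlet energy under the prescribed boundary conditions is unique, and it coincides with the unique solution of \eqref{eq:bvp} satisfying those conditions. This completes the identification of $\bm{\mu}^l$ and $\bm{\mu}^r$ as the announced solutions.
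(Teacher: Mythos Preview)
Your proposal is correct and follows essentially the same approach as the paper: both invoke Proposition~\ref{prop:minCriterion} to characterize $\bm{\mu}_{n,k}$ as the minimizer of the Dirichlet energy, identify the Lagrangian $L(t,\bm{u},\bm{u}') = (\bm{u}' - \balpha\bm{u})^T\bm{\Gamma}^{-1}(\bm{u}' - \balpha\bm{u})$, and derive~\eqref{eq:bvp} as the associated Euler--Lagrange equation. Your write-up is in fact slightly more explicit than the paper's, notably in deriving the boundary conditions from~\eqref{eq:candPsiBasis}, writing out the first-order form of the Euler--Lagrange equation, and supplying the uniqueness argument via strict positivity of the Dirichlet form.
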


\begin{proof}
	By Proposition \ref{prop:minCriterion}, we know that $\bm{\mu}_{n,k}(t)$ minimizes  the convex functional
	\[\int_0^1 {\vert \mathcal{L} [\bm{u}](s)\vert_{2}}^2\,ds \]
	over  $\mathcal{E}$, being equal to zero outside the interval $[l_{n,k},r_{n,k}]$ and equal to one at the point $t=m_{n,k}$. 
	Because of the hypotheses on $\balpha$ and $\sqrt{\Gamma}$, we have $\mathcal{L} = \mathcal{D}$ and we can additionally restrain our search to functions that are twice continuously differentiable.
	Incidentally, we only need to minimize separately the contributions on the interval $[l_{n,k},m_{n,k}]$ and $[m_{n,k},r_{n,k}]$.
	On both intervals, this problem is a classical Euler-Lagrange problem (see e.g.~\cite{Allaire:2007}) and is solved using basic principles of calculus of variations. 
	We easily identify the Lagrangian of our problem as
	\begin{eqnarray*}
	L(t,\bm{u},\bm{u}') &=& {{\left \vert \Big(\bm{u}' - \balpha(t)\,\bm{u}(t) \Big)\left( \sqrt{\bm{\Gamma(t)}} \right)^{-1}\right \vert}_{2}}^2 \\
	&=&  \Big(\bm{u}'(t) - \balpha(t)\,\bm{u}(t) \Big)^{T} {\Big(\bm{\Gamma}(t)	\Big)}^{-1}\Big(\bm{u}'(t) - \balpha(t)\,\bm{u}(t) \Big).
	\end{eqnarray*}
	From there, after some simple matrix calculations, the Euler-Lagrange equations
	\[\frac{\partial L(t,\bm{u},\bm{u}')}{\partial u_i} - \frac{d}{dt} \left (\frac{\partial L(t,\bm{u},\bm{u}')}{\partial u'_i} \right)=0, \qquad i=1,\,\ldots,\,d.\]
	can be expressed under the form:
\[ \bm{u}'' + \left( \bm{\Gamma}^{-1} \left( \balpha^{T}   \bm{\Gamma} -  \bm{\Gamma}' \right) - \balpha \right) \bm{u}'- \left( \bm{\Gamma}^{-1} \left( \balpha^{T}   \bm{\Gamma} -  \bm{\Gamma}' \right)  \balpha + \balpha' \right) \bm{u} = \bm{0} \]
which ends the proof. 
\end{proof}

\begin{remark}
It is a simple matter of calculus to check that the expression of $\bm{\mu}$ given in Proposition  \ref{bridgeProp} satisfies equation \eqref{eq:bvp}.
Notice also that in the case $\bm{\Gamma} = \bm{I}_{d}$, the differential equation becomes
\[ \bm{u}'' + \left(   \balpha^{T}  - \balpha \right) \bm{u}'- \left(  \balpha^{T}     \balpha + \balpha' \right) \bm{u} = \bm{0} \, ,\] 
which is further simplified for constant or symmetrical $\balpha$.
\end{remark}

Under the hypotheses of Proposition \ref{prop:diffeq}, we can thus define $\bm{\mu}_{n,k}$ as the unique solution to the second-order linear differential equation \eqref{eq:bvp} with the appropriate boundary values conditions.
From this definition, it is then easy to derive the basis $\bpsi_{n,k}$ by completing the following program:
\begin{enumerate}
\item Compute the $t \mapsto \bm{\mu}_{n,k}(t)$ by solving the linear ordinary differential problem.
\item Apply the differential operator $\mathcal{D}$ to get the functions $\mathcal{D}[\bm{\mu}_{n,k}]$.
\item Orthonormalize the column functions $t \mapsto c_{j}(\mathcal{D}[\bm{\mu}_{n,k}(t)])$  by Gram-Schmidt process.
\item Apply the integral operator $\mathcal{K}$  to get the desired functions $\bpsi_{n,k}$ (or equivalently multiply the original function $t \mapsto \bm{\mu}_{n,k}(t)$ by the corresponding Gram-Schmidt triangular matrix).
\end{enumerate}
Notice finally that each of these points are easily implemented numerically.


\section{Exemples: Derivation of the Bases for Some Classical  Processes}


 \subsection{One-Dimensional Case}\label{ssect:OneDimCalculus}

 In the one-dimensional case, the construction of the Gauss-Markov process is considerably simplified since we do not have to consider the potential degeneracy of matrix-valued functions.
 Indeed, in this situation, the centered Gauss-Markov process $X$ is solution of the one-dimensional stochastic equation
 \begin{equation*}
 dX_{t} = \alpha(t) \, X_{t} \, dt + \sqrt{\Gamma(t)} \, dW_{t} \, ,
 \end{equation*}
 with $\alpha$ homogeneously H\"{o}lder continuous and $\Gamma$ positive continuous function.
 We then have the  Doob representation  
 \begin{equation*}
 X_t = g(t) \int_{0}^{t} f(s) \, dW_s \, , \quad \mathrm{with}\quad g(t) = e^{\int_{0}^{t} \alpha(v) \, dv} \, , \, f(t) = \sqrt{\Gamma(t)} \, e^{-\int_{0}^{t} \alpha(v) \, dv} \, .
 \end{equation*}
 Writing the function $h$ as
 \begin{equation*}
 h(t) = \int_{0}^{t} f^{2}(s) \, ds \, ,
 \end{equation*}
 the covariance of the process  reads for any $0 \leq t,s \leq 0$ as
 \begin{equation*}
 C(t,s) = g(t) g(s) h(t \wedge s) \, .
 \end{equation*}
 The variance of the Gauss-Markov bridge $B_{t}$ pinned in $t_{x}$ and $t_{z}$ yields
 \begin{eqnarray*}
 \left(\sigma_{t_{x},t_{z}}(t)\right)^{2}= g(t)^{2} \frac{\big( h(t)-h(t_{x}) \big) \big( h(t_{z})-h(t) \big)}{h(t_{z})-h(t_{x})} \, .
 \end{eqnarray*}
 These simple relations entails that the functions $\psi_{n,k}$ are defined on their supports $S_{n,k}$ by $\psi_{n,k}(t)^2 = \Exp{\left(\delta^{n}(t)\right)^{2}}$ with
 \begin{eqnarray*}
 \lefteqn{
 \Exp{\left(\delta^{n}(t)\right)^{2}}
 =
 \left(\sigma_{l_{n,k},r_{n,k}}(t)\right)^{2}
  }
 \\
 &&
 - \left(
\mathbbm{1}_{S_{n+1,2k}}(t) \left(\sigma_{l_{n,k},m_{n,k}}(t)\right)^{2}
 +
\mathbbm{1}_{S_{n+1,2k+1}}(t) \left(\sigma_{l_{n,k},m_{n,k}}(t)\right)^{2}
 \right) \, .
 \end{eqnarray*}
 This reads on $S_{n+1,2k}$ as
 \begin{eqnarray*}
 \lefteqn{ \psi_{n,k}(t)^2 = g(t)^2 \left[ \frac{\big( h(t)-h(l_{n,k}) \big) \big( h(r_{n,k})-h(t) \big)}{ h(r_{n,k})-h(l_{n,k}) } \right. } 
	\\
	&& 
	\qquad \qquad \qquad \qquad \qquad - \left. \frac{(h(t)-h(l_{n,k}))(h(m_{n,k})-h(t))}{h(m_{n,k})-h(l_{n,k})}\right]
 	 \, ,
 \end{eqnarray*}
 and on $S_{n+1,2k+1}$ as
 \begin{eqnarray*}
  \lefteqn{
 	\psi_{n,k}(t)^2 = g(t)^2 \left[ \frac{(h(t)-h(l_{n,k}))(h(r_{n,k})-h(t))}{h(r_{n,k})-h(l_{n,k})}  \right. }
	\\
	&&
	\qquad \qquad \qquad \qquad \qquad - \left. \frac{(h(t)-h(m_{n,k}))(h(r_{n,k})-h(t))}{h(r_{n,k})-h(m_{n,k})}\right]  \, , 
 \end{eqnarray*}
 and therefore we have:
 \begin{equation*}
 	\psi_{n,k}(t) = \begin{cases}
 	\displaystyle  \frac{\sigma_{n,k} \, g(t) (h(t)-h(l_{n,k}))}{g(m_{n,k}) (h(m_{n,k})-h(l_{n,k}))}  \, , 
	&   l_{n,k}\leq t \leq m_{n,k} \, ,\\
	 \\
 	\displaystyle  \frac{\sigma_{n,k} \,  g(t) (h(r_{n,k})-h(t))}{g(m_{n,k})(h(r_{n,k})-h(m_{n,k}))}  \, , 
	 & m_{n,k} \leq t \leq r_{n,k} \, ,\\
 \end{cases} 
 \end{equation*}
 with
 \[
 \sigma_{n,k} = \sqrt{\frac{\big(h(r_{n,k}) - h(m_{n,k}) \big) \big(h(m_{n,k})-h(l_{n,k})\big)}{h(r_{n,k})-h(l_{n,k})}} \, .
 \]
 As for the first element, it simply results from the conditional expectation of the one-dimensional bridge pinned in $l_{0,0}= 0$ and $r_{0,0}=1$:
 \begin{equation*}
 \psi_{0,0}(t) =
 \frac{g(t) (h(t) - h(l_{0,0}))}{\sqrt{h(r_{0,0})-h(l_{0,0})}} \, .
 \end{equation*}
 In this class of processes, two paradigmatic processes are the Wiener process and the Ornstein-Uhlenbeck processes with constant coefficients. 
 In the case of the Wiener process, $h(t) = t $ and $g(t)=1$, which yields the classical triangular-shaped functions Schauder functions used by L\'{e}vy~\cite{Levy}.
 As for the Ornstein-Uhlenbeck process with constant coefficients $\alpha$ and $\sqrt{\Gamma}$, we have $g(t) = \exp(\alpha \, t)$, $f(t) = \sqrt{\Gamma} \exp(-\alpha\,t)$ and $h(t)=\frac{\Gamma}{2\alpha}(1-e^{-2\alpha\,t})$, yielding for the construction basis the expression:
 \begin{equation*}
 	\psi_{n,k}(t) = \begin{cases}
 	\displaystyle \sqrt{\frac{\Gamma}{\alpha}} \frac{\text{sinh}(\alpha(t-l_{n,k}))}{\sqrt{\text{sinh}(\alpha(m_{n,k}-l_{n,k}))}} \, , & l_{n,k}\leq t \leq m_{n,k} \, ,\\
 	\displaystyle \sqrt{\frac{\Gamma}{\alpha}}	\frac{\text{sinh}(\alpha(r_{n,k}-t))}{\sqrt{\text{sinh}(\alpha(m_{n,k}-l_{n,k}))}} \, , & m_{n,k}\leq t \leq r_{n,k } \, , 
 \end{cases}
 \end{equation*}
 and
 \[\psi_{0,0}(t)= 	\sqrt{\frac{\Gamma}{\alpha}}\frac{e^{-\alpha/2}\text{sinh}(\alpha\,t)}{\sqrt{\text{sinh}(\alpha)}}\]
 which were already evidenced in \cite{ThibaudJOSP:2008}.


 \subsection{Multidimensional Case}\label{ssect:MultiDimCalculus}

In the multidimensional case, the explicit expressions for the basis functions $\bpsi_{n,k}$ makes fundamental use of the flow $\bm{F}$ of the underlying linear equation \eqref{eq:linDeter} for a given function $\bm{\alpha}$. 
For commutative forms of $\bm{\alpha}$ (i.e such that $\bm{\alpha}(t) \cdot \bm{\alpha}(s) = \bm{\alpha}(s) \cdot \bm{\alpha}(t)$ for all $t, s$), the flow can be formally expressed as an exponential operator.
It is however a notoriously difficult problem to find a tractable expression for general $\alpha$. 
As a consequence, it is only possible to provide closed-from formulae for our basis functions in very specific cases.


\subsubsection{Multi-Dimensional Gauss-Markov Rotations}

We consider in this section $\balpha$ antisymmetric and constant and $\sqrt{\bm{\Gamma}} \in \R^{d\times m}$ such that $\bm{\Gamma} = \sigma^2 \bm{I}_d$. 
Since $\balpha^T(t) = -\balpha(t)$, we have: 
\[ \bF(s,t)^T= \bF(s,t)^{-1},\]
i.e. the flow is unitary. This property implies that
\begin{align*}
	\bm{h}_u(s,t) &=\sigma^2\int_{s}^{t}  \bF(w,u)  \bF(w,u)^T\,dw = \sigma^2 (t-s) \bm{I}_d \, ,
\end{align*}
which yields by definition of $\bm{\sigma}_{n,k}$
\[\bm{\sigma}_{n,k} \cdot \bm{\sigma}_{n,k}^{T} = \sigma^2 \frac{(m_{n,k} - l_{n,k})(r_{n,k} - m_{n,k})}{r_{n,k} - l_{n,k}} \bm{I}_d\, .\] 
The square root $\bm{\sigma}_{n,k}$ is then uniquely defined (both by choosing Cholesky and symmetrical square root) by
\[\bm{\sigma}_{n,k} = \sigma \sqrt{\frac{(m_{n,k} - l_{n,k})(r_{n,k} - m_{n,k})}{(r_{n,k} - l_{n,k})}} \bm{I}_d\, , \] 
and $\bpsi_{n,k}(t)$ reads
\[\bpsi_{n,k}(t) = \begin{cases}
 \displaystyle \sigma \sqrt{\frac{r_{n,k} - m_{n,k}}{(m_{n,k} - l_{n,k})(r_{n,k} - l_{n,k})}} (t-l_{n,k}) \bF(m_{n,k},t) \, ,& l_{n,k}\leq t \leq m_{n,k} \, ,\\
 \displaystyle \sigma \sqrt{\frac{m_{n,k} - l_{n,k}}{(r_{n,k} - m_{n,k})(r_{n,k} - l_{n,k})}} (r_{n,k}-t) \bF(m_{n,k},t) \, ,& l_{n,k}\leq t \leq m_{n,k} \, .\\
\end{cases} \]
Recognizing the $(n,k)$ element of the Schauder basis for the construction of the one-dimensional Wiener process
\[s_{n,k}(t) = \begin{cases}
 \displaystyle \sqrt{\frac{r_{n,k} - m_{n,k}}{(r_{n,k} - l_{n,k})(m_{n,k} - l_{n,k})}} (t-l_{n,k}) \, ,& l_{n,k}\leq t \leq m_{n,k} \, ,\\
 \displaystyle \sqrt{\frac{m_{n,k} - l_{n,k}}{(r_{n,k} - l_{n,k})(r_{n,k} - m_{n,k})}} (r_{n,k}-t) \, ,& l_{n,k}\leq t \leq m_{n,k} \, ,\\
\end{cases}\]
we obtain the following formula:
\[\bpsi_{n,k}(t) = \sigma \, s_{n,k}(t) \,  \bF(t-m_{n,k}) \, .\] 
This form shows that the Schauder basis for multidimensional rotations results from the multiplication of the triangular-shaped elementary function used for the L\'{e}vy-Cesielski construction of the Wiener process with the flow of the equation, i.e. the elementary rotation. \\
The simplest example of this kind is the stochastic sine and cosine process corresponding to:
\begin{equation}\label{eq:Rot}
	\balpha = \left ( \begin{array}{cc} 0 & 1 \\ -1 & 0 \end{array}\right) \qquad \text{and}\qquad
\sqrt{\bm{\Gamma}} = \sigma^2 \bm{I}_2 \, .
\end{equation}
In that case, $\bpsi_{n,k}$ has the expression
\[\bpsi_{n,k}(t) = s_{n,k}(t) \left ( \begin{array}{cc}\cos(t-m_{n,k}) & -\sin(t-m_{n,k}) \\ \sin(t-m_{n,k}) & \cos(t-m_{n,k})\end{array} \right)\]
\begin{figure}
	\centering
		\includegraphics[width=.6	\textwidth]{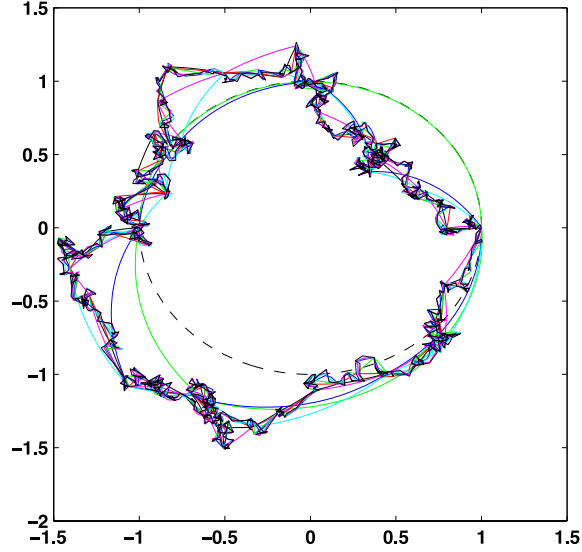}
	\caption{Construction of the stochastic sine and cos Ornstein-Uhelenbeck processes for the parameters given in equation \eqref{eq:Rot}: multi-resolution construction of the sample path. }
	\label{fig:Rot}
\end{figure}
Interestingly, the different basis functions have the structure of the solutions of the non-stochastic oscillator equation. One of the equations perturbs the trajectory in the radial component of the deterministic solution, and the other one in the tangential direction.
We represent such a construction in Figure \ref{fig:Rot} with the additional conditioning that $\bX_1=\bX_0$, i.e. imposing that the trajectory forms a loop between time $0$ and $1$.


\subsubsection{The Successive Primitives of the Wiener Process}

In applications, it often occurs that people use smooth stochastic processes to model the integration of noisy signals. 
This is for instance the case of a particule subject to a Brownian forcing or of the synaptic integration of noisy inputs~\cite{Touboul:2008}. 
Such smooth processes involves in general integrated martingales, and the simplest example of such processes are the successive primitives of a standard Wiener process. \\
Let $d>2$ and denote by $X^{d}_t$ the $d-1$th order primitive of the Wiener process. 
This process can be defined via the lower order primitives $X^{k}_t$ for $k<d$ via the relations: 
\[
\begin{cases}
	dX^{k+1}_t & =  X^{k}_t \, dt \qquad k<d\\
	dX^{1}_t &=	dW_t
\end{cases}
\]
where $W_t$ is a standard real Wiener process. 
These equations can be written in our formalism
\[
d\bm{X}_{t}
=
 \bm{\alpha}(t) \cdot \bm{X}_t + \sqrt{\bm{\Gamma}(t)} \cdot dW_t \, ,
\]
with
\[
 \bm{\alpha}(t) =  \left[ \begin{array}{ccccc}
		0&1\\
		&   \ddots & \ddots& \\
		&&\ddots&1\\
		&&&0\\
\end{array}
\right]
, \quad 
\bm{\sqrt{\bm{\Gamma}(t)}} = 
\left[
\begin{array}{c}
0\\
0\\
\vdots\\
1
\end{array}
\right] \, .
\]
In particular, though none of the integrated processes $X^k$ for $K>1$ is Markov by itself, the $d$-uplet $\bX = (X^d,\ldots,X^1)$ is a Gauss-Markov process. \\
Furthermore because of the simplicity and the sparcity of the matrices involved, we can identify in a compact form all the variables used in the computation of the construction basis for these processes. In particular, the flow $\bm{F}$ of the equation is the exponential of the matrix $\balpha$, and since $\alpha$ is nilpotent, it is easy to show that $\bm{F}$ has the expression:
\[\bm{F}(s,t) = \left [ \begin{array}{cccccc}
1  & (t-s)  & \frac{(t-s)^2}{2} & \ldots & \frac{(t-s)^{d-1}}{(d-1)!}\\
& \ddots & \ddots & \ddots & \vdots\\
& & \ddots& \ddots  & \frac{(t-s)^2}{2} \\
& & &\ddots & (t-s) \\
& & & & 1
\end{array}\right]\]
and the only non-zero entry of the $d \times d$ matrix $\bm{\Gamma}$ is one at position $(d-1,d-1)$. 
Using this expression and the highly simple expression of $\bm{\Gamma}$, we can compute the general element of the matrix $\bm{h}_u(t,s)$, which reads:
\[\left(\bm{h}_u(s,t)\right)_{i,j} = (-1)^{i+j}\frac{(t-u)^{2d-1 -(i+j)} - (s-u)^{2d-1 -(i+j)}}{\big(2d-1-(i+j)\big)(d-1-i)!(d-1-j)!} .\] 
Eventually, we observe that the functions $\bm{\psi}_{n,k}$ yielding the multi-resolution description of the integrated Wiener processes, are directly deduced from the matrix-valued function
\[\left(\bm{c}_{n,k}(t)\right)_{i,j} = \begin{cases}
 \displaystyle  \bm{\psi}_{n,k} \cdot \bm{L}^{-1}_{n,k} = \bm{g}(t) \bm{h}(l_{n,k},t)\, ,& l_{n,k}\leq t \leq m_{n,k} \, ,\\
 \displaystyle  \bm{\psi}_{n,k} \cdot \bm{R}^{-1}_{n,k} = \bm{g}(t) \bm{h}(t,r_{n,k})\, ,& m_{n,k}\leq t \leq r_{n,k} \, ,\\
\end{cases}\] 
whose components are further expressed as 
\[\left(\bm{c}_{n,k}(t)\right)_{i,j} = 
 \displaystyle  \sum_{p=i}^{d-1} (-1)^{p+j} \frac{t^{i-p}}{(i-p)!}  \frac{t^{2d-1 -(p+j)} - l_{n,k}^{2d-1 -(p+j)}}{\big(2d-1-(p+j)\big)(d-1-p)!(d-1-j)!} \, ,\\
\]
for $l_{n,k}\leq t \leq m_{n,k}$  and as
\[\left(\bm{c}_{n,k}(t)\right)_{i,j} =  \sum_{p=i}^{d-1} (-1)^{p+j} \frac{t^{i-p}}{(i-p)!}  \frac{m_{n,k}^{2d-1 -(p+j)} - t^{2d-1 -(p+j)}}{\big(2d-1-(p+j)\big)(d-1-p)!(d-1-j)!} \, ,\]
for $m_{n,k}\leq t \leq r_{n,k}$.
The final computation of the $\bpsi_{n,k}$ involves the computation of $\bm{L}_{n,k}$ and $\bm{R}_{n,k}$, which in the general case can become very complex. 
However, this expression is highly simplified when assuming that $m_{n,k}$ is the middle of the interval $[l_{n,k},r_{n,k}]$. 
Indeed, in that case, we observe that for any $(i,j)$ such that $i+j$ is odd, $(\bm{h}_m(l,r))_{i,j} = \bm{0}$ which induces the same property on the covariance matrix $\bm{\Sigma}_{n,k}$ and on the polynomials $(\bm{c}_{n,k}(t))_{i,j}$. 
This property gives therefore a preference to the dyadic partition that provides simple expressions for the basis elements in any dimensions, and allows simple computations of the basis.

\begin{remark}
Observe that for all $0 \leq i < d-1$, we have
\begin{eqnarray*}
\left(\bm{c}_{n,k}(t)\right)'_{i,j} &=& \left(\bm{c}_{n,k}(t)\right)_{i+1,j} \pm \sum_{p=i}^{d-1} (-1)^{p+j} \frac{t^{i-p}}{(i-p)!} \frac{t^{2d-1 -(p+j)} - l_{n,k}^{2d-2 -(p+j)}}{(d-1-p)!(d-1-j)!} \, , \\
&=& \left(\bm{c}_{n,k}(t)\right)_{i+1,j} \pm \frac{t^{d-j-1}}{(d-j-1)!} \underbrace{\sum_{q=0}^{d-1-i} \frac{(-t)^{q} t^{(d-1-i) -p}}{p ! \big( (d-1-i) -p \big)!}}_{\displaystyle 0} \, . 
\end{eqnarray*}
As $\bm{L}_{n,k}$ and $\bm{R}_{n,k}$ are constant, we immediately deduce the important relation that for all $0 \leq i \leq d-1$, $\left(\bpsi_{n,k}(t)\right)^{(i)}_{0,j} =  \left(\bpsi_{n,k}(t)\right)_{i,j}$.
This indicates that each finite-dimensional sample paths of our construction has components that satisfies the non-deterministic equation associated with the iteratively integrated Wiener process.
Actually, this fact is better stated remembering that the Schauder basis $\bpsi_{n,k}$ and the  corresponding orthonormal basis $\bphi_{n,k}:[0,1] \rightarrow \mathbb{R}^{1 \times d}$ are linked through the equation \eqref{eq:simplePsiPhi}, which reads
\begin{multline*}
\left[ \begin{array}{ccc}
		(\bpsi_{n,k})'_{0,0} & \ldots & (\bpsi_{n,k})'_{0,d-1}\\
		\vdots & & \vdots  \\
		(\bpsi_{n,k})'_{d-2,0} & \ldots & (\bpsi_{n,k})'_{d-2,d-1}\\
		(\bpsi_{n,k})'_{d-1,0} & \ldots & (\bpsi_{n,k})'_{d-1,d-1}\\
\end{array}
\right]
=
\left[ \begin{array}{ccc}
		(\bpsi_{n,k})_{1,0} & \ldots & (\bpsi_{n,k})_{1,d-1}\\
		\vdots & & \vdots  \\
		(\bpsi_{n,k})_{d-1,0} & \ldots & (\bpsi_{n,k})_{d-1,d-1}\\
		0 & \ldots & 0\\
\end{array}
\right]\\
 + 
 \left[ \begin{array}{ccc}
		0 & \ldots & 0\\
		\vdots & & \vdots   \\
		 0 & \ldots & 0\\
		(\bphi_{n,k})_{0,0} & \ldots & (\bphi_{n,k})_{0,d-1}\\
\end{array}
\right] \, .
\end{multline*}
Additionally, we realize that the orthonormal basis is entirely determined by the one-dimensional families $(\bphi_{n,k})_{0,j}$, which are mutually orthogonal functions satisfying $(\bphi_{n,k})_{0,j} = (\bpsi_{n,k})^{(d)}_{0,j}$.
\end{remark}

\begin{figure}
	\centering
		\includegraphics[width=.7\textwidth]{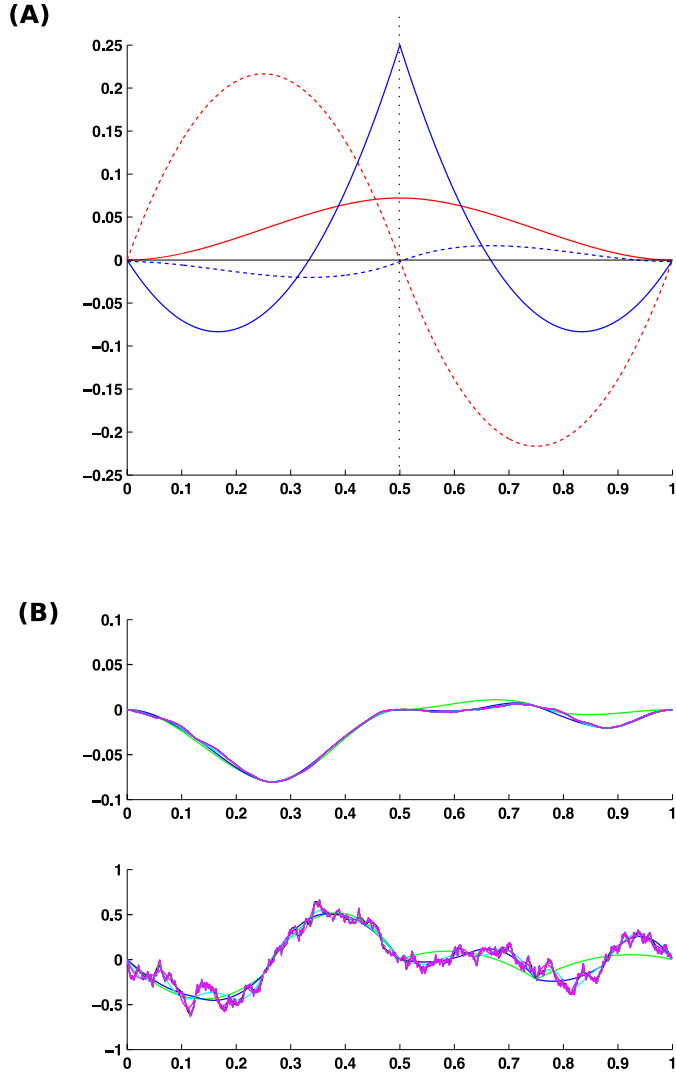}
	\caption{(A) Basis for the construction of the Integrated Wiener process ($d=2$). Plain red curve: $\psi_{1,1}$, dashed red: $\psi_{2,1}$, plain blue: $\psi_{1,2}$ and dashed blue: $\psi_{2,2}$. (B) $10$-steps construction of the process. We observe that each basis accounts separately for different aspects of sample path: $\psi_{2,1}$ fixes the value of the integrated process at the middle point $m_{n,k}$ and $\psi_{2,2}$ the value of the derivative of the process at the endpoints $\{l_{n,k}, r_{n,k}\}$ in relationship with the value of the Wiener process at the middle point, whose contributions are split between functions $\psi_{1,1}$ and $\psi_{1,2}$ (see Fig.~\ref{fig:IWPFig}). }
	\label{fig:IWPFig}
\end{figure}

We study in more details the case of the integrated and doubly-integrated Wiener process  ($d=2$ and $d=3$), for which closed-form expressions are provided in Appendices \ref{append:IP} and \ref{append:DIP}.
As expected, the first row of the basis functions for the integrated Wiener process turns out to be  the well-known cubic Hermite splines~\cite{Dahmen:2000}.
These functions have been widely used in numerical analysis and actually constitute the basis of lowest degree in a wider family of bases known as the natural basis of polynomial splines of interpolation~\cite{Kimel2}.
Such bases are used to interpolate data points with constraint of smoothness of different degree (for instance the cubic Hermite splines ensure that the resulting interpolation is in $C^{1}[0,1]$).
The next family of  splines of interpolation (corresponding to the $C^{2}$ constraint) is naturally retrieved by considering the construction of the doubly-integrated Wiener process:
we obtain a family of three $3$-dimensional functions, that constitutes the columns of a $3\times 3$ matrix that we denote $\bpsi$.
The top row is made of  polynomials of degree five, which have again simple expressions when $m_{n,k}$ is the middle of the interval $[l_{n,k},r_{n,k}]$.


\section{Stochastic Calculus from the Schauder point of view}\label{sec:CalculSto}

Thus far, all calculations, propositions and theorems are valid for any finite dimensional Gauss-Markov process and all the results are valid path-wise, i.e. for each sample path. The analysis provides a Schauder description of the processes as a series of standard Gaussian random variables multiplied by certain specific functions, that form a Schauder basis in the suitable spaces. This new description of Gauss-Markov processes provides a new way for treating problems arising in the study of stochastic processes. As examples of this, we derive It\^o formula and Girsanov theorem from the Schauder viewpoint. Note that these results are equalities in law, i.e. dealing with the distribution of stochastic processes, which is a weaker notion compared to the path-wise analysis. In this section, we restrict our analysis to the one-dimensional case for technical simplicity.

The closed-form expressions of the basis of functions $\psi_{n,k}$ in the one-dimensional case are given in section \ref{ssect:OneDimCalculus}. The differential and integral operators associated, introduced in section \ref{ssec:DualOps} are highly simplified in the one dimensional case. Let $U$ be a bounded open neighborhood of $[0,1]$, and denote by  $C(U)$ is the space of continuous real functions on $U$, $R(U)$ its topological dual, the space of Radon measures, $D_{0}(U)$ the space of test function in $C^{\infty}(U)$ which are zero at zero and it dual $D_{0}'(U)\subset R(U)$. We consider the Gelfand triple
\begin{equation}
 D_0(U) \subset C(U) \subset  L^{2}(U) \subset D_0'(U) \subset R(U) \nonumber
\end{equation}
The integral operator $\mathcal{K}$ is defined (and extended by dual pairing) by:
\[\mathcal{K}[\cdot](t) = \int_{U} \mathbbm{1}_{[0,t]}(s) g_{\alpha}(t) f_{\alpha}(s) \cdot \, ds\]
and the inverse differential operator $\mathcal{D}$ reads:
\[\mathcal{D}[\cdot](t) = \frac{1}{g_{\alpha}(t)} \frac{d}{dt} \! \left( \frac{\cdot}{f_{\alpha}(t)} \right).\]

Now that we dispose of all the explicit forms of the basis functions and related operators, we are in position to complete our program, and start by proving the very important  It\^o formula and its finite-dimensional counterpart before turning to Girsanov's theorem.


\subsection{It\^o's Formula}

A very useful theorem in the stochastic processes theory is the It\^o formula. We show here that this formula is consistent with the Schauder framework introduced. Most of the proofs can be found in the Appendix \ref{append:Ito}. The proof of It\^o formula is based on demonstrating the integration by parts property:

\begin{proposition}[Integration by parts]\label{prop:IPP}
	Let $(X_t)$ and $(Y_t)$ be two one-dimensional Gauss-Markov processes starting from zero. Then we have the following equality in law:
	\[X_t\,Y_t= \int_0^t X_s \circ dY_s + \int_0^t Y_s \circ dX_s\]
	where $\int_0^t A_s \circ dB_s$ for $A_t$ and $B_t$ two stochastic processes denotes the Stratonovich integral. In terms of It\^o's integral, this formula is written:
	\[X_t\,Y_t= \int_0^t X_s dY_s + \int_0^t Y_s dX_s + \langle X,Y \rangle_t\]
	where the brackets denote the mean quadratic variation. 
\end{proposition}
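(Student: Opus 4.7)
The strategy is to exploit the almost-sure uniform convergence of the Schauder partial sums, which are piecewise $C^{1}$ and therefore amenable to ordinary calculus, and then pass to the limit $N\to\infty$ to recover the stochastic statement. Writing the one-dimensional expansions $X_t = \sum_{(n,k)\in\I} \psi^X_{n,k}(t)\,\xi^X_{n,k}$ and $Y_t = \sum_{(n,k)\in\I} \psi^Y_{n,k}(t)\,\xi^Y_{n,k}$ with their partial sums $X^N$, $Y^N$ over $\I_N$, each $X^N$ and $Y^N$ is a finite linear combination of the piecewise-$C^{1}$ tent-like functions $\psi_{n,k}$ given in Section~\ref{ssect:OneDimCalculus}. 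Their derivatives $(X^N)'$ and $(Y^N)'$ thus exist as piecewise continuous functions on $[0,1]$, and deterministic integration by parts yields, sample-path by sample-path,
\[
X^N_t Y^N_t = \int_0^t X^N_s\, (Y^N_s)'\,ds + \int_0^t Y^N_s\, (X^N_s)'\,ds.
\]
By Proposition~\ref{lem:convergence} and Lemma~\ref{propX}, the left-hand side converges almost surely uniformly on $[0,1]$ to $X_t Y_t$.

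The main task is then to identify the limits of the two integrals as Stratonovich stochastic integrals. Using identity~\eqref{eq:simplePsiPhi}, which reads in the one-dimensional setting $\psi^X_{n,k}{}' = \alpha\,\psi^X_{n,k} + \sqrt{\Gamma}\,\phi^X_{n,k}$, I would decompose
\[
Y^N_s (X^N_s)' = \alpha(s)\, X^N_s Y^N_s + \sqrt{\Gamma(s)}\, Y^N_s\, \Phi^{X,N}(s),
\qquad \Phi^{X,N}(s) := \sum_{(n,k)\in\I_N} \phi^X_{n,k}(s)\,\xi^X_{n,k}.
\]
The first summand converges by dominated convergence to $\int_0^t \alpha(s) X_s Y_s\,ds$, which is the drift contribution of $\int_0^t Y_s\,dX_s$. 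For the second summand, the orthonormality of the columns of $\phi^X_{n,k}$ established in Proposition~\ref{orthoProp} ensures that $\sqrt{\Gamma(s)}\,\Phi^{X,N}(s)\,ds$ plays the role of a regularized version of the driving noise $\sqrt{\Gamma(s)}\, dW^X_s$. Since the smooth approximants $X^N$ converge to a nowhere-differentiable process $X$, this is exactly the Wong--Zakai situation, and the expected limit is the Stratonovich integral $\int_0^t Y_s \circ \sqrt{\Gamma(s)}\,dW^X_s$; combined with the drift, this yields $\int_0^t Y_s \circ dX_s$. Symmetric treatment of $\int_0^t X^N_s (Y^N_s)'\,ds$ produces the other Stratonovich integral.

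The hard step is the Wong--Zakai limit: while $(Y^N)'$ admits no pointwise limit, the combination $Y^N (X^N)'\,ds$ must converge in distribution to $Y\,dX + \tfrac{1}{2}\,d\langle X,Y\rangle$. I would establish this via the $L^{2}$-convergence
\[
\mathbb{E}\!\left[\left(\int_0^t Y^N_s\, \sqrt{\Gamma(s)}\,\Phi^{X,N}(s)\,ds - \int_0^t Y_s\, \sqrt{\Gamma(s)}\, dW^X_s - \tfrac{1}{2}\langle Y, X\rangle_t\right)^{\!2}\right] \xrightarrow[N\to\infty]{} 0,
\]
relying on the Parseval identity for the $\phi^X_{n,k}$ used in Lemma~\ref{lem:covariance} and the explicit bridge covariances of Proposition~\ref{bridgeProp}. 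The bracket $\tfrac{1}{2}\langle X,Y\rangle_t$ emerges from expanding the cross-terms $\xi^X_{n,k}\xi^Y_{p,q}$ against the common filtration generated by the shared driving Wiener process and is thus transparent through the duality between $\psi_{n,k}$ and $\delta_{n,k}$. Assembling the two limits gives the It\^{o} form $X_tY_t = \int_0^t X_s\,dY_s + \int_0^t Y_s\,dX_s + \langle X,Y\rangle_t$, equivalent to the announced Stratonovich identity.
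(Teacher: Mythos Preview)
Your overall strategy coincides with the paper's: take the deterministic integration-by-parts identity for the piecewise-$C^1$ partial sums $X^N,Y^N$, decompose the derivative via $\psi'_{n,k}=\alpha\,\psi_{n,k}+\sqrt{\Gamma}\,\phi_{n,k}$, handle the drift terms by dominated convergence, and identify the remaining ``martingale'' pieces as Stratonovich integrals in the limit. The setup and the decomposition are exactly what the paper does.

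The gap lies in the last step. You invoke Wong--Zakai and then announce an $L^2$-convergence that you would prove ``relying on the Parseval identity \ldots\ and the explicit bridge covariances,'' but this is where the entire content of the proof sits, and neither ingredient as stated delivers it. Standard Wong--Zakai theorems concern piecewise-linear interpolations of the \emph{driving} Wiener process; here $X^N$ is the conditional expectation $\mathbb{E}[X\mid\mathcal{F}_N]$, built from Schauder functions adapted to $X$ itself, so one cannot simply cite a classical result. And Parseval for the $\phi^X_{n,k}$ controls $\int \Phi^{X,N}(s)u(s)\,ds$ for deterministic $u$, not for the random, $N$-dependent integrand $Y^N_s$, so the proposed $L^2$ computation is not routine.

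The paper does not attempt any such $L^2$ estimate. Instead it exploits the fact that on each interval $[t_i,t_{i+1}]$ of $D_N$ both $Y^N$ and $(X^N/g_X)'$ have explicit closed forms (the bridge formula \eqref{eq:XNt} and its derivative \eqref{eq:DXNt}). Integrating their product over $[t_i,t_{i+1}]$ produces a discrete sum
\[
\sum_i \bigl(v_i^N\,Y_{t_i}+w_i^N\,Y_{t_{i+1}}\bigr)\Bigl(\tfrac{X_{t_{i+1}}}{g_X(t_{i+1})}-\tfrac{X_{t_i}}{g_X(t_i)}\Bigr),
\]
with explicit weights $v_i^N,w_i^N$ that a Taylor/H\"older expansion shows both converge to $\tfrac{1}{2}g_X(t_i)$. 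The symmetric weighting $\tfrac{1}{2}(Y_{t_i}+Y_{t_{i+1}})$ is precisely the discrete definition of the Stratonovich integral against the martingale $X/g_X$, and the identification follows. This explicit interval-by-interval computation is what replaces your Wong--Zakai appeal and is the substance you are missing.
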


The proof of this proposition is quite technical and is provided in Appendix \ref{append:Ito}. It is based a thorough analysis of the finite-dimensional processes $X^N_t$ and $Y^N_t$. For this integration by parts formula and using a density argument, one can recover the more general It\^o formula:

\begin{theorem}[It\^o]\label{theo:Ito}
	Let $(X_t)_t$ be a Gauss-Markov process and $F \in C^2(\R)$. The process $f(X_t)$ is a Markov process and satisfies the relation:
	\begin{equation}\label{eq:Ito}
		f(X_t)=f(X_0)+\int_0^t f'(X_s)dX_s + \frac 1 2 \int_0^t f''(X_s) d\langle X\rangle_s
	\end{equation}
\end{theorem}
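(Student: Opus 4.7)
The plan is to deduce \eqref{eq:Ito} from the integration-by-parts identity of Proposition~\ref{prop:IPP} by first verifying the formula for polynomials via an induction, then extending to arbitrary $f \in C^2(\R)$ by density. This is the classical route to It\^o's formula, adapted to the Schauder setting in which Proposition~\ref{prop:IPP} already supplies the crucial quadratic-variation cross-term.

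First I would verify the identity for monomials $f(x)=x^n$ by induction on $n$. The case $n=1$ is trivial, and the case $n=2$ is precisely Proposition~\ref{prop:IPP} applied with $X_t=Y_t$, which yields $X_t^2-X_0^2 = 2\int_0^t X_s\, dX_s + \langle X\rangle_t$ and matches \eqref{eq:Ito} since $f'(x)=2x$, $f''(x)=2$. For the inductive step, assuming the identity holds for $x^{n-1}$, I would apply Proposition~\ref{prop:IPP} to the product $X_t^{n-1}\cdot X_t$ and substitute the inductive expression for $d(X_s^{n-1})$, together with the cross-bracket $\langle X^{n-1},X\rangle_t = (n-1)\int_0^t X_s^{n-2}\,d\langle X\rangle_s$, which itself follows by polarization from the $n=2$ case. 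Collecting terms reproduces \eqref{eq:Ito} for $x^n$, and linearity extends the formula to every polynomial.

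The second step is to pass from polynomials to $f\in C^2(\R)$ by density. Fixing $T>0$ and working on the event $\{\sup_{t\le T}|X_t|\le M\}$, the process $X$ takes values in the compact interval $[-M,M]$, so one can approximate $f$, $f'$ and $f''$ simultaneously and uniformly on $[-M,M]$ by polynomial sequences $p_k,\ p_k',\ p_k''$ (for instance by applying the Bernstein construction to $f''$ and integrating twice). Dominated convergence then gives $p_k(X_t)\to f(X_t)$ and $\int_0^t p_k''(X_s)\,d\langle X\rangle_s\to\int_0^t f''(X_s)\,d\langle X\rangle_s$, while the It\^o isometry applied to $p_k'-f'$ yields $\int_0^t p_k'(X_s)\,dX_s \to \int_0^t f'(X_s)\,dX_s$. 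A standard localization argument with stopping times $\tau_M=\inf\{t:|X_t|\ge M\}$ then removes the boundedness restriction.

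The main obstacle will be the density step rather than the induction itself. In our Schauder-basis viewpoint, stochastic integrals against $X$ are not directly constructed as It\^o integrals but arise as almost-sure limits of finite-dimensional objects built from $\bpsi_{n,k}$. Controlling the joint convergence of $\int_0^t p_k'(X_s^N)\,dX_s^N$ in both the polynomial index $k$ and the Schauder truncation level $N$ therefore requires a double-limit lemma, essentially ensuring $\lim_{k\to\infty}\lim_{N\to\infty} I_{k,N} = \lim_{N\to\infty}\lim_{k\to\infty} I_{k,N}$ for the corresponding finite-dimensional integrals $I_{k,N}$. Establishing this interchange together with uniform tail estimates on $\sup_{t\le T}|X_t^N|$ is the technical heart of the argument; once secured, the formula \eqref{eq:Ito} follows in law for every $f\in C^2(\R)$.
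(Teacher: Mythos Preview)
Your proposal is correct and follows essentially the same route as the paper: establish \eqref{eq:Ito} for polynomials via the integration-by-parts of Proposition~\ref{prop:IPP}, then pass to general $f\in C^2$ by polynomial approximation combined with localization by the stopping times $\tau_M=\inf\{t:|X_t|\ge M\}$. The only cosmetic difference is that the paper phrases the polynomial step as ``the set of $f$ satisfying \eqref{eq:Ito} is an algebra containing $1$ and $x$'', whereas you carry out an explicit induction on monomials; the content is identical.

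One remark: your anticipated ``main obstacle'' --- the interchange of limits in the polynomial index $k$ and the Schauder truncation level $N$ --- is not actually needed. Proposition~\ref{prop:IPP} is already stated as an identity in law for the limit process $X$ itself (the passage $N\to\infty$ having been absorbed into its proof), so once you have \eqref{eq:Ito} for polynomials of $X$, the density step is the standard one-parameter argument in $k$ only, exactly as in the paper. You can therefore drop the double-limit discussion without loss.
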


This theorem is proved in the Appendix \ref{append:Ito}.

It\^o's formula implies in particular that the multi-resolution description developed in the paper is valid for every smooth functional of a Gauss-Markov process. In particular, it allows a simple description of exponential functionals of Gaussian Markovian processes, which are of particular interest in mathematics and has many applications, in particular in economics (see e.g.~\cite{Yor:2001}).

Therefore, we observe that in the view of the article, It\^o's formula stems from the non-orthogonal projections of the basis elements the one on the other. For multidimensional processes, the proof of It\^o's formula is deduced from the one-dimensional proof and would involve study of the multidimensional bridge formula for ${\bm{X}}_t$ and ${\bm{Y}}_t$.

We eventually remark that this section provides us with a finite-dimensional counterpart if It\^o's formula for discretized processes, which has important potential applications, and further assesses the suitability of using the finite-resolution representation developed in this paper. Indeed, using the framework developed in the present paper allows considering finite-resolution processes and their transformation through nonlinear smooth transformation in a way that is consistent with the standard stochastic calculus processes, since the equation on the transformed process indeed converges towards its It\^o representation as the resolution increases.


\subsection{Girsanov Formula, a geometric viewpoint}

In the framework we developed, transforming a process $X$ into a process $Y$ is equivalent to substituting the Schauder construction basis related to $Y$ for the basis related to $X$. 
Such an operation provides a path-wise mapping for each sample path of $X$ onto a sample path of $Y$ having the same probability density in ${}_{\xi}\Omega'$. 
This fact shed a new light on the geometry of multi-dimensional Gauss-Markov processes, since the relationship between two processes is seen as a linear change of basis. In our framework, this relationship between processes is straightforwardly studied in the finite-rank approximations of the processes up the a certain resolution. Technical intricacy are nevertheless raised when dealing with the representation of the process itself in infinite dimensional Schauder spaces. We solve these technical issues here, and show that in the limit $N\to \infty$ one recovers Girsanov's theorem as a limit of the linear transformations between Gauss-Markov processes.\\
The general problem consists therefore in studying the relationship between two real Gauss-Markov $X$ and $Y$ that are defined by:
\[
\begin{cases}
dX_t &= \alpha_\dX(t) X_t  \, dt+ \sqrt{\Gamma_\dX(t)} \, dW_t\\
dY_t &= \alpha_\dY(t) X_t  \, dt+ \sqrt{\Gamma_\dY(t)} \, dW_t
\end{cases}
\]
We have noticed that the spaces ${}_{x}\Omega'$ are the same in the one-dimensional case as long as both $\Gamma_\dX$ and $\Gamma_\dY$ never vanish, and therefore make this assumption here. In order to further simplify the problem, we assume that $\gamma_{\dX,\dY}= \Gamma_\dX / \Gamma_Y$ is continuously differentiable. This assumption allows us to introduce the process $Z_t = \gamma_{\dX,\dY}(t) Y_t$ that satisfies the stochastic differential equation:
\begin{align*}
	dZ_t &= \frac{d}{dt}\left(\gamma_{\dX,\dY}(t) \right) Y_t \, dt + \gamma_{\dX,\dY}(t) dY_t\\
	&= \left(\frac{d}{dt}\left(\gamma_{\dX,\dY}(t) \right) + \gamma_{\dX,\dY}(t) \alpha_\dY(t)\right)\,Y_t\,dt + \sqrt{\Gamma_\dX(t)}\,dW_t\\
	&= \alpha_\dZ(t) \, Z_t \,dt + \sqrt{\Gamma_\dX(t)}\,dW_t
\end{align*}
with $\alpha_\dZ(t)=\frac{d}{dt}\left(\gamma_{\dX,\dY}(t)\right) \gamma_{\dX,\dY}(t)^{-1}+ \alpha_\dY(t) $. Moreover, if $\uZ \psi_{n,k}$ and $\uY \psi_{n,k}$ are the basis of functions that describe the process $Z$ and $Y$ respectively, we have $\uZ \psi_{n,k} = \gamma_{\dX,\dY} \cdot \uY \psi_{n,k}$. \\
The previous remarks allow us to restrict without loss of generality our study to processes defined for a same function $\sqrt{\Gamma}$, thus reducing the parametrization of Gauss-Markov processes to the linear coefficient $\alpha$. 
Observe that in the classical stochastic calculus theory, it is well-known that such hypothesis are necessary for the process $X$ to be absolutely continuous with respect to  $Y$ (through the use of Girsanov's theorem). \\
Let us now consider $\alpha, \beta$ and $\sqrt{\Gamma}$ three real H\"older continuous real functions, and introduce $\ag X$ and $\bg X$ solutions of the equations:
\[\begin{cases}
d \left (\ag X_t\right) &= \alpha(t) \left (\ag X_t\right) \, dt+ \sqrt{\Gamma(t)} \, dW_t\\
d \left (\bg X_t \right) &= \beta(t) \left (\bg X_t \right) \, dt+ \sqrt{\Gamma(t)} \, dW_t
\end{cases}\]
All the functions and tools related to the process $\ag X$ (resp $\bg X$) will be indexed by $\ag$ ($\bg$) in the sequel.

\subsubsection{Lift Operators}
Depending on the space we we are considering (either coefficients or trajectories), we define the two following operators mapping the process $\ag X_t$ on $\bg X_t$
	\begin{enumerate}
		\item The \emph{coefficients lift} operator ${}_{ \alpha, \beta}G$ is the linear operator mapping in $\uxi \Omega'$ the process $\ag X$ on the process $\bg X$:
	\[{}_{ \alpha, \beta}G= {}_{ \beta}\Delta \circ {}_{ \alpha}\Psi:\left(\uxi \Omega' ,\mathcal{B}\left(   \uxi \Omega' \right)  \right) \rightarrow \left(  \uxi \Omega', \mathcal{B}\left(   \uxi \Omega' \right) \right) .\]
	For any $\xi \in \uxi \Omega'$, the operator ${}_{ \alpha, \beta}G$ maps a sample path of $\ag X$ on a sample path of $\bg X$. 
		\item The \emph{process lift} operator ${}_{\alpha, \beta}F$ is the linear operator mapping in $\ux \Omega$ the process $\ag X$ on the process $\bg X$:
	\[{}_{ \alpha, \beta}H= {}_{ \alpha}\Psi \circ {}_{ \alpha}\Delta :\left(\ux \Omega ,\mathcal{B}\left(   \ux \Omega \right)  \right) \rightarrow \left(  \ux \Omega, \mathcal{B}\left(   \ux \Omega \right) \right) \]
\end{enumerate}

We summarize the properties of these operators now.
\begin{proposition} \label{prop:LiftOperator}
The operators ${}_{ \alpha, \beta}G$ and ${}_{ \alpha, \beta}H$ satisfies the following properties:
\renewcommand{\theenumi}{\roman{enumi}} 
\begin{enumerate}
	\item They are linear measurable bijections,
	\item For every $N>0$, the function  ${}_{ \alpha, \beta}G_{N} = P_{N} \circ {}_{ \alpha, \beta}G \circ I_{N}: \uxi \Omega' _{N} \to \uxi \Omega _{N}$ (resp. ${}_{ \alpha, \beta}H_{N} = P_{N} \circ {}_{ \alpha, \beta}H \circ I_{N}: \uxi \Omega' _{N} \to \uxi \Omega _{N}$) is a finite-dimensional linear operator, whose matrix representation is triangular in the natural basis of $\uxi \Omega_{N}$ (resp. $\uxi \Omega' _{N}$) and whose eigenvalues ${}_{ \alpha, \beta}\nu_{n,k}$ are given by
	\begin{equation}
	{}_{ \alpha, \beta}\nu_{n,k} 
	=
	\frac{g_{\alpha}(m_{n,k})}{g_{\beta}(m_{n,k})} \frac{\bg M_{n,k}}{\ag M_{n,k}}
	, \quad 0 \leq n \leq N, \quad 0\leq k < 2^{N-1} \, .
	\nonumber
	\end{equation}
	(resp.  ${}_{ \beta, \alpha}\nu_{n,k} =  ({}_{ \alpha, \beta}\nu_{n,k})^{-1}$).
	\item $\ab G$ and $\ab H$ are bounded operators for the spectral norm with
	\begin{equation*}
	\big \Vert \ab G  \big \Vert_{2} = \sup_{n} \sup_{k} {}_{ \alpha, \beta}\nu_{n,k} \leq \frac{\sup g_{\alpha}}{\inf g_{\beta}}  \frac{\sup f^{2}_{\alpha}}{\inf f^{2}_{\beta}} < \infty \, ,
	\end{equation*}
	and $\Vert \ab H  \big \Vert_{2} = \big \Vert {}_{\beta,\alpha} G  \big \Vert_{2}<\infty$. 
	 
	\item the determinants of ${}_{ \alpha, \beta}G_{N}$ (denoted $ \ab J_{N}$) and ${}_{ \alpha, \beta}H_{N}$ admit a limit when $N$ tends to infinity:
	\begin{eqnarray*}
	{}_{ \alpha, \beta}J\ =  \lim_{N \to \infty} \ab J_{N} = \exp{\left(\frac{1}{2}\left( \int_{0}^{1} (\alpha(t) - \beta(t)) \, dt \right) \right)} , 
	\end{eqnarray*}
	and	
	\[\lim_{N \to \infty}  \det\left( {}_{ \alpha, \beta}H_{N} \right) = \exp{\left(\frac{1}{2}\left( \int_{0}^{1} (\beta(t) - \alpha(t)) \, dt \right) \right)} ={}_{ \beta, \alpha}J\, .\]
\end{enumerate}
\end{proposition}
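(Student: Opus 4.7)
The plan is to exploit the lower block-triangular structure of the finite-dimensional operators $\ag \bPsi_N$ and $\ag \bDelta_N$ in the natural basis ordered dyadically, already established in Section 4.1.1, together with the identities $\bDelta = \bPsi^{-1}$ and $\bm{\Sigma}_N = \bPsi_N \bPsi_N^T$.

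Properties (i) and (ii) are essentially algebraic. For (i), the operators $\ab G = \bg \bDelta \circ \ag \bPsi$ and $\ab H = \bg \bPsi \circ \ag \bDelta$ are compositions of linear, measurable bijections by Theorem \ref{lemInv} and Proposition \ref{lem:bijection}, and hence inherit those properties. For (ii), $\ab G_N$ and $\ab H_N$ are products of two lower triangular matrices, so they remain triangular, with eigenvalues given by the products of the corresponding diagonal entries of the two factors. A direct read-off from the explicit matrix displays in Section 4.1.1 shows that in one dimension the diagonal entry of $\ag \bPsi_N$ at position $(n,k)$ is $\psi_{n,k}(m_{n,k}) = \ag\sigma_{n,k}$, while that of $\ag \bDelta_N$ is $g_\alpha^{-1}(m_{n,k}) \, \ag M_{n,k} = \ag\sigma_{n,k}^{-1}$. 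Multiplying gives $\nu_{n,k} = \ag\sigma_{n,k}/\bg\sigma_{n,k}$, which I would then rewrite as the announced expression $(g_\alpha(m_{n,k})/g_\beta(m_{n,k})) \cdot (\bg M_{n,k}/\ag M_{n,k})$ using the identity $M_{n,k} = g(m_{n,k}) \, \sigma_{n,k}^{-1}$.

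For (iii), since the eigenvalues of a lower triangular operator are its diagonal entries, it suffices to bound $\sup_{n,k} \nu_{n,k}$. Using the closed form $\sigma_{n,k}^2 = g(m_{n,k})^2 \, h(l_{n,k},m_{n,k}) h(m_{n,k},r_{n,k})/h(l_{n,k},r_{n,k})$ together with the positive uniform bounds on the continuous functions $g_\alpha,g_\beta,f_\alpha^2,f_\beta^2$ on the compact interval $[0,1]$, a direct estimate yields the announced upper bound $\nu_{n,k}^2 \leq (\sup g_\alpha/\inf g_\beta)^2 (\sup f_\alpha^2/\inf f_\beta^2)$, uniformly in $(n,k)$. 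The extension to the infinite-dimensional operator follows by passing to the limit along the dense family of finite-dimensional cylinder subspaces of $\uxi \Omega'$; the bound for $\ab H$ then follows by the symmetric argument, or equivalently from the observation that $\det \ab H_N = \det \ba G_N$.

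The real obstacle is (iv). The key is the factorization $(\det \ab G_N)^2 = \det \ag \bm{\Sigma}_N / \det \bg \bm{\Sigma}_N$, combined with the Markov-property expression $\det \bm{\Sigma}_N = \prod_{i=1}^{2^N} g(t_i)^2 \, h(t_{i-1},t_i)$, where $(t_i)_{i=0}^{2^N}$ denote the ordered endpoints of $D_N$. This gives
\[ \log(\det \ab G_N)^2 = 2\sum_i \log \frac{g_\alpha(t_i)}{g_\beta(t_i)} + \sum_i \log \frac{h_\alpha(t_{i-1},t_i)}{h_\beta(t_{i-1},t_i)}. \]
The technical heart of the argument is then the second-order Taylor expansion $h_\alpha(s,t)/h_\beta(s,t) = (g_\beta(t)/g_\alpha(t))^2 \bigl[1 + (t-s)(\alpha(t)-\beta(t)) + O((t-s)^2)\bigr]$, obtained by expanding the integrands $\Gamma/g_\alpha^2$ and $\Gamma/g_\beta^2$ around $t$. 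Substituting and taking logarithms, the leading term $-2 \log(g_\alpha/g_\beta)(t_i)$ cancels the first sum exactly, leaving the Riemann sum $\sum_i (t_i-t_{i-1})(\alpha-\beta)(t_i) \to \int_0^1 (\alpha-\beta)$ plus a remainder bounded by $2^N \cdot O(2^{-2N}) = O(2^{-N}) \to 0$. This yields $\det \ab G_N \to \exp(\tfrac{1}{2}\int_0^1(\alpha-\beta)) = \ab J$; the limit for $\det \ab H_N$ follows from $\det \ab H_N = (\det \ab G_N)^{-1}$. The delicate point is the exact cancellation of the two individually divergent sums of order $2^N$, which hinges on using the precise second-order coefficient in the expansion of $h_\alpha/h_\beta$.
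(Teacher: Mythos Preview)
Your treatment of (i)--(iii) is essentially the paper's own: composition of the measurable bijections $\bPsi$ and $\bDelta$, triangularity inherited from the factors, diagonal entries read off as $\ag\sigma_{n,k}/\bg\sigma_{n,k}$ and rewritten via $M_{n,k}=g(m_{n,k})\sigma_{n,k}^{-1}$, and the uniform bound on $\nu_{n,k}$ obtained from the elementary two-sided bounds $(\inf f^2)(t-s)\le h(s,t)\le(\sup f^2)(t-s)$.

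For (iv) you take a genuinely different route. The paper does \emph{not} pass through the covariance determinants: it reduces to the case $\beta=0$ via the composition law $\ab G_N={}_{0,\beta}G_N\circ{}_{\alpha,0}G_N$, then observes that the product of diagonal entries $\prod_{n,k}(g_\alpha(m_{n,k})/\ag M_{n,k})^2$ telescopes (across resolution levels) into the product $\prod_i \ag\mathcal{V}_{t_{i-1},t_i}/{}_0\mathcal{V}_{t_{i-1},t_i}$ of the quantities $\mathcal{V}_{s,t}=\int_s^t\Gamma(u)e^{2\int_u^t\alpha}\,du$, and finally controls each factor by sandwiching $\mathcal{Q}_{s,t}=e^{-\int_s^t\alpha}\,\ag\mathcal{V}_{s,t}/{}_0\mathcal{V}_{s,t}$ between $1\pm C(t-s)^{1+\delta}$ using only the H\"older continuity of $\alpha$. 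Your approach via $(\det\ab G_N)^2=\det\ag\Sigma_N/\det\bg\Sigma_N$ and the Markov factorization $\det\Sigma_N=\prod_i g(t_i)^2 h(t_{i-1},t_i)$ is more direct and makes the cancellation of the divergent $g$-sums completely transparent; the paper's telescoping, on the other hand, avoids invoking the Markov determinant identity and never has two individually divergent sums to cancel. Both arguments are valid and arrive at the same Riemann sum $\sum_i(t_i-t_{i-1})(\alpha-\beta)(t_i)$.

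One small regularity point: under the paper's hypotheses ($\alpha,\beta$ merely H\"older, $\Gamma$ merely continuous) your claimed remainder $O((t-s)^2)$ in the expansion of $h_\alpha/h_\beta$ is not justified; what you actually get is $O((t-s)^{1+\delta})$ coming from the H\"older modulus of $\alpha-\beta$ (the $\Gamma$-dependence cancels in the ratio to leading order, so no smoothness of $\Gamma$ is needed). This weaker bound still gives a total error $2^N\cdot O(2^{-N(1+\delta)})\to 0$, so your conclusion stands, but the stated exponent should be corrected.
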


The proof of these properties elementary stem from the analysis done on the functions ${\bm{\Psi}}$ and ${\bm{\Delta}}$ that were previously performed, and these are detailed in Appendix \ref{append:LiftOperator}.


\subsubsection{Radon Nikodym Derivatives}

From the properties proved on the lift operators, we are in position to further analyze the relationship between the probability distributions of $\ag X$ and $\bg X$. 
We first consider the finite-dimensional processes $\ag X^N$ and $\bg X^N$. 
We emphasize that throughout this section, all equalities are true path-wise.

\begin{lemma}
Given the finite-dimensional measure $P^{N}_{\alpha}$ and $P^{N}_{\beta}$, the Radon-Nikodym derivative of $P^{N}_{\beta}$ with respect to $P^{N}_{\alpha}$ satisfies
\begin{eqnarray*}
\frac{d P^{N}_{\beta}}{d P^{N}_{\alpha}}(\omega) = \ab J_{N} \cdot \exp{ \left(- \frac{1}{2} \left( {\Xi_{N}(\omega)}^{T} \left( \ab S_{N}- Id_{\uxi \Omega_{N}}\right) \Xi_{N}(\omega)\right) \right)}
\end{eqnarray*}
with $\ab S_{N} = {\ab G_{N}}^{T} \cdot \ab G_{N}$ and the equality is true path-wise.
\end{lemma}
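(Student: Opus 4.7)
The plan is to exploit the fact that both $P^{N}_{\alpha}$ and $P^{N}_{\beta}$ are finite-dimensional Gaussian measures on the common sample-path space $\ux \Omega_{N}$, and to compute their densities in a shared system of coordinates, namely the $\alpha$-coefficients $\xi = \Xi_{N}(\omega) = {}_{\alpha}\Delta_{N}(\omega)$.

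First, I would observe that under $P^{N}_{\alpha}$ the coordinates $\xi$ are by construction iid standard normal, so that they admit the density $p_{\alpha}(\xi) = (2\pi)^{-k/2} \exp(-\tfrac{1}{2}\, \xi^{T}\xi)$ where $k = \dim \uxi \Omega_{N}$. Next, I would translate $P^{N}_{\beta}$ into the same coordinates: a sample path drawn from $P^{N}_{\beta}$ equals ${}_{\beta}\Psi_{N}\cdot \eta$ where $\eta$ is standard Gaussian, and its $\alpha$-coefficients are $\xi = {}_{\alpha}\Delta_{N} \circ {}_{\beta}\Psi_{N}\cdot \eta = \ba G_{N}\cdot \eta = (\ab G_{N})^{-1} \cdot \eta$, the last equality holding because $\ab G_{N}$ and $\ba G_{N}$ are finite-dimensional inverses of one another. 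Hence under $P^{N}_{\beta}$, $\xi$ is centered Gaussian with covariance $\Sigma = (\ab G_{N})^{-1}\,((\ab G_{N})^{-1})^{T}$; equivalently $\Sigma^{-1} = {\ab G_{N}}^{T} \cdot \ab G_{N} = \ab S_{N}$.

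The Radon-Nikodym derivative then follows immediately as a ratio of Gaussian densities,
\[
\frac{dP^{N}_{\beta}}{dP^{N}_{\alpha}}(\omega) = \frac{p_{\beta}(\xi)}{p_{\alpha}(\xi)} = \lvert \det \Sigma \rvert^{-1/2}\, \exp\!\left(-\tfrac{1}{2}\, \xi^{T}\bigl(\ab S_{N} - \mathrm{Id}_{\uxi\Omega_{N}}\bigr) \xi\right),
\]
so the only remaining point is to identify the prefactor $\lvert \det \Sigma \rvert^{-1/2} = \lvert \det \ab G_{N}\rvert$ with $\ab J_{N}$. This is where the Cholesky choice of square roots $\bm{\sigma}_{n,k}$ in Definition \ref{def:psiDef} enters: by Proposition \ref{prop:LiftOperator}(ii), $\ab G_{N}$ is triangular with strictly positive diagonal entries ${}_{\alpha,\beta}\nu_{n,k}$, so its determinant is positive and coincides with $\ab J_{N}$ as defined there. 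The main obstacle is essentially bookkeeping: one must keep straight the direction of the change of coordinates, namely that a path with $\beta$-coefficients $\eta$ has $\alpha$-coefficients $(\ab G_{N})^{-1}\eta$ rather than $\ab G_{N}\eta$; every remaining step is the standard change-of-variables formula for Gaussian measures on a finite-dimensional vector space.
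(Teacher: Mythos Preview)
Your proposal is correct and essentially coincides with the paper's argument. The paper's formal proof first writes the density ratio in path space via the covariance matrices ${}_{\alpha}\Sigma_{N}$ and ${}_{\beta}\Sigma_{N}$ and then reduces via their Cholesky factors to exactly your coefficient-space expression; immediately after, the paper gives a ``geometric'' rederivation that is verbatim your change-of-variables computation in $\uxi\Omega_{N}$, including the identification $\ba G_{N}=(\ab G_{N})^{-1}$ and the prefactor $\lvert\det \ab G_{N}\rvert$.
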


\begin{proof}
In the finite-dimensional case,  for all $N>0$, the probability measures $P^{N}_{\alpha}$, $P^{N}_{\beta}$  [$p^{N}_{\alpha}$] and the Lebesgue measure on $\ux \Omega^{N}$ are mutually absolutely continuous. We denote  $p^{N}_{\alpha}$ and $p^{N}_{\beta}$ the Gaussian density of $P^{N}_{\alpha}$ and $P^{N}_{\beta}$  with respect to the Lebesgue measure on $\ux \Omega^{N}$.
The Radon-Nikodym derivative of $P^{N}_{\beta}$ with respect to $P^{N}_{\alpha}$ is defined path-wise and is simply given by the quotient of the density of the vector $\lbrace \bg X^{N}(m_{i,j}) \rbrace$ with the density of the vector $\lbrace \ag X^{N}(m_{i,j}) \rbrace$ for $0 \leq i \leq N$, $0 \leq j < 2^{i-1}$, that is
\begin{eqnarray}
\lefteqn{
\frac{d P^{N}_{\beta}}{d P^{N}_{\alpha}}(\omega) = \frac{p^{N}_{\beta}(\ag X^{N}(\omega))}{p^{N}_{\alpha}(\ag X^{N}(\omega))} = }
\nonumber\\
&& \sqrt{\frac{\det{\left( \ag \Sigma_{N} \right)}}{\det{\left( \bg \Sigma_{N} \right)}}} \cdot \exp{\left( -\frac{1}{2}\left( {\ag X^{N}(\omega)}^{T} \left( \bg \Sigma^{-1}_{N} - \ag \Sigma^{-1}_{N} \right)    \ag X^{N}(\omega)  \right)\right)} \, .
\label{eq:finiteRadon}
\end{eqnarray}
We first make explicit
\begin{eqnarray}
\frac{\det{\left( \ag \Sigma_{N} \right)}}{\det{\left( \bg \Sigma_{N} \right)}} 
&=& 
\det{\left(\ag \Sigma_{N} \cdot \bg \Sigma^{-1}_{N} \right)} \, , \nonumber\\
&=&
 \det{\left( \ag \Psi_{N} \cdot {\ag \Psi_{N}}^{T} \cdot {\bg \Delta_{N}}^{T} \cdot \bg \Delta_{N} \right)} \, , \nonumber\\
 &=&
  \det{\left( \bg \Delta_{N} \cdot \ag \Psi_{N} \cdot {\ag \Psi_{N}}^{T} \cdot {\bg \Delta_{N}}^{T}   \right)} \, , \nonumber\\
  &=&
  \det{\left( \ab G_{N} \cdot {\ab G_{N}}^{T}   \right)} \, ,  \nonumber\\
  &=&
  \det{\left( \ab G_{N} \right)}^{2} \nonumber \, .
\end{eqnarray}
Then, we rearrange the exponent using the Cholesky decomposition 
\begin{eqnarray}
\bg \Sigma^{-1}_{N} - \ag \Sigma^{-1}_{N} 
=
{\bg \Delta_{N}}^{T} \cdot   \bg \Delta_{N}  - {\ag \Delta_{N}}^{T} \cdot \ag \Delta_{N}  \, , \nonumber
\end{eqnarray}
so that we write the exponent of \eqref{eq:finiteRadon} as
\begin{eqnarray}
&&{\ag X_{N}}^{T} \left( {\bg \Delta_{N}}^{T} \cdot \bg \Delta_{N} - {\ag \Delta_{N}}^{T} \cdot \ag \Delta_{N} \right)    \ag X_{N}   \, , \nonumber\\
&& \qquad =
 {\Xi_{N}}^{T} \cdot {\ag \Psi_{N}}^{T} \left( {\bg \Delta_{N}}^{T} \cdot \bg \Delta_{N} - {\ag \Delta_{N}}^{T} \cdot \ag \Delta_{N} \right)    \ag \Psi_{N}(\omega) \cdot \Xi_{N} \, , \nonumber\\
&& \qquad =
 {\Xi_{N}}^{T}  \left( {\ab G_{N}}^{T} \cdot \ab G_{N} - Id_{\uxi \Omega_{N}} \right)   \Xi_{N} \, . \nonumber
\end{eqnarray}
We finally reformulate \eqref{eq:finiteRadon} as
\begin{eqnarray}
\frac{d P^{N}_{\beta}}{d P^{N}_{\alpha}}(\omega)
=
\ab J_{N} \cdot \exp{\left( -\frac{1}{2}\left( {\Xi_{N}(\omega)}^{T}  \left( {\ab G_{N}}^{T} \cdot \ab G_{N} - Id_{\uxi \Omega_{N}} \right)   \Xi_{N}(\omega) \right)\right)} \, . \nonumber
\end{eqnarray}

\end{proof}

Let us now justify from a geometrical point of view why this formula is a direct consequence of the finite-dimensional change of variable formula on the model space $\uxi \Omega_N$.
If  we introduce $\ag \Delta_{N}$ the coefficient application related to $\ag X^{N}$, we know that $\Xi_{N} = \ag \Delta_{N}(\ag X^{N})$ follows a normal law $\mathcal{N}(\bm{0}, \bm{I}_{\uxi \Omega_{N}})$.
We denote $p_{\xi}^{N}$ its standard Gaussian density with respect to the Lebesgue measure on $\uxi \Omega^{N}$.
We also know that 
\[ \ag \Delta_{N}(\bg X^{N}) = (\ag \Delta_{N} \circ \ab F_N) (\ag X^{N})=  \ba G_N (\ag \Delta_{N}(\ag X^{N})) = \ba G_N (\Xi_{N}) \, . \]
Since $\ba G_N$ is linear,  the change of variable formula directly entails that $\ba G_N (\Xi_{N})$ admits on $\uxi \Omega_{N}$
\begin{eqnarray*} 
p^N_{\beta, \alpha}(\xi_N) = \vert \det(\ab G) \vert \: p^N_{\xi} \big [ {\xi_N}^T ({\ab G_{N}}^{T} \cdot \ab G_{N}) \, \xi_N \big ] 
\end{eqnarray*}
as density with respect to the Lebesgue measure.
Consider now $B$ a measurable set of $(\ux \Omega_{N}, \mathcal{B}(\ux \Omega_N))$, then we have
\begin{eqnarray*}
P^{N}_{\beta}(B) &=&  \int_{\ag \Delta_{N} (B)} p^N_{\beta, \alpha}(\xi_{N}) \, d\xi_N \, , \\
&=&  \int_{\ag \Delta_{N} (B)} \frac{p^N_{\beta, \alpha}(\xi_{N})}{p^N_\xi(\xi_N)} p^N_\xi(\xi_N)\, d\xi_N \, , \\
&=& \int_{ B} \frac{p^N_{\beta, \alpha}\big(\ag \Delta_{N}(X^{N})\big)}{p^N_\xi\big(\ag \Delta_{N}(X^{N})\big)} \, dP^{N}_{\alpha}(X^{N}) \, ,
\end{eqnarray*}
from which we immediately conclude.

\subsubsection{The Trace Class Operator}

The expression of the path-wise expression of the Radon-Nikodym extends to the infinite-dimensional representation of $\ag X$ and $\bg X$. 
This extension involves technical analysis on the infinite-dimensional Hilbert space $l^{2}(\mathbb{R})$. We have shown in Proposition \ref{prop:LiftOperator} that the application $\ab G$ was bounded for the spectral norm. 
This property will allow us to prove that the infinite-dimensional Gauss-Markov processes are absolutely continuous and to compute the Radon-Nikodym derivative of one measure with respect to the other one, proof that will essentially be comparable to the proof of Feldman-Hajek theorem (see e.g.\cite[Theorem 2.23]{DaPrato:1992}). 

We have for any $\xi$ in $l^{2}(\mathbb{R})$, the inequality 
\[\Vert \ab G(\xi) \Vert_{2} \leq \Vert \ab G \Vert_{2} \cdot \Vert  \xi \Vert_{2}\]
implying that $\ab G$ maps $l^{2}(\mathbb{R})$ into $l^{2}(\mathbb{R})$. We can then define the adjoint operator $\ab G^{T}$ from $l^{2}(\mathbb{R})$ to $l^{2}(\mathbb{R})$, which is given by:
\begin{equation}
\forall \quad \xi, \eta \in  l^{2}(\mathbb{R}) \, , \quad \left( \ab G^{T} ( \eta) ,  \xi \right) = \left( \eta , \ab G (\xi) \right) \, .
\nonumber
\end{equation}
Let us now consider the self-adjoint operator $\ab S = \ab G^{T} \circ \ab G: l^{2}(\mathbb{R}) \rightarrow l^{2}(\mathbb{R})$. This operator is the infinite dimensional counterpart of the matrix $\ab S_N$.
  
\begin{lemma}\label{lemCoefGG}
	Consider the coefficients of the matrix representation of $\ab S$ in the natural basis $e_{n,k}$ of $l^{2}(\mathbb{R})$ given as $\ab S^{n,k}_{p,q} = \left( e_{n,k}, \ab S(e_{p,q}) \right)$, we have 
	\begin{equation}\label{eq:coefExprGG}
	\ab S^{n,k}_{p,q}  
	=
	\int_{0}^{1} \left( \ag \phi_{n,k}(t) + \frac{\big( \alpha(t) - \beta(t) \big)}{\sqrt{\Gamma(t)}} \ag \psi_{n,k}(t) \right) \left( \ag \phi_{p,q}(t) + \frac{\big( \alpha(t) - \beta(t) \big)}{\sqrt{\Gamma(t)}} \ag \psi_{p,q}(t) \right) \, dt \, .
	\end{equation}	
\end{lemma}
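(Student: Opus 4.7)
The plan is to interpret the matrix coefficient $\ab S^{n,k}_{p,q} = (e_{n,k}, \ab S(e_{p,q})) = (\ab G\, e_{n,k}, \ab G\, e_{p,q})_{l^{2}}$ through the RKHS structure attached to the $\beta$-process. By definition $\ab G\, e_{n,k} = \bg\Delta \circ \ag\Psi(e_{n,k}) = \bg\Delta(\ag\psi_{n,k})$, which is precisely the sequence of coefficients of the function $\ag\psi_{n,k}$ expanded in the Schauder basis $\{\bg\psi_{p,q}\}$. Since the family $\{\bg\psi_{p,q}\}_{(p,q)\in\I}$ is orthonormal in the RKHS $\mathcal{E}_\beta$ associated with the $\beta$-process (cf.\ Section~\ref{sec:Optimality}), and since $\ag\psi_{n,k}$ lies in $\mathcal{E}_\beta$ (being piecewise $C^{1}$, compactly supported away from $0$, and $\Gamma$ being bounded away from $0$), Parseval's identity yields
\begin{equation*}
(\ab G\, e_{n,k}, \ab G\, e_{p,q})_{l^{2}} = \langle \ag\psi_{n,k}, \ag\psi_{p,q}\rangle_{\mathcal{E}_\beta} = \int_{0}^{1} \mathcal{L}_{\beta}[\ag\psi_{n,k}](t)\,\mathcal{L}_{\beta}[\ag\psi_{p,q}](t)\, dt,
\end{equation*}
where $\mathcal{L}_\beta$ is the first-order differential operator associated with the $\beta$-process.

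The second step is to express $\mathcal{L}_{\beta}[\ag\psi_{n,k}]$ in terms of the (already known) $\ag\phi_{n,k}$. In the one-dimensional setting, using $g_{\gamma}(t) = \exp(\int_{0}^{t}\gamma)$ and $f_{\gamma}(t) = \sqrt{\Gamma(t)}/g_{\gamma}(t)$, a direct computation gives the pointwise identity
\begin{equation*}
\mathcal{L}_{\gamma}[u](t) = \frac{1}{f_{\gamma}(t)}\frac{d}{dt}\!\left(\frac{u(t)}{g_{\gamma}(t)}\right) = \frac{1}{\sqrt{\Gamma(t)}}\big(u'(t) - \gamma(t)\,u(t)\big),
\end{equation*}
valid for $\gamma\in\{\alpha,\beta\}$. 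Subtracting the two expressions yields the multiplicative perturbation
\begin{equation*}
\mathcal{L}_{\beta}[u] \;=\; \mathcal{L}_{\alpha}[u] \;+\; \frac{\alpha-\beta}{\sqrt{\Gamma}}\,u.
\end{equation*}
Specializing to $u=\ag\psi_{n,k}$ and using the defining relation $\mathcal{L}_{\alpha}[\ag\psi_{n,k}] = \ag\phi_{n,k}$ (equivalent to $\ag\psi_{n,k} = \mathcal{K}_{\alpha}[\ag\phi_{n,k}]$ via \eqref{eq:psinkKPhi}), we obtain
\begin{equation*}
\mathcal{L}_{\beta}[\ag\psi_{n,k}](t) \;=\; \ag\phi_{n,k}(t) \;+\; \frac{\alpha(t)-\beta(t)}{\sqrt{\Gamma(t)}}\,\ag\psi_{n,k}(t).
\end{equation*}
Substituting this into the integral representation of $\ab S^{n,k}_{p,q}$ obtained in the first step yields exactly formula \eqref{eq:coefExprGG}.

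The main obstacle is the first step: one must justify rigorously that the $l^{2}$ inner product on coefficient space pulls back to the RKHS inner product $\langle\cdot,\cdot\rangle_{\mathcal{E}_\beta}$ along $\bg\Delta$. This relies on two ingredients that are in place: the orthonormality of $\{\bg\psi_{p,q}\}$ in $\mathcal{E}_\beta$ (Proposition on the RKHS structure) and the membership $\ag\psi_{n,k}\in\mathcal{E}_\beta$, which must be checked from the explicit closed form of $\ag\psi_{n,k}$ given in Section~\ref{ssect:OneDimCalculus}. The boundedness of $\ab G$ established in Proposition~\ref{prop:LiftOperator}(iii) guarantees that $\bg\Delta(\ag\psi_{n,k})$ is in $l^{2}$, which is exactly the coefficient-side reformulation of $\ag\psi_{n,k}\in\mathcal{E}_\beta$; once this is noted, the rest is a polarization of Parseval's identity followed by the pointwise computation above.
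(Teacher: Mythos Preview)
Your proposal is correct and follows essentially the same route as the paper. The paper writes the coefficient as the sum $\sum_{(i,j)}\big(\bg\delta_{i,j},\ag\psi_{n,k}\big)\big(\bg\delta_{i,j},\ag\psi_{p,q}\big)$, integrates by parts to turn each factor into $\big(\bg\phi_{i,j},\bg\mathcal{D}[\ag\psi_{n,k}]\big)_{L^{2}}$, and then applies Parseval with respect to the orthonormal family $\{\bg\phi_{i,j}\}$ in $L^{2}[0,1]$; you instead invoke the RKHS structure of $\mathcal{E}_\beta$ to identify the $l^{2}$ inner product of coefficients with $\int_{0}^{1}\mathcal{L}_\beta[\ag\psi_{n,k}]\,\mathcal{L}_\beta[\ag\psi_{p,q}]\,dt$ directly. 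Since $\mathcal{L}_\beta=\bg\Phi\circ\bg\Delta$ with $\bg\Phi$ an isometry, these two arguments are the same computation in different notation, and your identification $\mathcal{L}_\beta[u]=\Gamma^{-1/2}(u'-\beta u)$ reproduces exactly the paper's formula for $\bg\mathcal{D}^{*}[\ag\psi_{n,k}]$.
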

  
\begin{proof}  
Assume that $\max(n,p)\leq N$ and that $(n,k)$ and $(p,q) \in \I_N$. With the notations used previously with $U$ an open neighbourhood of $[0,1]$, we have:
\begin{eqnarray}\label{eq:infiniteCoefExprGG}
\ab S^{n,k}_{p,q}
&=&
\Big( \ab G(e_{n,k}) , \ab G(e_{p,q}) \Big) \nonumber\\
&= &
\sum_{(i,j) \in \I} \int_{U} \bg \delta_{i,j}(t) \ag \psi_{n,k}(t) \, dt \, \int_{U} \bg \delta_{i,j}(t)  \ag \psi_{p,q} (t) \, dt
\end{eqnarray}
Since we have by definition $\ag \psi_{n,k} = \ag \mathcal{K}[\ag \phi_{n,k}]$, $\bg \delta_{i,j} = \bg \mathcal{D}[\bg \phi_{i,j}]$ and $\ag \mathcal{K}^{-1} = \ag \mathcal{D}^{*}$ and $\bg \mathcal{D}^{-1} = \bg \mathcal{K}^{*}$ on the space  $D'_{0}(U)$, we have
\begin{eqnarray}
 \bg \phi_{i,j}(t) &=&   \bg \mathcal{K}^{*}[ \bg \delta_{i,j}](t) = f_{\beta}(t) \int_{U} g_{\beta}(s)  \, \bg \delta_{i,j}(s) \, ds \label{eq:eqInt1} \, , \\
 \ag \phi_{n,k}(t) &=& \ag \mathcal{D}^{*}[\ag \psi_{n,k}](t)= \frac{1}{f_{\alpha}(t)} \frac{d}{dt} \left( \frac{\ag \psi_{n,k}(t)}{g_{\alpha}(t)}\right)  \label{eq:eqInt2} \, .
\end{eqnarray}
From there using \eqref{eq:eqInt1}, we can write the integration by parts formula in the sense of the generalized functions to get
 \begin{eqnarray}
\int_{U} \bg \delta_{i,j}(t) \ag \psi_{n,k}(t) \, dt
&=&
\int_{U} \frac{1}{f_{\beta}(t)} \frac{d}{dt} \left( \frac{\ag \psi_{n,k}(t)}{g_{\beta}(t)} \right) \, \bg \phi_{i,j}(t) \, dt \\
&=&
\int_{U} \bg \mathcal{D}^{*}[\ag \psi_{n,k}](t) \, \bg \phi_{i,j}(t) \, dt \, .
\nonumber
\end{eqnarray}
We now compute using \eqref{eq:eqInt2}
\begin{eqnarray} \label{eq:derivExpr}
\bg \mathcal{D}^{*}[\ag \psi_{n,k}](t)
&=& 
\frac{1}{f_{\beta}(t)} \frac{d}{dt} \left( \frac{\ag \psi_{n,k}(t)}{g_{\alpha}(t)} \frac{g_{\alpha}(t)}{g_{\beta}(t)} \right) \nonumber\\
&=&
\frac{g_{\alpha}(t) f(t)}{g_{\beta}(t) f_{\beta}(t)} \ag \phi(t) - \frac{d}{dt} \left( \frac{g_{\alpha}(t)}{g_{\beta}(t)} \right) \frac{ \ag \psi_{n,k}(t)}{g_{\beta}(t) f_{\beta}(t)} 
\end{eqnarray}
Specifying $g_{\alpha}$, $g_{\beta}$, $f_{\alpha}$ and $f_{\beta}$, and recalling the relations 
\begin{eqnarray}
g_{\alpha}(t) f_{\alpha}(t) = g_{\beta}(t) f_{\beta}(t) = \sqrt{\Gamma(t)} \quad \mathrm{and} \quad \frac{d}{dt} \left( \frac{g_{\alpha}(t)}{g_{\beta}(t)} \right) = \big( \alpha(t) - \beta(t) \big)  \frac{g_{\alpha}(t)}{g_{\beta}(t)} \, ,
\nonumber
\end{eqnarray}
we rewrite the function \eqref{eq:derivExpr} in $L^{2}[0,1]$ as
\begin{eqnarray}
\bg \mathcal{D}^{*}[\ag \psi_{n,k}](t) = \frac{1}{f_{\beta}(t)} \frac{d}{dt} \left( \frac{\ag \psi_{n,k}(t)}{g_{\beta}(t)} \right) = \ag \phi_{n,k}(t) + \frac{\big( \alpha(t) - \beta(t) \big)}{\sqrt{\Gamma(t)}} \ag \psi_{n,k}(t) \, .
\nonumber
\end{eqnarray}
Now, since the family $\bg \phi_{n,k}$ forms a complete orthonormal system of $L^{2}[0,1]$ and is zero outside $[0,1]$, by Parseval identity, expression \eqref{eq:infiniteCoefExprGG} can be written as the scalar product:
\begin{eqnarray}
\ab S^{n,k}_{p,q}  
= 
\int_{0}^{1} \left( \ag \phi_{n,k}(t) + \frac{\big( \alpha(t) - \beta(t) \big)}{\sqrt{\Gamma(t)}} \ag \psi_{n,k}(t) \right) \left( \ag \phi_{p,q}(t) + \frac{\big( \alpha(t) - \beta(t) \big)}{\sqrt{\Gamma(t)}} \ag \psi_{p,q}(t) \right) \, dt \, ,
\nonumber
\end{eqnarray}
which ends the proof of the lemma.
\end{proof}  

Notice that we can further simplify expression \eqref{eq:coefExprGG}
\begin{eqnarray}
\lefteqn{
\ab S^{n,k}_{p,q}  
=  \delta^{n,k}_{p,q} + \int_{0}^{1}  \frac{\big( \alpha(t) - \beta(t) \big)^{2}}{\Gamma(t)}  \big( \ag \psi_{n,k}(t) \ag \psi_{p,q}(t) \big) \, dt +}
\nonumber\\
&&
\hspace{100pt} \int_{0}^{1}  \frac{\big( \alpha(t) - \beta(t) \big)}{\sqrt{\Gamma(t)}} \big( \ag \phi_{n,k}(t) \, \ag \psi_{p,q}(t)  +\ag \phi_{p,q}(t) \, \ag \psi_{n,k}(t) \big)\, dt 
\, .
\end{eqnarray}
We are now in a position to show that the operator $\ab S - Id$ can be seen as the limit of the finite dimensional operator $\ab S_{N} -Id_{\uxi \Omega_{N}}$, in the following sense:

\begin{theorem} \label{thCompactGG}
The operator $\ab S - Id: l^{2}(\mathbb{R}) \rightarrow l^{2}(\mathbb{R})$ is a trace class operator, whose trace is given by:
\begin{equation}
\mathrm{Tr}\left(\ab S - Id \right) =  \int_{0}^{1} \left( \alpha(t) - \beta(t) \right) \, dt
+
\int_{0}^{1} \frac{h_{\alpha}(t)}{{f_{\alpha}(t)}^{2}}  \big( \alpha(t) - \beta(t) \big)^{2} \, dt 
\end{equation}
\end{theorem}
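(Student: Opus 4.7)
The identity $\ab S = \ab G^{T}\ab G$ makes $\ab S - Id$ self-adjoint on $l^{2}(\mathbb R)$. The plan is to compute the formal trace by summing the diagonal matrix elements supplied by Lemma \ref{lemCoefGG} in the natural orthonormal basis $\{e_{n,k}\}$, to collapse the resulting double sums into integrals via the Parseval-type identities already proved for the families $\{\ag\psi_{n,k}\}$ and $\{\ag\phi_{n,k}\}$, and finally to justify the trace-class nature of $\ab S-Id$ by a limiting argument on the finite-rank truncations $\ab S_{N}-Id_{\uxi\Omega_{N}}$.

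\textbf{Step 1: the diagonal entries.} Specialize Lemma \ref{lemCoefGG} to $(p,q)=(n,k)$: since $\ag\phi_{n,k}$ is orthonormal in $L^{2}_{f_{\alpha}}$, the $\delta^{n,k}_{p,q}$ part accounts exactly for $Id$, so that
\begin{equation*}
(\ab S - Id)^{n,k}_{n,k}
=
\int_{0}^{1}\!\frac{(\alpha-\beta)^{2}}{\Gamma}\,\ag\psi_{n,k}^{2}\,dt
+2\int_{0}^{1}\!\frac{\alpha-\beta}{\sqrt{\Gamma}}\,\ag\phi_{n,k}\,\ag\psi_{n,k}\,dt .
\end{equation*}

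\textbf{Step 2: summing the series.} For the quadratic contribution, Tonelli's theorem together with Lemma \ref{lem:covariance} (evaluated on the diagonal) yields $\sum_{(n,k)\in\I}\ag\psi_{n,k}(t)^{2}=g_{\alpha}(t)^{2}h_{\alpha}(t)$. Using $\Gamma=g_{\alpha}^{2}f_{\alpha}^{2}$, the first summed integral equals $\int_{0}^{1}(h_{\alpha}/f_{\alpha}^{2})(\alpha-\beta)^{2}\,dt$, matching the second term of the target. For the cross contribution, I would insert the integral representation $\ag\psi_{n,k}(t)=g_{\alpha}(t)\int_{0}^{t}f_{\alpha}(s)\ag\phi_{n,k}(s)\,ds$, exchange sums and integrals, and apply Corollary \ref{cor:identDecomp} to recognize $\sum_{(n,k)}\ag\phi_{n,k}(t)\ag\phi_{n,k}(s)=\delta(t-s)\,{\rm Id}_{L^{2}_{f_{\alpha}}}$. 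Since the Dirac mass sits exactly at the endpoint $s=t$ of the integration domain $[0,t]$, the symmetrized kernel $\sum(\ag\phi_{n,k}(t)\ag\psi_{n,k}(s)+\ag\phi_{n,k}(s)\ag\psi_{n,k}(t))$ has diagonal value $g_{\alpha}(t)f_{\alpha}(t)=\sqrt{\Gamma(t)}$, so that $2\sum_{(n,k)}\ag\phi_{n,k}(t)\ag\psi_{n,k}(t)=\sqrt{\Gamma(t)}$. Integrating against $(\alpha-\beta)/\sqrt{\Gamma}$ gives the contribution $\int_{0}^{1}(\alpha-\beta)\,dt$, and adding the two pieces produces the announced trace.

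\textbf{Step 3: trace-class justification.} The delicate point, which is also the main obstacle, is to make the previous boundary-delta manipulation rigorous and to establish trace-class membership (as opposed to having merely a convergent formal diagonal). The natural route is to work at level $N$: the finite-dimensional operators $\ab S_{N}-Id_{\uxi\Omega_{N}}$ are automatically trace class, and the exact same computation above, applied to the truncated kernels $\bm k_{\alpha,N}$ and $\bm k_{\beta,N}$ (which are uniformly continuous on $[0,1]\times[0,1]$ by Lemma \ref{lem:finiteStoch}), yields
\begin{equation*}
\mathrm{Tr}(\ab S_{N}-Id_{\uxi\Omega_{N}})\;\longrightarrow\;\int_{0}^{1}\!(\alpha-\beta)\,dt+\int_{0}^{1}\!\frac{h_{\alpha}}{f_{\alpha}^{2}}(\alpha-\beta)^{2}\,dt .
\end{equation*}
To promote this to trace-class convergence of $\ab S-Id$ itself, I would bound the trace norm $\|\ab S_{N}-Id_{\uxi\Omega_{N}}\|_{1}$ uniformly in $N$ using the continuity of $\alpha,\beta,\sqrt{\Gamma}$ and the uniform control on $\ag\psi_{n,k}$ obtained in the proof of Lemma \ref{lem:convergence}; a compactness argument in the trace class then extracts the limit. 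The precise boundary-value treatment of $\delta(t-s)$ above emerges cleanly at the finite level because the kernels $\bm k_{\alpha,N},\bm k_{\beta,N}$ are honest continuous functions, and the half-mass convention is replaced by the Fubini-based exchange applied to smooth approximants.

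\textbf{Main obstacle.} The conceptual computation of the trace is immediate once one sums using covariance and completeness, but the rigorous interpretation of the endpoint Dirac mass in the cross term, and the verification that the resulting operator is genuinely trace class (not only Hilbert-Schmidt), is the non-trivial part; it is handled by the finite-rank approximation outlined above, relying crucially on the regularity assumptions on $\alpha,\beta,\Gamma$.
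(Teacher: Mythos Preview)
Your computation of the formal trace is correct in spirit, but the paper takes a genuinely different route that avoids precisely the obstacle you identify.

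Instead of summing the diagonal entries $(\ab S-Id)^{n,k}_{n,k}$ in the discrete basis $\{e_{n,k}\}$, the paper transports the whole problem to $L^{2}[0,1]$ via the isometry $\ag\Phi:l^{2}\to L^{2}_{f_{\alpha}}$. It proves (Lemma~\ref{lemGGFF}) that $\ab S-Id=\ag\Phi^{T}\circ\ab R\circ\ag\Phi$, where $\ab R$ is the integral operator on $L^{2}[0,1]$ with the explicit kernel
\[
\ab R(t,s)=\big(\alpha(t\vee s)-\beta(t\vee s)\big)\frac{f_{\alpha}(t\wedge s)}{f_{\alpha}(t\vee s)}+f_{\alpha}(t)\Big(\int_{t\vee s}^{1}\frac{(\alpha-\beta)^{2}}{f_{\alpha}^{2}}\,du\Big)f_{\alpha}(s).
\]
Because $\ab R$ is an honest integral operator with $L^{2}$ kernel it is Hilbert--Schmidt, and the trace is read off directly as $\int_{0}^{1}\ab R(t,t)\,dt$, yielding the stated formula with no endpoint-Dirac ambiguity. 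What you are doing by inserting $\ag\psi_{n,k}=\mathcal K[\ag\phi_{n,k}]$ and trying to collapse $\sum\ag\phi_{n,k}(t)\ag\phi_{n,k}(s)$ at $s=t$ is morally the same computation, but performed in the wrong order: the paper first carries out the $\sum_{(n,k)}$ (that is, applies Parseval inside the proof of Lemma~\ref{lemGGFF}) to obtain a closed-form kernel, and only then evaluates on the diagonal. This reordering is exactly what makes the half-mass issue disappear.

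On your Step~3: the phrase ``compactness argument in the trace class'' is not a valid mechanism, since the unit ball of the trace class is not compact in trace norm, and a uniform bound on $\|\ab S_{N}-Id\|_{1}$ together with strong convergence does not by itself force the limit to be trace class with the limiting trace. If you wanted to push your direct approach through, you would still need something like the paper's Lemma~\ref{lemGGFF} (or an equivalent factorization showing $\ab S-Id$ is a product of two Hilbert--Schmidt operators) to certify trace-class membership; at that point the integral-kernel computation is already available and is the shorter path.
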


We prove this essential point in Appendix \ref{append:TraceClass}.
The proof consists in showing that the operator  $\ab S - Id$ is isometric to a Hilbert-Schmidt operator whose trace can be computed straightforwardly.


\subsubsection{The Girsanov Theorem}

We now proceed to prove the Girsanov theorem by extending the domain of the quadratic form associated with $\ab S  - Id$ to the space $\uxi \Omega'$, which can only be done in law.

\begin{theorem} \label{RadonTheorem}
In the infinite dimensional case, the Radon-Nikodym derivative of $P_{\beta} = P_{{\bg X}^{-1}}$ with respect to $P_{\alpha} = P_{{\ag X}^{-1}}$ reads
\begin{equation}\label{eq:GirsaStrato}
\frac{d P_{\beta}(\omega)}{d P_{\alpha}(\omega)} = \exp{ \left(- \frac{1}{2} \left( \int_{0}^{1} \beta(t) - \alpha(t) \, dt + {\Xi(\omega)}^{T} \left(\ab S  - Id_{\uxi \Omega'}\right) \Xi(\omega)\right) \right)} \, .
\end{equation}
which in terms of It\^o stochastic integral reads:
\begin{eqnarray}\label{eq:GirsaIto}
\lefteqn{
\frac{d P_{\beta}(\omega)}{d P_{\alpha}(\omega)}  = \exp\left(  
\int_{0}^{1} \frac{\beta(t) - \alpha(t)}{{f_{\alpha}}^{2}(t)} \, \frac{\ag X_{t}(\omega)}{g_{\alpha}(t)}  d \! \left( \frac{\ag X_{t}(\omega)}{g_{\alpha}(t)} \right) - \right.}
\nonumber\\
&& \qquad \qquad \qquad \qquad \left. \frac{1}{2} \int_{0}^{1} \frac{\big( \beta(t)-\alpha(t)\big)^{2} }{{f_{\alpha}}^{2}(t)} \, \left( \frac{\ag X_{t}(\omega)}{g_{\alpha}(t)} \right)^{2} \, dt 
\right) \, .
\nonumber
\end{eqnarray}
\end{theorem}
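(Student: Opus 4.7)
The plan is to pass to the limit $N \to \infty$ in the finite-dimensional Radon-Nikodym derivative of the preceding lemma, and then to rewrite the resulting symmetric quadratic form as an It\^o stochastic integral. Setting
$$M_N(\omega) = \ab J_N \, \exp\!\left(-\tfrac{1}{2} \, \Xi_N(\omega)^T (\ab S_N - Id_{\uxi \Omega_N}) \Xi_N(\omega)\right),$$
I would first observe that $(M_N)_{N\ge 0}$ is a positive $P_\alpha$-martingale of mean one with respect to $\mathcal{F}_N = \sigma(\Xi_{n,k},\,(n,k) \in \I_N)$, because each $P_\beta^N$ is the restriction of $P_\beta^{N+1}$ to $\mathcal{F}_N$. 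Combining the convergence $\ab J_N \to \ab J = \exp(\tfrac{1}{2}\int_0^1 (\alpha-\beta)\,dt)$ from Proposition \ref{prop:LiftOperator}(iv) with the trace-class property of $\ab S - Id$ established in Theorem \ref{thCompactGG}, the infinite quadratic form $\Xi^T(\ab S - Id)\Xi$ is well-defined $P_\alpha$-a.s., and the finite projections $\Xi_N^T(\ab S_N - Id)\Xi_N$ converge to it both $P_\alpha$-a.s.\ and in $L^1(P_\alpha)$ via a standard second-order Gaussian chaos bound controlled by the Hilbert-Schmidt norm. Doob's martingale convergence theorem together with uniform integrability (inherited from the trace-class estimate) then identifies $M_\infty = \lim_N M_N$ with the density $dP_\beta/dP_\alpha$, proving (\ref{eq:GirsaStrato}).

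For the It\^o form, I would exploit the explicit expression of the coefficients of $\ab S$ in Lemma \ref{lemCoefGG}. Setting $\rho(t) = (\alpha(t) - \beta(t))/\sqrt{\Gamma(t)}$, the lemma together with Parseval's identity in $L^2_{\bm{f}}$ recasts the quadratic form as
$$\Xi^T \ab S \Xi = \int_0^1 \Big(\sum_{(n,k) \in \I} \big(\ag \phi_{n,k}(t) + \rho(t)\, \ag \psi_{n,k}(t)\big) \Xi_{n,k}\Big)^2 dt.$$
Expanding the square, using $\sum_{(n,k)}\ag \psi_{n,k}(t)\Xi_{n,k} = \ag X_t$, and cancelling the orthonormal contribution $\int_0^1 (\sum \ag\phi_{n,k}(t)\Xi_{n,k})^2 dt = \sum \Xi_{n,k}^2 = \Xi^T Id\, \Xi$, one obtains
$$\Xi^T (\ab S - Id) \Xi = 2 \int_0^1 \rho(t) \, \ag X_t \, \Big(\sum_{(n,k)} \ag\phi_{n,k}(t) \Xi_{n,k}\Big)\, dt + \int_0^1 \rho(t)^2 \, \ag X_t^2 \, dt.$$
The cross term is symmetric in the two $\Xi$-indices, hence coincides with the Stratonovich integral $2\int_0^1 \rho(t) \ag X_t \circ dW_t$ by the Schauder integration-by-parts identity of Proposition \ref{prop:IPP}. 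Converting to It\^o via $d\langle \ag X, W\rangle_t = \sqrt{\Gamma(t)}\,dt$ produces a correction $\int_0^1(\alpha-\beta)\,dt$, which exactly cancels the constant $\int_0^1(\beta-\alpha)\,dt$ in (\ref{eq:GirsaStrato}). After substituting $\sqrt{\Gamma} = f_\alpha g_\alpha$ and recognizing $f_\alpha \, dW_t = d(\ag X_t / g_\alpha)$, the remaining expression is precisely the It\^o form (\ref{eq:GirsaIto}).

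The hard part will be the rigorous treatment of the two infinite limits that intervene: on one hand, the $L^1(P_\alpha)$-convergence of the truncated quadratic forms, which only holds because of the trace-class bound of Theorem \ref{thCompactGG} and would fail for a mere bounded operator; on the other hand, the identification of the bilinear cross term as a Stratonovich integral, since $\sum \ag\phi_{n,k}(t) \Xi_{n,k}$ is only a distributional white noise. Both issues can be handled by truncating at level $N$, applying Proposition \ref{prop:IPP} and Theorem \ref{theo:Ito} to the smooth finite-dimensional process $\ag X^N$ to pick up the correct It\^o correction, and then passing to the limit using the Hilbert-Schmidt estimate on $\ab S - Id$ to control the remainder.
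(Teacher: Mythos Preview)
Your proposal is correct and reaches the same conclusion, but it takes a genuinely different route from the paper. The paper's proof is essentially a wrapper around Lemma~\ref{lemGGextension}: that lemma (proved in the appendix) shows by a direct computation that the truncated quadratic form $\Xi_N^T(\ab S_N-Id)\Xi_N$ can be rewritten as a Riemann-type sum along the partition $D_N$, whose weights are Taylor-expanded to identify the limit with the Stratonovich integral $2\int_0^1 \frac{\alpha-\beta}{f_\alpha^2}\,\frac{\ag X}{g_\alpha}\circ d(\ag X/g_\alpha)$ plus the quadratic drift term; the theorem then follows by combining this with $\ab J_N\to\ab J$ and the Stratonovich-to-It\^o correction. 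You instead decouple convergence from identification: convergence of the density is obtained abstractly from the positive-martingale structure of $(M_N)$ together with a second-chaos bound governed by $\|\ab S-Id\|_{HS}$ (in the spirit of Feldman--H\'ajek), and only afterward do you recognise the limiting quadratic form via Lemma~\ref{lemCoefGG} and the Stratonovich mechanism of Proposition~\ref{prop:IPP}. Your route is cleaner from the functional-analytic side and makes explicit why the trace-class property of Theorem~\ref{thCompactGG} is exactly what is needed for absolute continuity; the paper's is more hands-on and avoids any appeal to martingale convergence or chaos estimates. One caveat: your invocation of Proposition~\ref{prop:IPP} for the cross term $2\int_0^1\rho\,\ag X_t\big(\sum\ag\phi_{n,k}\Xi_{n,k}\big)\,dt$ is not a direct application of its \emph{statement} (which is about $X_tY_t$) but of the limiting mechanism in its \emph{proof}; the paper carries this step out explicitly as the $A^N(\xi)$ computation in Appendix~\ref{append:Girsanov}.
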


\noindent In order to demonstrate the Girsanov theorem from our geometrical point of view, we need to establish the following result:

\begin{lemma}\label{lemGGextension}
The positive definite quadratic form on $l^{2}(\mathbb{R}) \times l^{2}(\mathbb{R})$ associated with operator $\ab S - Id: l^{2}(\mathbb{R}) \rightarrow l^{2}(\mathbb{R})$ is well-defined on $\uxi \Omega'$.
Moreover for all $\uxi \Omega'$, 
\begin{eqnarray} \label{eq:StratoExp}
\lefteqn{
\big(\xi, (\ab S - Id_{\uxi \Omega'})(\xi \big) = }
\nonumber\\
&&
2 \, \int_{0}^{1} \frac{\alpha(t) - \beta(t)}{{f_{\alpha}}^{2}(t)} \, \frac{\ag X_{t}(\xi)}{g_{\alpha}(t)} \circ d \left( \frac{\ag X_{t}(\xi)}{g_{\alpha}(t)} \right) + \int_{0}^{1} \frac{\big(\alpha(t)- \beta(t)\big)^{2} }{{f_{\alpha}}^{2}(t)} \, \left( \frac{\ag X_{t}(\xi)}{g_{\alpha}(t)} \right)^{2} \, dt \, , 
\nonumber
\end{eqnarray}
where $\ag X_{t}(\xi) = \ag \Phi(\xi)$ and $\circ$ refers to the Stratonovich integral and the equality is true in law. . 
\end{lemma}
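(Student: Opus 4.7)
The plan is to compute $(\xi^N, (\ab S - Id)\xi^N)$ explicitly for truncations $\xi^N$ of $\xi\in\uxi\Omega'$ to indices in $\I_N$, recast it as a $[0,1]$-integral against the finite-dimensional sample path $\ag X^N_t(\xi) = \sum_{(n,k)\in\I_N} \ag\psi_{n,k}(t)\,\xi_{n,k}$, and then pass to the limit $N\to\infty$ using the Schauder machinery already established.

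I would start from the closed form for $\ab S^{n,k}_{p,q}-\delta^{n,k}_{p,q}$ given just after Lemma~\ref{lemCoefGG} and the one-dimensional version of~\eqref{eq:simplePsiPhi}, namely $(\ag\psi_{n,k})'=\alpha\,\ag\psi_{n,k}+\sqrt{\Gamma}\,\ag\phi_{n,k}$, which for almost every $t$ yields
\[
\sum_{(n,k)\in\I_N}\xi_{n,k}\,\ag\phi_{n,k}(t)=\frac{1}{f_\alpha(t)}\frac{d}{dt}\!\left(\frac{\ag X^N_t(\xi)}{g_\alpha(t)}\right).
\]
Interchanging the finite double sum with the integral and using $\sqrt{\Gamma}=f_\alpha\,g_\alpha$, the truncated quadratic form reduces to the sum of two ordinary integrals along the smooth branches of $\ag X^N$:
\[
\bigl(\xi^N,(\ab S-Id)\xi^N\bigr) = \int_0^1 \frac{(\alpha-\beta)^2}{f_\alpha^2}\!\left(\frac{\ag X^N_t}{g_\alpha}\right)^{\!2}dt + 2\int_0^1 \frac{\alpha-\beta}{f_\alpha^2}\,\frac{\ag X^N_t}{g_\alpha}\,d\!\left(\frac{\ag X^N_t}{g_\alpha}\right).
\]

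The passage to the limit proceeds term by term. The first (diagonal) term converges to the announced $L^2$-integral by uniform convergence of $\ag X^N\to\ag X_\cdot(\xi)$ on $[0,1]$, guaranteed by Proposition~\ref{lem:convergence} for $\xi\in\uxi\Omega'$. The second (cross) term is a classical integral against the smooth interpolant $\ag X^N/g_\alpha$, which coincides with $\ag X/g_\alpha$ at every endpoint of $D_N$ and whose Dirichlet energy is controlled by Proposition~\ref{prop:minCriterion}; this is precisely the setting of a Wong--Zakai approximation, under which the ordinary integral converges in law to the Stratonovich integral $2\int_0^1 \frac{\alpha-\beta}{f_\alpha^2}\,\frac{\ag X_t}{g_\alpha}\circ d(\ag X_t/g_\alpha)$.

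The main obstacle is precisely this last limit: the formal series $\sum \xi_{n,k}\,\ag\phi_{n,k}$ does not converge in $L^2$ since $\xi\in\uxi\Omega'$ is not required to be square-summable, so the quadratic form cannot be extended by mere Hilbert-space continuity. The resolution is the symmetric appearance of $(\ag\phi_{n,k},\ag\psi_{n,k})$ in the cross term, which is exactly what converts the pointwise divergent series into a Stratonovich integral, the It\^o--Stratonovich correction being absorbed in the trace $\mathrm{Tr}(\ab S-Id)$ already computed in Theorem~\ref{thCompactGG}. Tail control through the sub-geometric bound $|\xi_{n,k}|<2^{n\delta/2}$ with $\delta<1$ that defines $\uxi\Omega'$ then propagates through both integral terms to extend the identity in law to all of $\uxi\Omega'$.
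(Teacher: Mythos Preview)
Your approach is essentially the same as the paper's: both start from the coefficient formula of Lemma~\ref{lemCoefGG}, split the truncated quadratic form into the quadratic piece $\int (\alpha-\beta)^2 f_\alpha^{-2}(\ag X^N/g_\alpha)^2\,dt$ and the cross piece $2\int (\alpha-\beta)f_\alpha^{-2}(\ag X^N/g_\alpha)\,d(\ag X^N/g_\alpha)$, and pass the first to the limit by uniform convergence of $\ag X^N$.

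The one point where you diverge is the handling of the cross term. You invoke ``Wong--Zakai'' and the Dirichlet-energy minimality of Proposition~\ref{prop:minCriterion}; the paper instead carries out the explicit Riemann-sum computation. On each interval $[t_i,t_{i+1}]$ it uses the closed forms \eqref{eq:XNt}--\eqref{eq:DXNt} for $\ag X^N/g_\alpha$ and its derivative to reduce the integral to $\bigl(w_i^N\,Y_{t_i}+w_{i+1}^N\,Y_{t_{i+1}}\bigr)(Y_{t_{i+1}}-Y_{t_i})$ with $Y=\ag X/g_\alpha$, and then shows by a H\"older estimate that the weights $w_i^N$ converge to $\tfrac12(\alpha(t_i)-\beta(t_i))/f_\alpha^2(t_i)$, so the sum is asymptotically a symmetric (Stratonovich) Riemann sum. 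This is exactly the computation already done in the proof of Proposition~\ref{prop:IPP}, so your shortcut of citing that machinery is legitimate; calling it ``Wong--Zakai'' is slightly loose, however, since $\ag X^N/g_\alpha$ is not the piecewise-linear interpolant (its derivative carries a factor $f_\alpha^2$), and the classical Wong--Zakai hypotheses are not literally met. Your remarks about the trace and the $2^{n\delta/2}$ tail bound are not actually used in the argument and can be dropped.
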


\begin{proof}[Proof of Theorem \ref{RadonTheorem}]
We start by writing the finite-dimensional Radon-Nikodym derivative 
\begin{eqnarray}
\lefteqn{
\frac{d P_{\beta}(\omega)}{d P_{\alpha}(\omega)} = }
\nonumber\\
&& \ab J_{N} \cdot \lim_{N \to \infty} \exp{ \left(- \frac{1}{2} \left( {\Xi_{N}(\omega)}^{T} \left(\ab S_{N} - Id_{\uxi \Omega_{N}}\right) \Xi_{N}(\omega)\right) \right)} \, .
\end{eqnarray}
By Proposition \ref{prop:LiftOperator}, we have 
\begin{eqnarray}
\ab J = \lim_{N \to \infty} \ab J_{N} = \frac{1}{2} \int_{0}^{1} (\alpha(t) - \beta(t)) \, dt \, .
\nonumber
\end{eqnarray}
If, as usual, $\Xi$ denotes a recursively indexed infinite-dimensional vectors of independent variables with law $\mathcal{N}(0,1)$ and $\Xi_{N} = \uxi P_{N} \circ \Xi$, writing $\xi_{n,k} = \Xi_{n,k}(\omega)$, we have
\begin{eqnarray}
\lefteqn{
{\Xi_{N}(\omega)}^{T} \left( \ab S_{N} - Id_{\uxi \Omega_{N}}\right) \Xi_{N}(\omega)
=}
\nonumber\\
&&
 \sum_{n = 0}^{N} \hspace{17pt} \sum_{p = 0}^{N}  \hspace{5pt} \sum_{ \hspace{5pt} 0 \leq k < 2^{n\!-\!1} }  \hspace{-5pt}  \sum_{ \hspace{5pt} 0 \leq q < 2^{p\!-\!1} }  \xi_{n,k} \left[\ab S_{N} -Id_{\uxi \Omega_{N}} \right]^{n,k}_{p,q} \xi_{p,q} \, .
 \nonumber
\end{eqnarray}
We know that $\xi$ is almost surely in $\uxi \Omega'$ and, by Lemma \ref{lemGGextension}, we also know that  on $\uxi \Omega' \times \uxi \Omega'$:
\begin{equation*}
\lim_{N \to \infty} \big(\xi, (\ab S_{N} - Id_{\uxi \Omega_{N}})(\xi) \big) = \big(\xi, (\ab S - Id_{\uxi \Omega'})(\xi) \big) \, ,
\end{equation*}
so that we can effectively write the infinite-dimensional Radon-Nikodym derivative on $\uxi \Omega'$ as the point-wise limit of the finite dimensional ones on $\uxi \Omega_{N}$ through the projectors $\uxi P_{N}$:
\begin{equation*}
\frac{d P_{\beta}}{d P_{\alpha}} (\omega)= \lim_{N \to \infty} \frac{d P^{N}_{\beta}}{d P^{N}_{\alpha}}(\omega)  \,
\end{equation*}
which directly yields formula \eqref{eq:GirsaStrato}

The derivation of Girsanov formula \eqref{eq:GirsaIto} from \eqref{eq:GirsaStrato} comes from the relationship between Stratonovich and It\^o formula, since the quadratic variation of $\ag X_t / g_{\alpha}(t)$ and $(\alpha(t)-\beta(t))/ f_{\alpha}^2(t) \times \ag X_t / g_{\alpha}(t)$ reads
\begin{eqnarray}
\int_{0}^{1} E\left( \int_{0}^{t} f_{\alpha}(s) \, dW_s, \frac{\alpha(t) - \beta(t)}{{f_{\alpha}}^{2}(t)} \int_{0}^{t} f_{\alpha}(s) \, dW_s \right) = \int_{0}^{1} \big( \alpha(t) - \beta(t) \big) \, dt \, .
\nonumber
\end{eqnarray}
Therefore, the expression for the Radon-Nikodym derivative in Lemma \ref{lemGGextension} can be written in terms of It\^o integrals
\begin{eqnarray} \label{eq:ItoExp}
\lefteqn{
\big(\xi, (\ab S - Id_{\uxi \Omega'})(\xi \big) =  \int_{0}^{1} \big( \alpha(t) - \beta(t) \big) \, dt}
\nonumber\\
&&
2 \, \int_{0}^{1} \frac{\alpha(t) - \beta(t)}{{f}^{2}(t)} \, \frac{\ag X_{t}(\xi)}{g_{\alpha}(t)}  d \left( \frac{\ag X_{t}(\xi)}{g_{\alpha}(t)} \right) + \int_{0}^{1} \frac{\big(\alpha(t)- \beta(t)\big)^{2} }{{f}^{2}(t)} \, \left( \frac{\ag X_{t}(\xi)}{g_{\alpha}(t)} \right)^{2} \, dt \, ,
\nonumber
\end{eqnarray}
and the Radon-Nikodym derivative:
\begin{eqnarray}
\lefteqn{
\frac{d P_{\beta}}{d P_{\alpha}}(\omega)  = \exp \left(  
\int_{0}^{1} \frac{\beta(t) - \alpha(t)}{{f_{\alpha}}^{2}(t)} \, \frac{\ag X_{t}(\omega)}{g_{\alpha}(t)}  d \! \left( \frac{\ag X_{t}(\omega)}{g_{\alpha}(t)} \right) \right. }
\nonumber\\
&& \qquad \qquad \qquad \qquad  \left. -\frac{1}{2} \int_{0}^{1} \frac{\big( \beta(t)-\alpha(t)\big)^{2} }{{f_{\alpha}}^{2}(t)} \, \left( \frac{\ag X_{t}(\omega)}{g_{\alpha}(t)} \right)^{2} \, dt 
\right) \, .
\nonumber
\end{eqnarray}
\end{proof}
Observe that if $\alpha(t) = 0$, we recover the familiar expression
\begin{eqnarray*}
\frac{d P_{\beta}}{d P_{\alpha}}(\omega) = \exp{\left( \int_{0}^{1} \frac{\beta(t)}{\sqrt{\Gamma(t)}} W_{t}(\omega) \, dW_{t}(\omega)  -\frac{1}{2} \int_{0}^{1} \frac{\beta(t)^{2}}{\Gamma(t)} W(\omega)^{2}_{t} \, dt \right)} \, .\
\end{eqnarray*}

\section*{Conclusion and Perspectives}

The discrete construction we present displays both analytical and numerical interests for further applications. 
From the analysis viewpoint, three Haar-like properties make our decomposition particularly suitable for certain numerical computations: i) all basis elements have compact support on an open interval that has the structure of dyadic rational endpoints; ii) these intervals are nested and become smaller for larger indices of the basis element, and iii) for any interval endpoint, only a finite number of basis elements is nonzero at that point. 
Thus the expansion in our basis, when evaluated at an interval endpoint (e.g. dyadic rational), terminates in a finite number of steps. 
Moreover, the very nature of the construction based on an increasingly refined description of the sample paths paves the way to coarse-graining approaches similar to wavelet decompositions in signal processing. 
In view of this, our framework offers promising applications: 

\paragraph{Dichotomic Search of First Hitting-times}
The first application we envisage concerns the problem of first-hitting times. 
Because of its manifold applications, finding the time when a process first exits a given region, is a central question of stochastic calculus.
However, closed-form theoretical results are scarce and one often has to resort to numerical algorithms~\cite{Ricciardi:1987}.
In this regard, the multi-resolution property suggests an exact scheme to simulate sample paths of a Gaussian Markov process $\bX$ in an iterative ``top-down'' fashion. 
Assume the intervals are dyadic rational, and that we have a conditional knowledge of a sample path on the dyadic points of $D_{N} = \lbrace k2^{-N} \vert 0 \leq k \leq 2^{N}\rbrace$, one can decide to further the simulation of this sample path at any time $t$ in $D_{N+1}$ by drawing a point according to the conditional law of $\bX_{t}$ knowing ${\lbrace \bX_t \rbrace}_{t \in D_{N}} $, which is simply expressed in the framework of our construction. 
This property can be used to great advantage in numerical computations such as dichotomic search algorithms for first passage times: 
the key element is to find an estimate of the true conditional probability that a hitting time has occurred when knowing the value of the process at two given time, one in the past and on in the future.
With such an estimate,  an efficient strategy to look for passage times consists in refining the sample path when and only when its trajectory is estimated likely to actually cross the barrier. 
Thus the sample path of the process is represented at poor temporal resolution when far from the boundary and at increasingly higher resolution closer to the boundary.
Such an algorithmic principle achieves a high level of precision in the computation of the first hitting time, while demanding far less operation than usual stochastic Runge-Kutta scheme. 
Note that this approach has been successfully implemented for the one-dimensional case~\cite{Taillefumier:2010} and at stake is now to generalize to the computation of exit times in any dimension and for general smooth sets~\cite{Gobet2000167,Baldi:2000,Baldi:2002}. \\


\paragraph{Gaussian Deformation Modes in Nonlinear Diffusions}
The present study is developed for Gauss-Markov systems. However, many models arising in applied science present nonlinearities, and in that case, the construction based on a sum of Gaussian random variable will not generalize. However, the Gaussian case treated here can nevertheless be applied to perturbation of nonlinear differential equations will small noise. 
Let $\bm{F}:\R \times\R^d \mapsto \R^d$ be a nonlinear time-varying vector field and let us assume that $\bX_0(t)$ is a stable (attractive) solution of the dynamical system:
\[\frac{d\bm{X}}{dt} = \bm{F(t,X)}.\]
This function $\bX_0$ can for instance be a fixed point (in which case it is a constant), a cycle (in which case it is periodic) or a general attractive orbit of the system.  In the deterministic case, any solution having its initial condition in a given neighbourhood $\mathcal{B}$ in $\R\times \R^d$ of the solution will asymptotically converge towards the solution, and therefore perturbations of the solutions are bounded. Let us now consider that the system is subject to a small amount of noise and define $\bm{Y}\in \R^d$ the solution of the stochastic nonlinear differential equation:
\[d\bm{Y}_t = \bm{F}(t,\bm{Y}_t)\,dt + \varepsilon \sqrt{\bm{\Gamma}(t, Y_t)} d\bm{W}_t\]
Assuming that the noise is small (i.e. $\varepsilon$ is a small parameter), because of the attractivity of the solution $\bm{X}_0(t)$, the function $\bm{Y}(t)$ will remain very close to $\bm{X}_0(t)$ (at least in a bounded time interval). In this region, we define $\varepsilon \bm{Z}_t=\bm{Y}_t-\bm{X}_0(t)$. This stochastic variable is solution of the equation:
\begin{align*}
	d\bm{Z}_t &= \frac{1}{\varepsilon} \left(\bm{F}(t,\bm{X}_0(t) + \varepsilon\bm{Z}_t) - \bm{F}(t,\bm{X}_0(t) + \varepsilon \sqrt{\bm{\Gamma}(t, \bm{X}_0(t) + \varepsilon \bm{Z}_t)} d\bm{W}_t \right)\\ 
	& = (\nabla_x \bm{F})(t,\bm{X}_0(t)) \bm{Z}_t + \sqrt{\bm{\Gamma}(t, \bm{X}_0(t))} d\bm{W}_t + O(\varepsilon)
\end{align*}
The solution at the first order in $\varepsilon$ is therefore the multidimensional Gaussian process with non-constant coefficients:
\[d\bm{Z}_t = (\nabla_x \bm{F})(t,\bm{X}_0(t)) \bm{Z}_t +  \sqrt{\bm{\Gamma}(t, \bm{X}_0(t))} d\bm{W}_t \, .\]
and our theory describes the solutions in a multi-resolution framework. 
Notice that, in that perspective, our basis functions $\bpsi_{n,k}$ can be seen as increasingly finer modes of deformation of a deterministic trajectory.
This approach appears particularly relevant to the theory of weakly interconnected neural oscillators in computational neuroscience
~\cite{Izhikevich:2007}. 
Indeed, one of the most popular approach of this field, the phase model theory, formally consists in studying how perturbations are integrated in the neighborhood of an attracting cycle~\cite{Ermentrout:86,Ermentrout:95}.

\bigskip
The present paper proposes a very convenient way of defining finite-dimensional approximations of Gauss-Markov processes that are optimal in the sense of being minimizers of the Dirichlet energy, and that is consistent with standard concepts of stochastic calculus. 
It also sheds new light on the structure of the space of Gauss-Markov processes, by the study of the operators transforming a process into another, which can be seen as an infinite-dimensional rotation in the space of coefficients. 
The exploration of this new view might help further uncovering the geometric structure of this space.
 All these instances are exemplar of how our multi-resolution description of Gauss-Markov processes offers a simple yet rigorous tool to broach open problems, promising original applications in theoretical and applied sciences. 

\section*{Acknowledgements}
We wish to thank Prof. Marcelo Magnasco for many illuminating discussions. This work was partially supported by NSF grant EF-0928723 and ERC grant NERVI-227747.

\appendix
\section{Formulae of the Basis for the Integrated Wiener Proces}\label{append:IP}

In the case of the primitive of the Wiener process, straightforward linear algebra computations leads to the two basis of functions $\left((\psi_{n,k})_{1,1},(\psi_{n,k})_{2,1}\right)$ and $\left((\psi_{n,k})_{1,2},(\psi_{n,k})_{2,2}\right)$ having the expressions:

\begin{eqnarray*}
(\psi_{n,k})_{1,1}(t)
&=&
\left\{
\begin{array}{ccc}
 &\displaystyle  (\sigma_{n,k})_{1,1} \left( \frac{t-l_{n,k}}{m_{n,k} - l_{n,k}} \right)^{2} \left( 1+2 \frac{m_{n,k} - t}{m_{n,k} -l_{n,k}}\right)  \, , & l_{n,k}\leq t \leq m_{n,k}\\
 &\displaystyle (\sigma_{n,k})_{1,1} \left( \frac{r_{n,k}-t}{r_{n,k} - m_{n,k}} \right)^{2} \left( 1+2 \frac{t-m_{n,k} }{r_{n,k} -m_{n,k}}\right)   \, , & m_{n,k} \leq t \leq r_{n,k}\\
\end{array}
\right.
\, ,
\\
\\
(\psi_{n,k})_{2,1}(t)
&=&
\left\{
\begin{array}{ccc}
& \displaystyle  \;(\sigma_{n,k})_{1,1} \, 6 \frac{(t-l_{n,k})(m_{n,k}-t)}{(m_{n,k} - l_{n,k})^{3}}   \, , & l_{n,k}\leq t \leq m_{n,k}\\
 &   \displaystyle - \; (\sigma_{n,k})_{1,1}\, 6 \frac{(r_{n,k}-t)(t-m_{n,k})}{(r_{n,k} - m_{n,k})^{3}}  \, ,& m_{n,k} \leq t \leq r_{n,k}\\
\end{array}
\right.
\, ,
\\
\\
(\psi_{n,k})_{1,2}(t)
&=&
\left\{
\begin{array}{ccc}
 & - (\sigma_{n,k})_{2,2} \left( m_{n,k} - t \right) \displaystyle \left( \frac{t-l_{n,k}}{m_{n,k} - l_{n,k}} \right)^{2}  \, , & l_{n,k}\leq t \leq m_{n,k}\\
 &(\sigma_{n,k})_{2,2} \left( t-m_{n,k} \right)   \displaystyle \left( \frac{r_{n,k}-t}{r_{n,k} - m_{n,k}} \right)^{2} \, , & m_{n,k} \leq t \leq r_{n,k}\\
\end{array}
\right.
\, ,
\\
\\
(\psi_{n,k})_{2,2}(t)
&=&
\left\{
\begin{array}{ccc}
 & \displaystyle (\sigma_{n,k})_{2,2}  \left( \frac{t-l_{n,k}}{m_{n,k} - l_{n,k}} \right)^{2} \left( 1-2 \frac{m_{n,k} - t}{t -l_{n,k}}\right)  \, , & l_{n,k}\leq t \leq m_{n,k}\\
 & \displaystyle (\sigma_{n,k})_{2,2} \left( \frac{r_{n,k}-t}{r_{n,k} - m_{n,k}} \right)^{2} \left( 1-2 \frac{t-m_{n,k} }{r_{n,k} -t}\right)\, ,& m_{n,k} \leq t \leq r_{n,k}\\
\end{array}
\right.
\, ,
\end{eqnarray*}
where 
\[
(\sigma_{n,k})_{1,1} = \sqrt{
\frac{1}{196}(r_{n,k} -l_{n,k})^{3}} \qquad \text{and} \qquad (\sigma_{n,k})_{2,2}= \sqrt{\frac{1}{32}(r_{n,k} -l_{n,k})} \,.
\]
are the diagonal components of the (diagonal) matrix $\sigma_{n,k}$
As expected, we notice that the differential structure of the process is conserved at any finite rank, since we have:
\begin{eqnarray*}
\frac{d}{dt}\Big((\psi_{n,k})_{1,1}(t)\Big) =  (\psi_{n,k})_{2,1}(t)\, , \quad  \frac{d}{dt}\Big((\psi_{n,k})_{1,2}(t)\Big) =  (\psi_{n,k})_{2,2}(t) \, .
\end{eqnarray*}

\section{Formulae of the Basis for the Doubly-Integrated Wiener Proces}\label{append:DIP}

For the doubly integrated Wiener process, the construction of the three dimensional process involves a family of thee $3$-dimensional functions, which constitutes the columns of a $3\times 3$ matrix that we denote $\psi$.
This basis has again simple expression when $m_{n,k}$ is the middle of the interval $[l_{n,k},r_{n,k}]$:

\begin{eqnarray*}
(\psi_{n,k})_{1,1}(t)
&=& \begin{cases} 
\frac{\sqrt{5}}{60 \,(r_{n,k}-l_{n,k})^{5/2}} (t-l_{n,k})^3\,(l_{n,k}^2-7\,l_{n,k}\,t+5\,l_{n,k}\,r_{n,k}-25 t\,r_{n,k}+16\,t^2+10\,r_{n,k}^2) \\ 
-\frac{\sqrt{5}}{60\,(r_{n,k}-l_{n,k})^{5/2}} (r_{n,k}-t)^3\,(r_{n,k}^2-7\,r_{n,k}\,t+5\,l_{n,k}\,r_{n,k}-25\,t\,l_{n,k}+16\,t^2+10\,l_{n,k}^2)
\end{cases}
\\
(\psi_{n,k})_{1,2}(t)
&=& \begin{cases} 
\frac{\sqrt{3}}{12\,(r_{n,k}-l_{n,k})^{5/2}} \, (l_{n,k}-t)^3\,(2\,r_{n,k}+l_{n,k}-3\,t)\,(r_{n,k}+l_{n,k}-2\,t) \\ 
\frac{\sqrt{3}}{12\,(r_{n,k}-l_{n,k})^{5/2}} \, (r_{n,k}-t)^3\,(r_{n,k}+2\,l_{n,k}-3\,t)\,(r_{n,k}+l_{n,k}-2\,t)
\end{cases}
\\
(\psi_{n,k})_{1,3}(t)
&=& \begin{cases} 
- \frac{1}{6\,(r_{n,k}-l_{n,k})^{5/2}} \, (t-l_{n,k})^3\,(2\,t-r_{n,k} -l_{n,k} r_{n,k}+l_{n,k}-3\,t)^2\,(r_{n,k}+l_{n,k}-2\,t) \\ 
- \frac{1}{6\,(r_{n,k}-l_{n,k})^{5/2}} \, (r_{n,k}-t)^3\,(2\,t-r_{n,k} -l_{n,k} r_{n,k}+l_{n,k} -3\,t)^2\,(r_{n,k}+l_{n,k}-2\,t)
\end{cases}
\\
(\psi_{n,k})_{2,1}(t)
&=& \begin{cases} 
\frac{\sqrt{1}}{6 \,(r_{n,k}-l_{n,k})^{5/2}} (t-l_{n,k})^2\,(3\,r_{n,k}+l_{n,k}-4*t)\,(r_{n,k}+l_{n,k}-2\,t) \\ 
-\frac{\sqrt{1}}{6 \,(r_{n,k}-l_{n,k})^{5/2}} (r_{n,k}-t)^2\,(3\,l_{n,k}+r_{n,k}-4*t)\,(r_{n,k}+l_{n,k}-2\,t)
\end{cases}
\\
(\psi_{n,k})_{2,2}(t)
&=& \begin{cases} 
-\frac{\sqrt{3}}{6\,(r_{n,k}-l_{n,k})^{5/2}} \, (t-l_{n,k})^2\,(3\,r_{n,k}+2\,l_{n,k}-5\,t)\,(r_{n,k}+2\,l_{n,k}-3\,t) \\ 
-\frac{\sqrt{3}}{6\,(r_{n,k}-l_{n,k})^{5/2}} \, (r_{n,k}-t)^2\,(3\,l_{n,k}+2\,r_{n,k}-5\,t)\,(l_{n,k}+2\,r_{n,k}-3\,t) \\ 
\end{cases}
\\
(\psi_{n,k})_{2,3}(t)
&=& \begin{cases} 
- \frac{1}{6\,(r_{n,k}-l_{n,k})^{5/2}} \, (t-l_{n,k})^3\,(2\,t-r_{n,k}-l_{n,k} r_{n,k}+l_{n,k}-3\,t)^2\,(r_{n,k}+l_{n,k}-2\,t) \\ 
- \frac{1}{6\,(r_{n,k}-l_{n,k})^{5/2}} \, (r_{n,k}-t)^3\,(2\,t-r_{n,k}-l_{n,k} r_{n,k}+l_{n,k}-3\,t)^2\,(r_{n,k}+l_{n,k}-2\,t)
\end{cases}
\\
(\psi_{n,k})_{3,1}(t)
&=& \begin{cases} 
\frac{\sqrt{5}}{3 \,(r_{n,k} -l_{n,k})^{5/2}} (t-l_{n,k})\,(4\,l_{n,k}^2-17\,l_{n,k}\,t+9\,l_{n,k}\,r_{n,k}-15\,t\,r_{n,k}+3\,r_{n,k}^2+16\,t^2) \\ 
\frac{\sqrt{5}}{3 \,(r_{n,k} -l_{n,k})^{5/2}} (r_{n,k}-t)\,(4\,r_{n,k}^2-17\,t\,r_{n,k}+9\,l_{n,k}\,r_{n,k}-15\,t\,l_{n,k}+3\,l_{n,k}^2+16\,t^2) 
\end{cases}
\\
(\psi_{n,k})_{3,2}(t)
&=& \begin{cases} 
\frac{\sqrt{3}}{(r-l)^{5/2}} (l-t)\,(r+l-2\,t)\,(r+4\,l-5\,t)\\
\frac{\sqrt{3}}{(r-l)^{5/2}} (r-t)\,(r+l-2\,t)\,(l+4\,r-5\,t)\\
\end{cases}
\\
(\psi_{n,k})_{3,3}(t)
&=& \begin{cases} 
\frac{1}{3(r-l)^{5/2}}\,(l-t)\,(19\, l^2-56\,l\,t+18\,l\,r-24\,t\,r+3\,r^2+40\,t^2)\\
\frac{1}{3(r-l)^{5/2}}\,(r-t)\,(19 \,r^2-56\,r\,t+18\,l\,r-24\,t\,l+3\,l^2+40\,t^2)
\end{cases}
\end{eqnarray*}
Notice again that the basis functions satisfy the relationships:
\[\frac{d}{dt}\Big((\psi_{n,k})_{i,j}(t)\Big) =  (\psi_{n,k})_{i+1,j}(t)\] 
for $i\in\{1,2\}$ and $j\in \{1,2,3\}$.
These functions also  form a tri-orthogonal basis of functions, which makes easy to simulate sample paths of the doubly integrated Wiener process, as show in Figure ~\ref{fig:DIPFig}.
\begin{figure}
	\centering
		\includegraphics[width=.7\textwidth]{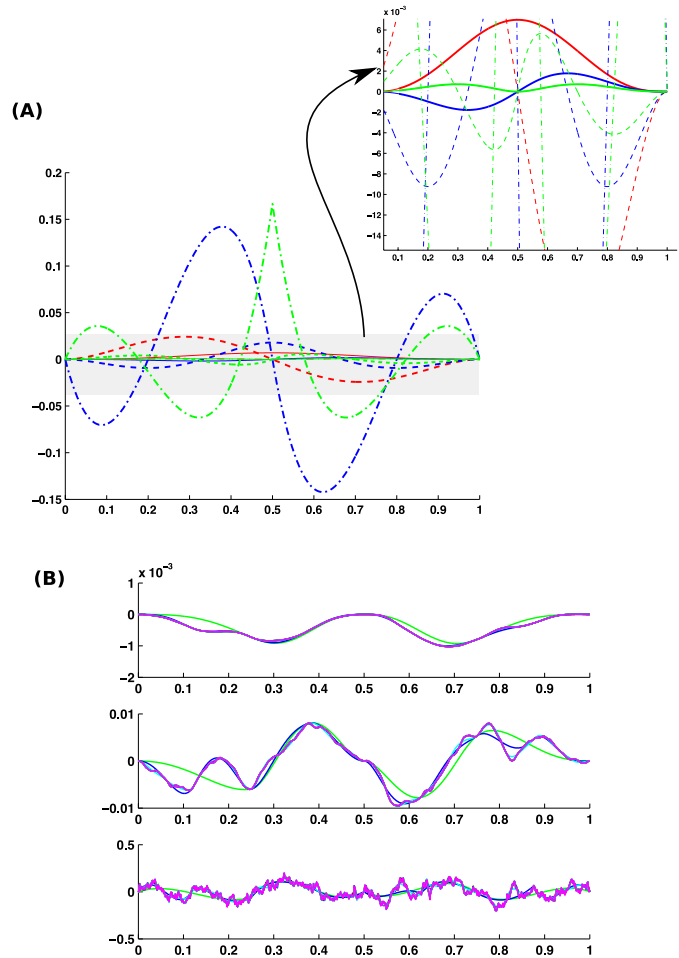}
	\caption{(A) Basis for the construction of the Doubly Integrated Wiener process ($d=3$) and (B) $10$-steps construction of the process. }
	\label{fig:DIPFig}
\end{figure}

\newpage

\section{Properties of the Lift Operators}\label{append:LiftOperator}
This appendix is devoted to the proofs of the properties of the lift operator enumerated in Proposition \ref{prop:LiftOperator}. The proposition is split into three lemmas for the sake of clarity.

\begin{lemma} \label{lemG}
The operator ${}_{ \alpha, \beta}G$ is a linear measurable bijection. Moreover, for every $N>0$, the function  ${}_{ \alpha, \beta}G_{N} = P_{N} \circ {}_{ \alpha, \beta}G \circ I_{N}: \uxi \Omega' _{N} \to \uxi \Omega _{N}$ is a finite-dimensional linear operator, whose matrix representation is triangular in the natural basis of $\uxi \Omega_{N}$ and whose eigenvalues ${}_{ \alpha, \beta}\nu_{n,k}$ are given by
\begin{equation}
{}_{ \alpha, \beta}\nu_{n,k} 
=
\frac{g_{\alpha}(m_{n,k})}{g_{\beta}(m_{n,k})} \frac{\bg M_{n,k}}{\ag M_{n,k}}
, \quad 0 \leq n \leq N, \quad 0\leq k < 2^{N-1} \, .
\nonumber
\end{equation}

Eventually, $\ab G$ is a bounded operator for the spectral norm with
\begin{eqnarray}
\big \Vert \ab G  \big \Vert_{2} = \sup_{n} \sup_{k} {}_{ \alpha, \beta}\nu_{n,k} \leq \frac{\sup g_{\alpha}}{\inf g_{\beta}} \frac{\sup f_{\beta}}{\inf f} \frac{\sup f^{2}_{\alpha}}{\inf f^{2}_{\beta}} < \infty \, ,
\nonumber
\end{eqnarray}
and the determinant of ${}_{ \alpha, \beta}G_{N}$ denoted $\ab J_{N}$ admits a limit when $N$ tends to infinity
\begin{equation}
\lim_{N \to \infty}  \det\left( {}_{ \alpha, \beta}G_{N} \right) =  \lim_{N \to \infty} \ab J_{N} = \exp{\left(\frac{1}{2}\left( \int_{0}^{1} (\alpha(t) - \beta(t)) \, dt \right) \right)} ={}_{ \alpha, \beta}J\, .
\nonumber
\end{equation}
\end{lemma}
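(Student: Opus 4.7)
The plan is to assemble the proof from the structural properties of $\ag\Psi$ and $\bg\Delta$ already established. Since both $\ag\Psi:\uxi\Omega' \to \ux\Omega'$ and $\bg\Delta:\ux\Omega' \to \uxi\Omega'$ are linear measurable bijections by Proposition \ref{lem:bijection} and Theorem \ref{lemInv} applied to the two processes, their composition $\ab G = \bg\Delta \circ \ag\Psi$ is immediately a linear measurable bijection of $\uxi\Omega'$ onto itself. For the finite-rank restriction, I will lean on the explicit lower-block-triangular matrix representations of $\ag\Psi_N$ and $\bg\Delta_N$ displayed in Section 3.1.1. The product $\ab G_N = \bg\Delta_N \cdot \ag\Psi_N$ of two lower-triangular matrices is again lower triangular in the recursively indexed basis $\{\bm{e}_{n,k}\}$. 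A direct reading of the diagonals gives $(\ag\Psi_N)_{n,k}^{n,k} = \ag\psi_{n,k}(m_{n,k}) = \ag\sigma_{n,k}$ and $(\bg\Delta_N)_{n,k}^{n,k} = g_\beta^{-1}(m_{n,k}) \bg M_{n,k} = 1/\bg\sigma_{n,k}$, so $(\ab G_N)_{n,k}^{n,k} = \ag\sigma_{n,k}/\bg\sigma_{n,k}$, which by the identity $\bm{M}_{n,k} = g(m_{n,k})/\sigma_{n,k}$ rewrites exactly as the claimed $\ab\nu_{n,k}$.

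The boundedness of each diagonal entry will follow by a direct estimate of the closed-form expression $\sigma_{n,k}^2 = (h(m)-h(l))(h(r)-h(m))/(h(r)-h(l))$ obtained in Section 4.1, combined with the elementary two-sided inequality $\inf f^2 \leq (h(t)-h(s))/(t-s) \leq \sup f^2$: each of the three difference-of-$h$ factors in $\ag\sigma^2/\bg\sigma^2$ contributes a uniform bound of the form $\sup f_\alpha^2/\inf f_\beta^2$ (or its reciprocal), and the continuous factor $g_\alpha/g_\beta$ contributes a uniform bound on $[0,1]$. The identification of the full operator norm with $\sup \ab\nu_{n,k}$ is the delicate point: the inequality $\Vert \ab G \Vert_{2} \geq \sup|\ab\nu_{n,k}|$ follows by testing on the canonical vectors $\bm{e}_{n,k}$, while for the matching upper bound I plan to invoke the perturbative control of $\ab S = \ab G^T \ab G$ developed in Lemma \ref{lemCoefGG} and Theorem \ref{thCompactGG}, which writes $\ab S - \mathrm{Id}$ as a trace-class operator with explicit integral kernel; the self-adjointness of $\ab S$ then yields $\Vert \ab G\Vert_{2}^2 = \Vert \ab S \Vert_{2}$, which can be controlled by the diagonal supremum up to the trace-class perturbation.

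The determinant limit is the most quantitative step. Combining $\det(\ab G_N) = \det(\ag\Psi_N)/\det(\bg\Psi_N)$ with the Cholesky identity $\det(\Psi_N)^2 = \det(\Sigma_N)$ (proved in Section 4.1 via $\Sigma_N = \Psi_N \Psi_N^T$), it suffices to analyze $\det(\ag\Sigma_N)/\det(\bg\Sigma_N)$. The Markov factorization of the joint Gaussian density on $D_N = \{0 = t_0 < t_1 < \cdots < t_{2^N} = 1\}$ gives the telescoping formula
\begin{equation*}
\det(\ag\Sigma_N) = \prod_{i=1}^{2^N} g_\alpha^2(t_i)\bigl(h_\alpha(t_i) - h_\alpha(t_{i-1})\bigr) \, ,
\end{equation*}
and similarly for $\beta$. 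Using the crucial relation $f_\alpha g_\alpha = f_\beta g_\beta = \sqrt{\Gamma}$, the mean-value theorem recasts each ratio $\bigl(h_\alpha(t_i)-h_\alpha(t_{i-1})\bigr)/\bigl(h_\beta(t_i)-h_\beta(t_{i-1})\bigr)$ as $(g_\beta/g_\alpha)^2(\xi_i)$ for some $\xi_i \in [t_{i-1},t_i]$. A second-order analysis of the integral mean-value formula shows $\xi_i = (t_{i-1}+t_i)/2 + O(t_i-t_{i-1})$, so the sum
\begin{equation*}
\tfrac{1}{2}\log\bigl(\det(\ag\Sigma_N)/\det(\bg\Sigma_N)\bigr) = \sum_{i=1}^{2^N}\Bigl[\log(g_\alpha/g_\beta)(t_i) - \log(g_\alpha/g_\beta)(\xi_i)\Bigr]
\end{equation*}
becomes asymptotically a midpoint Riemann sum for $\tfrac{1}{2}\int_0^1 (\alpha - \beta)(t)\,dt$, since $(\log(g_\alpha/g_\beta))' = \alpha - \beta$ and $t_i - \xi_i \sim (t_i - t_{i-1})/2$. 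The hard part will be carefully quantifying the remainder from the MVT to ensure the sum of second-order errors is $O(2^{-N})$, which is where the $C^1$ regularity of $\alpha, \beta$ and the uniform bounds on $f^2, g$ enter decisively; positivity of each diagonal entry $\ag\sigma_{n,k}/\bg\sigma_{n,k}$ then guarantees $\det(\ab G_N) > 0$ for all $N$, so exponentiating yields $\ab J = \exp\bigl(\tfrac{1}{2}\int_0^1(\alpha-\beta)\,dt\bigr)$.
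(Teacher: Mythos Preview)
Your treatment of parts (i), (ii) and the diagonal bound in (iii) matches the paper's proof closely. I want to flag one genuine circularity and then discuss your determinant argument, which takes a route quite different from the paper's.

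\textbf{On the operator norm.} Your plan to prove the upper bound $\Vert \ab G\Vert_{2} \leq \sup_{n,k}\ab\nu_{n,k}$ by invoking Lemma~\ref{lemCoefGG} and Theorem~\ref{thCompactGG} is circular in the paper's logical order: those results are stated and proved \emph{after} the present lemma. Moreover, even granting that $\ab S - Id$ is trace class, this does not by itself identify $\Vert \ab S\Vert_{2}$ with the supremum of the diagonal entries of $\ab S$; a triangular operator can have spectral norm strictly larger than its largest eigenvalue. In fairness, the paper's own proof of (iii) only bounds each diagonal entry and does not justify the equality either, so you are not missing something the paper supplies --- but you should not claim more than an explicit uniform bound on the $\ab\nu_{n,k}$.

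\textbf{On the determinant.} Here you and the paper diverge. The paper first uses the transitivity $\ab J_{N} = {}_{\zeta,\beta}J_{N}\cdot({}_{\zeta,\alpha}J_{N})^{-1}$ to reduce to the case $\beta=0$, then rewrites $({}_{\alpha,0}J_{N})^{2}$ as a telescoping product of the quantities ${}_{\alpha}\mathcal{V}_{t_{i},t_{i+1}}/{}_{0}\mathcal{V}_{t_{i},t_{i+1}}$, and controls each factor via the H\"older continuity of $\alpha$. Your route --- Markov factorization $\det\Sigma_{N}=\prod_{i} g^{2}(t_{i})(h(t_{i})-h(t_{i-1}))$, Cauchy mean-value theorem on the $h$-ratios, and recognition of a Riemann sum for $\tfrac{1}{2}\int_{0}^{1}(\alpha-\beta)$ --- is a legitimate and arguably more conceptual alternative, exploiting directly that $f_{\alpha}g_{\alpha}=f_{\beta}g_{\beta}=\sqrt{\Gamma}$. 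What it buys is that no reduction to $\beta=0$ is needed.

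That said, your error control needs tightening. The assertion ``$\xi_{i}=(t_{i-1}+t_{i})/2 + O(t_{i}-t_{i-1})$'' is vacuous: any $\xi_{i}\in[t_{i-1},t_{i}]$ satisfies it, and with only an $O(\Delta t)$ remainder the error sum $\sum_{i}(\alpha-\beta)(\eta_{i})\cdot O(\Delta t)$ is $O(1)$, not $o(1)$. What you actually need (and what a genuine second-order expansion of the weighted mean $\int r f_{\beta}^{2}/\int f_{\beta}^{2}$ delivers) is $r(\xi_{i}) = r(m_{i}) + O(\Delta t^{1+\delta})$ under the paper's H\"older hypothesis on $\alpha,\beta$ --- not $C^{1}$, which the paper does not assume. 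With that correction the remainder sum is $O(2^{-N\delta})$ and your argument closes.
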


\begin{proof}
	All these properties are deduced from the properties of the functions $\Delta$ and $\Psi$ derived previously.\\
	\emph{i}) ${}_{ \alpha, \beta}G=  \bg \Delta \circ \ag \Psi$ is a linear measurable bijection of $\uxi \Omega'$ as the composed application of two linear bijective measurable functions $\ag \Delta:\ux \Omega'  \rightarrow \uxi \Omega'$ and $\ag \Psi: \uxi \Omega' \rightarrow \ux \Omega'$.\\
\emph{ii}) Since we have the expressions of the matrices of the finite-dimensional linear transformations, it is easy to write the linear transformation of  ${}_{ \alpha, \beta}G_{N}$ on the natural basis $e_{n,k}$ as:
\begin{equation} \label{eq:AppGN} 
{}_{ \alpha, \beta}G_{N}\left(  \xi \right)_{n,k} 
=
\int_{U} \bg \delta_{n,k}(t) \big( \ag \Psi(\xi) \big)(t) \, dt
=
\sum_{(p,q)\in \I_N} \left( \int_{U} \bg \delta_{n,k}(t) \, \ag \psi_{p,q}(t) \, dt \right) \,  \xi_{p,q} \, ,
\end{equation}
leading to the coefficient expression
\begin{equation}
{}_{ \alpha, \beta}G^{n,k}_{p,q}
=
\int_{U} \bg \delta_{n,k}(t) \, \ag \psi_{p,q}(t) \, dt
=
\sum_{i,j \in \I_N} \bg \Delta^{n,k}_{i,j} \cdot \ag \Psi^{i,j}_{p,q}  
 \, . \nonumber
\end{equation}
where we have dropped the index $N$ since the expression of the coefficients do not depend on it.
We deduce from the form of the matrices $\bg \Delta_{N}$ and $\ag \Psi_{N}$, that the application ${}_{ \alpha, \beta}G_{N}$ has a matrix representation in the basis $e_{n,k}$ of the form
\begin{displaymath}
{}_{ \alpha, \beta}G_{N} 
=
\left[
\begin{array}{c|c|cc|cccc|c}
  \ab G^{0,0}_{0,0} &   &  &   &  &   &  & & \\
  \hline
  \ab G^{0,0}_{1,0}& \ab G^{1,0}_{1,0}  &  &   &  &   &  &  &\\
   \hline
  \ab G^{0,0}_{2,0}&  \ab G^{1,0}_{2,0} & \ab G^{2,0}_{2,0} &   &  &   &  & &\\
  \ab G^{0,0}_{2,1}&  \ab G^{1,0}_{2,1} &  & \ab G^{2,1}_{2,1}  &  &   &  & & \\
    \hline
  \ab G^{0,0}_{3,0}&  \ab G^{1,0}_{3,0} & \ab G^{2,0}_{3,0} &   & \ab G^{3,0}_{3,0}&   &  &  &\\
  \ab G^{0,0}_{3,1}&  \ab G^{1,0}_{3,1} &  \ab G^{2,0}_{3,1} &   &  & \ab G^{3,1}_{3,1}  &  & & \\
  \ab G^{0,0}_{3,2}&  \ab G^{1,0}_{3,2} &  &  \ab G^{2,1}_{3,2}  &  &   &  \ab G^{3,2}_{3,2} & & \\
  \ab G^{0,0}_{3,3}&  \ab G^{1,0}_{3,3} &   &  \ab G^{2,1}_{3,3}  &  &   &  &  \ab G^{3,3}_{3,3}& \\
  \hline
  \vdots &   &  &   &  &   &  & & \ddots
\end{array}
\right]
\end{displaymath}
where we only represent the non-zero terms.

The eigenvalues of the operator are therefore the diagonal elements $\ab G^{n,k}_{n,k}$, that are easily computed from the expressions of the general term of the matrix:
\begin{align*}
	\ab G^{n,k}_{n,k} &= \bg \Delta_{n,k}^{n,k} \ag \psi_{n,k}^{n,k}\\
	&= \frac{\bg M_{n,k}}{g_{\beta}(m_{n,k})} \; \frac{g_{\alpha}(m_{n,k})}{\ag M_{n,k}}\\
	&= \frac{g_{\alpha}(m_{n,k})}{g_{\beta}(m_{n,k})} \; \frac{\bg M_{n,k}}{\ag M_{n,k}}
\end{align*}
\noindent \emph{iii}) From the expression of $\ag M_{n,k} = \sqrt{(h_{\alpha}(r)-h_{\alpha}(m))(h_{\alpha}(m)-h_{\alpha}(l)) / (h_{\alpha}(r)-h_{\alpha}(l))}$, we deduce the inequalities
\begin{eqnarray}
\frac{\sup f}{\inf f^{2}_{\alpha}} 2^{n+1} \leq \ag M_{n,k} \leq \frac{\sup f}{\inf f^{2}_{\alpha}} 2^{n+1} \, ,
\end{eqnarray}
from which follows the given upper-bound to the singular values.\\
\emph{iv}) 
${}_{ \alpha, \beta}G_{N}$ is a finite-dimensional triangular linear matrix in the basis $\lbrace e_{n,k} \rbrace$.
Its determinant is simply given as the product
\begin{equation}
{}_{ \alpha, \beta}J_{N} = \prod_{n=0}^{N} \prod_{\hspace{5pt} 0 \leq k < 2^{n\!-\!1}}   \ab G^{n,k}_{n,k} =  \prod_{n=0}^{N} \prod_{\hspace{5pt} 0 \leq k < 2^{n\!-\!1}} {}_{ \alpha, \beta}\nu_{n,k} \, ,
\end{equation}
where we noticed that the eigenvalues ${}_{ \alpha,\beta}\nu_{n,k}$ are of the form
\begin{equation}
{}_{ \alpha, \beta}\nu_{n,k} = \frac{g_{\alpha}(m_{n,k})}{g_{\beta}(m_{n,k})} \frac{\bg M_{n,k}}{\ag M_{n,k}} \, .
\nonumber
\end{equation}
Since for every $N>0$, we have ${}_{ \alpha, \beta}G_{N} = {}_{ \scriptscriptstyle \zeta ,\beta \displaystyle}G_{N} \circ {}_{ \scriptscriptstyle \alpha,\zeta \displaystyle}G_{N}$ which entails ${}_{ \alpha, \beta}J_{N} = {}_{ \scriptscriptstyle \zeta,\beta \displaystyle}J_{N} \cdot \left({}_{ \scriptscriptstyle \zeta,\alpha \displaystyle}J_{N}\right)^{-1}$, it is enough to show that we have
\begin{equation}
\lim_{N \to \infty} {}_{ \alpha, 0}J_{N} =  \exp{ \frac{1}{2} \int_{0}^{1} \alpha(t) \, dt } \, .
\nonumber
\end{equation}
Now, writing for every $0 \leq s < t\leq 1$ the quantity
\begin{equation}
{}_{ \alpha} \mathcal{V}_{t,s} = \int_{s}^{t} \Gamma(u) \, e^{2 \int_{u}^{t} \alpha(v)\, dv } \, du \, .
\nonumber
\end{equation}
we have 
\begin{equation}
\left( \frac{g_{\alpha}(m_{n,k})}{\ag M_{n,k}} \right)^{2}= \frac{{}_{ \alpha} \mathcal{V}_{l_{n,k},m_{n,k}} \cdot {}_{ \alpha} \mathcal{V}_{m_{n,k},r_{n,k}}}{{}_{ \alpha} \mathcal{V}_{l_{n,k},r_{n,k}}} \, ,
\nonumber
\end{equation}
so that, ${}_{ \alpha, 0}J_{N}$ is a telescoping product that can be written
\begin{equation}
\left( {}_{ \alpha, 0}J_{N}\right)^{2} = \prod_{k=0}^{2^N} \frac{\ag \mathcal{V}_{k2^{-N},(k\!+\!1)2^{-N}} } {{}_{ 0} \mathcal{V}_{k2^{-N},(k\!+\!1)2^{-N}} } \, .
\nonumber
\end{equation}
If $\alpha$ is H\"{o}lder continuous, there exists $\delta>0$ and $C>0$ such that
\begin{equation}
 \sup_{0 \leq s,t \leq 1} \frac{\vert \alpha(t)-\alpha(s) \vert}{\vert t-s\vert^{\delta}} < C \, ,
 \nonumber
\end{equation}
and introducing for any $0 \leq s<t \leq 1$, the quantity $\mathcal{Q}_{t,s}$
\begin{equation}
\mathcal{Q}_{t,s} =  e^{- \int_{s}^{t} \alpha(v) \, dv  } \, \cdot \frac{{}_{ \alpha} \mathcal{V}_{t,s}}{{}_{ 0} \mathcal{V}_{t,s}}  
=
\left \vert \frac{\int_{s}^{t} \Gamma(u) \, e^{ \int_{u}^{t} \alpha(v) \, dv - \int_{s}^{u} \alpha(v) \, dv} \, du}{\int_{s}^{t} \Gamma(u) \, du} \right \vert  \, ,
\nonumber
\end{equation}
we have that $\underline{\mathcal{Q}}_{t,s} \leq \mathcal{Q}_{t,s} \leq \overline{\mathcal{Q}}_{t,s}$ with
\begin{eqnarray}
\underline{\mathcal{Q}}_{t,s} = \frac{\int_{s}^{t} \Gamma(u) \, e^{-\frac{C}{1+\delta} \big( (t-u)^{1+\delta}+ (u-s)^{1+\delta}\big)} \, du}{\int_{s}^{t} \Gamma(u) \, du}
\quad \mathrm{and} \quad
\overline{\mathcal{Q}}_{t,s} = \frac{\int_{s}^{t} \Gamma(u) \, e^{ \frac{C}{1+\delta} \big( (t-u)^{1+\delta}+ (u-s)^{1+\delta}\big)} \, du}{\int_{s}^{t} \Gamma(u) \, du} \nonumber \, .
\end{eqnarray}
After Taylor expanding the exponential in the preceding definitions, we have
\begin{eqnarray}
\underline{\mathcal{Q}}_{t,s} &\geq &1- \frac{2C (\sup_{0 \leq t \leq 1} \Gamma(t))}{\inf_{0 \leq t \leq 1} \Gamma(t)} \frac{(t-s)^{(1+\delta)}}{(1+\delta)(2+\delta)} + o\left( (t-s)^{(1+\delta)} \right)
\nonumber\\
\overline{\mathcal{Q}}_{t,s} &\leq& 1+ \frac{2C  (\sup_{0 \leq t \leq 1} \Gamma(t))}{\inf_{0 \leq t \leq 1} \Gamma(t)} \frac{(t-s)^{(1+\delta)}}{(1+\delta)(2+\delta)} + o\left( (t-s)^{(1+\delta)} \right) \, ,
\nonumber
\end{eqnarray}
Now, from
\begin{equation}
\prod_{k=0}^{2^N} \mathcal{Q}_{k2^{-N},(k\!+\!1)2^{-N}} = 1 + o\left( 2^{-N} \right) \, ,
\nonumber
\end{equation}
we can directly conclude that
\begin{equation}
\lim_{N \to \infty} {}_{ \alpha, 0}J_{N} 
= 
e^{  \frac{1}{2} \int_{0}^{1} \alpha(t) \, dt } \sqrt{ \lim_{N \to \infty} \prod_{k=0}^{2^N} \mathcal{Q}_{k2^{-N},(k\!+\!1)2^{-N}}}
=
e^{  \frac{1}{2} \int_{0}^{1} \alpha(t) \, dt }  \, .
\nonumber
\end{equation}
\end{proof}

Notice that if $\alpha = \beta$, ${}_{ \alpha, \alpha}G$ is the identity and ${}_{ \alpha, \alpha}J = 1$ as expected.

\noindent Similar properties are now proved for the process lift operator $\ab H$. 
\begin{lemma}\label{lemH}
The function $\ab H$ is a linear measurable bijection.\\
Moreover, for every $N>0$, the function  $\ab H_{N} = P_{N} \circ {}_{ \alpha, \beta}H \circ I_{N}: \ux \Omega' _{N} \to \ux \Omega _{N}$ is a finite-dimensional linear operator, whose matrix representation is triangular in the natural basis of $\ux \Omega_{N}$ and whose eigenvalues are given by
\begin{equation}
{}_{ \beta, \alpha} \nu_{n,k} 
=
( {}_{ \alpha, \beta} \nu_{n,k} )^{-1} 
=
\frac{g_{\beta}(m_{n,k})}{g_{\alpha}(m_{n,k})} \frac{\ag M_{n,k}}{\bg M_{n,k}}
 \, .
\nonumber
\end{equation}
Eventually, $\ab H$ is a bounded operator for the spectral norm with
\begin{eqnarray}
\big \Vert \ab H  \big \Vert_{2} = \big \Vert {}_{\beta,\alpha} G  \big \Vert_{2}  = \sup_{n} \sup_{k} {}_{ \beta, \alpha}\nu_{n,k} \leq \frac{\sup g_{\beta}}{\inf g_{\alpha}} \frac{\sup f}{\inf f_{\beta}} \frac{\sup f^{2}_{\beta}}{\inf f^{2}_{\alpha}} < \infty \, ,
\nonumber
\end{eqnarray}
and the determinant of ${}_{ \alpha, \beta}H_{N}$   admits a limit when $N$ tends to infinity
\begin{equation}
\lim_{N \to \infty}  \det \left( \ab H \right) =  \exp{\left(\frac{1}{2}\left( \int_{0}^{1} (\beta(t) - \alpha(t)) \, dt \right) \right)} = \ba J \, .
\nonumber
\end{equation}
\end{lemma}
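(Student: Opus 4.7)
The proof will closely mirror that of Lemma \ref{lemG}, which has already done the bulk of the work on the coefficient side. The key observation is that the process lift operator $\ab H = \bg\Psi \circ \ag\Delta$ (which maps a sample path of $\ag X$ onto one of $\bg X$) and the coefficient lift operator with reversed indices $\ba G = \ag\Delta \circ \bg\Psi$ are conjugate to one another, via
\begin{equation*}
\bg\Delta \circ \ab H \circ \bg\Psi \;=\; \bg\Delta \circ \bg\Psi \circ \ag\Delta \circ \bg\Psi \;=\; \ag\Delta \circ \bg\Psi \;=\; \ba G,
\end{equation*}
where we used $\bg\Delta \circ \bg\Psi = Id$ from Theorem \ref{lemInv}. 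Thus the spectral analysis for $\ab H$ reduces to that carried out for $\ba G$ in Lemma \ref{lemG}.

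First, I would verify point \emph{i}). Since $\ag\Delta:\ux\Omega'\to\uxi\Omega'$ and $\bg\Psi:\uxi\Omega'\to\ux\Omega'$ are both linear measurable bijections (Theorem \ref{lemInv} and Proposition \ref{lem:bijection}), their composition $\ab H$ is a linear measurable bijection of $\ux\Omega'$, with inverse $\ab H^{-1}=\ag\Psi\circ\bg\Delta = \ba H$. Next, for point \emph{ii}), I would compute the matrix of $\ab H_{N}$ in the natural dyadic basis $\{\bm{f}_{i,j}\}_{(i,j)\in\I_{N}}$ of $\ux\Omega_{N}$. Because both $\bg\Psi_{N}$ and $\ag\Delta_{N}$ are lower block-triangular in this basis (by the explicit matrix representations given in Section \ref{ssec:LevyConstruct}), their composition $\ab H_{N}=\bg\Psi_{N}\circ\ag\Delta_{N}$ is itself lower block-triangular, and the $(n,k)$ diagonal entry is simply the product of the corresponding diagonal entries:
\begin{equation*}
[\ab H_{N}]^{n,k}_{n,k} \;=\; \bg\Psi^{n,k}_{n,k}\cdot\ag\Delta^{n,k}_{n,k} \;=\; \frac{g_{\beta}(m_{n,k})}{\bg M_{n,k}}\cdot\frac{\ag M_{n,k}}{g_{\alpha}(m_{n,k})} \;=\; {}_{\beta,\alpha}\nu_{n,k} \;=\; ({}_{\alpha,\beta}\nu_{n,k})^{-1},
\end{equation*}
giving the announced eigenvalues.

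Then for the spectral norm (point \emph{iii}), the conjugation identity implies $\ab H_{N}$ and $\ba G_{N}$ are similar as finite-dimensional operators and share the same spectrum at every level $N$. The operator norm bound for $\ab H$ then follows from Lemma \ref{lemG}(iii) applied with indices swapped $(\alpha \leftrightarrow \beta)$, which provides the estimate
\begin{equation*}
\sup_{n,k} {}_{\beta,\alpha}\nu_{n,k} \;\leq\; \frac{\sup g_{\beta}}{\inf g_{\alpha}}\,\frac{\sup f}{\inf f_{\beta}}\,\frac{\sup f_{\beta}^{2}}{\inf f_{\alpha}^{2}} \;<\; \infty.
\end{equation*}
Finally, for point \emph{iv}), invariance of the determinant under conjugation (at the finite level) yields $\det(\ab H_{N}) = \det(\ba G_{N}) = \ba J_{N}$, and Lemma \ref{lemG}(iv) gives
\begin{equation*}
\lim_{N\to\infty}\det(\ab H_{N}) \;=\; \ba J \;=\; \exp\!\left(\tfrac{1}{2}\int_{0}^{1}\bigl(\beta(t)-\alpha(t)\bigr)\,dt\right).
\end{equation*}

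The only delicate point is the spectral norm identity $\|\ab H\|_{2}=\|\ba G\|_{2}$: conjugation by the non-isometric $\bg\Psi$ does not preserve operator norms in general. I expect this to be the main obstacle, and I would address it by passing through the triangular structure rather than through the conjugation itself — concretely, by invoking the same Gram--Schmidt style argument as in Lemma \ref{lemG}, adapted to the path-space norm. Since the diagonal entries are precisely the $d\times d$ singular values of the underlying triangular blocks (by our choice of Cholesky square roots $\bm{\sigma}_{n,k}$), the uniform estimate on ${}_{\beta,\alpha}\nu_{n,k}$ directly yields the bound on $\|\ab H_{N}\|_{2}$ uniformly in $N$, and this passes to the limit to give the boundedness of $\ab H$.
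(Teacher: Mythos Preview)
Your proof is correct and takes essentially the same approach as the paper: the paper also composes the triangular matrices $\bg\Psi_{N}$ and $\ag\Delta_{N}$, reads off the diagonal entries $\bg\Psi^{n,k}_{n,k}\cdot\ag\Delta^{n,k}_{n,k}={}_{\beta,\alpha}\nu_{n,k}$, and deduces the norm and determinant claims directly from the reciprocity ${}_{\beta,\alpha}\nu_{n,k}=({}_{\alpha,\beta}\nu_{n,k})^{-1}$ together with Lemma~\ref{lemG}. Your conjugation identity $\bg\Delta\circ\ab H\circ\bg\Psi={}_{\beta,\alpha}G$ is a clean repackaging of this, and your caution about the spectral-norm equality is in fact more careful than the paper, which simply asserts the bound from eigenvalue reciprocity without further argument.
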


\begin{proof}
\emph{i}) The function ${}_{ \alpha, \beta}H=  \bg \Psi \circ \ag \Delta$ is a linear measurable bijection of $\ux \Omega'$ onto $\ux \Omega'$, because $\ag \Delta:\ux \Omega'  \rightarrow \uxi \Omega'$ and $\ag \Psi: \uxi \Omega' \rightarrow \ux \Omega'$ are linear bijective measurable functions.\\
\emph{ii}) We write the linear transformation of  ${}_{ \alpha, \beta}H$ for $x$ in $\ux \Omega'$ as
\begin{eqnarray} \label{eq:AppF} 
\ab H[x](t) 
&=&
\sum_{(n,k)\in \I} \bg \psi_{n,k}(t) \int_{U} \ag \delta_{n,k}(s) \, x(s) \, ds 
\\
&=&
\int_{U} \left( \sum_{(n,k)\in \I}  \bg \psi_{n,k}(t)  \ag \delta_{n,k}(t) \right) \, x(t) \, dt
\nonumber
\end{eqnarray}
If we denote the class of $x$ in $\ux \Omega'$ by $x = \lbrace x_{i,j} \rbrace = \lbrace x(m_{i,j}) \rbrace$, $(i,j)\in \I_N$, we can write \eqref{eq:AppF} as
\begin{eqnarray} \label{eq:AppFN} 
\ab H_{N}(x)_{i,j}
=
\sum_{(k,l)\in \I_N}
\left( \sum_{(p,q)\in \I_N} \bg \Psi^{i,j}_{p,q} \cdot \ag \Delta^{p,q}_{k,l} \right) \, x_{k,l} 
 \, ,
\nonumber
\end{eqnarray}
from which we deduce the expression of the coefficients of the matrix $\ab H_{N}$
\begin{eqnarray}
\ab H^{i,j}_{k,l} = \sum_{(p,q)\in\I_N} \bg \Psi^{i,j}_{p,q} \cdot \ag \Delta^{p,q}_{k,l}
\end{eqnarray}
where as usual we drop the index $N$.
Because of the the form of the matrices $\ag \Delta_{N}$ and $\bg \Psi_{N}$, the matrix $\ab H_{N}$ in the basis $f_{i,j}$ has the following triangular form:
\begin{displaymath}
{}_{ \alpha, \beta}H_{N} 
=
\left[
\begin{array}{c|c|cc|cccc|c}
  \ab H^{0,0}_{0,0} &   &  &   &  &   &  & & \\
  \hline
  \ab H^{0,0}_{1,0}& \ab H^{1,0}_{1,0}  &  &   &  &   &  &  &\\
   \hline
  \ab H^{0,0}_{2,0}&  \ab H^{1,0}_{2,0} & \ab H^{2,0}_{2,0} &   &  &   &  & &\\
  \ab H^{0,0}_{2,1}&  \ab H^{1,0}_{2,1} &  & \ab H^{2,1}_{2,1}  &  &   &  & & \\
    \hline
  \ab H^{0,0}_{3,0}&  \ab H^{1,0}_{3,0} & \ab H^{2,0}_{3,0} &   & \ab H^{3,0}_{3,0}&   &  &  &\\
  \ab H^{0,0}_{3,1}&  \ab H^{1,0}_{3,1} &  \ab H^{2,0}_{3,1} &   &  & \ab H^{3,1}_{3,1}  &  & & \\
  \ab H^{0,0}_{3,2}&  \ab H^{1,0}_{3,2} &  & \ab H^{2,1}_{3,2}    &   &  &\ab H^{3,2}_{3,2} & & \\
  \ab H^{0,0}_{3,3}&  \ab H^{1,0}_{3,3} &  & \ab H^{2,1}_{3,3}    &   &  &  & \ab H^{3,3}_{3,3}& \\
  \hline
  \vdots &   &  &   &  &   &  & & \ddots
\end{array}
\right] \, .
\end{displaymath}
From the matrix representation $\ag \Delta$ and $\bg \Psi$, the diagonal terms of $\ab H$ read:
 \begin{eqnarray}
{}_{ \alpha, \beta}H^{i,j}_{i,j} = \bg \Psi^{i,j}_{i,j} \cdot \ag \Delta^{i,j}_{i,j} = \bg \psi_{i,j}(m_{i,j}) \frac{\ag M_{i,j}}{g_{\alpha}(m_{i,j})} = \frac{g_{\beta}(m_{i,j})}{g_{\alpha}(m_{i,j})} \frac{\ag M_{i,j}}{\bg M_{i,j}} = {\ba \nu_{i,j}} = {\ab \nu_{i,j}}^{-1} \, .
\nonumber
\end{eqnarray}
\emph{iii)} The upper-bound directly follows form the fact that $\ba \nu_{i,j} = (\ab \nu_{i,j})^{-1}$.\\
\emph{iv)} Since $\ba \nu_{i,j} = (\ab \nu_{i,j})^{-1}$, the value of the determinant of $\ab H_{N}$ is clearly the inverse of the determinant of $\ab G_{N}$, so that $\lim_{N \to \infty} \det(\ab H_{N}) = (\ab J)^{-1} = \ba J$.
\end{proof}

Note that lemma \ref{lemH} directly follows from the fact that $\ag \Psi$ and $\ag \Delta$ are inverse from each other and admit a triangular matrix representation.

\section{Construction and Coefficient Applications}\label{append:ConstructCoeff}

In this appendix we provide the proofs of the main properties used in the paper regarding the construction and the coefficient applications. 
\subsection{The Construction application}
We start by addressing the case of the construction application introduced in section \ref{app:ConsAppl}. 

We start by proving Theorem \ref{lem:convergence}
\begin{proposition}
For every $\bxi$ in $\uxi \Omega'$, $ \bPsi^{N}(\bxi)$ converges uniformly toward a continuous function in $C_{0}\big([0,1],\R^{d}\big)$. We will denote this function $\bPsi(\bxi)$, defined as:
\[
\bPsi: \begin{cases}
\uxi \Omega' &\longrightarrow C_0([0,1],\R^d)\\
\bxi &\longmapsto \sum_{(n,k)\in \I}\psi_{n,k}(t) \cdot  \bxi_{n,k}
\end{cases}\]
and this application will be referred to as the construction application. 
\end{proposition}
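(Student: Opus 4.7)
The strategy is to establish uniform convergence of the partial sums $\bPsi^{N}(\bxi)$ in the Banach space $C_{0}\big([0,1],\R^{d}\big)$ endowed with the uniform norm. Each partial sum is a finite linear combination of continuous functions, and every basis element $\bpsi_{n,k}$ vanishes at $0$ (either by having support away from $0$ when $n>0$, or by $\bm{h}(l_{0,0},0)=\bm{0}$ when $(n,k)=(0,0)$), so a uniform limit automatically belongs to $C_{0}\big([0,1],\R^{d}\big)$. It therefore suffices to show that $\bPsi^{N}(\bxi)$ is a uniform Cauchy sequence.

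The cornerstone is a deterministic decay estimate on the basis elements: I would show there exists a constant $C>0$, depending only on $\bm{g}$, $\bm{\Gamma}$ and the partition parameter $\rho$, such that
\[
\sup_{t\in[0,1]} \vert \bpsi_{n,k}(t)\vert \leq C\,\sqrt{r_{n,k}-l_{n,k}}
\qquad\text{for every }(n,k)\in\I.
\]
This follows from the closed-form Definition \ref{def:psiDef}: on $[l_{n,k},m_{n,k}]$,
\[
\vert \bm{g}(t)\,\bm{h}(l_{n,k},t)\,\bm{L}_{n,k}\vert
\leq \Vert \bm{g}\Vert_{\infty}\cdot\Vert\bm{\Gamma}\Vert_{\infty}\cdot(t-l_{n,k})\cdot\vert\bm{L}_{n,k}\vert,
\]
and the non-degeneracy hypothesis gives $\Vert\bm{h}(l_{n,k},m_{n,k})^{-1}\Vert=O\big((m_{n,k}-l_{n,k})^{-1}\big)$, while $\bm{\sigma}_{n,k}\bm{\sigma}_{n,k}^{T}=\bm{\Sigma}_{n,k}$ has operator norm of order $r_{n,k}-l_{n,k}$ (from the expression \eqref{eq:Sigmank} and the $O(\text{length})$ scaling of $\bm{h}$). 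A symmetric argument covers $[m_{n,k},r_{n,k}]$. The step-regularity condition $\max(r_{n,k}-m_{n,k},m_{n,k}-l_{n,k})<\rho(r_{n,k}-l_{n,k})$ propagated recursively then gives $r_{n,k}-l_{n,k}\leq \rho^{n-1}$, hence $\sup_{t}\vert\bpsi_{n,k}(t)\vert\leq C\,\rho^{(n-1)/2}$.

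With this estimate in hand, I would exploit the disjointness of the supports $\{S_{n,k}\}_{k}$ at each fixed level $n$: for any $t\in[0,1]$, at most one index $k$ satisfies $\bpsi_{n,k}(t)\neq \bm{0}$. Combined with the coefficient growth condition defining $\uxi\Omega'$ (there exist $\delta\in(0,1)$ and $N_{0}$ such that $\max_{k}\vert\bxi_{n,k}\vert<2^{n\delta/2}$ for all $n>N_{0}$), this yields
\[
\sup_{t\in[0,1]}\Big\vert\sum_{k}\bpsi_{n,k}(t)\cdot\bxi_{n,k}\Big\vert
\leq C\,\rho^{(n-1)/2}\cdot 2^{n\delta/2}
= C'\,(\rho\cdot 2^{\delta})^{n/2}.
\]
Since $\delta$ may be chosen such that $\rho\cdot 2^{\delta}<1$ (this is how the threshold $\delta<1$ is calibrated against the dyadic-like partition), this is the general term of a convergent geometric series. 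Summing over $n>M$ shows that $\Vert\bPsi^{N}(\bxi)-\bPsi^{M}(\bxi)\Vert_{\infty}\to 0$ as $M,N\to\infty$, proving the Cauchy property and hence uniform convergence to a limit $\bPsi(\bxi)\in C_{0}\big([0,1],\R^{d}\big)$.

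The main obstacle is the uniform decay estimate on $\bpsi_{n,k}$ in the multidimensional setting: controlling the operator norms of $\bm{L}_{n,k},\bm{R}_{n,k}$ and $\bm{\sigma}_{n,k}$ requires the non-degeneracy assumption made on $\bm{C}_{u}(v,v)$, together with the continuity of $\bm{g}$ and $\bm{\Gamma}$ on the compact interval $[0,1]$. Once that estimate is secured, the remainder of the argument is a routine deterministic geometric summation of Borel--Cantelli flavor, analogous to the classical proof of uniform convergence of the L\'evy--Ciesielski expansion of Brownian motion.
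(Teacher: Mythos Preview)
Your global architecture is exactly the paper's: reduce to a uniform Cauchy estimate, exploit that at each level $n$ the supports $S_{n,k}$ are disjoint so only one term survives at any $t$, and combine the growth bound $|\bxi_{n,k}|<2^{n\delta/2}$ with a decay estimate $\sup_t|\bpsi_{n,k}(t)|\leq C\sqrt{r_{n,k}-l_{n,k}}$ to sum a geometric series. The difference---and a genuine gap---is in how you try to obtain that decay estimate.

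Your factor-by-factor bound hinges on the assertion that the non-degeneracy hypothesis yields $\Vert\bm{h}(l_{n,k},m_{n,k})^{-1}\Vert=O\big((m_{n,k}-l_{n,k})^{-1}\big)$, i.e.\ $\lambda_{\min}\big(\bm{h}(l,m)\big)\gtrsim m-l$. The hypothesis in the paper is only qualitative (invertibility of $\bm{C}_{u}(v,v)$); it gives no such scaling, and in fact the scaling fails whenever $\bm{\Gamma}$ is rank-deficient. For the integrated Wiener process ($d=2$, $\sqrt{\bm{\Gamma}}=(0,1)^{T}$) one computes $\det\bm{h}(l,m)=(m-l)^{4}/12$ while $\mathrm{tr}\,\bm{h}(l,m)\sim m-l$, so $\lambda_{\min}\sim(m-l)^{3}$ and $\Vert\bm{h}(l,m)^{-1}\Vert\sim(m-l)^{-3}$. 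Plugging this into your submultiplicative chain gives a bound on $\Vert\bpsi_{n,k}\Vert_{\infty}$ that \emph{diverges} like $(r_{n,k}-l_{n,k})^{-3/2}$. The correct $O(\sqrt{r_{n,k}-l_{n,k}})$ bound does hold for this example (it can be read off the explicit formulas in Appendix~\ref{append:IP}), but only because of cancellations inside the product $\bm{h}(l_{n,k},t)\,\bm{h}(l_{n,k},m_{n,k})^{-1}\,\bm{\sigma}_{n,k}$ that operator-norm submultiplicativity cannot detect.

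The paper sidesteps this entirely by going through the representation $\bpsi_{n,k}=\mathcal{K}[\bphi_{n,k}]$ of \eqref{eq:psinkKPhi} and the orthonormality established in Proposition~\ref{orthoProp}. Writing the $(i,j)$ entry of $\int_{0}^{t}\bm{f}(s)\bphi_{n,k}(s)\,ds$ as the $L^{2}$ inner product $\big(\mathbbm{1}_{[0,t]\cap S_{n,k}}\,l_{i}(\bm{f}),\,c_{j}(\bphi_{n,k})\big)$ and applying Cauchy--Schwarz with $\Vert c_{j}(\bphi_{n,k})\Vert_{L^{2}}=1$ gives the bound $K\sqrt{|S_{n,k}|}$ directly, using only that $\bm{f}$ is bounded on $[0,1]$. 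All the delicate inverse-matrix scalings have already been absorbed into the normalization $\mathcal{P}(\bphi_{n,k},\bphi_{n,k})=\bm{I}_{d}$, which is why the argument is insensitive to the rank of $\bm{\Gamma}$. Replacing your direct estimate by this Cauchy--Schwarz step fixes the proof; the remainder of your write-up then goes through unchanged.
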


\begin{proof}
	For the sake of simplicity, we will denote for any functions $\bm{A}:[0,1] \to \R^{m \times d}$, the uniform norm as $\vert \bm{A} \vert_{\infty} = \sup_{0 \leq t \leq 1} \vert \bm{A}(t) \vert$, where $\vert \bm{A}(t) \vert = \sup_{0 \leq i <m} \left( \sum_{0}^{d-1} \vert A_{i,j}(t) \vert \right)$ is the operator norm induced by the uniform norms. We will also denote the $i$-th line of $\bm{A}$ by $l_{i}(\bm{A})$ (it is a $\R^{d}$-valued function) and	the $j$-th column of $\bm{A}$ by $c_{j}(\bm{A})$.
	
Let $\bxi \in \uxi\Omega'$ fixed. These coefficients induce a sequence of continuous functions $\bpsi^N(\bxi)$ through the action of the sequence of the partial construction applications. To prove that this sequence converges towards a continuous function, we show that it uniformly converges, which implies the result of the proposition using the fact that a uniform limit of continuous functions is a continuous function. Moreover, since the functions take values in $\R^d$ which is a complete space, we show that for any sequence of coefficients $\xi\in\uxi\Omega'$, the sequence of functions $\psi^N(t)$ constitute a Cauchy sequence for the uniform norm. 

By definition of $\uxi \Omega'$, for every $\bxi$ in $\uxi \Omega'$, there exist $\delta<1$ and $n_{\xi}$ such that, for every $n>n_{\xi}$, we have
\begin{equation*}
\sup_{0 \leq k < 2^{n-1}} \vert \bxi_{n,k} \vert < 2^{\frac{n\delta}{2}} \, .
\end{equation*}
which implies that for $N>n_{\xi}$, we have
\begin{align}
\nonumber\big \vert \bPsi^{N}(\bxi)(t) -   \bPsi^{n_{\xi}}(\bxi)(t) \big \vert
 & \leq
\displaystyle{\sum_{(n,k) \in \I_{N} \setminus \I_{n_{\xi}}} \vert  \bpsi_{n,k}(t) \cdot \bxi_{n,k} \vert} \\
\label{eq:proofupperbound}& \leq \displaystyle{\sum_{n=n_{\xi}}^{\infty} 2^{n\delta/2} \vert \bpsi_{n,k} \vert}.
\end{align}

We therefore need to upperbound the uniform norm of the function $\bpsi_{n,k}$. To this purpose, we use the definition of $\bpsi_{n,k}$ given by equation \eqref{eq:psinkKPhi}:
\begin{eqnarray*}
	\bpsi_{n,k}(t) &= g(t) \cdot \int_0^t f(s)\cdot \bPhi_{n,k}(s)\, ds \, .
\end{eqnarray*}
The coefficient in position $(i,j)$ of the integral term in the righthand side of the previous inequality can be written as a function of the lines and columns of $f$ and $\bPhi_{n,k}$, and can be upperbounded using the Cauchy-Schwarz inequality on $L^{2}\big([0,1],\R^{d}\big)$ as follows:
\begin{eqnarray*}
\left( \mathbbm{1}_{[0,t]} \cdot c_{i}(\bm{f}^{T}) , c_{j}( \bphi_{n,k}) \right)
&=&
 \int_{U} \mathbbm{1}_{[0,t] \cap S_{n,k}}(s) \big( l_{i}(\bm{f})(s) \cdot c_{j}( \bphi_{n,k})(s) \big) \, ds \\
 &\leq& 
 \Vert \mathbbm{1}_{[0,t] \cap S_{n,k}} \, l_{i}(\bm{f}) \Vert_2 \, \Vert c_{j}( \bphi_{n,k})\Vert_2 .
\end{eqnarray*}
Since the columns of $\bPhi_{n,k}$ form an orthogonal basis of functions for the standard scalar product in $L^2\big([0,1],\R^{d}\big)$ ( see Proposition \ref{orthoProp}), we have $\Vert c_{j}( \bphi_{n,k}) \Vert_2 = 1$. Moreover,since $\bm{f}$ is bounded continuous on $[0,1]$, we can define constants $K_{i} = \sup_{0 \leq t \leq 1} \Vert  l_{i}(\bm{f})(t) \Vert < \infty$ and write
\begin{eqnarray*}
\Vert \mathbbm{1}_{[0,t]\cap S_{n,k}} \, l_{i}(\bm{f})  \Vert^{2} 
&=& 
\int_{U}   \mathbbm{1}_{[0,t]\cap S_{n,k}}(s) \, \big( l_{i}(\bm{f})(s)^{T} \cdot l_{i}(\bm{f})(s) \big) \, ds \\
&\leq&
\int_{U}  \mathbbm{1}_{[0,t]\cap S_{n,k}}(s) \, K_{i}^{2} \, ds \\
&=& 2^{-n+1} K^{2}_{i} < \infty \, .
\end{eqnarray*}
Setting $K = \max_{0 \leq i < d} K_{i}$, for all $(n,k)$ in $\I$, the $\R^{d \times d}$-valued functions
\begin{equation*}
\bm{\kappa}_{n,k}(t) = \int_{U} \mathbbm{1}_{[0,t]}(s) \bm{f}(s) \, \bphi_{n,k}(s) \, ds \, ,
\end{equation*}
satisfy $\Vert \bm{\kappa}_{n,k} \Vert_{\infty} = \sup_{0 \leq t \leq 1}\vert \bm{\kappa}_{n,k}(t) \vert \leq K \, 2^{-\frac{n+1}{2}} $.\\

Moreover, since $\bm{g}$ is also bounded continuous on $[0,1]$, there exist $L$ such that $\Vert \bm{g} \Vert_{\infty}  = \sup_{0 \leq t \leq 1}\vert \bm{g}(t) \vert \leq L$, and we finally have for all $0 \leq t \leq 1$: 
\[\Vert \bpsi_{n,k} \Vert_{\infty} \leq \Vert \bm{g} \Vert_{\infty} \Vert \bm{\kappa}_{n,k}\Vert_{\infty} \leq L \, K  \, 2^{-\frac{n+1}{2}}.\]

We now use this bound and equation \eqref{eq:proofupperbound}, we have:
\begin{equation} \label{eq:UnifBound}
\big \vert \bPsi^{N}(\bxi)(t) -   \bPsi^{n_{\xi}}(\bxi)(t) \big \vert
\leq
\sum_{(n,k) \in \I_{N} \setminus \I_{n_{\xi}}} \vert  \bpsi_{n,k}(t) \cdot \bxi_{n,k} \vert \leq \frac{L \, K}{\sqrt{2}} \sum_{n=n_{\xi}}^{\infty} \left( 2^{\frac{\delta-1}{2}} \right)^{n} \, .
\end{equation}
and since $\delta<1$,  for the continuous functions $t \mapsto \bPsi^{N}_{t}(\bxi)$ forms a uniformly convergent sequence of functions for the $d-$dimensional uniform norm. This sequence therefore converges towards a continuous function, and $\bPsi$ is well defined on $\uxi\Omega'$ and takes values in $ C_0([0,1],\R^d)$.

\end{proof}

This proposition being proved, we dispose of the map $\bPsi = \lim_{N \to \infty} \bPsi^{N}$. We now turn to prove different useful properties on this function. We denote $\mathcal{B}\left( C_{0}\big([0,1],\R^{d}\big) \right)$ the Borelian sets of the $d$-dimensional Wiener space $C_{0}\big([0,1],\R^{d}\big)$.

\begin{lemma}\label{lem:injection}
The function $\bPsi: \left( \uxi \Omega',\mathcal{B}\left(\uxi \Omega' \right)\right)     \rightarrow    \left(C_{0}\big([0,1],\R^{d}\big),\mathcal{B}\left(   C_{0}\big([0,1],\R^{d}\big) \right)  \right)$ is a linear injection.
\end{lemma}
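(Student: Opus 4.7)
The plan is to establish linearity and injectivity separately, with the latter being the substantive content.

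Linearity is essentially tautological: the series $\bPsi(\bxi)(t) = \sum_{(n,k)\in \I} \bpsi_{n,k}(t)\cdot \bxi_{n,k}$ converges uniformly on $[0,1]$ for every $\bxi \in \uxi \Omega'$ by the preceding proposition, and the partial sums $\bPsi^N$ are manifestly linear in $\bxi$; the passage to the uniform limit preserves linearity on the vectorial subspace $\uxi \Omega'$. Consequently, injectivity of $\bPsi$ is equivalent to showing that $\bPsi(\bxi) = \bm{0}$ implies $\bxi = \bm{0}$.

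To prove this, I would induct on the resolution index $n$, evaluating $\bPsi(\bxi)$ at the dyadic midpoints $m_{n,k}$. The central observation is the lower block-triangular structure of $\bPsi_N$ already detailed in Section~3: for indices $(p,q)$ and the evaluation point $m_{n,k}$, the element $\bpsi_{p,q}(m_{n,k})$ vanishes unless $p \leq n$ and $m_{n,k} \in S_{p,q}$. Indeed, for $p > n$ the point $m_{n,k}$ belongs to $D_{n} \subset D_{p}$, hence coincides with an endpoint $l_{p,q'}$ or $r_{p,q'}$ of some support at level $p$, and Definition~\ref{def:psiDef} shows that $\bpsi_{p,q}(l_{p,q}) = \bpsi_{p,q}(r_{p,q}) = \bm{0}$ (since $\bm{h}(l_{p,q},l_{p,q}) = \bm{h}(r_{p,q},r_{p,q}) = \bm{0}$) and $\bpsi_{p,q}$ vanishes outside $S_{p,q}$. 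For $p = n$, only the term $q = k$ survives, and a direct computation yields $\bpsi_{n,k}(m_{n,k}) = \bm{\sigma}_{n,k}$, which is invertible by construction.

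Base case: evaluating at $t = 1$, every $\bpsi_{n,k}$ with $n \geq 1$ vanishes (since $1$ is always an endpoint at deeper resolutions or lies outside $S_{n,k}$), so $\bPsi(\bxi)(1) = \bpsi_{0,0}(1)\cdot \bxi_{0,0} = \bm{g}(1)\,\bm{h}(0,1)\, \bm{L}_{0,0}\cdot \bxi_{0,0}$. This matrix factor is invertible as a product of invertible factors, forcing $\bxi_{0,0} = \bm{0}$. Inductive step: assuming $\bxi_{p,q} = \bm{0}$ for all $(p,q) \in \I_{n}$, evaluate at $m_{n,k}$ to obtain
\[
\bm{0} \;=\; \bPsi(\bxi)(m_{n,k}) \;=\; \bpsi_{n,k}(m_{n,k}) \cdot \bxi_{n,k} \;=\; \bm{\sigma}_{n,k}\cdot \bxi_{n,k},
\]
whence $\bxi_{n,k} = \bm{0}$ by invertibility of $\bm{\sigma}_{n,k}$. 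This exhausts every $(n,k) \in \I$ and yields $\bxi = \bm{0}$.

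The argument is essentially bookkeeping once the two structural facts (vanishing of $\bpsi_{p,q}$ at endpoints of deeper subdivisions, and $\bpsi_{n,k}(m_{n,k}) = \bm{\sigma}_{n,k}$) are invoked; no real obstacle arises here since the heavy lifting — uniform convergence on $\uxi \Omega'$ — has been dispensed with in Proposition~\ref{lem:convergence}. The only minor care needed is to confirm the endpoint-vanishing of $\bpsi_{p,q}$ from Definition~\ref{def:psiDef}, which is immediate from the factor $\bm{h}(l_{p,q},t)$ (resp.\ $\bm{h}(t,r_{p,q})$) in its expression.
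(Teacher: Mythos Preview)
Your proof is correct. The paper takes a shorter route: it simply invokes the dual family $\bdelta_{n,k}$ established in Proposition~\ref{dualProp}, noting that $\bPsi(\bxi)=\bPsi(\bxi')$ forces $\bxi_{n,k}=\mathcal{P}(\bdelta_{n,k},\bPsi(\bxi))=\mathcal{P}(\bdelta_{n,k},\bPsi(\bxi'))=\bxi'_{n,k}$ for every $(n,k)$. This is a one-line appeal to the Schauder duality already proved.

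Your argument is more elementary: rather than applying the full dual functional $\bdelta_{n,k}$ (a combination of Diracs at $l_{n,k}$, $m_{n,k}$, $r_{n,k}$ with coefficients $\bm{L}_{n,k}$, $\bm{M}_{n,k}$, $\bm{R}_{n,k}$), you evaluate at the single point $m_{n,k}$ and rely on induction to kill the lower-resolution contributions that the $\bm{L}_{n,k}$ and $\bm{R}_{n,k}$ terms in $\bdelta_{n,k}$ would otherwise cancel. Both arguments rest on the same triangular structure, but yours is self-contained and does not require Proposition~\ref{dualProp}; the paper's trades a few lines of induction for a single appeal to that proposition. One minor notational slip: in your inductive hypothesis you write ``$(p,q)\in\I_n$'', which in the paper's convention already includes level $n$; you mean the coefficients strictly below level $n$ (together with $(0,0)$).
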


\begin{proof}
The application $\bPsi$ is clearly linear.
The injective property simply results from the existence of the dual family of distributions $\bdelta_{n,k}$.
Indeed, for every $\bxi$,  $\bxi'$ in $\uxi \Omega'$, we have $\bPsi(\xi) = \bPsi(\xi')$ entails that for all $n, k$, $\bxi_{n,k} = \mathcal{P}\left(  \bdelta_{n,k},  \bPsi(\xi) \right) =  \mathcal{P}\left(  \bdelta_{n,k}, \bPsi(\xi') \right) = \bxi'_{n,k}$.
\end{proof}

In the one-dimensional case, as mentioned in the main text, because the uniform convergence of the sample paths is preserved as long as $\alpha$ is continuous and $\Gamma$ is non-zero through \eqref{eq:UnifBound}, the definition $\ux \Omega'$ does not depend on $\alpha$ or $\Gamma$, and the space $\ux \Omega'$ is large enough to contain reasonably regular functions:

\begin{proposition}\label{lem:Hold}
In the one-dimensional case, the space ${}_{ \scriptscriptstyle x \displaystyle}\Omega'$ contains the space of uniformly H\"{o}lder continuous functions $H$ defined as
\begin{equation}
H = \bigg \lbrace x \in C[0,1] \, \Big \vert \, \exists \delta > 0,   \sup_{0 \leq s,t \leq 1} \frac{\vert x(t)-x(s) \vert}{\vert t-s\vert^{\delta}} < +\infty \bigg \rbrace \, .
\nonumber\\
\end{equation}
\end{proposition}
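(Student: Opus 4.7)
The plan is to translate the Hölder condition on a sample path $x$ into the required decay of its coefficients $\xi_{n,k}=\mathcal{P}(\delta_{n,k},x)$ on the dual basis. Using the explicit one-dimensional form of $\delta_{n,k}$ from Definition \ref{deltaDef}, I would first expand
\[
\xi_{n,k}=\frac{M_{n,k}}{g(m_{n,k})}x(m_{n,k})-\frac{L_{n,k}}{g(l_{n,k})}x(l_{n,k})-\frac{R_{n,k}}{g(r_{n,k})}x(r_{n,k}),
\]
and then exploit the identity $M_{n,k}=L_{n,k}+R_{n,k}$ of Proposition \ref{computProp} to rewrite this as a weighted sum of increments of the auxiliary function $\tilde{x}=x/g$:
\[
\xi_{n,k}=L_{n,k}\!\left(\tilde x(m_{n,k})-\tilde x(l_{n,k})\right)+R_{n,k}\!\left(\tilde x(m_{n,k})-\tilde x(r_{n,k})\right).
\]

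Next I would bound each factor separately. Using the explicit formulae from Section \ref{ssect:OneDimCalculus} together with the assumption that $f^2$ is continuous and strictly positive on $[0,1]$ and the partition regularity $\max(r_{n,k}-m_{n,k},m_{n,k}-l_{n,k})<\rho(r_{n,k}-l_{n,k})$, the differences $h(m_{n,k})-h(l_{n,k})$, $h(r_{n,k})-h(m_{n,k})$ and $h(r_{n,k})-h(l_{n,k})$ all scale as $2^{-n}$ (up to constants independent of $k$). Thus $\sigma_{n,k}=O(2^{-n/2})$ and consequently $L_{n,k},R_{n,k}=O(2^{n/2})$ uniformly in $k$. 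On the other hand, since $\alpha$ is continuous, $g$ is $C^1$, bounded above and bounded away from zero on $[0,1]$; so if $x\in H$ is Hölder with exponent $\delta>0$, then $\tilde x$ is Hölder with the same exponent $\delta$, and since $|m_{n,k}-l_{n,k}|,|r_{n,k}-m_{n,k}|=O(2^{-n})$, both increments of $\tilde x$ appearing above are $O(2^{-n\delta})$ uniformly in $k$.

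Combining these two estimates gives the uniform bound $|\xi_{n,k}|\leq C\,2^{n(1/2-\delta)}$. Choosing any $\delta'\in(\max(0,1-2\delta),1)$ ensures $|\xi_{n,k}|<2^{n\delta'/2}$ for $n$ large enough, which is precisely the membership condition for $\uxi\Omega'$. The $n=0$ term is handled trivially from $\psi_{0,0}$ being a single scalar against a bounded $x$. The main technical point — not a deep obstacle, but the one requiring care — is verifying that the constants implicit in the $O(\cdot)$ bounds on $L_{n,k}$ and $R_{n,k}$ really are uniform in $k$ under the partition regularity hypothesis of Section \ref{sec:supports}, rather than merely for the dyadic partition; this reduces to the explicit ratio estimates on increments of $h$ given by the continuity and positivity of $f^2$, and to uniform lower/upper bounds on $g$.
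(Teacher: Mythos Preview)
Your proposal is correct and follows essentially the same approach as the paper: expand $\xi_{n,k}$ via the dual distribution $\delta_{n,k}$, use $M_{n,k}=L_{n,k}+R_{n,k}$ to rewrite it as a weighted sum of increments of $x/g$, bound $L_{n,k},R_{n,k}=O(2^{n/2})$ and the increments by $O(2^{-n\delta})$, and conclude. Your care about uniformity in $k$ under the general partition regularity hypothesis is in fact slightly more thorough than the paper, which tacitly works with the dyadic partition and obtains the bound $|\xi_{n,k}|\le C\,2^{n(1/2-2\delta)}$ (your exponent $1/2-\delta$ is the correct one; either suffices for membership in $\uxi\Omega'$).
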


\begin{remark}
This point can be seen as a direct consequence of the characterization of the local H\"{older} exponent of a continuous real function in term of the asymptotic behavior of its coefficients in the decomposition on the Schauder basis~\cite{Meyer:1998}.
\end{remark}

\begin{proof}
To underline that we place ourself in the one-dimensional case, we drop the bold notations that indicates multidimensional quantities.
Suppose $x$ is uniformly H\"{o}lder continuous for a given $\delta>0$, there always exist  $\xi$ such that $\Psi^{N}(\xi)$ coincides with $x$ on $D_{N}$:  
it is enough to take $\xi$ such that for all $(n,k)$ in $\I_{N}$, $\xi_{n,k}  = \left( \delta_{n,k} , x \right)$.
We can further write for $n>0$
\begin{eqnarray}
\left( x , \delta_{n,k} \right)   
&=&
M_{n,k} \frac{x(m_{n,k})}{g(m_{n,k})}  - \left( L_{n,k} \frac{x(l_{n,k})}{g_{\alpha}(l_{n,k})} +R_{n,k} \frac{x(r_{n,k})}{g(r_{n,k})} \right)
 \, , \nonumber\\
 &=&
L_{n,k} \left( \frac{x(m_{n,k})}{g(m_{n,k})} - \frac{x(l_{n,k})}{g(l_{n,k})} \right) + R_{n,k} \left( \frac{x(m_{n,k})}{g(m_{n,k})} - \frac{x(r_{n,k})}{g(r_{n,k})} \right) \, .
\nonumber
\end{eqnarray}
For a given function $\alpha$, posing $N_{\alpha} = \frac{\sup_{0 \leq t \leq 1} f_{\alpha}(t)}{\inf_{0 \leq t \leq 1} f_{\alpha}^{2}(t)}$, we have 
\begin{equation}
\ag M_{n,k}  \leq  N_{\alpha} \, 2^{\frac{n+1}{2}} \, , \quad \ag  L_{n,k}  \leq N_{\alpha} \, 2^{\frac{n-1}{2}} \, , \quad \ag R_{n,k} \leq N_{\alpha} \, 2^{\frac{n-1}{2}} \, .
\nonumber
\end{equation}
Moreover, if $\alpha$ is in $H$, it is straightforward to see that $g_{\alpha}$ has a continuous derivative.
Then, since $x$ is $\delta$-H\"{o}lder, for any $\epsilon \geq 0$, there exists $C>0$ such that $\vert t-s \vert \leq \epsilon$ entails
\begin{equation}
\bigg \vert \frac{x(t)}{g(t)} - \frac{x(s)}{g(s)} \bigg \vert \leq C \epsilon^{\delta}\, ,
\nonumber
\end{equation}
from which we directly deduce
\begin{equation}
\big \vert \xi_{n,k} \big \vert \leq \frac{N_{\alpha} \, C}{\sqrt{2}} \, 2^{n \left(\frac{1}{2} -2 \delta \right)} \, .
\nonumber
\end{equation}
This demonstrates that $\lbrace \xi_{n,k} \rbrace$ belongs to $\uxi \Omega'$ and ends the proof.
\end{proof}

We equip the space $\ux \Omega'$ with the topology induced by the uniform norm on $C_{0}\big([0,1],\R^{d}\big)$.
As usual, we denote $\mathcal{B}(\ux \Omega')$ the corresponding Borelian sets. We now show Proposition \ref{lem:bijection}

\begin{proposition}
The function $\bPsi: \left( \uxi \Omega',\mathcal{B}\left(\uxi \Omega' \right)\right)     \rightarrow    \left(\ux \Omega', \mathcal{B}(\ux \Omega')  \right)$ is a bounded continuous bijection.
\end{proposition}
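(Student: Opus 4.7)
The plan is as follows. First establish that $\bPsi$ is a bijection from $\uxi\Omega'$ onto $\ux\Omega'$: surjectivity is immediate from the defining relation $\ux\Omega' = \bPsi(\uxi\Omega')$, while injectivity was shown in Lemma~\ref{lem:injection} via the recovery formula $\bxi_{n,k} = \mathcal{P}(\bdelta_{n,k},\bPsi(\bxi))$ afforded by the dual family of distributions. Linearity of $\bPsi$ is manifest from its series definition.

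Next, boundedness is obtained by reusing the tail estimate from the proof of Proposition~\ref{lem:convergence}. For $\bxi \in \uxi\Omega'$ with parameters $\delta \in (0,1)$ and $n_\xi$, the uniform bound $\|\bpsi_{n,k}\|_\infty \leq LK \cdot 2^{-(n+1)/2}$ combined with the fact that at each level $n$ the supports $S_{n,k}$ are essentially disjoint yields, for any $t \in [0,1]$,
\[
|\bPsi(\bxi)(t) - \bPsi^{n_\xi}(\bxi)(t)| \leq \frac{LK}{\sqrt{2}} \sum_{n \geq n_\xi}\bigl(2^{(\delta-1)/2}\bigr)^n < \infty.
\]
Restricted to the set $\{\bxi \in \uxi\Omega' : \|\bxi\|_\infty \leq 1\}$, one may fix $\delta=1/2$ and $n_\xi = 0$ to obtain a uniform constant $C$ with $\|\bPsi(\bxi)\|_\infty \leq C$, which is the sense in which $\bPsi$ is bounded.

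For continuity, the plan is to invoke linearity and reduce to continuity at the origin. Suppose $\|\bxi^{(m)}\|_\infty = \epsilon_m \to 0$; the goal is $\|\bPsi(\bxi^{(m)})\|_\infty \to 0$. For each $t \in [0,1]$ and each level $n$, only the unique index $k_n(t)$ with $t \in S_{n,k_n(t)}$ contributes, so
\[
|\bPsi(\bxi^{(m)})(t)| \leq \sum_{n \geq 0} \|\bpsi_{n,k_n(t)}\|_\infty \cdot |\bxi^{(m)}_{n,k_n(t)}|.
\]
Given $\eta > 0$, split the sum at a threshold $N$: for $n \leq N$, use the uniform bound $|\bxi^{(m)}_{n,k}|\leq \epsilon_m$ to obtain a contribution at most $\epsilon_m \cdot C_N$ (a finite sum); for $n > N$, once $\epsilon_m \leq 1$, each $\bxi^{(m)}$ satisfies $|\bxi^{(m)}_{n,k}| \leq 2^{n/4}$ and the tail is bounded by $\frac{LK}{\sqrt{2}} \sum_{n>N} (2^{-1/4})^n$. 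Choose $N$ so this tail is below $\eta/2$, then $m$ large enough that $\epsilon_m C_N < \eta/2$.

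The main subtlety is ensuring that the tail estimate remains \emph{uniform} over the convergent sequence, so that the limits in $m$ and $n$ may be interchanged. Two structural features make this possible: (i) the nested disjoint-support property of $\{\bpsi_{n,k}\}_k$ forces a single contributing term per level $n$ when evaluating at a fixed $t$, so the bound reduces to a convergent geometric series rather than summing over the $2^{n-1}$ sibling supports (which would blow up); and (ii) the condition $\|\bxi^{(m)}\|_\infty \leq 1$ automatically places every $\bxi^{(m)}$ in $\uxi\Omega'$ with the common parameters $\delta = 1/2$, $n_\xi = 0$, removing any $m$-dependence from the tail bound.
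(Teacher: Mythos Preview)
Your proof is correct and rests on the same two ingredients as the paper's: bijectivity from Lemma~\ref{lem:injection} together with the definition $\ux\Omega' = \bPsi(\uxi\Omega')$, and continuity from the bound $\|\bpsi_{n,k}\|_\infty \leq LK\,2^{-(n+1)/2}$ combined with the fact that the supports $S_{n,k}$ at a fixed level are disjoint, so only one term per level contributes at any $t$.

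The paper's argument is shorter because it extracts a Lipschitz bound directly: if $\|\bxi\|_\infty \leq \delta$, then for every $t$,
\[
|\bPsi(\bxi)(t)| \;\leq\; \sum_{n\geq 0}\|\bpsi_{n,k_n(t)}\|_\infty\,\delta \;\leq\; \frac{LK}{\sqrt{2}}\,\delta\sum_{n\geq 0}2^{-n/2},
\]
giving $\|\bPsi(\bxi)\|_\infty \leq C\,\|\bxi\|_\infty$ in one stroke, which handles both boundedness and continuity at the origin simultaneously. Your splitting at a threshold $N$ is therefore unnecessary: in the tail you weaken $|\bxi^{(m)}_{n,k}|\leq\epsilon_m$ to the coarser $|\bxi^{(m)}_{n,k}|\leq 2^{n/4}$, whereas keeping the sharper bound already makes the whole series $\leq C\epsilon_m \to 0$ without any head/tail decomposition. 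Nothing is wrong, but you are working harder than needed; the paper's direct Lipschitz estimate is the cleaner route.
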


\begin{proof}
Consider an open ball $\ux B(x,\epsilon)$ of $\ux \Omega'$ of radius $\epsilon$.
If we take $M = L \, K / \sqrt{2}$ as defined in \eqref{eq:UnifBound}, we can choose a real $\delta > 0$ such that
\begin{equation}
\delta < \epsilon M \left( \sum_{n= 0}^{\infty} 2^{-n/2} \right)^{-1} \, .
\nonumber
\end{equation}
Let us consider $\bxi$ in $\uxi \Omega'$ such that $\bPsi(\xi) = x$.
Then by \eqref{eq:UnifBound}, we immediately have that, for all $\bxi'$ in the ball of radius $\uxi B(\xi, \delta)$ of $\uxi \Omega$, $\Vert  \bPsi( \bxi -\bxi')\Vert_{\infty} \leq \epsilon$.
This shows that $\bPsi^{-1}(\ux B(x,\epsilon))$ is open and that $\bPsi$ is continuous for the 
$d-$dimensional uniform norm topology.
\end{proof}

\subsection{The Coefficient application}
In this section of the appendix we show some useful properties of the coefficient application introduced in section \ref{app:CoefAppl}. 

\begin{lemma}\label{lemXi}
The function $\bDelta:\left(C_{0}\big([0,1],\R^{d}\big) ,\mathcal{B}\left(   C_{0}\big([0,1],\R^{d}\big) \right)  \right) \rightarrow \left(  \uxi \Omega,\mathcal{B}\left(\uxi \Omega\right)\right) $ is a measurable linear injection. 
\end{lemma}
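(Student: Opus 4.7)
The plan is to verify the three properties of $\bDelta$ separately: linearity is immediate, measurability reduces to the continuity of each coordinate functional, and injectivity follows from the finite-dimensional bijectivity already established in Section on the Finite-Dimensional Approximations. The linearity is a direct consequence of the definition \eqref{eq:CoeffAppliDef}, since each coordinate map $\bm{x} \mapsto \mathcal{P}(\bdelta_{n,k}, \bm{x})$ inherits the linearity of the dual pairing.

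For measurability, I would observe that each generalized function $\bdelta_{n,k}$ lies in $R_{0}(U, \R^{d\times d})$ and is supported on the finite set $\{l_{n,k}, m_{n,k}, r_{n,k}\}$ (with the convention that $\bdelta_{0,0}$ involves a Dirac at $r_{0,0}$, as used in the proof of Proposition \ref{dualProp}). Consequently the component $\mathcal{P}(\bdelta_{n,k}, \bm{x})$ is a fixed matrix-weighted combination of finitely many evaluation functionals $\bm{x} \mapsto \bm{x}(t_i)$, each of which is continuous on $(C_{0}([0,1],\R^{d}), \Vert \cdot \Vert_{\infty})$. Since the Borel $\sigma$-algebra $\mathcal{B}(\uxi\Omega)$ is generated by the family of cylinder sets $C(\uxi\Omega)$, and each coordinate projection of $\bDelta$ is continuous and hence Borel-measurable, $\bDelta$ itself is Borel-measurable from $C_{0}([0,1],\R^{d})$ to $\uxi\Omega$.

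The injectivity step is the main technical point. Suppose $\bm{x} \in C_{0}([0,1],\R^{d})$ satisfies $\bDelta(\bm{x}) = \bm{0}$. For any fixed $N > 0$ and any $(n,k) \in \I_{N}$, the endpoints $l_{n,k}, m_{n,k}, r_{n,k}$ all belong to $D_{N}$, so the coefficient $\mathcal{P}(\bdelta_{n,k}, \bm{x})$ depends only on the restriction $\ux\bm{p}_{N}(\bm{x})$. Thus the vanishing of $\bDelta(\bm{x})$ on $\I_{N}$ forces $\bDelta_{N}(\ux\bm{p}_{N}(\bm{x})) = \bm{0}$. But we have already seen in the Finite-Dimensional Approximations section that $\bDelta_{N}$ admits a lower block-triangular matrix with invertible diagonal blocks (each of the form $(\bm{g}^{-1}(m_{n,k}))^{T}\cdot\bm{M}_{n,k}$), and in particular $\bPsi_{N}\cdot\bDelta_{N} = Id_{\ux\Omega_{N}}$. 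Therefore $\ux\bm{p}_{N}(\bm{x}) = \bm{0}$, i.e.\ $\bm{x}$ vanishes on $D_{N}$. Letting $N$ vary and invoking the density of $\cup_{N}D_{N}$ in $[0,1]$ together with the continuity of $\bm{x}$ (and $\bm{x}(0) = \bm{0}$), we conclude $\bm{x} \equiv \bm{0}$.

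The main obstacle is of a purely bookkeeping nature: one must carefully verify that every endpoint of every $S_{n,k}$ with $(n,k) \in \I_{N}$ indeed lies in $D_{N}$, so that passing from the infinite-dimensional identity $\bDelta(\bm{x})=\bm{0}$ to the finite-dimensional one $\bDelta_{N}(\ux\bm{p}_{N}(\bm{x}))=\bm{0}$ is legitimate for every $N$. Once this is settled, invertibility of $\bDelta_{N}$ and density of $\cup_{N} D_{N}$ conclude the argument without further analytic input.
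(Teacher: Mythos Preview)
Your proof is correct and, for injectivity, follows essentially the paper's line: both arguments rest on the finite-dimensional identity $\bPsi_{N}\cdot\bDelta_{N}=Id_{\ux\Omega_{N}}$, the paper phrasing this by forming $\bm{d}_{N}(t)=\sum_{(n,k)\in\I_{N}}\bpsi_{n,k}(t)\big(\bDelta(\bm{x})_{n,k}-\bDelta(\bm{y})_{n,k}\big)$ and observing that it coincides with $\bm{x}-\bm{y}$ on $D_{N}$, while you invoke the invertibility of $\bDelta_{N}$ directly. For measurability your route is slightly more economical than the paper's: the paper proves it by an induction on the depth $N$ of cylinder sets, decomposing each $\mathcal{C}_{N}(B)$ as $\mathcal{C}_{N-1}(B')\cap D$ and using continuity of the level-$N$ map $\bDelta_{N}$, whereas you bypass the induction by noting at once that every coordinate functional $\bm{x}\mapsto\mathcal{P}(\bdelta_{n,k},\bm{x})$ is a finite combination of point evaluations and hence continuous, so preimages of generating cylinders are Borel immediately.
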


\begin{proof}
\emph{i}) 
The function $\bDelta$ is clearly linear.\\
\emph{ii}) 
To prove that, $\bDelta$ is injective, we show that for $\bm{x}$ and $\bm{y}$ in $C_{0}\big([0,1],\R^{d}\big)$, $\bm{x} \neq \bm{y}$ implies that $\bDelta(\bm{x}) \neq  \bDelta(y)$. To this end, we fix $\bm{x} \neq \bm{y}$ in $C_{0}\big([0,1],\R^{d}\big)$ equipped with the uniform norm, and 
consider the continuous function
\begin{equation}
\bm{d}_{N}(t) = \sum_{(n,k) \in \I_N}  \bpsi_{n,k}(t) \left( \bDelta(\bm{x})_{n,k} -  \bDelta(\bm{y})_{n,k} \right)\,.
\nonumber
\end{equation}
This function coincides with $\bm{x}-\bm{y}$ on every dyadic numbers in $D_{N}$ and has zero value if $\bDelta(\bm{x}) =  \bDelta(\bm{y})$.
Since $\bm{x} \neq \bm{y}$, there exists $s$ in $]0,1[$ such that $\bm{x}(s) \neq \bm{y}(s)$, and by continuity of $\bm{x}-\bm{y}$, there exists an $\varepsilon>0$ such that $\bm{x} \neq \bm{y}$ on the ball $]s-\varepsilon, s+ \varepsilon[$.
But, for $N$ large enough, there exists $k$, $0 \leq k < 2^{N\!-\!1}$ such that  $\vert s - k2^{-N} \vert < \varepsilon $.
We then necessarily have that $\bDelta(f) \neq \bDelta(g)$, otherwise, we would have $d_{N}(k2^{-N}) = (\bm{x}-\bm{y})(k2^{-N}) = 0$, which would contradict the choice of $\varepsilon$. \\
\emph{iii})
Before proving the measurability of $\bDelta$, we need the following observation.
Consider for $N>0$, the finite dimensional linear function $\bDelta_{N}$ 
\begin{eqnarray}
C_{0}\big([0,1],\R^{d}\big) & \longrightarrow &\left( \R^{d}\right)^{2^{N-1}} 
\nonumber\\
\bm{x} & \longmapsto &  \bDelta_{N}(x) = \large\{  \bDelta(x)_{N,k} \large\}_{(N,k) \in \I_{N}} \, .
\nonumber
\end{eqnarray}
Since for all $(N,k)$, the matrices $\bM_{N,k}$, $\bR_{N,k}$, $\bL_{N,k}$ are all bounded, the function $\bDelta_{N}:\left(C_{0}\big([0,1],\R^{d}\big) ,\mathcal{B}\left(  C_{0}\big([0,1],\R^{d}\big)\right)  \right) \rightarrow \left(  \left( \R^{d}\right)^{2^{N-1}},\mathcal{B}\left(\left( \R^{d}\right)^{2^{N-1}}\right)\right) $ is a continuous linear application.
To show that the function $\bDelta$ is measurable, it is enough to show that the pre-image by $\bDelta$ of the generative cylinder sets of $\mathcal{B}\left(\uxi \Omega \right)$ belong to $\mathcal{B}\left( C_{0}\big([0,1],\R^{d}\big) \right)$.\\
For any $N\! \geq \! 0$, take an arbitrary Borel set 
\begin{equation}
B = \prod_{(n,k) \in \I_{N}} B_{n,k} \quad \in \quad \mathcal{B}\left(\left(\mathbb{R}^{d}\right)^{\I_{N}}\right) \, ,
\end{equation}
and define the cylinder set $\mathcal{C}_{N}(B)$ as
\begin{equation}
\mathcal{C}_{N}(B)
=
\Big \lbrace  \bxi \in \uxi \Omega  
\, \Big \vert \,
\forall \; (n,k) \in  {I}_{N}\, , \bxi_{n,k} \in B_{n,k}\, \Big \rbrace \, ,
\nonumber\\
\end{equation}
and we write the collection of cylinder sets $C$ as 
\begin{equation}
C = \bigcup_{n \geq 0} C_{N} \quad \mathrm{with} \quad C_{N}  =  \bigcup_{B \in \left(\mathbb{R}^{d}\right)^{\I_{N}}} \mathcal{C}_{N}(B) \, .
\nonumber
\end{equation}
We proceed by induction on $N$ to show that the pre-image by $\bDelta$ of any cylinder set in $C$  is in $\mathcal{B}\left( C_{0}\big([0,1],\R^{d}\big) \right)$. For $N\!=\!0$, a cylinder set of $C_{0}$ is of the form $B_{0,0}$ in $\mathcal{B}\left(\R^{d}\right)$, $ \bDelta^{-1}(B) = \lbrace \bm{x} \in C_{0}\big([0,1],\R^{d}\big) \, \vert \, \bm{x}(1) \in \,  \bL^{T}_{0,0} \, \bm{g}^{-1}(r_{0,0}) \left( B_{0,0} \right) \rbrace$, which is measurable for being a cylinder set of $\mathcal{B}\left( C_{0}\big([0,1],\R^{d}\big) \right)$. Suppose now that for $N>0$, for any set $A$ in $C_{N-1}$, the set $\bDelta^{-1}(A)$ is measurable. Then considering a set $A$ in $C_{N}$, there exist  $B$ in $\mathcal{B}\left( \left( \R^d \right)^{\I_{N}} \right)$ such that $A = \mathcal{C}_{N}(B)$.
Define $A'$ in $C_{N}$ such that $A'= \mathcal{C}_{N-1}(B')$, where  
\begin{equation}
 B' = \prod_{(n,k) \in \I_{N-1}} B_{n,k} \, .
\nonumber
\end{equation}
and remark that $A= \mathcal{C}_{N}(B) \subset A'= \mathcal{C}_{N}(B')$.
Clearly, we have that $A =  A' \cap D$, where we have defined the cylinder set $D$ as
\begin{equation}
D = \mathcal{C}_{I_{N}}\left(\prod_{ (N,k) \in \I_{N,k} } B_{N,k}\right) \, .
\nonumber
\end{equation}
Having defined the function $\bDelta_{N}$,
we now have have $ \bDelta^{-1}(A) = \bDelta^{-1}(A' \cap D) =  \bDelta^{-1}(A') \cap \bDelta^{-1}(D) =  \bDelta^{-1}(A') \cap \bDelta^{-1}_{N}(D)$.
Because of the continuity of $\bDelta_{N}$, $\bDelta_{N}^{-1}(D)$ is a Borel set of $\mathcal{B}\left( C_{0}\big([0,1],\R^{d}\big) \right)$.
Since, by hypothesis of recurrence, $\bDelta^{-1}(A')$ is in $\mathcal{B}\left( C_{0}\big([0,1],\R^{d}\big) \right)$, $\bDelta^{-1}(A)$ is also in $\mathcal{B}\left(C_{0}\big([0,1],\R^{d}\big) \right)$ as the intersection of two Borel sets.
The proof of the measurability of $\bDelta$ is complete.
\end{proof}

We now demonstrate Proposition \ref{lemInv}. 

\begin{proposition}
The function $\bDelta:\left(\ux \Omega' ,\mathcal{B}\left(  \ux \Omega' \right)  \right) \rightarrow \left(  \uxi \Omega',\mathcal{B}\left(\uxi \Omega' \right)\right) $ is a measurable linear bijection whose inverse is $\bPsi = \bDelta^{-1}$. 
\end{proposition}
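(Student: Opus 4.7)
The plan is to leverage the work already done: $\bPsi$ has been shown to be a bounded continuous linear bijection from $\uxi\Omega'$ onto $\ux\Omega'$ (Lemma \ref{lem:bijection}), and $\bDelta$ has been shown to be a measurable linear injection from the whole Wiener space $C_0\big([0,1],\R^d\big)$ into $\uxi\Omega$ (Lemma \ref{lemXi}). It then suffices to establish that $\bDelta\circ\bPsi=\mathrm{Id}_{\uxi\Omega'}$, from which the statement of the proposition follows by purely formal manipulations.

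The heart of the argument is the identity
\[
\bDelta\big(\bPsi(\bxi)\big)_{n,k}
\;=\;\mathcal{P}\!\left(\bdelta_{n,k},\,\sum_{(p,q)\in\I}\bpsi_{p,q}\cdot\bxi_{p,q}\right)
\;=\;\sum_{(p,q)\in\I}\mathcal{P}\!\left(\bdelta_{n,k},\,\bpsi_{p,q}\right)\cdot\bxi_{p,q}
\;=\;\bxi_{n,k},
\]
where the final equality uses the duality relation $\mathcal{P}(\bdelta_{n,k},\bpsi_{p,q})=\delta^{n,k}_{p,q}\,\bm{I}_d$ (Proposition \ref{dualProp}). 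The delicate step is justifying the interchange of the pairing with the infinite sum. This is however quite tractable here: $\bdelta_{n,k}$ is a finite linear combination of Dirac masses (supported at $l_{n,k}$, $m_{n,k}$, $r_{n,k}$), so the pairing $\mathcal{P}(\bdelta_{n,k},\cdot)$ is just evaluation at three points (pre-multiplied by constant matrices). Since $\bxi\in\uxi\Omega'$ makes the series $\sum\bpsi_{p,q}\cdot\bxi_{p,q}$ uniformly convergent on $[0,1]$ by Proposition \ref{lem:convergence}, the pointwise evaluation commutes with the sum. Only finitely many terms are then nonzero by the nested-support structure, so the exchange is legitimate and the identity holds.

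From $\bDelta\circ\bPsi=\mathrm{Id}_{\uxi\Omega'}$, three consequences cascade. First, $\bDelta(\ux\Omega')\supseteq \uxi\Omega'$, because every $\bxi\in\uxi\Omega'$ is the image under $\bDelta$ of $\bPsi(\bxi)\in\ux\Omega'$. Second, for any $\bm{x}\in\ux\Omega'$, by surjectivity of $\bPsi:\uxi\Omega'\to\ux\Omega'$ there exists a unique $\bxi\in\uxi\Omega'$ with $\bm{x}=\bPsi(\bxi)$; then $\bDelta(\bm{x})=\bxi\in\uxi\Omega'$, so $\bDelta(\ux\Omega')\subseteq\uxi\Omega'$, and furthermore $\bPsi(\bDelta(\bm{x}))=\bPsi(\bxi)=\bm{x}$, i.e. $\bPsi\circ\bDelta=\mathrm{Id}_{\ux\Omega'}$. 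Combined with the injectivity of $\bDelta$ (Lemma \ref{lemXi}), this proves that $\bDelta:\ux\Omega'\to\uxi\Omega'$ is a linear bijection with inverse $\bPsi$.

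It remains to verify measurability with respect to the Borel structures induced by the uniform and the sup norm topologies respectively. Linearity is inherited directly from Lemma \ref{lemXi}. For measurability, the key observation is that $\bDelta$ is already Borel measurable as a map $C_0\big([0,1],\R^d\big)\to\uxi\Omega$; since $\ux\Omega'\subset C_0\big([0,1],\R^d\big)$ is equipped with the subspace Borel $\sigma$-algebra $\mathcal{B}(\ux\Omega')=\{B\cap\ux\Omega':B\in\mathcal{B}(C_0)\}$ and likewise for $\uxi\Omega'\subset\uxi\Omega$, the restriction $\bDelta|_{\ux\Omega'}:\ux\Omega'\to\uxi\Omega'$ is automatically measurable: for any Borel set $A\in\mathcal{B}(\uxi\Omega')$ of the form $A=A'\cap\uxi\Omega'$ with $A'\in\mathcal{B}(\uxi\Omega)$, one has $\bDelta^{-1}(A)=\bDelta^{-1}(A')\cap\ux\Omega'\in\mathcal{B}(\ux\Omega')$. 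The only point requiring care is to check that the image $\bDelta(\ux\Omega')$ is indeed $\uxi\Omega'$ (rather than just a subset), which has already been settled above. The main subtlety throughout is thus the commutation of the distributional pairing with the uniformly convergent series; the compact and finite-order nature of each $\bdelta_{n,k}$ makes this routine rather than a genuine obstacle.
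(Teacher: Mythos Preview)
Your proof is correct and takes a somewhat different route from the paper's. The paper works on the sample-path side: for $\bm{x}\in\ux\Omega'$ it writes $\bPsi(\bDelta(\bm{x}))(t)=\sum_{(n,k)\in\I}\bpsi_{n,k}(t)\cdot\mathcal{P}(\bdelta_{n,k},\bm{x})$, observes that this coincides with $\bm{x}(t)$ at every endpoint $t\in D$ (because only finitely many $\bpsi_{n,k}(t)$ are nonzero there and $\bPsi_N\bDelta_N=\mathrm{Id}$), and then concludes by density of $D$ and continuity that $\bPsi\circ\bDelta=\mathrm{Id}_{\ux\Omega'}$. You instead work on the coefficient side, proving $\bDelta\circ\bPsi=\mathrm{Id}_{\uxi\Omega'}$ directly from the duality relation $\mathcal{P}(\bdelta_{n,k},\bpsi_{p,q})=\delta^{n,k}_{p,q}\bm{I}_d$, and then derive $\bPsi\circ\bDelta=\mathrm{Id}_{\ux\Omega'}$ formally from the bijectivity of $\bPsi$.

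Your argument is arguably cleaner: the paper's computation tacitly applies $\bPsi$ to $\bDelta(\bm{x})$ before establishing that $\bDelta(\bm{x})\in\uxi\Omega'$, so strictly speaking the series must first be interpreted pointwise at endpoints (where it is finite) and continuity of the limit inferred only afterward. Your route sidesteps this by starting from $\bxi\in\uxi\Omega'$, where uniform convergence is already available, and your explicit justification for commuting the Dirac pairing with the uniformly convergent sum is exactly the point that makes the argument rigorous. The paper's approach, on the other hand, highlights the interpolation structure (agreement on $D_N$) more visibly. Both are short and rest on the same ingredients; they simply verify opposite compositions.
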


\begin{proof}
	Let $x\in \ux \Omega'$ be a continuous function. We have:
	\begin{align*}
		\bPsi(\bDelta(x))(t) &= \sum_{(n,k)\in \I} \bpsi_{n,k}(t)\cdot \bDelta_{n,k}\\
		&= \sum_{(n,k)\in \I} \bpsi_{n,k}(t)\cdot \mathcal{P}(\bdelta_{n,k},x)
	\end{align*}
	This function is equal to $x(t)$ for any $t\in D$ the set of dyadic numbers. Since $D$ is dense in $[0,1]$ and both $x$ and $\bPsi(\bDelta(x))$ are continuous, the two functions, coinciding on the dyadic numbers, are equal for the uniform distance, and hence $\bPsi(\bDelta(x)) = x$. 
\end{proof}


\section{It\^o Formula}\label{append:Ito}

In this section we provide rigorous proofs of Proposition \ref{prop:IPP} and Theorem \ref{theo:Ito} related to It\^o formula. 

\begin{proposition}[Integration by parts]
	Let $(X_t)$ and $(Y_t)$ be two one-dimensional Gauss-Markov processes starting from zero. Then we have the following equality in law:
	\[X_t\,Y_t= \int_0^t X_s \circ dY_s + \int_0^t Y_s \circ dX_s\]
	where $\int_0^t A_s \circ dB_s$ for $A_t$ and $B_t$ two stochastic processes denotes the Stratonovich integral. In terms of It\^o's integral, this formula is written:
	\[X_t\,Y_t= \int_0^t X_s dY_s + \int_0^t Y_s dX_s + \langle X,Y \rangle_t\]
	where the brackets denote the mean quadratic variation. 
\end{proposition}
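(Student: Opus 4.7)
The plan is to exploit the multi-resolution representation of $X$ and $Y$ developed in the paper, apply a pathwise integration by parts at each finite resolution, and pass to the limit. Concretely, let ${}_X\bpsi_{n,k}$ and ${}_Y\bpsi_{n,k}$ denote the respective one-dimensional Schauder bases associated to $X$ and $Y$, and write
\[
X^N_t = \sum_{(n,k)\in\I_N} {}_X\psi_{n,k}(t)\,\Xi^X_{n,k}, \qquad
Y^N_t = \sum_{(n,k)\in\I_N} {}_Y\psi_{n,k}(t)\,\Xi^Y_{n,k},
\]
where the Gaussian coefficients $\Xi^X_{n,k}$ and $\Xi^Y_{n,k}$ are obtained from the same underlying Wiener process driving both $X$ and $Y$. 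By Lemma \ref{propX} and Theorem \ref{theo:PrincipTheo}, for almost every $\omega$ the trajectories $t\mapsto X^N_t(\omega)$ and $t\mapsto Y^N_t(\omega)$ converge uniformly on $[0,1]$ to $X(\omega)$ and $Y(\omega)$.

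The first key observation is that for each fixed $N$, the sample paths $X^N_t(\omega)$ and $Y^N_t(\omega)$ are continuous and piecewise $C^1$, being finite linear combinations of basis functions which are piecewise $C^1$ on the dyadic intervals determined by $D_{N+1}$. Hence the classical fundamental theorem of calculus applies on each subinterval, and summing over the finite partition one obtains, pathwise and without any stochastic calculus,
\[
X^N_t\,Y^N_t \;=\; \int_0^t X^N_s\,(Y^N_s)'\,ds \;+\; \int_0^t Y^N_s\,(X^N_s)'\,ds.
\]
This reduces the problem to identifying the limits of the two integrals on the right-hand side as $N\to\infty$.

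The main technical step, and the principal obstacle, is to show that
\[
\int_0^t Y^N_s\,(X^N_s)'\,ds \;\xrightarrow[N\to\infty]{}\; \int_0^t Y_s\,\circ\,dX_s
\]
in probability (or in $L^2$), with the Stratonovich symbol appearing naturally. The reason to expect the Stratonovich rather than the It\^o interpretation is that $X^N$ is a \emph{smooth} (piecewise $C^1$) approximation of $X$ obtained by conditional expectation on the dyadic grid, so that Wong--Zakai-type principles apply. To make this rigorous I would expand both integrands using the biorthogonal pair $({}_X\bpsi, {}_X\bphi)$ via $({}_X\psi_{n,k})' = \alpha_X\,{}_X\psi_{n,k} + \sqrt{\Gamma_X}\,{}_X\phi_{n,k}$ (equation \eqref{eq:simplePsiPhi}), rewriting $(X^N_s)'\,ds = \alpha_X(s) X^N_s\,ds + \sqrt{\Gamma_X(s)}\,dW^N_s$, where $dW^N_s$ denotes the finite-dimensional partial sum of the Haar-type expansion of white noise. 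The limit of the absolutely continuous part is handled by dominated convergence, while the limit of the martingale part reduces to a Parseval identity on the orthonormal basis ${}_X\bphi_{n,k}$, exactly as in the proof of Lemma \ref{lem:covariance}. Tracking the midpoint evaluation of the piecewise-linear interpolant gives the symmetric Stratonovich rule in the limit.

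Once the two limit identifications are established, passing to the limit in the pathwise identity above yields the Stratonovich form
\[
X_t\,Y_t = \int_0^t X_s\circ dY_s + \int_0^t Y_s\circ dX_s,
\]
and the It\^o form follows from the standard Stratonovich--It\^o conversion, which contributes precisely the bracket $\langle X,Y\rangle_t$ by symmetrization of the two half-corrections. The main obstacle will be to control the cross-convergence of the smooth approximations to the Stratonovich limit, but this is the place where the orthonormality and biorthogonality results (Proposition \ref{orthoProp} and Proposition \ref{dualProp}) make the computation tractable, since they reduce the stochastic convergence to a deterministic Parseval-type identity together with the elementary uniform convergence of $X^N$ and $Y^N$.
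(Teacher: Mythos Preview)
Your overall strategy coincides with the paper's: expand $X$ and $Y$ in their Schauder bases, use the pathwise product rule on the piecewise $C^1$ approximations $X^N,Y^N$, split $(X^N)'$ via $\psi_{n,k}'=\alpha_X\psi_{n,k}+\sqrt{\Gamma_X}\,\phi_{n,k}$ into a drift part and a ``noise'' part, treat the drift part by dominated convergence, and identify the noise part as a Stratonovich integral in the limit.

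The one place where your sketch diverges from the paper, and where it becomes imprecise, is the identification of the Stratonovich limit. You write that ``the limit of the martingale part reduces to a Parseval identity on the orthonormal basis $\phi_{n,k}$, exactly as in the proof of Lemma~\ref{lem:covariance}.'' That is not the mechanism. Parseval in Lemma~\ref{lem:covariance} computes a deterministic covariance kernel; here you must control a \emph{random} bilinear quantity and, crucially, pin down \emph{which} stochastic integral appears. The paper does this by an explicit interval-by-interval computation: on each $[t_i,t_{i+1}]$ it writes $Y^N$ in bridge form (equation~\eqref{eq:XNt}) and $(X^N/g_X)'$ as a constant slope (equation~\eqref{eq:DXNt}), evaluates the resulting integrals $v_i^N,w_i^N$ by Taylor/H\"older expansion, and shows that \emph{both} endpoint weights tend to $\tfrac12 g_X(t_i)$. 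It is precisely this symmetric $\tfrac12$--$\tfrac12$ splitting that forces the Stratonovich interpretation; a Parseval argument would not see this and would not distinguish It\^o from Stratonovich. Your parenthetical ``tracking the midpoint evaluation'' is closer to the truth, but the actual work is the asymptotic analysis of $v_i^N,w_i^N$, not an orthogonality identity.
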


\begin{proof}
	We assume that $X$ and $Y$ satisfy the equations:
	\[\begin{cases}
	dX_t &= \alpha_\dX(t) X_t + \sqrt{\Gamma_\dX(t)} \, dW_t\\
	dY_t &= \alpha_\dY(t) X_t + \sqrt{\Gamma_\dY(t)} \, dW_t
	\end{cases}\]
	and we introduce the functions  $f_\dX,\,f_\dY, \, g_\dX, \, g_\dY$ such that $X_t=g_\dX(t) \int_0^t f_\dX(s)\,$ and  $Y_t=g_\dY(t) \int_0^t f_\dY(s)\,$.
	
	 We define $(\uX \psi_{n,k})_{(n,k)\in \I}$and  $(\uY \psi_{n,k})_{(n,k)\in \I}$ the construction bases of the processes $X$ and $Y$. 
	 Therefore, using Theorem \ref{theo:PrincipTheo}, there exist $(\uX \Xi_{n,k})_{(n,k) \in \I}$ and  $(\uY \Xi_{p,q})_{(p,q) \in \I}$ standard normal independent variables such that $X = \sum_{(n,k) \in \I}\uX  \psi_{n,k} \cdot \uX \Xi_{n,k}$ and $Y =\sum_{(p,q) \in \I} \uY \psi_{n,k} \cdot \uY \Xi_{n,k}.$ 
	 and we know that the processes $X$ and $Y$ are almost-sure uniform limits when $N\to \infty$ of the processes $X^N$ and $Y^N$ defined as the partial sums:
	\[X^N =\sum_{(n,k)\in\I_N} \uX  \psi_{n,k} \cdot \uX \Xi_{n,k} \qquad \text{and} \qquad Y^N =\sum_{(p,q)\in\I_N}  \uY \psi_{n,k} \cdot \uY \Xi_{n,k}.\]
	Using the fact that the functions $\uX \psi_{n,k}$ and  $\uY \psi_{n,k}$ have piecewise continuous derivatives, we have:
	\begin{eqnarray*}
		X^N_t\,Y^N_t &=& \sum_{(n,k) \in I_N}  \sum_{(p,q) \in I_N} \uX \psi_{n,k}(t)\, \uY \psi_{p,q}(t) \, \uX \Xi_{n,k}  \, \uY \Xi_{p,q}\\
		&=& \sum_{(n,k) \in I_N}  \sum_{(p,q) \in I_N} \uX \Xi_{n,k}  \, \uY \Xi_{p,q} \int_0^t \frac{d}{ds}\left( \uX \psi_{n,k}(s) \uY \psi_{p,q}(s)\right)\,dt\\
		&=& \sum_{(n,k) \in I_N}  \sum_{(p,q) \in I_N} \uX \Xi_{n,k}  \, \uY \Xi_{p,q} \int_0^t \left( \uX \psi _{n,k}'(s)  \uY \psi_{p,q}(s) +\uX \psi _{n,k}(s)  \uY \psi'_{p,q}(s)\right)\,ds\\
	\end{eqnarray*}
	Therefore we need to evaluate the piecewise derivative of the functions $\uX \psi_{n,k}$ and  $\uY \psi_{n,k}$. We know that 
	\[\frac{1}{f_\dX}\left(\frac{\uX \psi _{n,k}}{g_\dX}\right)'(t) = \uX \phi_{n,k}(t)\]
	which entails:
	\[\uX \psi'_{n,k} = \alpha_\dX\, \uX \psi_{n,k} + g_\dX \,f_\dX \,\uX \phi_{n,k} = 	\alpha_\dX \uX \psi_{n,k} + \sqrt{\Gamma_\dX} \, \uX \phi_{n,k}\]
	and similarly so for the process $Y$. Therefore, we have:
	\begin{eqnarray*}
	X^N \,Y^N = A^{n,k}_{p,q} + B^{n,k}_{p,q} + C^{n,k}_{p,q} + D^{n,k}_{p,q} \, ,
	\end{eqnarray*}
	with 
	\begin{eqnarray*}
 	A_t & = & \sum_{(n,k) \in I_N}  \sum_{(p,q) \in I_N} \left(  \int_0^t \alpha_\dX(s) \, \uX \psi_{n,k}(s)\, \uY \psi_{p,q}(s)\,ds \right) \uX \Xi_{n,k}  \, \uY \Xi_{p,q} \\
 	B_t & = & \sum_{(n,k) \in I_N}  \sum_{(p,q) \in I_N} \left( \int_0^t  \sqrt{\Gamma_\dX(s)} \uX \phi_{n,k}(s)\, \uY  \psi_{p,q}(s) \,ds \right) \uX \Xi_{n,k}  \, \uY \Xi_{p,q} \\
  	C_t & = &  \sum_{(n,k) \in I_N}  \sum_{(p,q) \in I_N} \left( \int_0^t \alpha_\dY(s) \, \uX \psi_{n,k}(s)\, \uY \psi_{p,q}(s)  \,ds \right) \uX \Xi_{n,k}  \, \uY \Xi_{p,q} \\
  	 D_t & = &\sum_{(n,k) \in I_N}  \sum_{(p,q) \in I_N}  \left( \int_0^t \sqrt{\Gamma_\dY(s)} \, \uY \phi_{n,k}(s)\,\uX \psi_{p,q}(s) \right) \,ds \uX \Xi_{n,k}  \, \uY \Xi_{p,q} 
	\end{eqnarray*}
	We easily compute:
	\begin{eqnarray*}
		A_t+C_t = \int_0^t (\alpha_\dX(s)+\alpha_\dY(s)) X^N(s)\,Y^N(s)\,ds.
	\end{eqnarray*}
	For $t\in [0,1]$ as it is our case, $X^N(s)$ and $Y^N(s)$ are both almost surely finite for all $t$ in $[0,1]$. For almost all $\uY \xi$ and $\uY \xi$ drawn with respect to the law of the Gaussian infinite vector $\Xi$, we therefore have, by the Lebesgue's dominated convergence theorem that this integral converges almost surely towards
	\[\int_0^t (\alpha_\dX(s)+\alpha_\dY(s)) X(s)\,Y(s)\,ds.\]
	The other two terms $B_t$ and $D_t$ necessitate a more thorough analysis, and we treat it as follows. Let us start by considering the first one of this term:
	\begin{eqnarray*}
		B_t &=&\int_0^1 \mathbbm{1}_{[0,t]}(s) \sqrt{\Gamma_\dX(s)} \sum_{(n,k) \in I_N}  \sum_{(p,q) \in I_N} \uX \phi_{n,k}(s)\, \uY \psi_{p,q}(s) \, \uX \Xi_{n,k}  \, \uY \Xi_{p,q} \,ds\\
		&=& \sum_{t_i\in \mathcal{D}_N \setminus\{1\}} \int_{t_i}^{t_{i+1}} \mathbbm{1}_{[0,t]}(s) \sqrt{\Gamma_\dX(s)} \left(\sum_{(n,k)\in\I_N} \uX  \phi_{n,k}(s) \cdot \uX \Xi_{n,k}\right)\,Y^{N}(s) \,ds\\
	 &=& \sum_{t_i\in \mathcal{D}_N \setminus\{1\}} \int_{t_i}^{t_{i+1}} \mathbbm{1}_{[0,t]}(s) \sqrt{\Gamma_\dX(s)} \left(\frac{1}{f_\dX(s)}\left(\frac{X^N}{g_\dX}(s)\right)'\right) \, Y^N(s)\,ds
	\end{eqnarray*}
	Let us now have a closer look at the process $X^N_t$ for $t \in [t_i,t_{i+1}]$ where $[t_i, t_{i+1}]=S_{N,i}$ for $i$ such that $(N,i)\in \I$.
	Because of the structure of our construction, we have:
	\begin{equation}\label{eq:XNt}
		Y^N(t) = \frac{g_\dY(t)}{g_\dY(t_i)}\frac{h_\dY(t_{i+1})-h_\dY(t)}{h_\dY(t_{i+1})-h_\dY(t_i)}\cdot Y_{t_i} + \frac{g_\dY(t)}{g_\dY(t_{i+1})}\frac{h_\dY(t)-h_\dY(t_{i})}{h_\dY(t_{i+1})-h_\dY(t_i)}\cdot Y_{t_{i+1}}
	\end{equation}
	and 
	\begin{equation}\label{eq:DXNt}
			\frac{1}{f_\dX(t)}\left(\frac{X^N}{g_\dX}\right)'(t) = \frac{f_\dX(t)}{h_\dX(t_{i+1})-h_\dX(t_{i})}\left(\frac{X_{t_{i+1}}}{g_\dX(t_{i+1})} - \frac{X_{t_{i}}}{g_\dX(t_{i})}\right).
		\end{equation}
	We therefore have:
	\begin{eqnarray*}
		&&\int_{t_i}^{t_{i+1}}  \left(\frac{\mathbbm{1}_{[0,t]}(s) \sqrt{\Gamma_\dX(s)}}{f_\dX(s)}\frac{d}{ds}\left(\frac{X^N(s)}{g_\dX(s)}\right)\right) \, Y^N(s)\,ds\\
		&& \qquad =\int_{t_i}^{t_{i+1}} \frac{ \mathbbm{1}_{[0,t]}(s) \sqrt{\Gamma_\dX(s)} f_\dX(s)}{{h_\dX(t_{i+1})-h_\dX(t_{i})}} g_\dY(s)\Bigg [ \frac{h_\dY(t_{i+1})-h_\dY(s)}{h_\dY(t_{i+1})-h_\dY(t_i)} \frac{Y_{t_i}}{g_\dY(t_i)} +\\
		&& \qquad \qquad \qquad \frac{h_\dY(s)-h_\dY(t_i)}{h_\dY(t_{i+1})-h_\dY(t_i)} \frac{Y_{t_{i+1}}}{g_\dY(t_{i+1})}\Bigg]\; ds\; \left(\frac{X_{t_{i+1}}}{g_\dX(t_{i+1})} - \frac{X_{t_{i}}}{g_\dX(t_{i})} \right)\\
		&& \qquad = \left[ v_i(t)^N Y_{t_i}^N +w_i(t)^N Y_{t_{i+1}}^N\right]  \left(\frac{X_{t_{i+1}}}{g_\dX(t_{i+1})} - \frac{X_{t_{i}}}{g_\dX(t_{i})} \right)
	\end{eqnarray*}
	with 
	\[
	\begin{cases}
		v_i^N(t) &= \displaystyle \int_{t_i}^{t_{i+1}} \frac{\mathbbm{1}_{[0,t]}(s)  \sqrt{\Gamma_\dX}(s) f_\dX(s)}{h_\dX(t_{i+1}) - h_\dX(t_i)} \frac{g_\dY(s)}{g_\dY(t_i)} \frac{h_\dY(t_{i+1})-h_\dY(s)}{h_\dY(t_{i+1})-h_\dY(t_i)}\, ds\\
	w_i^N(t) &= \displaystyle \int_{t_i}^{t_{i+1}}\frac{\mathbbm{1}_{[0,t]}(s)  \sqrt{\Gamma_\dX}(s) f_\dX(s)}{h_\dX(t_{i+1}) - h_\dX(t_i)} \frac{g_\dY(s)}{g_\dY(t_{i+1})} \frac{h_\dY(s)-h_\dY(t_{i})}{h_\dY(t_{i+1})-h_\dY(t_i)}\,ds
	\end{cases}
		\]
		Let us denote $\delta_N$ the time step of the partition $\delta_N = \max_{t_i\in \mathcal{D}_N\setminus \{1\}} (t_{i+1}-t_i)$, which is smaller than $\rho^N$ with $\rho \in (0,1)$ from the assumption made in section \ref{sec:supports}. Moreover, we know that the functions $g_\dX$, $g_\dY$, $h_\dX$ and $h_\dY$ are continuously differentiable, and since $\sqrt{\Gamma}_\dX$ and  $\sqrt{\Gamma}_\dY$ are $\delta$-H\"older, so are $f_\dX$ and $f_\dY$. When $N \to \infty$ (i.e. when $\delta_N \to 0$), using Taylor and H\"older expansions for the differential functions, we can further evaluate the integrals we are considering. Let us first assume that $t>t_{i+1}$. We have:
		\begin{align*}
			v_i(t)^N &= \int_{t_i}^{t_{i+1}} \frac{\sqrt{\Gamma_\dX}(s) f_\dX(s)}{h_\dX(t_{i+1}) - h_\dX(t_i)} \frac{g_\dY(s)}{g_\dY(t_i)} \frac{h_\dY(t_{i+1})-h_\dY(s)}{h_\dY(t_{i+1})-h_\dY(t_i)}\, ds\\
			& =\frac{ \sqrt{\Gamma_\dX}(t_i)\big(1+O(\delta_N^\delta)\big) \, f_\dX(t_i)\big(1+O(\delta_N^\delta)\big)}{f_\dX(t_{i})^2 (t_{i+1}-t_i)\big(1+ O(\delta_N)\big)} \\
			& \qquad \frac{g_\dY(t_i) \big(1+ O(\delta_N)\big)}{g_\dY(t_i)}\int_{t_i}^{t_{i+1}}  \frac{f_\dY(t_i)^2\,(t_{i+1}-s)\big(1+O(\delta_N)\big)}{f_\dY(t_i)^2\,(t_{i+1}-t_i)\big(1+O(\delta_N)\big)}\, ds\\
			&= \frac{\sqrt{\Gamma_\dX}(t_i) }{f_\dX(t_i)(t_{i+1}-t_i)} \left( \int_{t_i}^{t_{i+1}}  \frac{t_{i+1}-s}{t_{i+1}-t_i}\, ds \right) \bigg(1+O(\delta_N)+O(\delta_N^{\delta})\bigg)\\
			& = \frac 1 2 g_\dX(t_i) + O(\delta_N + \delta_N^{\delta})
		\end{align*}
Similarly, we show that that $w_i^N(t) = \frac 1 2 g_\dX(t_{i})+ O(\delta_N + \delta_N^{\delta})$ when $N \to \infty$. If $t<t_i$, we have $v_i^N(t) = w_i^N(t)= 0$ and for $t$ in $[t_{i_0}, t_{i_0}+1)$ we have:
\[\begin{cases}
	v_{i_0}^N(t) &= \displaystyle \frac{g_\dX(t_{i_0})}{ 2}  \left(\frac{t_{{i_0}+1}-t}{t_{{i_0}+1}-t_{i_0}}\right)^{2} + O(\delta_N+\delta_N^{\delta})\\
	w_{i_0}^N(t) &= \displaystyle  \frac{g_\dX(t_{i_0})}{ 2}\left(\frac{t-t_{i_0}}{t_{{i_0}+1}-t_{i_0}}\right)^{2}+ O(\delta_N+\delta_N^{\delta}) = v_{i_0}^N(t)+ O(\delta_N+\delta_N^{\delta})
\end{cases}\] 
We then finally have:
		\begin{multline*}
			B_t =  \sum_{t_i\in \mathcal{D}_N;\; t_{i+1}\leq t} \frac{ g_\dX(t_i)}{2} \left(Y_{t_i}^N + Y^N_{t_{i+1}}\right) \left(\frac{X_{t_{i+1}}^N}{g_\dX(t_{i+1})} - \frac{X_{t_{i}}^N}{g_\dX(t_{i})} \right) \\
			+ \frac { g_\dX(t_{i_0})}{2}\left(\frac{t_{{i_0}+1}-t}{t_{{i_0}+1}-t_{i_0}}\right)^{2}\left(Y_{t_{i_0}}^N + Y^N_{t}\right) \left(\frac{X_{t_{i+1}}^N}{g_\dX(t)} - \frac{X_{t_{i_0}}^N}{g_\dX(t_{i_0})} \right) + O(\delta_N+\delta_N^{\delta})
		\end{multline*}
Moreover, we observe that the process $X_t/g_\dX(t) = \int_{t_{i}}^{t_{i+1}} f_\dX(s)\, dW_s$ is a martingale, and by definition of Stratonovich integral for martingale processes, we have:
\[
	 B_t \mathop{\longrightarrow}\limits_{N\to \infty} \int_{0}^t g_\dX(s) Y_s \circ d \left(X_s/g_\dX(s) \right) = \int_{0}^t \sqrt{\Gamma_\dX}(s) Y_s \circ dW(s)
			\]
		where $\circ$ is used to denote the Stratonovich stochastic integral and the limit is taken in distribution. Notice that the fact that the sum converges towards Stratonovich integral does not depend on the type of sequence of partition chosen which can be different from the dyadic partition. 
		Putting all these results together, we obtain the equality in law:
		\begin{eqnarray*}
			X_t\,Y_t &= &\int_0^t \alpha_\dX(s) X_s \,Y_s \,ds + \int_0^t \sqrt{\Gamma_\dX}(s) Y_s \circ dW_s +	\\
			&&	\qquad	\int_0^t \alpha_\dY(s) X_s \,Y_s \,ds + \int_0^t \sqrt{\Gamma_\dY}(s) X_s \circ dW_s \, ,
		\end{eqnarray*}
		which is exactly the integration by parts formula we were searching for. The integration by parts formula for It\^o stochastic integral directly comes from the relationship between Stratonovich and It\^o stochastic integral. 
\end{proof}

\begin{theorem}[It\^o]
	Let $X$ be a Gauss-Markov process and $f$ in $C^2(\R)$. The process $f(X_t)$ is a Markov process and satisfies the relation:
	\begin{equation}\label{eq:Ito2}
		f(X_t)=f(X_0)+\int_0^t f'(X_s)dX_s + \frac 1 2 \int_0^t f''(X_s) d\langle X\rangle_s
	\end{equation}
\end{theorem}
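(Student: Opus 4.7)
The plan is to bootstrap from the integration by parts formula of Proposition \ref{prop:IPP} up to the full It\^o formula in three stages: polynomials by induction, $C^{2}$ functions by polynomial approximation on compacts, and finally a localization argument to handle the unboundedness of $X_{t}$. Throughout, the equality \eqref{eq:Ito2} will be understood in law (in fact in $L^{2}(\Omega)$, which is enough to pass limits inside the Lebesgue and It\^o integrals).

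First I would establish the formula for $f(x)=x^{n}$, $n\geq 1$, by induction on $n$. The case $n=1$ is tautological. Assume it holds for $n-1$. Applying Proposition \ref{prop:IPP} to the pair $X_{t}$ and $Y_{t}=X_{t}^{n-1}$ and writing the result in It\^o form gives
\[
X_{t}^{n}=\int_{0}^{t} X_{s}^{n-1}\,dX_{s}+\int_{0}^{t} X_{s}\,dX_{s}^{n-1}+\langle X,X^{n-1}\rangle_{t}.
\]
The induction hypothesis expresses $dX_{s}^{n-1}$ as $(n-1)X_{s}^{n-2}dX_{s}+\tfrac12(n-1)(n-2)X_{s}^{n-3}d\langle X\rangle_{s}$, and a parallel argument, repeating the proof of Proposition \ref{prop:IPP} but tracking the quadratic covariation of $X$ and $X^{n-1}$ through the same telescoping sum used to identify the $v_{i}^{N},w_{i}^{N}$ coefficients, yields $d\langle X,X^{n-1}\rangle_{s}=(n-1)X_{s}^{n-2}d\langle X\rangle_{s}$. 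Collecting the terms gives exactly \eqref{eq:Ito2} for $x^{n}$. By linearity the formula holds for every polynomial.

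Second, for a general $f\in C^{2}(\R)$ and an arbitrary $R>0$, I would approximate $f$ on $[-R,R]$ in $C^{2}$ norm by a sequence of polynomials $P_{k}$: this is classical, obtained by mollifying $f$ and then invoking Weierstrass on $f$, $f'$, $f''$ simultaneously. Writing \eqref{eq:Ito2} for each $P_{k}$ and subtracting, on the event $\{\sup_{s\leq t}|X_{s}|\leq R\}$ the Lebesgue integral $\int_{0}^{t}(P_{k}''-f'')(X_{s})\,d\langle X\rangle_{s}$ tends to zero deterministically, while the It\^o integrals $\int_{0}^{t}(P_{k}'-f')(X_{s})\,dX_{s}$ tend to zero in $L^{2}$ by It\^o's isometry because $\sqrt{\Gamma}$ is bounded on $[0,1]$ and $\|P_{k}'-f'\|_{\infty,[-R,R]}\to 0$. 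Passing to the limit gives the identity on this event.

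Finally, to get rid of the localization, I would introduce the stopping times $\tau_{R}=\inf\{s\geq 0:|X_{s}|\geq R\}\wedge t$. The previous step yields \eqref{eq:Ito2} for $X_{s\wedge\tau_{R}}$, and letting $R\to\infty$ uses $\tau_{R}\uparrow t$ almost surely (sample path continuity of $X$ ensures $\sup_{s\leq t}|X_{s}|<\infty$ a.s.) together with the continuity of $f,f',f''$ on compacts to pass to the limit in each term.

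The main obstacle is the second stage: ensuring simultaneous uniform approximation of $f$, $f'$ and $f''$ by a single sequence of polynomials, and controlling the convergence of the stochastic integral $\int_{0}^{t}(P_{k}'-f')(X_{s})\,dX_{s}$ through It\^o's isometry while $X_{s}$ ranges over a compact set. The Markov property of $f(X_{t})$ then follows from \eqref{eq:Ito2} via the standard observation that the right-hand side depends on $X$ only through its path from time $s$ onwards once $X_{s}$ is known, combined with the Markov property of $X$.
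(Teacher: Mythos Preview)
Your proposal is correct and follows essentially the same route as the paper: establish \eqref{eq:Ito2} first for polynomials via the integration by parts of Proposition \ref{prop:IPP}, then approximate a general $C^{2}$ function by polynomials uniformly together with its first two derivatives, and pass to the limit after localizing by the stopping times $\tau_{R}=\inf\{s:|X_{s}|>R\}$. The paper packages the polynomial step by observing that the set of $f$ satisfying \eqref{eq:Ito2} is an algebra containing the identity, which is equivalent to your explicit induction on monomials.
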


\begin{proof}
	The integration by parts formula directly implies It\^o's formula through a density argument, as follows. Let $\mathcal{A}$ be the set of functions $f \in C^2([0,1],\R)$ such that equation \eqref{eq:Ito2} is true. It is clear that $\mathcal{A}$ is a vector space. Moreover, because of the result of Proposition \ref{prop:IPP}, the space $\mathcal{A}$ is an algebra. Since all constant functions and the identity function $f(x)=x$ trivially belong to $\mathcal{A}$, the algebra $\mathcal{A}$ contains all polynomial functions. 
	
	Let now $f \in C^2([0,1],\R)$. There exists a sequence of polynomials $P_k$ such that $P_k$ (resp. $P_k$, $P_k''$) uniformly converges towards $f$ (resp. $f'$, $f''$). Let us denote $U_n$ the sequence of stopping times
	\[U_n=\inf \{t\in [0,1]\;;\; \vert X_t\vert > n\}.\]
	This sequences grows towards infinity. We have:
	\[
		P_k(X_{t\wedge U_n}) - P_k(X_{0}) = \int_0^t P_k'(X_s)\mathbbm{1}_{[0,U_n]}(s) dX_s + \frac 1 2 \int_0^t P_k''(X_s) \mathbbm{1}_{[0,U_n]}(s) d\langle X \rangle_s
	\]
	On the interval $[0,U_n]$, we have $X_t \leq n$, which allows to use Lebesgue's dominated convergence theorem on each term of the equality. We have:
	\begin{multline*}
		\mathbbm{E}\left [\Bigg \vert \int_0^t P_k'(X_s)\mathbbm{1}_{[0,U_n]}(s) dX_s- \int_0^t F'(X_s)\mathbbm{1}_{[0,U_n]}(s) dX_s \Bigg \vert^2\right] \\= \mathbbm{E}\left [\int_0^t \left \vert P_k'(X_s) - F'(X_s)\right\vert ^2 \mathbbm{1}_{[0,U_n]}(s) d\langle X\rangle _s \right]
	\end{multline*}
	which converges towards zero because of Lebesgue's theorem for Steljes integration. The same argument directly applies to the other term. Therefore, letting $k \to \infty$, we proved It\^o's formula for $X_{t\wedge U_n}$, and eventually letting $n\to \infty$ obtain the desired formula. 
\end{proof}


\section{Trace Class Operator}\label{append:TraceClass}

In this section, we demonstrate the Theorem \ref{thCompactGG}, which proves instrumental to extend the finite-dimensional change of variable formula to the infinite dimensional case.
The proof relies on the following lemma:

\begin{lemma}\label{lemGGFF}
The operator $\ab S - Id: l^{2}(\mathbb{R}) \rightarrow l^{2}(\mathbb{R})$ is isometric to the operator $\ab R: l^{2}(\mathbb{R}) \rightarrow l^{2}(\mathbb{R})$ defined by
\begin{eqnarray}
\ab R[x] = \int_{0}^{1} \ab R(t,s) \, x(s) \, ds \, ,
\end{eqnarray}
with the kernel
\begin{eqnarray}
\ab R(t,s)
=
 \big( \alpha(t \vee s) - \beta(t \vee s) \big) \frac{f_{\alpha}(t \wedge s)}{f_{\alpha}(t \vee s)}
 + 
f _{\alpha}(t) \left( \int_{t \vee s}^{1} \frac{\big( \alpha(u) - \beta(u) \big)^{2}}{{f_{\alpha}}^{2}(u)} \, du \right) f_{\alpha} (s) \, .
 \nonumber
\end{eqnarray}
\end{lemma}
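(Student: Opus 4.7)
The plan is to transport the operator $\ab S - Id$, which acts on the sequence space $l^{2}(\I)$, into an equivalent operator on $L^{2}[0,1]$ via the canonical isometry given by the complete orthonormal family $\ag\phi_{n,k}$ established in Proposition \ref{orthoProp}. Specifically, let $\ag\Phi : l^{2}(\I) \to L^{2}[0,1]$ be the unitary $\xi \mapsto \sum_{(n,k)\in\I}\xi_{n,k}\,\ag\phi_{n,k}$; then $\tilde{\ab R} := \ag\Phi \circ (\ab S - Id) \circ \ag\Phi^{-1}$ is an isometric copy of $\ab S - Id$ acting on $L^{2}[0,1]$, and it suffices to prove that $\tilde{\ab R}$ is the integral operator with kernel $\ab R(t,s)$.

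Using the explicit matrix coefficients $\ab S^{n,k}_{p,q}$ obtained in Lemma \ref{lemCoefGG} and the Parseval identity $\delta^{n,k}_{p,q} = (\ag\phi_{n,k},\ag\phi_{p,q})$, one reads off
\[
(\ab S - Id)^{p,q}_{n,k} = \int_{0}^{1}\frac{\alpha-\beta}{\sqrt{\Gamma}}\bigl(\ag\phi_{p,q}\ag\psi_{n,k}+\ag\phi_{n,k}\ag\psi_{p,q}\bigr)\,dv + \int_{0}^{1}\frac{(\alpha-\beta)^{2}}{\Gamma}\,\ag\psi_{p,q}\ag\psi_{n,k}\,dv.
\]
Plugging this into $K(t,u) = \sum_{(n,k),(p,q)} \ag\phi_{p,q}(t)\,(\ab S - Id)^{p,q}_{n,k}\,\ag\phi_{n,k}(u)$ and formally interchanging sums and integrals yields three integrals over $v\in[0,1]$ in which the sums over $(n,k)$ and $(p,q)$ are decoupled and reduce to two closure identities.

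The two identities needed are: (i) the resolution of identity from Corollary \ref{cor:identDecomp}, $\sum_{(n,k)\in\I}\ag\phi_{n,k}(t)\,\ag\phi_{n,k}(v) = \delta(t-v)$, and (ii) its partner obtained by applying the integral operator $\ag\mathcal{K}$ in the $v$-variable to (i),
\[
\sum_{(n,k)\in\I}\ag\phi_{n,k}(u)\,\ag\psi_{n,k}(v) \;=\; g_{\alpha}(v)\,f_{\alpha}(u)\,\mathbbm{1}_{[0,v]}(u),
\]
which follows from $\ag\psi_{n,k}(v) = g_{\alpha}(v)\int_{0}^{v}f_{\alpha}(r)\ag\phi_{n,k}(r)\,dr$ and (i). Substituting (i) and (ii) in the formal expression for $K(t,u)$ collapses the Dirac masses, and using $g_{\alpha}f_{\alpha}=\sqrt{\Gamma}$ simplifies the coefficients to
\[
K(t,u) = \frac{\alpha(t)-\beta(t)}{f_{\alpha}(t)}f_{\alpha}(u)\,\mathbbm{1}_{u\leq t} + \frac{\alpha(u)-\beta(u)}{f_{\alpha}(u)}f_{\alpha}(t)\,\mathbbm{1}_{t\leq u} + f_{\alpha}(t)f_{\alpha}(u)\int_{t\vee u}^{1}\frac{(\alpha-\beta)^{2}}{f_{\alpha}^{2}}\,dv,
\]
which is exactly $\ab R(t,u)$ upon combining the first two terms as $(\alpha(t\vee u)-\beta(t\vee u))\,f_{\alpha}(t\wedge u)/f_{\alpha}(t\vee u)$.

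The main obstacle is making the formal distributional manipulations rigorous: the sums $\sum_{(n,k)}\ag\phi_{n,k}(t)\ag\phi_{n,k}(v)$ and $\sum_{(n,k)}\ag\phi_{n,k}(u)\ag\psi_{n,k}(v)$ converge only in the sense of distributions (or as operator kernels), not pointwise. To handle this cleanly I would work with partial sums $\I_{N}$ in place of $\I$, obtaining a sequence of bona fide $L^{2}$ kernels $K_{N}(t,u)$ corresponding to the finite rank operator $\ab S_{N}-Id_{\uxi\Omega_{N}}$; the Parseval-type identities at finite rank are legitimate inner products in $L^{2}_{\bm{f}}$, and using the $L^{2}$-convergence of $\ag\Phi_{N}\to\ag\Phi$ combined with the Gelfand-triple dual pairing ensures that $K_{N}\to \ab R$ as operator kernels. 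The isometric equivalence of $\ab S-Id$ and $\ab R$ then follows, and the Hilbert--Schmidt norm of $\ab R$ can be computed directly from the explicit kernel, yielding the trace formula required in Theorem \ref{thCompactGG}.
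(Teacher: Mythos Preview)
Your proposal is correct in substance and uses the same isometry $\ag\Phi$ as the paper, but you run the computation in the opposite direction and thereby make it harder than necessary. You start from the known matrix entries $(\ab S-Id)^{p,q}_{n,k}$ and attempt to \emph{reconstruct} the kernel $K(t,u)$ by summing $\sum_{(n,k),(p,q)}\ag\phi_{p,q}(t)(\ab S-Id)^{p,q}_{n,k}\ag\phi_{n,k}(u)$; this forces you through distributional identities (the resolution of the identity and its $\mathcal{K}$-image) and, as you correctly note, a finite-rank limiting argument to justify the interchanges. The paper instead goes the easy way: it starts from the explicit kernel $\ab R(t,s)$, computes the ordinary double integrals
\[
\bigl(\ag\phi_{n,k},\ab R[\ag\phi_{p,q}]\bigr)=\int_{0}^{1}\!\!\int_{0}^{1}\ab R(t,s)\,\ag\phi_{n,k}(t)\,\ag\phi_{p,q}(s)\,dt\,ds,
\]
splits them into the three pieces $A^{n,k}_{p,q}+B^{n,k}_{p,q}+C^{n,k}_{p,q}$, and by a straightforward Fubini swap (using $\int_{0}^{t}f_{\alpha}(s)\ag\phi_{p,q}(s)\,ds=\ag\psi_{p,q}(t)/g_{\alpha}(t)$) recovers exactly the expression of Lemma~\ref{lemCoefGG}. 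Since two bounded operators on $L^{2}$ with identical matrix entries in a complete orthonormal basis coincide, this immediately gives $\ag\Phi^{T}\circ\ab R\circ\ag\Phi=\ab S-Id$ with no distributions and no limiting procedure. Your route works, but the paper's direction is both shorter and fully rigorous from the outset.
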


\begin{proof}
Notice first that
\begin{eqnarray}
 \big( \alpha(t \vee s) - \beta(t \vee s) \big) \frac{f_{\alpha}(t \wedge s)}{f_{\alpha}(t \vee s)}
=
\bold{1}_{\lbrace s < t \rbrace} \big( \alpha(t) - \beta(t) \big) \frac{f_{\alpha}(s)}{f_{\alpha}(t)}
+
\bold{1}_{\lbrace s \geq t \rbrace} \big( \alpha(s) - \beta(s) \big) \frac{f_{\alpha}(t)}{f_{\alpha}(s)} \, ,
 \nonumber
\end{eqnarray}
which leads to write in $L^{2}[0,1]$, for any $(n,k)$ and $(p,q)$ in $\I$:
\begin{eqnarray*}
\left(  \ag \phi_{n,k} ,\ab R[\ag  \phi_{p,q}] \right)
=
\int_{0}^{1} \int_{0}^{1} \ab R(t,s) \ag \phi_{n,k}(t) \ag  \phi_{p,q}(s) \, dt \, ds
=
A^{n,k}_{p,q} + B^{n,k}_{p,q} + C^{n,k}_{p,q} \, ,
\end{eqnarray*}
with
\begin{eqnarray}
A^{n,k}_{p,q}
 &=&
 \int_{0}^{1}   \frac{\alpha(t) - \beta(t)}{f_{\alpha}(t)} \ag \phi_{n,k}(t) \left(  \int_{0}^{t} f_{\alpha}(s)  \ag  \phi_{p,q}(s) \, ds \right) \, dt  \, ,  \nonumber\\
&=&
 \int_{0}^{1}  \frac{\alpha(t) - \beta(t)}{\sqrt{\Gamma(t)}} \ag \phi_{n,k}(t)   \ag  \psi_{p,q}(t)  \, dt \, ,
 \nonumber\\
B^{n,k}_{p,q}
 &=&
 \int_{0}^{1} \ag f(t) \phi_{n,k}(t)   \left(  \int_{t}^{1}  \frac{\alpha(s) - \beta(s)}{f_{\alpha}(s)}  \ag  \phi_{p,q}(s) \, ds \right) \, dt   \, , \nonumber\\
&=&
 \int_{0}^{1} \frac{\alpha(t) - \beta(t)}{\sqrt{\Gamma(t)}} \ag \psi_{n,k}(t)   \ag  \phi_{p,q}(t)  \, dt \, ,
 \nonumber
\end{eqnarray}
and 
\begin{eqnarray}
C^{n,k}_{p,q}
&=& 
 \int_{0}^{1} \int_{0}^{1}  \ag \phi_{n,k}(t) f_{\alpha} (t) \left( \int_{s,t} \frac{\big( \alpha(u) - \beta(u) \big)^{2}}{{f_{\alpha}}^{2}(u)} \, du \right) f (s) \ag  \phi_{p,q}(s) \, dt \, ds \, , \nonumber\\
&=&
 \int_{0}^{1} \int_{0}^{1} \int_{0}^{1} \frac{\big( \alpha(u) - \beta(u) \big)^{2}}{{f_{\alpha}}^{2}(u)} \, \nonumber\\
 && \qquad \qquad \left( \bold{1}_{[0,u]}(t)  f_{\alpha} (t) \ag \phi_{n,k}(t) \right) \, \left( \bold{1}_{[0,u]}(s) f_{\alpha} (s) \ag  \phi_{p,q}(s) \right) \, dt \, ds \, du  \, , \nonumber\\
 &=&
 \int_{0}^{1}  \frac{\big( \alpha(u) - \beta(u) \big)^{2}}{\Gamma(u)} \,   \ag \psi_{n,k}(t) \,  \ag \psi_{p,q}(t) \, dt \, ds \, .
 \nonumber
\end{eqnarray}
This proves that $\left(  \ag \phi_{n,k} ,\ab R[\ag  \phi_{p,q}] \right) = [\ab S - Id]^{n,k}_{p,q}$.
Therefore, if we denote the isometric linear operator
\begin{eqnarray}
\ag \Phi: l^{2}(\mathbb{R}) &\longrightarrow& L^{2}(\mathbb{R}) \nonumber\\
\xi & \mapsto& \ag \Phi[\xi] = \sum_{n=0}^{\infty}\sum_{ \hspace{5pt} 0 \leq k < 2^{n\!-\!1} } \ag \phi_{n,k} \cdot \xi_{n,k} \, ,
 \end{eqnarray}
we clearly have ${\ag \Phi}^{T} \circ \ab R \circ \ag \Phi = \ab S - Id$ with ${\ag \Phi}^{T} = {\ag \Phi}^{-1}$.
\end{proof}

We now proceed to demonstrate that $\ab S - Id$ is a trace-class operator.

\begin{proof}[Proof of Theorem \ref{thCompactGG}]
Since the kernel $\ab R(t,s)$ is  integrable in $L^{2}([0,1] \times [0,1])$, the integral operator $\ab R:L^{2}[0,1] \rightarrow L^{2}[0,1]$ is a Hibert-Schmidt operator and thus is compact.
Moreover it is a trace-class operator, since we have
\begin{eqnarray}
\mathrm{Tr}(\ab R)
& =&
\int_{0}^{1} \left( \alpha(t) - \beta(t) \right) \, dt
+
\int_{0}^{1} {f_{\alpha}}^{2}(t) \left( \int_{t}^{1} \frac{\big( \alpha(u) - \beta(u) \big)^{2}}{{f_{\alpha}}^{2}(u)} \, ds \right) \, dt
\nonumber\\
&=& 
\int_{0}^{1} \left( \alpha(t) - \beta(t) \right) \, dt
+
\int_{0}^{1} \frac{h_{\alpha}(t)}{{f_{\alpha}(t)}^{2}}  \big( \alpha(t) - \beta(t) \big)^{2} \, dt 
\nonumber
\end{eqnarray}
Since $\ab S - Id$ and $\ab R$ are isometric through $\ag \Phi$, the compactness of $\ab S - Id$ is equivalent to the compactness of $\ab R$.
Moreover the traces of both operator coincide:
\begin{eqnarray}
\sum_{n=0}^{\infty}\sum_{ \hspace{5pt} 0 \leq k < 2^{n\!-\!1} } \ab S^{n,k}_{n,k} 
&=&
\sum_{n=0}^{\infty}\sum_{ \hspace{5pt} 0 \leq k < 2^{n\!-\!1} } \int_{0}^{1}  \int_{0}^{1} \ag \phi_{n,k}(t) \, \ag \phi_{n,k}(s) \, \ab R(t,s) \, ds \, dt \, ,  \nonumber\\
&=&
\int_{0}^{1}  \int_{0}^{1} \left( \sum_{n=0}^{\infty}\sum_{ \hspace{5pt} 0 \leq k < 2^{n\!-\!1} }   \ag \phi_{n,k}(t) \, \ag \phi_{n,k}(s) \right) \, \ab R(t,s) \, ds \, dt \, , \nonumber\\
&=&
\int_{0}^{1}  \, \ab R(t,t) \, ds \, dt \, ,
\nonumber
\end{eqnarray}
using the result of Corollary \ref{cor:identDecomp}. 
\end{proof}

\section{Girsanov formula}\label{append:Girsanov}

In this section we provide the quite technical proof of Lemma \ref{lemGGextension} which is useful in proving Girsanov's formula:

\begin{lemma}
The positive definite quadratic form on $l^{2}(\mathbb{R}) \times l^{2}(\mathbb{R})$ associated with operator $\ab S - Id: l^{2}(\mathbb{R}) \rightarrow l^{2}(\mathbb{R})$ is well-defined on $\uxi \Omega'$.
Moreover for all $\uxi \Omega'$, 
\begin{eqnarray} \label{eq:StratoExp2}
\lefteqn{
\big(\xi, (\ab S - Id_{\uxi \Omega'})(\xi \big) = }
\nonumber\\
&&
2 \, \int_{0}^{1} \frac{\alpha(t) - \beta(t)}{{f}^{2}(t)} \, \frac{\ag X_{t}(\xi)}{g_{\alpha}(t)} \circ d \left( \frac{\ag X_{t}(\xi)}{g_{\alpha}(t)} \right) + \int_{0}^{1} \frac{\big(\alpha(t)- \beta(t)\big)^{2} }{{f}^{2}(t)} \, \left( \frac{\ag X_{t}(\xi)}{g_{\alpha}(t)} \right)^{2} \, dt \, , \nonumber
\end{eqnarray}
where $\ag X_{t}(\xi) = \ag \Psi(\xi)$ and $\circ$ refers to the Stratonovich integral and the equality is true in law. 
\end{lemma}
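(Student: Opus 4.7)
The plan is to first evaluate the quadratic form on the subspace $l^2(\R)$, where everything is manifestly well-defined, by using the explicit coefficient expression from Lemma \ref{lemCoefGG}. This reduces the form to two explicit integrals in $t$ involving the process $\ag X_t(\xi)$ and the $L^2$-function $\ag\Phi[\xi]$. A formal identification of $\ag\Phi[\xi]\,dt$ with $f_\alpha^{-1}\,d(\ag X_t/g_\alpha)$ will recast one of the integrals as a Stratonovich integral, while the other becomes the Riemann integral appearing in the statement. The extension from $l^2(\R)$ to $\uxi\Omega'$ will then be carried out by approximating a typical $\xi\in\uxi\Omega'$ by its finite-dimensional truncations $\xi^N=\uxi p_N(\xi)$, which always lie in $l^2(\R)$.

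\textbf{Step 1: computation on $l^2(\R)$.} Expanding coefficient-wise and invoking Lemma \ref{lemCoefGG} together with the orthonormality of the columns of $\ag\phi_{n,k}$ in $L^2_{\bm f}$, we obtain for every $\xi\in l^2(\R)$
\begin{equation*}
(\xi,(\ab S - \mathrm{Id})\xi) = \int_0^1 \frac{(\alpha(t)-\beta(t))^2}{\Gamma(t)}\,\ag X_t(\xi)^2\,dt + 2\int_0^1 \frac{\alpha(t)-\beta(t)}{\sqrt{\Gamma(t)}}\,\ag X_t(\xi)\,\ag\Phi[\xi](t)\,dt,
\end{equation*}
where $\ag X_t(\xi) = \sum_{(n,k)\in\I}\ag\psi_{n,k}(t)\,\xi_{n,k}$ and $\ag\Phi[\xi](t)=\sum_{(n,k)\in\I}\ag\phi_{n,k}(t)\,\xi_{n,k}\in L^2$. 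Since $\ag X_t(\xi)=\mathcal{K}[\ag\Phi[\xi]](t)=g_\alpha(t)\int_0^t f_\alpha(s)\,\ag\Phi[\xi](s)\,ds$, we have the pathwise identity $d\bigl(\ag X_t(\xi)/g_\alpha(t)\bigr)=f_\alpha(t)\,\ag\Phi[\xi](t)\,dt$. Substituting and using $\sqrt{\Gamma(t)}=g_\alpha(t)f_\alpha(t)$ yields the identity claimed in the lemma, with the middle integral legitimately interpreted as a classical Riemann integral because $\ag X_t(\xi)$ has an $L^2$-derivative on $l^2$-coefficients; classical, Stratonovich and It\^o integrals coincide in this regime.

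\textbf{Step 2: extension to $\uxi\Omega'$.} For $\xi\in\uxi\Omega'$, form the truncation $\xi^N=\uxi p_N(\xi)\in l^2(\R)$ and apply Step 1 to $\xi^N$. The left-hand side $(\xi^N,(\ab S-\mathrm{Id})\xi^N)$ converges to $(\xi,(\ab S-\mathrm{Id})\xi)$ because the trace-class estimate of Theorem \ref{thCompactGG} controls the tail of the quadratic form uniformly and because $\xi\in\uxi\Omega'$ has coefficients with at most sub-exponential growth. For the right-hand side, Proposition \ref{lem:convergence} guarantees that $\ag X^N_\cdot(\xi)=\bPsi^N(\xi)$ converges uniformly to $\ag X_\cdot(\xi)=\bPsi(\xi)$, so the Riemann integral $\int_0^1 \frac{(\alpha-\beta)^2}{f_\alpha^2}(\ag X_t^N/g_\alpha)^2\,dt$ converges by dominated convergence to its analog for $\ag X_t$. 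The more delicate Stratonovich term is handled by combining the uniform convergence of the piecewise-smooth interpolants $\ag X_t^N$ with the integration-by-parts machinery of Proposition \ref{prop:IPP}, which for the finite-dimensional approximations reduces the Stratonovich integral to explicit boundary-plus-bulk terms that stabilize as $N\to\infty$.

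\textbf{Main obstacle.} The hard part is the extension in Step 2: for a typical $\xi\in\uxi\Omega'$, the formal series $\ag\Phi[\xi]$ is not in $L^2$, so the bilinear form $(\xi,(\ab S-\mathrm{Id})\xi)$ cannot be interpreted literally and must be defined as the limit in law of its finite-dimensional avatars. Similarly the Stratonovich integral on the right must be understood in distributional sense as the limit of its piecewise-smooth analogues built from $\ag X^N$. The convergence of these Stratonovich integrals, for the deterministic piecewise-linear interpolating basis to the stochastic Stratonovich integral against the true Gauss-Markov trajectory, is what ultimately forces the equality to hold only in law, and is precisely where Proposition \ref{prop:IPP} is invoked---exactly as in the proof of that proposition itself, where a careful midpoint-vs-endpoint analysis of each interval $S_{N,k}$ was needed to recover the correct Stratonovich correction. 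The trace-class property ensured in Theorem \ref{thCompactGG} is essential because it guarantees that both sides tend to their limits in a compatible way.
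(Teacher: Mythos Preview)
Your overall architecture matches the paper's: truncate $\xi$ to $\xi^N$, compute the finite-dimensional quadratic form explicitly as the sum of a ``$B$-term'' (the $\psi\psi$ contribution) and an ``$A$-term'' (the $\phi\psi$ cross contribution), then let $N\to\infty$. The paper does exactly this, handling the $B$-term by uniform convergence of $\ag X^N$ and the $A$-term by the same interval-by-interval midpoint analysis that underlies Proposition~\ref{prop:IPP}, computing explicit weights $w_i^N$, $w_{i+1}^N$ on each $[t_i,t_{i+1}]$ and showing $|w_i^N-w(t_i)|=O(2^{-N(1+\delta)})$ via the H\"older regularity of $w(t)=(\alpha-\beta)/f_\alpha^2$.

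There is one genuine misstep in your Step~2. You claim that the left-hand side $(\xi^N,(\ab S-\mathrm{Id})\xi^N)$ converges to $(\xi,(\ab S-\mathrm{Id})\xi)$ ``because the trace-class estimate of Theorem~\ref{thCompactGG} controls the tail of the quadratic form uniformly.'' This is neither what the paper does nor sufficient on its own: trace-class is an $l^2$-statement, and a generic $\xi\in\uxi\Omega'$ has $|\xi_{n,k}|\lesssim 2^{n\delta/2}$ and is \emph{not} in $l^2$, so a bound on $\sum|T^{n,k}_{n,k}|$ or even on the Hilbert--Schmidt norm does not by itself control $\sum\xi_{n,k}T^{n,k}_{p,q}\xi_{p,q}$. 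The paper's logic is the reverse of what you wrote: it shows directly that the \emph{right-hand side} (the Stratonovich-plus-Riemann expression built from $X^N$) converges, and this limit is what \emph{defines} $(\xi,(\ab S-\mathrm{Id})\xi)$ on $\uxi\Omega'$. You yourself acknowledge this in your ``Main obstacle'' paragraph, so your Step~2 should simply drop the trace-class justification for the left-hand side and state that the quadratic form on $\uxi\Omega'$ is defined as the limit of its finite-dimensional avatars, whose existence is secured by the convergence of the $A$- and $B$-terms on the right.
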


\begin{proof}
The proof of this lemma uses quite similar material as in the proof of It\^o's theorem. However since this result is central for giving insight on the way our geometric considerations relate to Girsanov's theorem, we provide the detailed proof here.\\
Consider $\xi$ in $\uxi \Omega'$, denote $\xi_{N} = \uxi P_{N}(\xi)$ and write
\begin{eqnarray} \label{eq:quadQuantity}
\big(\xi_{N}, (\ab S_{N} - Id_{\uxi \Omega_{N}})(\xi_{N}) \big) 
=  
 \sum_{(n,k)\in \I_N} \sum_{ (p,q)\in \I_N} \Big( A^{n,k}_{p,q} + B^{n,k}_{p,q} \Big) \: \xi_{n,k}  \, \xi_{p,q}
 \nonumber
\end{eqnarray}
where we have posited
\begin{eqnarray}
A^{n,k}_{p,q} &=& 
2 \int_{0}^{1}  \frac{ \alpha(t) - \beta(t)}{\sqrt{\Gamma(t)}} \big(  \ag \phi_{n,k}(t) \, \ag \psi_{p,q}(t)  +\ag \phi_{p,q}(t) \, \ag \psi_{n,k}(t) \big)\, dt  \, ,\nonumber\\
B^{n,k}_{p,q} &=& \int_{0}^{1} \frac{\big(\alpha(t)- \beta(t)\big)^{2} }{\Gamma(t)}  \big( \ag \psi_{n,k}(t) \, \ag \psi_{p,q}(t) \big) \, dt \, . \nonumber
\end{eqnarray}
It is easy to see, using similar arguments as in the proof of the integration by parts formula, Proposition \ref{prop:IPP}:
\begin{eqnarray} \label{eq:strato1}
A^{N}(\xi) &=&   \sum_{(n,k)\in \I_N} \sum_{ (p,q)\in \I_N} A^{n,k}_{p,q}\: \xi_{n,k}  \, \xi_{p,q} \nonumber\\
 &=&
 2 \, \int_{0}^{1} \frac{\alpha(t) - \beta(t)}{\sqrt{\Gamma(t)}} \: \frac{\ag X_{t}^{N}\!(\xi)}{f_{\alpha}(t)} \:  \frac{d}{dt} \! \left( \frac{\ag X_{t}^{N}(\xi)}{g_{\alpha}(t)} \right) \, dt \, ,
 \nonumber
\end{eqnarray}
and 
\begin{eqnarray}
 \sum_{(n,k)\in \I_N} \sum_{ (p,q)\in \I_N}  B^{n,k}_{p,q}\: \xi_{n,k}  \, \xi_{p,q}
 =
 \int_{0}^{1} \frac{\big(\alpha(t)- \beta(t)\big)^{2}}{\Gamma(t)} \: {\ag X_{t}^{N}\!(\xi)}^{2} \:  dt \, ,
 \nonumber
\end{eqnarray}
Because of the uniform convergence property of $X^N$ towards $X$ and the fact that it has almost surely bounded sample paths, the latter sum converges towards
\begin{eqnarray}
 \int_{0}^{1} \frac{\big(\alpha(t)- \beta(t)\big)^{2}}{{f}^{2}(t)} \: {\left( \frac{\ag X_{t} (\xi)}{g_{\alpha}(t)}\right)}^{2} \:  dt \, ,
 \nonumber
\end{eqnarray}
Now writing quantity $A^{N}(\xi)$ as the sum of elementary integrals between the points of discontinuity $t_{i} = i2^{-N}$, $0 \leq i \leq 2^{N}$
\begin{eqnarray}
A^{N}(\xi) = -2 \,  \sum_{i=0}^{2^{N}-1}  \int_{t_{i}}^{t_{i+1}}    \frac{\alpha(t) - \beta(t)}{\sqrt{\Gamma(t)}} \: \frac{\ag X_{t}^{N}\!(\xi)}{f(t)} \:  \frac{d}{dt} \! \left( \frac{\ag X_{t}^{N}(\xi)}{g_{\alpha}(t)} \right) \, dt \, ,
\nonumber
\end{eqnarray}
and using the identities of equations \eqref{eq:XNt} and \eqref{eq:DXNt}, we then have
\begin{eqnarray} \label{eq:strato2}
A^{N}(\xi) = -2 \,  \sum_{i=0}^{2^{N}-1}  \left(  w^{N}_{i} \frac{ \ag X_{t_{i}}(\xi)}{g_{\alpha}(t_{i})}  + w^{N}_{i+1}   \frac{\ag X_{t_{i+1}}(\xi)}{g_{\alpha}(t_{i+1})}  \right) \: \left( \frac{\ag X_{t_{i+1}}(\xi)}{g_{\alpha}(t_{i+1})}  - \frac{ \ag X_{t_{i}}(\xi)}{g_{\alpha}(t_{i})}  \right) \, ,
\nonumber
\end{eqnarray}
where we denoted
\begin{eqnarray}
w^{N}_{i} &=& \frac{\int_{t_{i}}^{t_{i+1}} \big(\alpha(t)- \beta(t)\big) \big( h(t_{i+1}) - h(t)  \big) \, dt}{\big( h(t_{i+1}) - h(t_{i}) \big)^{2}} \, ,
\nonumber\\
w^{N}_{i+1} &=& \frac{\int_{t_{i}}^{t_{i+1}} \big(\alpha(t)- \beta(t)\big)  \big( h(t_{i+1}) - h(t)  \big) \, dt}{\big( h(t_{i+1}) - h(t_{i}) \big)^{2}} \, .
\nonumber
\end{eqnarray}
Let us define the function $w$ in $C[0,1]$ by
\begin{equation}
w(t) = \frac{ \alpha(t)- \beta(t)}{{f}^{2}(t)}
\nonumber
\end{equation}
If $\alpha$ and $\beta$ is uniformly $\delta$-H\"{o}lder continuous, so is $w$.
Therefore, there exist an integer $N'>0$ and a real $M>0$ such that if $N>N'$, for all $0 \leq i < 2^{N}$, we have
\begin{eqnarray}
\big \vert w^{N}_{i} - w(t_{i}) \big \vert &=& \left \vert  \displaystyle \int_{t_{i}}^{t_{i+1}} \big( w(t) - w(t_{i}) \big)  \, \frac{ \frac{d}{dt} \! \left( \big( h(t_{i+1}) - h(t)  \big)^{2} \right)}{\big( h(t_{i+1}) - h(t_{i}) \big)^{2}} \, dt \right \vert  \, ,
\nonumber\\
& \leq & M \left  \vert  \displaystyle \int_{t_{i}}^{t_{i+1}} ( t -t_{i} )^{\delta}  \, \frac{ \frac{d}{dt} \! \left( \big( h(t_{i+1}) - h(t)  \big)^{2} \right)}{\big( h(t_{i+1}) - h(t_{i}) \big)^{2}} \, dt \right \vert  \, ,
\nonumber\\
& \leq & M  \left( \frac{(t-t_{i})^{\delta+1}}{2 (\delta + 1)} \right) + M \left  \vert \displaystyle \int_{t_{i}}^{t_{i+1}} \frac{(t-t_{i})^{\delta+1}}{\delta + 1}  \frac{  \big( h(t_{i+1}) - h(t)  \big)^{2}}{\big( h(t_{i+1}) - h(t_{i}) \big)^{2}}  \, dt \right \vert  \, ,
\nonumber\\
& \leq & M  \left( \frac{(t-t_{i})^{\delta+1}}{2 (\delta + 1)} \right) + M  \left( \frac{(t-t_{i})^{\delta+2}}{2 (\delta + 1)(\delta + 2)} \right)  \, ,
\nonumber
\end{eqnarray}
which shows that for $\vert w^{N}_{i} - w(t_{i}) \vert = O(2^{-N(1+\delta)})$, and similarly $\vert w^{N}_{i+1} - w(t_{i+1}) \vert = O(2^{-N(1+\delta)})$ as well.
As a consequence, expression \eqref{eq:strato2} converges when $N$ tends to infinity toward the desired Stratonovich integral.
\end{proof}

\bibliographystyle{plain}
\bibliography{MyBib}

\end{document}